\documentclass[a4paper, 11pt, oneside, reqno, svgnames]{amsart}
\usepackage[a4paper]{geometry}
\usepackage[utf8]{inputenc}
\usepackage[english]{babel}
\usepackage[T1]{fontenc}
\usepackage{textcomp}

\usepackage{txfonts}

\usepackage{apptools}

\usepackage[mathscr]{euscript}

\usepackage{tikz-cd}
\usepackage{quiver}
\usepackage{todonotes}

\usepackage{amsmath}
\usepackage{amsfonts}
\usepackage{amsthm}
\usepackage{latexsym}
\usepackage{amssymb}
\usepackage{xcolor}
\definecolor{blue(munsell)}{rgb}{0.0, 0.5, 0.69}
\definecolor{random}{rgb}{0.3, 0.8, 0.3}

\usepackage{hyperref}
\hypersetup{hypertexnames = false, bookmarksdepth = 2, bookmarksopen = true, colorlinks=true, linkcolor = black, citecolor = blue(munsell), urlcolor = blue(munsell), pdfstartview={XYZ null null 1}}

\makeatletter
\def\l@subsection{\@tocline{2}{0pt}{2pc}{5pc}{}}
\makeatother

\newenvironment{psmallmatrix}{\left(\begin{smallmatrix}}{\end{smallmatrix}\right)}

\numberwithin{equation}{section}

\DeclareMathOperator{\Hom}{Hom}
\DeclareMathOperator{\Homdg}{\underline{Hom}}
\DeclareMathOperator{\Ext}{Ext}

\DeclareMathOperator{\Fun}{Fun}
\DeclareMathOperator{\Ob}{Ob}
\DeclareMathOperator{\coker}{coker}

\DeclareMathOperator{\Mod}{Mod}

\DeclareMathOperator{\Img}{Im}

\DeclareMathOperator{\Fundg}{Fun_{dg}}

\DeclareMathOperator{\pretr}{pretr}
\DeclareMathOperator{\cone}{cone}

\DeclareMathOperator{\id}{id}

\DeclareMathOperator{\colim}{colim}
\DeclareMathOperator{\holim}{holim}
\DeclareMathOperator{\hocolim}{hocolim}

\DeclareMathOperator{\Inj}{Inj}
\DeclareMathOperator{\Proj}{Proj}
\DeclareMathOperator{\GInj}{GInj}
\DeclareMathOperator{\GProj}{GProj}
\DeclareMathOperator{\WTriv}{WTriv}

\DeclareMathOperator{\pshdg}{dgm}

\DeclareMathOperator{\dercomp}{\mathsf{D}}
\DeclareMathOperator{\dercompdg}{\mathsf{D}_{\mathrm{dg}}}

\DeclareMathOperator{\QFun}{\dercomp^{\mathrm{qf}}}

\DeclareMathOperator{\RHom}{\mathbb R \underline{\Hom}}
\DeclareMathOperator{\lotimes}{\overset{\mathbb L}{\otimes}}
\newcommand{\cat}{\mathscr}

\newcommand{\opp}[1]{{#1}^{\mathrm{op}}}
\newcommand{\kat}{\mathsf}

\newcommand{\stalk}[1]{S\!_{#1}\,}

\newcommand{\derivator}{\mathscr D}

\newcommand{\Set}{\kat{Set}}
\newcommand{\Cat}{\kat{Cat}}
\newcommand{\CAT}{\kat{CAT}}
\newcommand{\Hqe}{\kat{Hqe}}

\newcommand{\wholim}[2]{\holim\nolimits^{#2}{\! #1}}

\newcommand{\whocolim}[2]{\hocolim\nolimits^{#2} {\! #1}}

\newcommand{\qis}{\overset{\mathrm{qis}}{\approx}}

\newcommand{\basering}[1]{\mathbf{#1}}

\newcommand{\dgm}[2]{{_{#1}} \mathrm{dgm}{_{#2}}}
\newcommand{\rdgm}[1]{\dgm{}{#1}}
\newcommand{\ldgm}[1]{\dgm{#1}{}}

\newtheorem{theorem}{Theorem}[section]
\newtheorem*{theorem*}{Theorem}
\newtheorem*{variant*}{Variant}
\newtheorem{proposition}[theorem]{Proposition}
\newtheorem{corollary}[theorem]{Corollary}

\newtheorem{lemma}[theorem]{Lemma}

\theoremstyle{remark}
\newtheorem{remark}[theorem]{Remark}
\newtheorem*{remark*}{Remark}
\newtheorem{example}[theorem]{Example}
\newtheorem{notation}[theorem]{Notation}

\theoremstyle{definition}
\newtheorem{definition}[theorem]{Definition}

\AtAppendix{\counterwithin{theorem}{section}}

\title{The derivator of a dg-category}

\author{Francesco Genovese}
\address[Francesco Genovese]{Praha, Česká republika.}
\email{fg.anisama@gmail.com}

\author{Chiara Sava} 
\address[Chiara Sava]{Univerzita Karlova, Matematicko-fyzik\'{a}ln\'{i} fakulta, Katedra Algebry, Sokolovsk\'{a} 49/83, 186 75 Praha 8, Česká republika.}
\email{sava@karlin.mff.cuni.cz}

\makeatletter
\let\@wraptoccontribs\wraptoccontribs
\makeatother

\contrib[with an appendix by]{Jan Šťovíček}

\address[Jan Šťovíček]{Univerzita Karlova, Matematicko-fyzik\'{a}ln\'{i} fakulta, Katedra Algebry, Sokolovsk\'{a} 49/83, 186 75 Praha 8, Česká republika.}
\email{stovicek@karlin.mff.cuni.cz}

\subjclass[2020]{Primary:18G35, 18N40; Secondary:18G05, 18G80, 16G20}

\thanks{
C.~S. was supported by the Charles University Grant Agency (GA UK) project n° 222923.
J.~\v{S}. was supported by the Czech Science Foundation grant GA\v{C}R~23-05148S}

\begin{document}
\begin{abstract}
  In this work, we construct the stable derivator associated to a homotopically complete and cocomplete dg-category by explicitly defining homotopy Kan extensions via suitable weighted homotopy limits and colimits in dg-categories. By restricting the domain of the derivator to finite direct categories, we obtain a well-defined derivator even for pretriangulated dg-categories. This definition enables an explicit description of the derivator associated to a weakly idempotent complete Frobenius exact category, leading to a more direct characterization in terms of Gorenstein projective (equivalently, Gorenstein injective) diagrams.
\end{abstract}

\maketitle

\tableofcontents

\section{Introduction}
\emph{Derived categories} (of rings or schemes) are nowadays a classical topic in mathematics, with many applications to geometry and algebra. They are defined by localizing the categories of complexes of modules over the given ring (or sheaves over the given scheme) along \emph{quasi-isomorphisms}, that is, maps inducing isomorphisms in cohomology. Derived categories are not in general very well-behaved from a categorical point of view: for instance, the usual limits and colimits of complexes (e.g. kernels and cokernels) will not in general yield corresponding limits and colimits inside the derived category. Instead of those, we should look for \emph{homotopy limits and colimits}, namely, variants of the notion of limits and colimits of complexes which have the crucial property of being \emph{stable under quasi-isomorphisms}.

The notion of \textit{triangulated structure}, making derived categories the typical examples of \emph{triangulated categories}, partially addresses the above problem. Indeed, we may define some homotopy limits and colimits in a derived category, such as \emph{mapping cones}; unfortunately, these homotopy limits and colimits will be given \emph{just as objects, not as functors}. Such lack of functoriality is one of the main technical drawbacks of working just with derived categories -- and, more in general, with triangulated categories: it makes the theory a useful tool for many purposes but ultimately deficient.

In order to recover functorial homotopy limits and colimits, derived categories and triangulated categories just do not carry enough information. It turns out that they are almost always “shadows” of more complicated, higher categorical structures, where all homotopy limits and colimits can be defined and behave functorially as expected. There are many types of such higher structures, and choosing one or the other is typically a matter of situational convenience. It is also expected that all of such higher categorical theories ``enhancing'' triangulated categories are, in some sense, equivalent. 

A popular choice of ``enhancements'' of triangulated categories is given by \emph{differential graded (dg) categories}. These are categories enriched in complexes of modules over a fixed base commutative ring, and they possess a homotopy theory induced by the homotopy theory of complexes themselves \cite{tabuada-quillendg} \cite{toen-dgcat-invmath}. The notion of \emph{pretriangulated dg-category} can be given \cite{bondal-kapranov-enhanced}, and if $\cat A$ is such, then its \emph{homotopy category} $H^0(\cat A)$ carries a natural structure of triangulated category. Inside $\cat A$ we may define functorial homotopy limits and colimits, in particular functorial mapping cones. Recent results \cite{canonaco-stellari-neeman-dgenh-all} \cite{canonoaco-stellari-dgenh-grothendieck} ensure that essentially all derived categories admit an essentially unique dg-categorical enhancement -- namely, a pretriangulated dg-category such that its $H^0$ is equivalent to the given derived category.

Dg-categories are a popular means of enhancing triangulated categories, but there are other possible choices. In this work, we will be interested in \emph{stable derivators}. Derivators were first introduced by Grothendieck \cite{grothendieck-derivateurs} and Heller \cite{heller-homotopy} \cite{heller-stablehomotopies} and more recently developed in \cite{groth-derivators}; stable derivators are specific derivators tailored for derived categories (and stable homotopy theory). If $R$ is a ring, we may define a stable derivator which enhances the derived category $\dercomp(R)$ as the following $2$-functor:
\begin{align*}
    I \mapsto \dercomp(\Mod(R)^I),
\end{align*}
where $I$ is any small category, and $\Mod(R)^I$ denotes the (abelian) category of functors $I \to \Mod(R)$. We see that by taking $I=e$ (the terminal category), we recover $\dercomp(R)$ itself, but keeping track of all derived categories of functors $I \to \Mod(R)$ gives a natural framework allowing us to define and work with homotopy Kan extensions, hence homotopy limits and colimits.

Having both dg-categories and stable derivators as possible enhancements for derived categories, we come to a natural question: can such enhancements be compared? The goal of this paper is to give a first positive definition to that question, by directly constructing, for a given (pretriangulated, with suitable homotopical completeness properties) dg-category, an associated derivator:
\begin{theorem*}[see Theorem \ref{theorem:derivator_dg}]
    Let $\cat A$ be a pretriangulated dg-category admitting all homotopy limits and colimits. Then, the $2$-functor
    \[
    I \mapsto \QFun(\basering k[I], \cat A)
    \]
    defines a strong and stable derivator. Here $\basering k$ is a base commutative ring, $\basering k[I]$ denotes the free $\basering k$-linear category generated by the small category $I$, and $\QFun(\basering k[I], \cat A)$ denotes the category of \emph{quasi-functors} (intuitively: homotopically coherent morphisms of dg-categories) from $\basering k[I]$ to $\cat A$.
\end{theorem*}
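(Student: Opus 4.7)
The plan is to verify the derivator axioms (Der1)--(Der4) for the $2$-functor $\mathbb{D}_{\cat A} \colon I \mapsto \QFun(\basering k[I], \cat A)$, and then check strongness and stability. The $2$-functoriality in $I$ is straightforward: a functor $u \colon I \to J$ induces a dg-functor $\basering k[u] \colon \basering k[I] \to \basering k[J]$, and precomposition yields a restriction functor $u^* \colon \QFun(\basering k[J], \cat A) \to \QFun(\basering k[I], \cat A)$ with the expected behaviour on natural transformations.

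Axioms (Der1) and (Der2) are essentially formal consequences of the definitions. For (Der1) one uses that $\basering k[I \sqcup J] \cong \basering k[I] \amalg \basering k[J]$ in dg-categories and that quasi-functors out of a coproduct factor as a pair, giving $\mathbb{D}_{\cat A}(I \sqcup J) \simeq \mathbb{D}_{\cat A}(I) \times \mathbb{D}_{\cat A}(J)$, and that $\basering k[\emptyset]$ is the empty dg-category, so $\mathbb{D}_{\cat A}(\emptyset)$ is terminal. For (Der2), the conservativity at points reduces to the statement that a morphism of quasi-functors $F \to G$ becomes invertible in $H^0 \QFun(\basering k[I], \cat A)$ exactly when every evaluation $F(i) \to G(i)$ is invertible in $H^0(\cat A)$, which is built into the definition of quasi-isomorphisms of quasi-functors.

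Axiom (Der3) is where the hypothesis that $\cat A$ has all homotopy limits and colimits enters decisively. For $u \colon I \to J$, I would construct $u_!$ and $u_*$ using the weighted homotopy (co)limits in $\cat A$ developed earlier in the paper: for $F \in \QFun(\basering k[I], \cat A)$ the assignments
\[
j \mapsto \hocolim_{u/j} (F \circ \pr), \qquad j \mapsto \holim_{j/u} (F \circ \pr)
\]
should be promoted to quasi-functors $u_! F$ and $u_* F$, and the adjointness relations $u_! \dashv u^* \dashv u_*$ should follow from the universal properties of these weighted homotopy (co)limits. Axiom (Der4), the pointwise / Beck--Chevalley formula, then asserts that the canonical comparison map $\hocolim_{u/j}(F \circ \pr) \to (u_! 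F)(j)$ is an isomorphism in $H^0 \cat A$ for every $j \in J$, and dually for $u_*$. This is the main obstacle, and I expect it to require a change-of-weight / cofinality argument identifying the weight defining the global colimit $u_!$ with the fibrewise weights on the comma categories $u/j$.

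Finally, strongness amounts to showing that the canonical comparison $\mathbb{D}_{\cat A}(I \times [1]) \to \mathbb{D}_{\cat A}(I)^{[1]}$ is full and essentially surjective, which rests on lifting morphisms in the homotopy category of quasi-functors to honest quasi-functors on $\basering k[I \times [1]]$ via an internal cylinder construction. Stability decomposes into two parts: pointedness is immediate from the zero object available in any pretriangulated dg-category, and the equivalence of homotopy cartesian and cocartesian squares follows from $\cat A$ being pretriangulated, since every such square on $\basering k[\square]$ can be unwound into a distinguished triangle in $H^0(\cat A)$, making the two notions agree. The real difficulty, as indicated, sits in (Der4), where the explicit weighted-homotopy-colimit construction of $u_!$ must be matched with the pointwise formula over the slice categories.
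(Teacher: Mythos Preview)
Your outline is broadly correct in spirit, but the approach to (Der3)/(Der4) diverges from the paper's and, as written, is somewhat circular. The paper does \emph{not} build $u_*F$ by first forming the pointwise values $\holim_{j/u}(F\circ q)$ and then ``promoting'' them to a quasi-functor. Instead it defines $u_*F$ globally as the $\basering k[J]$-$\cat A$-bimodule $\RHom_{\basering k[I]}(u,F)$, so that $(u_*F)_j^A = \RHom_{\basering k[I]}(\basering k[J](j,u(-)),F^A)$. This is a bimodule by construction (no assembly step), and the adjunction $u^*\dashv u_*$ is just the derived tensor--hom adjunction~\eqref{equation:derivedtensorhomdgm}. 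Homotopical completeness enters only to guarantee that each $(u_*F)_j$ is quasi-representable, so that $u_*F$ is a quasi-functor (Proposition~\ref{proposition:righthokan_holim}). Your pointwise route would need separate arguments for functoriality in $j$ and for adjointness---precisely what the $\RHom$ packaging gives for free---and since you have already \emph{defined} $(u_!F)(j)$ by the slice formula in (Der3), your statement of (Der4) as ``the comparison $\hocolim_{u/j}(F\circ\pr)\to(u_!F)(j)$ is an isomorphism'' becomes tautological.

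With $u_*$ so defined, (Der4) becomes the comparison of two \emph{different} weighted limits: the one with weight $\basering k[J](j,u(-))$ over $\basering k[I]$, versus the one with constant weight over $\basering k[j/u]$. The underived comparison is indeed a direct computation (Lemma~\ref{lemma:der4_underived}), close in spirit to your anticipated cofinality argument. The substantive step, however, is passing from underived to derived: the paper's Lemma~\ref{lemma:der4_restriction_hinj} shows that if $F$ is h-injective as a $\basering k[I]$-$\cat A$-bimodule, then $F^A_{q(-)}$ remains h-injective as a left $\basering k[j/u]$-module, so $\RHom$ may be replaced by $\Homdg$ on both sides. This h-injectivity preservation is the crux of (Der4), and your proposal does not anticipate it. For strongness, the paper does not use a cylinder but an explicit identification of $\RHom(\basering k[[1]],\cat B)$ with a concrete dg-category $\underline{\operatorname{Mor}}(\cat B)$ via strictly unital $A_\infty$-functors (Proposition~\ref{proposition:mor_cat_comparison}); and for stability it computes $\Omega\cong[-1]$ directly from the triangle~\eqref{equation:hopullback_hofiber} rather than arguing abstractly about (co)cartesian squares.
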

The key tool in the proof of this result is the description of \emph{homotopy limits and colimits} and hence \emph{homotopy Kan extensions} in dg-categories, given in Section \ref{subsection:hokandg}. The main idea behind the definition of such homotopy (co)limits is to view them as particular weighted colimits (as in enriched category theory), where the weight undergoes a suitable resolution in order to ensure stability under quasi-isomorphisms. This theory of homotopy limits and colimits will also allow us to give a characterization of pretriangulated dg-categories as precisely those dg-categories admitting \emph{finite} homotopy limits and colimits (Section \ref{subsection:pretr_dg_hocolim}, Proposition \ref{proposition:pretr_dgcat_finite_holims}). With this, we obtain a variant of the above theorem:
\begin{variant*}[see Theorem \ref{theorem:derivator_dg_finite}]
    Let $\cat A$ be a (non empty) pretriangulated dg-category. Then, the $2$-functor
    \[
    I \mapsto \QFun(\basering k[I], \cat A),
    \]
    defined on the $2$-category $\opp{\kat{FinDir}}$ whose objects are \emph{finite direct categories}, defines a strong and stable derivator.
\end{variant*}

As a rather immediate corollary of the above theorem (see Corollary \ref{corollary:derivator_ring}), we can prove that, for a given ring $R$, by taking $\cat A= \dercompdg(R)$ a dg-categorical enhancement of $\dercomp(R)$, the $2$-functors
\begin{align*}
    I & \mapsto \dercomp(\Mod(R)^I), \\
    I &\mapsto \QFun(\basering k[I], \dercompdg(R))
\end{align*}
are equivalent, giving us the desider connection between the dg-categorical and the derivator-enhancement of the derived category $\dercomp(R)$.

The derivator associated to a dg-category is also useful in another application. Given a Frobenius exact category $\cat F$, we know that the stable category $\underline{\cat F}$ is triangulated and, as a fact of the matter, almost all triangulated categories in algebra occur in this way (they are often called \emph{algebraic triangulated categories}, see e.g.\ \cite[\S7]{krause-chicagonotes}). So this is another very common and general ``enhancement''.
It is well known that the stable category $\underline{\cat F}$ admits a dg-enhancement given by 
$\operatorname{Ch^{dg}_{ac}}(\operatorname{Proj}(\cat F))$,
the dg-category of acyclic complexes of projectives objects of $\cat F$ \cite{keller-vossieck-derived}. Our aim is to give an explicit description of the derivator associated to this dg-category, i.e. the derivator of a Frobenius exact category $\cat F$, when  we restrict the domain of the derivator to finite direct categories. For technical reasons, we assume that $\cat F$ is weakly idempotent complete (i.e.\ that every section in $\cat F$ has a cokernel, or equivalently that every retraction has a kernel, \cite[\S7]{buehler}).

\begin{theorem*}[see Theorem~\ref{theorem:derivator-Frobenius}]
    Let $\cat F$ be a weakly idempotent complete Frobenius exact category and let $I$ a finite direct category. The following equivalence holds
        \begin{equation*}
         \QFun(\basering k[I], \operatorname{Ch^{dg}_{ac}}(\operatorname{Proj}(\cat F))) \cong
         \operatorname{K_{ac}}(\Proj(\cat{F}^I)) \cong
         \operatorname{K_{ac}}(\Inj(\cat{F}^I)).
     \end{equation*}
Here $\operatorname{K_{ac}}(\Proj(\cat{F}^I))$ is the category of acyclic chain complexes of projectives objects in the exact category of functors $I \to \cat F$, and similarly $\operatorname{K_{ac}}(\Inj(\cat{F}^I))$ is the category of acyclic chain complexes of injective objects in the exact category of functors $I \to \cat F$
    \end{theorem*}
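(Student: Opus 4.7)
The strategy is to strictify the quasi-functor category, then compare the resulting diagram category with $\operatorname{K_{ac}}(\Proj(\cat F^I))$, and finally obtain the third equivalence by a Gorenstein duality argument. For the strictification step, I exploit that $I$ is finite direct: either $\basering k[I]$ is cofibrant in the Tabuada model structure, or (as presumably used in the proof of Theorem~\ref{theorem:derivator_dg_finite}) a Reedy-type strictification applies because the target is pretriangulated, yielding
\[
\QFun(\basering k[I], \operatorname{Ch^{dg}_{ac}}(\operatorname{Proj}(\cat F))) \cong H^0\Fundg(\basering k[I], \operatorname{Ch^{dg}_{ac}}(\operatorname{Proj}(\cat F))).
\]
A strict dg-functor $\basering k[I] \to \operatorname{Ch^{dg}_{ac}}(\operatorname{Proj}(\cat F))$ is an $I$-indexed diagram of acyclic complexes of projectives of $\cat F$, equivalently a pointwise-acyclic chain complex in the functor category $\operatorname{Proj}(\cat F)^I$. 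Modding out by chain homotopies identifies the left-hand side with the homotopy category $\operatorname{K_{ac}}(\operatorname{Proj}(\cat F)^I)$ of pointwise-acyclic complexes of pointwise-projective diagrams.

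Next I would compare this with $\operatorname{K_{ac}}(\Proj(\cat F^I))$. Since $I$ is finite direct, $\cat F^I$ has enough projectives, obtained as retracts of left Kan extensions of projectives in $\cat F$ along the inclusions $\{i\}\hookrightarrow I$; each such takes pointwise-projective values, so $\Proj(\cat F^I) \subseteq \operatorname{Proj}(\cat F)^I$ and we get an embedding of the corresponding homotopy categories. For the opposite direction, I would perform an inductive replacement on a level function of $I$ (e.g., longest descending chain from an object), promoting at each level the pointwise-projective terms to left Kan extensions of projectives: at each step the Frobenius property of $\cat F$ (projectives $=$ injectives, hence syzygies of projectives are projective) provides the required lifts, the weak idempotent completeness of $\cat F$ controls the retracts that appear, and the resulting replacement map has a contractible cone and is thus a chain homotopy equivalence.

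Finally, for the second equivalence $\operatorname{K_{ac}}(\Proj(\cat F^I)) \cong \operatorname{K_{ac}}(\Inj(\cat F^I))$: although $\cat F^I$ is not Frobenius when $|I|>1$, it is Gorenstein in the sense that both sides compute the same stable category of Gorenstein projective (equivalently Gorenstein injective) objects, matching the viewpoint foreshadowed in the abstract. An explicit bridge splices a pointwise-projective acyclic complex with injective coresolutions of its syzygies in $\cat F^I$, and vice versa. The main obstacle of the entire argument is the reverse inclusion in the middle step: the inductive promotion of pointwise-projectives to genuine projectives in $\cat F^I$ requires careful bookkeeping across the levels of $I$ and essential use of the Frobenius axioms of $\cat F$ to guarantee the needed coherence at each stage.
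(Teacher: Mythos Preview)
Your strictification step is where the argument breaks. The equivalence
\[
\QFun(\basering k[I], \cat B) \cong H^0\Fundg(\basering k[I], \cat B)
\]
is false, even for $I$ finite direct and $\cat B$ pretriangulated; cofibrancy of $\basering k[I]$ does not give this. The correct relation (used in the paper, via \cite[after Theorem~4.5]{keller-dgcat}) is that $\QFun$ is the \emph{localization} of $H^0\Fundg$ at the objectwise homotopy equivalences. Under the identification $H^0\Fundg(\basering k[I],\operatorname{Ch^{dg}_{ac}}(\Proj(\cat F)))\cong\operatorname{K_{ac}}(\Proj(\cat F)^I)$, this is the Verdier quotient by the full subcategory $\operatorname{K_{tc}}(\Proj(\cat F)^I)$ of \emph{termwise} contractible complexes---nontrivial as soon as $I$ has a nonidentity arrow, since a morphism between contractible complexes need not be contractible as an object of the arrow category.

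The same mistake recurs in your second step: the inclusion $\operatorname{K_{ac}}(\Proj(\cat F^I))\hookrightarrow\operatorname{K_{ac}}(\Proj(\cat F)^I)$ is fully faithful but not essentially surjective. What the paper proves (Proposition~\ref{proposition:projective_frobenius_semiorthogonal}), by an induction on minimal objects of $I$ close in spirit to your sketch, is a semiorthogonal decomposition
\[
\operatorname{K_{ac}}(\Proj(\cat F)^I) = \langle\, \operatorname{K_{tc}}(\Proj(\cat F)^I),\ \operatorname{K_{ac}}(\Proj(\cat F^I))\,\rangle.
\]
Your ``replacement map'' produces a cone that is only termwise contractible, not contractible in $\operatorname{K}(\Proj(\cat F)^I)$; that is precisely why one lands in the quotient rather than back in the original category. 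Once both steps are corrected, the two quotients match and the first equivalence follows. For the second equivalence the paper bypasses the Gorenstein machinery entirely: it applies the first equivalence to $\opp{\cat F}$ and $\opp{I}$ and dualizes. The Gorenstein viewpoint you outline is indeed valid and is developed in the Appendix, but it is not needed for the theorem itself.
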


In the appendix, we give a more direct description of the derivator of a (weakly idempotent complete) Frobenius exact category. The passage through acyclic complexes, although necessary to apply our tools, may feel artificial in hindsight, and it appears useful to obtain a better understanding of homotopy (co)limits (or homotopy Kan extensions) in this context.

The main result of the Appendix can be viewed from two perspectives. For readers familiar with Quillen model categories and Reedy model structures, we recall that $\underline{\cat F}$ identifies with the localization $\cat F[W^{-1}]$, where $W$ is the class of all stable isomorphisms (i.e.\ the morphisms in $\cat F$ which become invertible in $\underline{\cat F}$). This localization is underlined by two model structures on $\cat F$, one where cofibrations are precisely the inflations and one where fibrations are precisely the deflations. Then our result essentially summarizes properties of the corresponding Reedy model structures on diagram categories, but the existence of the Reedy model structures is rather non-trivial in our context, since $\cat F$ is in general far from being complete or cocomplete.

For readers familiar with representation theory, the explanation is that $\cat F^I$ is typically not Frobenius, but it is what we call a Gorenstein exact category (in the sense that it is homologically similar to module categories of Iwanaga-Gorenstein rings~\cite{iwanaga-part1,iwanaga-part2}, see also~\cite[Chapter~11, Definition~4.1]{sauter-habilitation}).
In such an exact category, there are two natural Frobenius exact full subcategories, the categories of Gorenstein projective diagrams $\GProj(\cat F^I)$ and Gorenstein injective diagrams $\GInj(\cat F^I)$, which coincide precisely with the cocycles of complexes in $\operatorname{Ch_{ac}}(\Proj(\cat{F}^I))$ and $\operatorname{Ch_{ac}}(\Inj(\cat{F}^I))$, respectively. In fact, the categories of Gorenstein projective and Gorenstein injective diagrams were studied in various contexts in representation theory. The role of our category $\cat F$ was usually (but not always) taken by the category of finitely generated modules over a finite-dimensional algebra and various restrictions were usually imposed on the shape $I$, ranging from~\cite{jorgensen-kato-symmetric-AB} \cite{li-zhang-construction-GP} (for $I=[1]$, the two-element poset $(0<1)$) through \cite{xiong-zhang-GP-triangular} \cite{zhang-mono-GP} (for finite ordinals $I=[n]=(0<1<\cdots<n)$) and \cite{enochs-estrada-garciarozas_injective-reps} \cite{eshraghi-hafezi-salarian-total-acyclicity} \cite{luo-zhang_monic-GP} (for $I$ freely generated by a finite acyclic quiver) to \cite{shen-GP-tensor} (which in principle encompasses all finite direct categories).
We have no ambition for completeness of this list of references since Gorenstein homological algebra is an active and popular topic which was and is studied by many authors.
We just remark that our situation is on the one hand much more general (in particular $\cat F$ is an exact category, so does not even have all finite (co)limits), but on the other hand we assume a strong homological condition of $\cat F$ being Gorenstein.

\begin{theorem*}[see Theorem~\ref{theorem:derivator_Frobenius_direct}]
Let $\cat F$ be a weakly idempotent complete Frobenius exact category and let $\derivator_{\cat F} \colon \opp{\kat{FinDir}} \to \kat{CAT}$ be the derivator as in the previous theorem, i.e.\ given by
\[
\derivator_{\cat F}(I) = \QFun(\basering k[I], \operatorname{Ch^{dg}_{ac}}(\operatorname{Proj}(\cat F))).
\]
Then for any finite direct category $I$, we have equivalences
\[
\derivator_{\cat F}(I)\cong \cat F^I[W_I^{-1}]\cong \underline{\GProj(\cat F^I)} \cong \underline{\GInj(\cat F^I)},
\]
where $W_I$ consists of the morphisms of diagrams $f\colon X\to Y$ in $\cat F^I$ such that $f_i\colon X_i\to Y_i$ is a stable isomorphism in $\cat F$ for each $i\in I$.

Moreover, for any functor $u\colon I\to J$ between finite direct categories, the obvious exact restriction functor $f^*\colon \cat F^J\to\cat F^I$ has a partially defined exact left adjoint $u_!\colon \GProj(\cat F^I)\to\GProj(\cat F^J)$ and a partially defined exact right adjoint $u_*\colon\GInj(\cat F^I)\to\GInj(\cat F^J)$. These partially defined adjoints induce, respectively, the left and the right adjoint to the restriction functor $u^*\colon \derivator_{\cat F}(J)\to\derivator_{\cat F}(I)$ (i.e.\ the left and right Kan extensions along $u$ in $\derivator_{\cat F}$), as indicated by the following diagram.
\[
\begin{tikzcd}
\underline{\GProj(\cat F^I)} \ar[d, bend right, swap, pos=0.45, "u_!"] & \derivator_{\cat F}(I) \arrow[l, swap, <-, "\simeq"] \arrow[r, <-, "\simeq"] & \underline{\GInj(\cat F^I)}  \ar[d, bend left, pos=0.45, "u_*"] \\
\underline{\GProj(\cat F^J)} & \derivator_{\cat F}(J) \arrow[u, "u^*"] \arrow[l, swap, <-, "\simeq"] \arrow[r, <-, "\simeq"] & \underline{\GInj(\cat F^J)}
\end{tikzcd}
\]
\end{theorem*}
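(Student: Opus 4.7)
The plan is to build on Theorem~\ref{theorem:derivator-Frobenius}, which already identifies $\derivator_{\cat F}(I)$ with both $\operatorname{K_{ac}}(\Proj(\cat F^I))$ and $\operatorname{K_{ac}}(\Inj(\cat F^I))$. It therefore suffices to (a) identify these homotopy categories of acyclic complexes with $\underline{\GProj(\cat F^I)}$ and $\underline{\GInj(\cat F^I)}$; (b) identify both of these stable categories with the localization $\cat F^I[W_I^{-1}]$; and (c) construct the pointwise Kan extensions $u_!$ on $\GProj$ and $u_*$ on $\GInj$ and verify that they descend to adjoints of $u^*$ in the derivator.

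For (a), I would first show that $\cat F^I$ is a Gorenstein exact category. The evaluation functor $\mathrm{ev}_i\colon \cat F^I \to \cat F$ at each $i \in I$ admits both a left and a right adjoint, given by Kan extension along $\{i\}\hookrightarrow I$. Because $I$ is finite and direct, these Kan extensions require only finitely many pushouts along inflations (resp.\ pullbacks along deflations), and such exist in $\cat F$ thanks to the weak idempotent completeness hypothesis. The adjoints send $\Proj(\cat F)=\Inj(\cat F)$ into $\Proj(\cat F^I)$ and $\Inj(\cat F^I)$ respectively, furnishing enough projectives and enough injectives. An induction on the degree function of $I$ then produces, for each projective, a finite injective resolution (and dually), establishing the Gorenstein property. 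The classical $Z^0$-cocycle functor of Gorenstein homological algebra then yields the equivalence $\operatorname{K_{ac}}(\Proj(\cat F^I))\simeq\underline{\GProj(\cat F^I)}$, with the dual argument for injectives.

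For (b), I would invoke two Reedy-type model structures on $\cat F^I$ induced respectively by the two native ``projective'' (cofibrations $=$ inflations) and ``injective'' (fibrations $=$ deflations) model structures available on a weakly idempotent complete Frobenius exact category. The latching and matching objects stay inside $\cat F$ thanks once more to the finite, directed structure of $I$; this is the technical heart of the argument, since $\cat F$ is generally far from being complete or cocomplete. Both model structures have $W_I$ as their class of weak equivalences, so both homotopy categories coincide with $\cat F^I[W_I^{-1}]$. Cofibrant objects of the first turn out to be precisely the Gorenstein projective diagrams, giving $\cat F^I[W_I^{-1}]\simeq\underline{\GProj(\cat F^I)}$; fibrant objects of the second give the GInj version.

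For (c), the restriction $u^*$ is manifestly exact, preserves $W_I$ pointwise, and also preserves Reedy (co)fibrations because $u$ is a functor between direct categories. The partial left adjoint $u_!$ on $\GProj(\cat F^I)$ is defined via the pointwise colimit over the finite comma category $u/j$; this can be computed cell-by-cell along the Reedy-cofibrant filtration using only pushouts of inflations, and the result is again Reedy cofibrant, hence Gorenstein projective in $\cat F^J$. The dual construction gives $u_*$ on $\GInj$. The Quillen adjunction at the level of the model structures then descends to the stable categories, yielding the required adjoints to $u^*$ inside $\derivator_{\cat F}$. The main obstacle is clearly step (b): rigorously constructing the two Reedy model structures in a non-(co)complete exact category, exactly the technical difficulty flagged in the introduction; the remaining steps then follow by a combination of standard Gorenstein homological algebra and Reedy model category machinery.
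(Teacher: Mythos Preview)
Your overall decomposition into steps (a)--(c) matches the paper's architecture, and steps (a) and (c) are essentially what the paper does: the Gorenstein property of $\cat F^I$ is established via an induction on the degree function (Lemmas~\ref{lemma:finite_homological_dim_F^I_general} and~\ref{lemma:finite_homological_dim_F^I}, Proposition~\ref{proposition:Gorenstein-diagrams}), the $Z^0$ equivalence $\operatorname{K_{ac}}(\Proj(\cat F^I))\simeq\underline{\GProj(\cat F^I)}$ is the classical one for Frobenius categories (Corollary~\ref{corollary:GP_and_GI_are_Frobenius} and~\eqref{equation:Frobenius-dg}), and the Kan extensions are built from pointwise colimits whose existence and exactness are proved by a Reedy-style induction (Theorem~\ref{theorem:Gorenstein_projectives}, Corollary~\ref{corollary:Gorenstein_left_Kan}).

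The genuine divergence is in (b). You propose to construct Reedy model structures on $\cat F^I$ directly and then read off $W_I$ and the (co)fibrant objects. The paper does \emph{not} do this: it instead applies Hovey's theorem (Proposition~\ref{proposition:exact-model}) to the cotorsion pairs $(\GProj(\cat F^I),\WTriv(\cat F^I))$ and $(\WTriv(\cat F^I),\GInj(\cat F^I))$ coming from the Gorenstein property (Proposition~\ref{proposition:Gore_cotorsion_pairs}). This yields the two model structures and the equivalences $\underline{\GProj(\cat F^I)}\simeq\cat F^I[W_I^{-1}]\simeq\underline{\GInj(\cat F^I)}$ without ever needing latching or matching objects of \emph{arbitrary} diagrams. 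The identification of weak equivalences with degreewise stable isomorphisms is then checked directly (Example~\ref{example:Gorentstein}). Your route would need latching objects $L_j(X)$ to exist for all $X\in\cat F^I$ in order to run the Reedy factorization argument, and this is exactly what fails when $\cat F$ is not cocomplete; the paper's Theorem~\ref{theorem:Gorenstein_projectives} shows that $L_j(X)$ exists \emph{only} for $X\in\GProj(\cat F^I)$, and this characterization is used for the Kan extensions, not for building the model structure. So the obstacle you flag is real, and the paper's cotorsion-pair approach is precisely the device that sidesteps it.
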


To conclude the Introduction, let us also point out yet another application related to the recent paper~\cite{gao-kuelshammer-kvamme-psaroudakis-monoII} by Gao, K\"ulshammer, Kvamme and Psaroudakis. For a category $I$ freely generated by a finite acyclic quiver, the authors constructed a certain epivalence (i.e.\ a full, essentially surjective and isomorphism reflecting functor). In our context, this construction gives none other than the diagram functor (in the sense of~\cite[p.~323]{groth-derivators})
\[ \underline{\GProj(\cat F^I)} \cong \derivator_{\cat F}(I) \to \derivator_{\cat F}(e)^I \cong \underline{\cat F}^I, \]
which is known to be an epivalence for any strong and stable derivator. Indeed, for $I=[1]$ (i.e.\ the quiver $\bullet\!\to\!\bullet$), this is immediate from the definition of a strong derivator (see the axiom (Der5) in Section~\ref{section:derivators}), and for (necessarily finite direct) categories generated by other finite acyclic quivers, this fact follows by the argument for~\cite[Proposition~A.5]{keller-nicolas-weight-str}. So all in all, we have implicitly proved a variant of~\cite[Theorem~A]{gao-kuelshammer-kvamme-psaroudakis-monoII} for weakly idempotent complete Frobenius exact categories.

\begin{remark*}
Throughout the paper, we fix a base commutative ring $\basering k$. Everything will be $\basering k$-linear, unless otherwise specified. We will consistently use \emph{cohomological} notation, so that every complex of modules (also called dg-module) is a cochain complex with differentials of degree $+1$.

We will essentially disregard set-theoretic issues by implicitly fixing suitable Grothendieck universes.
\end{remark*}

\subsection*{Acknowledgement} 

We would like to thank Isaac Bird, John Bourke, Sondre Kvamme, Mattia Ornaghi and Sebastian Opper for helpful discussions and references.

\section{Derivators} \label{section:derivators}
This section aims to be a quick introduction on derivators. More information can be found in \cite{groth-derivators}. 

Let $\Cat$ be the $2$-category of small categories and $\CAT$ the $2$-category of large categories. 

\begin{definition}
    A $2$-functor $\derivator \colon \opp{\Cat} \to \CAT$ is called \emph{prederivator}. 
\end{definition} 

Prederivators form a $2$-category $\mathcal{PDER}$ whose morphisms are pseudonatural transformations and transformations of prederivators are modifications. Given a morphism of small categories $u\colon I \to J$, his image under a prederivator $\derivator$
\[ \derivator(u)=u^* \colon \derivator(J) \to \derivator(I),\] is called \emph{restriction} of $u$. Similarly, given two morphisms $u,v \colon I \to J$, the image of a natural transformation $\alpha: u \to v$ is denoted by 
\[ \derivator(\alpha)=\alpha^*\colon u^* \to v^*. \]
Let now $e$ be the terminal object in $\Cat$, i.e. the category with only one object and the identity morphism. We call $\derivator(e)$ the \emph{underlying category} of the derivator $\derivator$. Consider a small category $I$, we call \emph{evaluation} the restriction along the functor $i\colon e \to I$,  which is the only functor mapping the object of $e$ to the object $i \in I$. Moreover, for  a morphism  in $\derivator(I)$ \[f: X \to Y,\] we denote by \[i^*(f)=f_i \colon X_i \to Y_i\] its image under the restriction $i^*$. 

\noindent We now introduce two crucial functors for derivators and hence for this work. 
\begin{definition}
    Let $u\colon I \to J \in \Cat$ and consider his restriction $u^* \colon \derivator(J) \to \derivator(I)$. When it exists, we denote the left adjoint of $u^*$ by 
    \[ u_! \colon \derivator(I) \to \derivator(J)\] and we call it \emph{left Kan extension}. Similarly, when it exists, we denote by 
     \[ u_* \colon \derivator(I) \to \derivator(J)\] the right adjoint of $u^*$ and we call it \emph{right Kan extension}.
\end{definition}
Particular examples of Kan extensions arise when we consider the unique functor $\pi \colon I \to e$. Indeed, the left adjoint to the restriction $\pi^*$ is the \emph{colimit functor} and the right adjoint is the \emph{limit functor}.
To work with Kan extensions, let us cosider the \emph{slice squares}:

\begin{equation}\label{eq:Der4}
    \begin{tikzcd}
(u/j)  \arrow[r, "p"] \arrow[d, "\pi"'] & I \arrow[d, "u"] &  & (j/u) \arrow[r, "q"] \arrow[d, "\pi"'] & I \arrow[d, "u"] \\
e \arrow[r, "j"']                       & J                &  & e \arrow[r, "j"']                      & J               
\end{tikzcd}
\end{equation}

which come together with canonical transformations $up \stackrel{\alpha}{\longrightarrow} j\pi$ and $j\pi \stackrel{\beta}{\longrightarrow} up$, respectively. Here the \emph{slice category} $(u/j)$ consists of pairs $(i,f)$ where $f\colon u(i) \to j$ is a morphism in $J$. A morphism between two objects $(i,f)$ and $(i',f')$ is a morphism $i \to i'$ in $I$, making the obvious triangle commute in $J$. The functor $p \colon (u/j) \to I $ is the projection onto the first component. The \emph{coslice category} $(j/u)$ and the functor $q$ are defined dually.

\begin{definition} \label{definition:derivator}
    A prederivator $\derivator$ is a derivator if it satisfies the following axioms:
    \begin{itemize}
        \item[(Der1)] $\derivator\colon \opp{\Cat} \to \CAT$ sends products to coproducts. Namely, the canonical map \[\derivator(\coprod I_n) \to \prod\derivator(I_n)\] is an equivalence. In particular, $\derivator(\emptyset)$ is equivalent to the terminal category.
        \item[(Der2)] For any $I \in \Cat$, a morphism $f:X \to Y$ in $\derivator(I)$ is an isomorphism if and only if the morphism $f_i:X_i \to Y_i$ is an isomorphism for any $i \in I$.
        \item[(Der3)] For every functor $u\colon I \to J$, there exist both the left Kan extension $u_!$ and right Kan extension $u_*$ of the restriction $u^*$. 
        \item[(Der4)] For any functor $u:I \to J$ and any object $i \in I$, the canonical transformations
        \[ \pi_!p^* \stackrel{\eta}{\longrightarrow} \pi_!p^*u^*u_! \stackrel{\alpha^*}{\longrightarrow} \pi_!\pi^*i^*u_! \stackrel{\epsilon}{\longrightarrow} i^*u_! \] \[ b^*u_* \stackrel{\eta}{\longrightarrow} \pi_*\pi^*b^*u_* \stackrel{\beta^*}{\longrightarrow} \pi_*q^*u^*u_* \stackrel{\epsilon}{\longrightarrow} \pi_* q^* \]
        associated to the slice squares \eqref{eq:Der4} are isomorphisms. Here we denote by $\eta$ the adjunction units and by $\epsilon$ the counits.
    \end{itemize}
\end{definition}

Derivators forms a $2$-category $\mathcal{DER}$ where a morphism of derivators is a morphism between the underlying prederivators and a natural transformation between two morphisms is a modification.

\begin{example}\label{ex:der}
Let us list some of the main examples of derivators. 
\begin{enumerate} 
    \item Let $\cat B$ be a bicomplete category, the 2-functor 
    \[\derivator_{\cat B}\colon I \mapsto \cat B^I\]
    is what we call the \emph{represented derivator}. Here we can observe that the Kan extensions are the ordinary Kan extensions functors and the underlying category is $\cat B$ itself. 
    \item Let $\cat M$ be a Quillen model category with weak equivalences $\cat W$, by formally inverting the pointwise weak equivalences we get the \emph{homotopy derivator} (see \cite{cisinski-modelderivator})
      \[\derivator_{\cat M}\colon I \mapsto \cat M^I[(\cat W^I)^{-1}].\]
   In particular, let $R$ be a ring, the homotopy derivator $\derivator_R$ of the projective model structure on unbounded chain complexes over $R$, is the derivator which enhances the derived category $\dercomp(R)$, which is the localization of the category of chain complexes at quasi-isomorphisms. 
   \[\derivator_{R}\colon I \mapsto \dercomp(\Mod(R)^I)\]
\item Let $\cat C$ be a complete and cocomplete $\infty$-category, we can associate a derivator to $\cat C$ by considering the homotopy derivator of $\cat C$
   \[\derivator_{R}\colon I \mapsto \operatorname{Ho}(\cat C^{\operatorname{N}(I)})\]
   where $\operatorname{N}(I)$ is the nerve of $I$ and $\operatorname{Ho}(\cat C^{\operatorname{N}(I)})$ is the homotopy category. More information can be found in \cite{groth-ponto-shulman-mayervietoris}.
    \item Let $\derivator$ be a derivator, $I,J \in \Cat$, the functor
    \[\derivator^I\colon J \mapsto \derivator(I\times J) \] is a derivator which we call \emph{shifted derivator}.
    \item The \emph{opposite derivator} is defined as follows
    \[\opp{\derivator}\colon I \mapsto \opp{\derivator(\opp{I})}.\]
\end{enumerate}
\end{example}

\subsection{Stable derivators}
Given a derivator $\derivator$, in order to get a triangulated structure on its underlying category, we need $\derivator$ to be \emph{stable}. To define stable derivators let us first introduce \emph{pointed derivators}.

\begin{definition}
    A derivator $\derivator$ is pointed if $\derivator(e)$ admits a zero object. 
\end{definition}
Let $[1]$ be the poset $(0<1)$ considered as a category and take a commutative square $\square=[1]\times[1]$. There are two inclusion functors of the full subcategories obtained by removing the terminal and the initial object, respectively\[i_\llcorner\colon\llcorner\to\square, \hspace{1cm}i_\urcorner\colon\urcorner\to\square.\]
Their Kan extensions are fully faithful by the following proposition

\begin{proposition}[{\cite[Proposition~1.20]{groth-derivators}}]
    Kan extensions along fully faithful functors are fully faithful. 
\end{proposition}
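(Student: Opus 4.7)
The plan is to prove that $u_!$ is fully faithful; the argument for $u_*$ will be dual. I recall the standard categorical fact that a left adjoint $L$ in an adjunction $L \dashv R$ is fully faithful if and only if the unit $\eta \colon \mathrm{id} \to RL$ is a natural isomorphism, so the strategy is to show that the unit $\eta \colon \mathrm{id}_{\derivator(I)} \to u^*u_!$ of the adjunction $u_! \dashv u^*$ is invertible. By axiom (Der2) this reduces to checking that the components $\eta_{X,i} \colon X_i \to (u_!X)_{u(i)}$ are invertible for every $i \in I$ and every $X \in \derivator(I)$.

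The key input will be axiom (Der4) applied to the slice square
\[
\begin{tikzcd}
(u/u(i)) \arrow[r, "p"] \arrow[d, "\pi"'] & I \arrow[d, "u"] \\
e \arrow[r, "u(i)"'] & J,
\end{tikzcd}
\]
which produces a canonical isomorphism $\pi_!p^*X \xrightarrow{\cong} u(i)^*u_!X = (u_!X)_{u(i)}$. Next I would exploit full-faithfulness of $u$: for any object $(i',f\colon u(i')\to u(i))$ of $(u/u(i))$, the morphism $f$ is uniquely of the form $u(\tilde f)$ for some $\tilde f \colon i' \to i$ in $I$, giving a unique morphism $(i',f)\to (i,\mathrm{id}_{u(i)})$; hence $T := (i,\mathrm{id}_{u(i)})$ is a terminal object of $(u/u(i))$. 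Terminality yields an adjunction $\pi \dashv T$ in $\Cat$, whence $T^* \dashv \pi^*$ after applying the $2$-functor $\derivator$, and by uniqueness of left adjoints $T^* \cong \pi_!$. Using $p\circ T = i$, this gives $\pi_!p^*X \cong T^*p^*X = (pT)^*X = X_i$, and composing with the (Der4) isomorphism produces an invertible map $X_i \cong (u_!X)_{u(i)}$.

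It remains to verify that this composite is actually $\eta_{X,i}$ and not merely some abstract isomorphism. Using the identification $\pi_! \cong T^*$, the first factor $\pi_!(p^*\eta_{u_!X})$ of the base-change morphism of (Der4) becomes $(pT)^*(\eta_X) = \eta_{X,i}$, the middle factor $\pi_!(\alpha^*u_!X)$ reduces to the identity because $\alpha_T = \mathrm{id}_{u(i)}$ (from the defining formula $\alpha_{(i',f)} = f$), and the last factor, the counit of $\pi_! \dashv \pi^*$, also collapses to the identity via $T^*\pi^* = (\pi T)^* = \mathrm{id}$. Hence $\eta_{X,i}$ coincides with the invertible composite above; the dual argument for $u_*$ uses the coslice square $(u(i)/u)$, whose initial object is $(i,\mathrm{id}_{u(i)})$. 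I expect no substantive obstacle beyond the bookkeeping in this last verification step — carefully tracking how the various canonical $2$-cells of $\derivator$ interact — since all the conceptual content lies in identifying $T$ as a terminal object of $(u/u(i))$, which is immediate from full-faithfulness of $u$.
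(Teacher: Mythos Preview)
The paper does not supply its own proof of this proposition; it is stated with a citation to \cite[Proposition~1.20]{groth-derivators} and used as a black box. Your argument is correct and is essentially the standard proof given in Groth's paper: reduce full faithfulness of $u_!$ to invertibility of the unit via (Der2), invoke the (Der4) base-change isomorphism for the slice $(u/u(i))$, and use that full faithfulness of $u$ makes $(i,\mathrm{id}_{u(i)})$ terminal there so that $\pi_!\cong T^*$. The bookkeeping in your final paragraph is also right (the counit of $\pi_!\dashv\pi^*$ becomes the identity under $\pi_!\cong T^*$ because the counit $\pi T\to\mathrm{id}_e$ of $\pi\dashv T$ is already an identity). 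One minor notational slip: in ``$\pi_!(p^*\eta_{u_!X})$'' you presumably mean $\pi_!(p^*\eta_X)$, the image of the unit $\eta_X\colon X\to u^*u_!X$; the conclusion you draw from it is correct regardless.
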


There are two important classes of fully faithful functors, which allows a characterization of the essential images of their Kan extensions: the \emph{(co)sieves}. If $\derivator$ is a pointed derivator, a fully faithful functor $u\colon I \to J$ in $\Cat$ is called \emph{sieve} if, whenever we have a morphism $j \to u(i)$ in $J$, then $j$ lies in the image of $u$. Dually we can define \emph{cosieves}. 

\begin{proposition}[{\cite[Proposition~3.6]{groth-derivators}}]\label{proposition_(co)sieves}
    Let $\derivator$ be a pointed derivator. \begin{enumerate}
        \item Let $u \colon I \to J$ be a sieve, then the right Kan extension $u_*$ is fully faithful and $X \in \derivator(J)$ lies in the essential image of $u_*$ if and only if $X_j\cong 0$ for all $j \in J\setminus u(I)$. 
        \item Let $u \colon I \to J$ be a cosieve, then the left Kan extension $u_!$ is fully faithful and $X \in \derivator(J)$ lies in the essential image of $u_!$ if and only if $X_j\cong 0$ for all $j \in J\setminus u(I)$.
    \end{enumerate}
\end{proposition}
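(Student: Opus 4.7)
The plan is to prove (1) directly and to deduce (2) by passing to the opposite derivator (Example~\ref{ex:der}(5)), which exchanges sieves with cosieves and right Kan extensions with left Kan extensions. Fully faithfulness of $u_*$ is immediate from the preceding proposition, since a sieve is in particular fully faithful, so the real content lies in describing the essential image.

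The geometric heart of the argument will be a reformulation of the sieve condition in terms of coslice categories: if $j = u(i_0) \in u(I)$, then $(j/u)$ has the initial object $(i_0, \id)$ by full faithfulness of $u$, whereas for $j \in J \setminus u(I)$ the sieve condition forces $(j/u) = \emptyset$, because any morphism $j \to u(i)$ would require $j$ to lie in $u(I)$. Invoking the base change isomorphism $j^* u_* \cong \pi_* q^*$ supplied by (Der4), the computation of $(u_* Y)_j$ at $j \notin u(I)$ reduces to a right Kan extension from the empty category. For the ``only if'' direction I then use (Der1) to see that $\derivator(\emptyset)$ is equivalent to the terminal category, so $\pi_* q^* Y$ is the terminal object of $\derivator(e)$, which coincides with $0$ by pointedness; hence $(u_* Y)_j \cong 0$ for every $j \notin u(I)$.

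For the converse, I would assume $X_j \cong 0$ for all $j \notin u(I)$ and show that the unit $\eta \colon X \to u_* u^* X$ is an isomorphism by checking pointwise via (Der2). At $j = u(i) \in u(I)$, the restriction $\eta_{u(i)}$ identifies, via the triangle identities, with the inverse of the counit $(u^* u_* u^* X)_i \to (u^* X)_i$; this counit is invertible precisely because $u_*$ is fully faithful. At $j \notin u(I)$, both $X_j$ and $(u_* u^* X)_j$ vanish --- the former by hypothesis, the latter by the computation of the previous paragraph --- and any morphism between two zero objects is automatically invertible. Part (2) then follows by dualising throughout via the opposite-derivator construction.

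The main subtlety I expect is the bookkeeping at points $j = u(i)$: one has to chase the unit of $u^* \dashv u_*$ through the identification $j^* = i^* u^*$ and use the triangle identity to reduce invertibility of $\eta_{u(i)}$ to invertibility of the counit $\epsilon_{u^* X}$ at $i$, which is the categorical reformulation of $u_*$ being fully faithful. Everything else is essentially routine manipulation of the axioms (Der1), (Der2) and (Der4) together with pointedness.
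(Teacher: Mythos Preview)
The paper does not give its own proof of this proposition: it is stated with a citation to \cite[Proposition~3.6]{groth-derivators} and is used as a black box. There is therefore nothing in the paper to compare your argument against.

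That said, your proposed proof is correct and is essentially the standard argument one finds in Groth's paper. One small imprecision: at $j=u(i)$ you say $\eta_{u(i)}$ ``identifies with the inverse of the counit''. What you actually use is that $j^*\eta = i^*u^*\eta$, and the triangle identity gives $\epsilon_{u^*X}\circ u^*\eta_X=\id_{u^*X}$; since $u_*$ is fully faithful, $\epsilon$ is invertible, hence so is $u^*\eta_X$, and applying $i^*$ yields invertibility of $j^*\eta$. Your argument is this one, just phrased a touch loosely.
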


When this proposition applies we say that homotopy left Kan
extension along cosieves and homotopy right Kan extension along sieves are given by \emph{extension by
zero functors}.

\noindent Given a (pre)derivator $\derivator$, we write $X\in\derivator$ to indicate that there is a small category~$I$ such that $X\in\derivator(I)$.
In particular, a square $X \in \derivator^{\square}$ is called \emph{cartesian} if it lies in the essential image of $(i_\urcorner)_*$ and \emph{cocartesian} if it lies in the essential image of $(i_\llcorner)_!$. 
\begin{definition}
    A pointed derivator is \emph{stable} if the classes of cartesian squares and cocartesian squares coincide. These squares are then called \emph{bicartesian}.
\end{definition}

If a derivator $\derivator$ is stable then so are the shifted derivator $\derivator^I$ and the opposite $\opp{\derivator}$. Moreover, homotopy derivators of stable $\infty$-categories and stable model categories are stable.

We recall that a derivator is \emph{strong} if it satisfies one more axiom
 
\begin{itemize}
    \item [(Der$5$)] For any $I \in \Cat$, the induced functor  $\derivator(I \times [1]) \to \derivator(I)^{[1]}$ is full and essentially surjective.
\end{itemize}

The property of being strong allows to relate properties of stable derivators to the existence of structure on its values. This is showed, for example, in the following theorem. 

\begin{theorem}[{\cite[Theorem~4.16]{groth-derivators}}]
    Let $\derivator$ be a strong and stable derivator and let $I$ be a category. Then $\derivator(I)$ is a triangulated category. 
\end{theorem}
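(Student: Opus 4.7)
The plan is to reduce to the case $I=e$ and then implement the Groth-style construction of a triangulated structure on the underlying category. Since the shifted prederivator $\derivator^I\colon J\mapsto \derivator(I\times J)$ is itself strong and stable (Example~\ref{ex:der}(4), and one checks directly that (Der$5$) and stability are inherited under shifts) and satisfies $\derivator^I(e)=\derivator(I)$, it suffices to equip $\derivator(e)$ with a triangulated structure and apply the argument to each shifted derivator. First I would verify that $\derivator(e)$ is additive: pointedness gives a zero object, and stability together with (Der$4$) produces finite biproducts from extension-by-zero constructions on discrete diagrams $\{0,1\}$, from which one extracts the abelian group structure on Hom-sets in the usual way.

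Next, I would construct the suspension and loop functors. Since $\derivator$ is pointed, extending $X\in\derivator(e)$ by zero along the cosieve $i_\llcorner\colon\llcorner\to\square$ (Proposition~\ref{proposition_(co)sieves}(2)) and then taking the cocartesian completion $(i_\llcorner)_!$ yields a coherent square whose bottom-right vertex I call $\Sigma X$; dually one defines $\Omega X$ by extending by zero along the sieve $i_\urcorner$ and passing to the cartesian completion. Stability (cartesian $=$ cocartesian) together with Proposition~\ref{proposition_(co)sieves} implies that the two squares obtained this way agree up to canonical isomorphism, so $\Sigma$ and $\Omega$ become mutually inverse auto-equivalences of $\derivator(e)$.

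For triangles, start with a morphism $f\colon X\to Y$ in $\derivator(e)$. By (Der$5$), $f$ lifts to a coherent object $\bar f\in\derivator([1])$. Extending $\bar f$ by zero along the cosieve $[1]\hookrightarrow\llcorner$ (so that $f$ appears on the top row and $0$ at the bottom-left vertex) and then applying $(i_\llcorner)_!$ produces a coherent cocartesian square whose bottom-right corner is, by definition, $\cone(f)$. Gluing a further bicartesian square on the right extends this to a coherent object in $\derivator([2]\times[1])$ whose bottom row, after vertex-wise evaluation, realizes a sequence $X\to Y\to\cone(f)\to\Sigma X$. Declaring all such sequences (and their isomorphic images) to be the distinguished triangles gives the candidate triangulated structure.

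The verification of (TR1) (existence and identity triangles) and (TR2) (rotation) reduces to routine pasting and rotation of coherent bicartesian squares, using the symmetry between cartesian and cocartesian squares provided by stability and the gluing lemma for Kan extensions implied by (Der$4$). Axiom (TR3) (morphism of triangles) is where (Der$5$) enters essentially: it lifts an incoherent commutative square relating two morphisms to a coherent morphism between the two cone-constructions, whose universality then yields the required map on cones. The main obstacle, as usual in the derivator framework, is (TR4) (the octahedral axiom): here I would follow the standard argument by constructing a coherent $3\times 3$ grid of bicartesian squares in $\derivator(\square\times[1])$ starting from a composable pair $X\to Y\to Z$, and reading off the required octahedron by evaluating at the appropriate vertices. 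The inputs are (Der$4$), used to identify restrictions of iterated Kan extensions with the needed corners of the grid, and stability, which allows us to paste and reinterpret cocartesian squares as cartesian squares as needed throughout the diagram chase.
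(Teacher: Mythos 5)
This statement is not proved in the paper at all—it is quoted verbatim from Groth's paper cited in the statement—so the only meaningful comparison is with Groth's own argument, and your proposal reconstructs essentially that same route: reduce to the underlying category of the shifted derivator $\derivator^I$, build $\Sigma$, $\Omega$ and cofiber sequences from bicartesian squares on extensions by zero, and use (Der5) together with (Der4) and stability for the axioms, with the octahedron as the delicate point. Two slips worth fixing in your write-up: the extension by zero is the right (resp.\ left) Kan extension along the sieve $e\to\llcorner$ at $(0,0)$ (resp.\ the cosieve $e\to\urcorner$ at $(1,1)$), not along $i_\llcorner$ or $i_\urcorner$ themselves, which are the subsequent (co)cartesian completions; and for (TR4) the coherent input is a lift of the composable pair to $\derivator([2])$, which does not follow from (Der5) as literally stated for $[1]$ and requires a further (standard) lifting argument, with the resulting grid of bicartesian squares living over a shape larger than $\square\times[1]$.
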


\section{Dg-categories and quasi-functors}

We start by reviewing in \S \ref{subsection:basicsdg} some basic definitions and results on dg-categories. We will also discuss in \S \ref{subsection:hokandg} \emph{homotopy Kan extensions} in the context of quasi-functors -- a feature of dg-category theory which we will be a key element of the main result. The results of this section are -- to the authors' knowledge -- well-known save for the part on homotopy Kan extensions, which nevertheless draws direct inspiration from the well-known extensions of dg-functors (see \cite{canonaco-stellari-internalhoms} \cite{keller-deriving-dgcat}).

\subsection{Basics} \label{subsection:basicsdg}

We assume the reader to be familiar with the basics of differential graded (dg-) categories. A basic reference is \cite{keller-dgcat}. Here, we recollect the definitions and results we will need, while establishing our notation.
\begin{definition} \label{definition:basicsdg}
A \emph{dg-category} $\cat A$ is a category enriched in complexes of $\basering k$-modules. More concretely, it is given by a set of objects $\Ob(\cat A)$ and complexes $\cat A(A,B)$ for any pair $A,B \in \Ob(\cat A)$, with associative and unital compositions compatible with gradings and differentials.

Given a dg-category $\cat A$, we have its \emph{opposite dg-category} $\opp{\cat A}$.

\emph{Dg-functors} are defined in the obvious way and they form a \emph{dg-category of dg-functors} $\Fundg(\cat A, \cat B)$. Moreover, there is a \emph{tensor product} of dg-categories $\cat A \otimes \cat B$ and a natural isomorphism
\begin{equation} \label{equation:dgtensorhom}
\Fundg(\cat A \otimes \cat B, \cat C) \cong \Fundg(\cat A, \Fundg(\cat B, \cat C)),
\end{equation}
for dg-categories $\cat A, \cat B, \cat C$.

By taking respectively $0$-cocycles and zeroth cohomology we may associate to any dg-category $\cat A$ ordinary categories $Z^0(\cat A)$ and $H^0(\cat A)$, called the \emph{underlying category} and the \emph{homotopy category}. Similarly, for any dg-functor $F \colon \cat A \to \cat B$, we have ordinary functors $Z^0(F) \colon Z^0(\cat A) \to Z^0(\cat B)$ and $H^0(F) \colon H^0(\cat A) \to H^0(\cat B)$. We will call a closed degree $0$ morphism $f \colon A \to B$ in $\cat A$ a \emph{(strict) isomorphism} if it is an isomorphism in $Z^0(\cat A)$; we will call it a \emph{homotopy equivalence} if it is an isomorphism in $H^0(\cat A)$.

A \emph{quasi-equivalence} of dg-categories is a dg-functor $F \colon \cat A \to \cat B$ such that for each pair of objects $A,B \in \cat A$ the chain map $F \colon \cat A(A,B) \to \cat B(F(A),F(B))$ is a quasi-isomorphism, and moreover $H^0(F)$ is essentially surjective. We will denote by $\kat{Hqe}$ the localization of the category of (small) dg-categories along quasi-equivalences, and we will say that two dg-categories are \emph{quasi-equivalent} if they are isomorphic in $\kat{Hqe}$.

There is a dg-category $\pshdg(\basering k)=\rdgm{\basering k}$ of dg-modules over our base ring $\basering k$. If $\cat A$ is a dg-category, we define the \emph{dg-category of right $\cat A$-dg-modules}
\begin{equation}
 \pshdg(\cat A) = \rdgm{\cat A} = \Fundg(\opp{\cat A}, \rdgm{\basering k}).
\end{equation}
Dually, we have the \emph{dg-category of left $\cat A$-dg-modules}:
\begin{equation*}
  \pshdg(\opp{\cat A})=  \ldgm{\cat A} = \Fundg(\cat A, \rdgm{\basering k}).
\end{equation*}
We see that left $\cat A$-dg-modules are just right $\opp{\cat A}$-dg-modules.

If $\cat B$ is another dg-category, we also define the \emph{dg-category of $\cat A$-$\cat B$-dg-bimodules}:
\begin{equation}
  \pshdg(\cat A, \cat B) = \dgm{\cat A}{\cat B} = \Fundg(\opp{\cat B} \otimes \cat A, \rdgm{\basering k}).
\end{equation}
Note that, thanks to \eqref{equation:dgtensorhom}, $\cat A$-$\cat B$-dg-bimodules may be identified with dg-functors $\cat A \to \rdgm{\cat B}$. Note moreover that $\cat A$-$\cat B$-dg-bimodules are just right $\cat B \otimes \opp{\cat A}$-dg-modules.

Sometimes, we will implicitly view our base ring $\basering k$ as a dg-category concentrated in degree $0$ with a single object. It is immediate to see that, for any dg-category $\cat A$, the $\cat A$-$\basering k$-dg-bimodules are just the left $\cat A$-dg-bimodules, and the $\basering k$-$\cat A$-dg-modules are just the right $\cat A$-dg-modules.
\end{definition}

\begin{notation}
We will use ``Einstein notation'' for dg-modules and dg-bimodules. Namely, if $F \in \dgm{\cat A}{\cat B}$, we will use the notation:
\begin{equation}
F_A^B = F(B,A) \in \rdgm{\basering k}.
\end{equation}
The lower index is covariant, the upper index is contravariant. In particular, if $F \in \rdgm{\cat A}$ is a right $\cat A$-dg-module, we will simply write $F^A = F(A)$; analogously, if $F \in \ldgm{\cat A}$, we will write $F_A = F(A)$.

Moreover, $F_A$ or $F_A^{-}$ denotes the right $\cat B$-dg-module
\begin{align*}
F_A &\in \rdgm{\cat B}, \\
B &\mapsto F_A^B,
\end{align*}
if $A \in \cat A$ is fixed, and similarly $F^B$ or $F^B_-$ denotes the left $\cat A$-dg-module
\begin{align*}
F^B & \in \ldgm{\cat A}, \\
A &\mapsto F_A^B ,
\end{align*}
if $B \in \cat B$ is fixed.
\end{notation}

\subsubsection*{Hom and tensor of dg-modules}
If $\cat A$ is a dg-category, we will use the notation
\begin{equation}
\Homdg_{\cat A}(-,-)
\end{equation}
to denote the complex of dg-natural transformations of right or left (depending on context) $\cat A$-dg-modules. If $\cat B$ is another dg-category, we will use the notation
\[
\Homdg_{\cat A,\cat B}(-,-)
\]
to denote the complex of dg-natural transformations of $\cat A$-$\cat B$-dg-bimodules.

We can also define a ``parametrized Hom'' as follows. Let $\cat A, \cat B, \cat C$ be dg-categories, and let $F \in \dgm{\cat A}{\cat C}$ and $G \in \dgm{\cat B}{\cat C}$. We may define
\begin{equation}
    \Homdg_{\cat C}(F,G) \in \dgm{\cat B}{\cat A}
\end{equation}
as follows:
\[
\Homdg_{\cat C}(F,G)_B^A = \Homdg_{\cat C}(F_A, G_B).
\]

Now, let $\cat A$ be a dg-category, and let $F \in \rdgm{\cat A}$ and $G \in \ldgm{\cat A}$ be respectively a right and a left $\cat A$-dg-module. We may define the \emph{tensor product} $F \otimes_{\cat A} G$ as the following $\basering k$-dg-module:
\begin{equation}
\begin{split}
    F \otimes_{\cat A} G = \coker \left(  \bigoplus_{A,B \in \cat A} F(B) \otimes \right.&\cat A(A,B)  \otimes G(A)  \left. \to \bigoplus_{C \in \cat A} F(C) \otimes G(C) \right), \\
    x \otimes f \otimes y &\mapsto F(f)(x) \otimes y - (-1)^{|x||f|} x \otimes G(f)(y),
\end{split}
\end{equation}
where $|a|$ denotes the degree of an homogeneous graded symbol $a$.

We can also define a ``parametrized tensor'', as follows. Let $\cat A, \cat B, \cat C$ be dg-categories, and let $F \in \dgm{\cat A}{\cat B}$ and $G \in \dgm{\cat B}{\cat C}$ be respectively an $\cat A$-$\cat B$-dg-bimodule and a $\cat B$-$\cat C$-dg-bimodule. Then, we may define:
\begin{equation}
    F \otimes_{\cat B} G \in \dgm{\cat A}{\cat C}
\end{equation}
as follows:
\[
(F \otimes_{\cat B} G)_A^C = F_A \otimes_{\cat B} G^C.
\]

We have a \emph{tensor-hom adjunction} involving the parametrized Hom and tensor, which we can express as the following natural isomorphisms of complexes:
\begin{equation} \label{equation:tensorhomdgm}
\begin{split}
   \Homdg_{\cat A, \cat C}(F \otimes_{\cat B} G, H) & \cong \Homdg_{\cat A,\cat B}(F, \Homdg_{\cat C}(G,H)) \\
   & \cong \Homdg_{\cat B, \cat C}(G, \Homdg_{\cat A}(F,H)).
\end{split}
\end{equation}
for $F \in \dgm{\cat A}{\cat B}, G \in \dgm{\cat B}{\cat C}, H \in \dgm{\cat A}{\cat C}$. A ``derived version'' of this isomorphism will be investigated later on, in particular in \S \ref{subsection:hokandg} while dealing with homotopy Kan extensions.

\subsubsection*{Yoneda lemma}
For any dg-category $\cat A$, we have the \emph{diagonal bimodule}
\begin{equation} \label{equation:diagonalbimod}
\begin{split}
    h_{\cat A} &= \cat A  \in \dgm{\cat A}{\cat A}, \\
    (h_{\cat A})_A^{A'} &= h_A^{A'} =\cat A_A^{A'} = \cat A(A',A).
\end{split}
\end{equation}
The right and left $\cat A$-modules of the form $h_A =\cat A(-,A)$ and $h^A = \cat A(A,-)$ are respectively called \emph{represented} and \emph{corepresented} by $A \in \cat A$.

The \emph{Yoneda lemma} holds, for a given $F \in \rdgm{\cat A}$:
\begin{equation} \label{equation:yoneda}
    \Homdg_{\cat A}(h_A, F) \cong F(A) = F^A.
\end{equation}
This yields the \emph{Yoneda embedding}, namely, the following fully faithful dg-functor:
\begin{equation}
    \begin{split}
        \cat A &\hookrightarrow \rdgm{\cat A}, \\
        A & \mapsto h_A = \cat A(-,A). 
    \end{split}
\end{equation}

The Yoneda lemma has a ``dual'' version involving the tensor product. Namely, if $A \in \cat A$ and $F \in \ldgm{\cat A}$, we have a natural isomorphism:
\begin{equation} \label{equation:coyoneda}
    h_A \otimes_{\cat A} F \cong F(A) = F_A.
\end{equation}

\subsubsection*{Resolutions}

\emph{Quasi-isomorphisms} of (right) dg-modules over a dg-category $\cat A$ are simply defined as the termwise quasi-isomorphisms; \emph{acyclic} (right) dg-modules are just the termwise acyclic dg-modules. Quasi-isomorphisms of dg-modules and quasi-equivalences of dg-categories will not in general be preserved by the typical constructions we perform, such as tensor products or Homs. A typical strategy to achieve \emph{derived} versions of such constructions (namely, versions which are stable under quasi-isomorphism or quasi-equivalence) is to use \emph{resolutions}. Such resolutions exist for both dg-categories and dg-modules.

\begin{definition}
    Let $\cat A$ be a dg-category. We say that a (right) $\cat A$-dg-module $F$ is \emph{h-projective} if $\Homdg_{\cat A}(F,-)$ preserves acyclic (right) $\cat A$-dg-modules. We say that it is \emph{h-injective} if $\Homdg_{\cat A}(-,F)$ preserves acyclic (right) $\cat A$-dg-modules. We say that it is \emph{h-flat} if $F \otimes_{\cat A} -$ preserves acyclic (left) $\cat A$-dg-modules.

    An \emph{h-projective resolution} of $F$ is a quasi-isomorphism $Q(F) \to F$ with $Q(F)$ h-projective. An \emph{h-injective resolution} of $F$ is a quasi-isomorphism $F \to R(F)$ with $R(F)$ h-injective. An \emph{h-flat resolution} of $F$ is a quasi-isomorphism $S(F) \to F$ with $S(F)$ h-flat. 
\end{definition}
\begin{remark}
Obviously, if a given dg-module $F$ is itself h-projective, h-injective or h-flat, the identity morphisms can be taken as h-projective, h-injective or h-flat resolutions.
\end{remark}

We know that h-projective, h-injective and h-flat resolutions of dg-modules always exist. Moreover, h-projectivity implies h-flatness. As anticipated above, we may also resolve dg-categories:
\begin{definition}
    Let $\cat A$ be a dg-category. We say that $\cat A$ is \emph{h-projective} if the hom complexes $\cat A(A,B)$ are h-projective $\basering k$-dg-modules for all $A,B \in \cat A$. We say that $\cat A$ is \emph{h-flat} if the hom complexes $\cat A(A,B)$ are h-flat $\basering k$-dg-modules. for all $A,B \in \cat A$.
\end{definition}
We know that h-projective and h-flat resolutions of dg-categories always exist (see \cite[Lemma B.5]{drinfeld-dgquotients}). Clearly, h-projectivity implies h-flatness also for dg-categories. 

We now list some well-known useful results involving \emph{termwise} h-projectivity and h-injectivity.

\begin{lemma}
    Let $\cat A$ and $\cat B$ be dg-categories, and let $F \in \dgm{\cat A}{\cat B}$ be an $\cat A$-$\cat B$-dg-bimodule. Assume that $\cat A$ is h-projective and $F$ is h-projective. Then, $F_A$ is h-projective as a right $\cat B$-dg-bimodule for all $A \in \cat A$.

    Moreover, assume that $\cat A$ is h-flat and $F$ is h-injective. Then, $F_A$ is h-injective as a right $\cat B$-dg-bimodule for all $A \in \cat A$.

    Furthermore, assume that $\cat A$ is h-flat and $F$ is h-flat. Then, $F_A$ is h-flat as a right $\cat B$-dg-bimodule for all $A \in \cat A$.
\end{lemma}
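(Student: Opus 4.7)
The plan is to reduce each statement to the corresponding hypothesis on $F$ via suitable adjunctions (or a Fubini-type identity for the h-flat case). The restriction functor $F \mapsto F_A$ from $\cat A$-$\cat B$-dg-bimodules to right $\cat B$-dg-modules can be computed through the coYoneda identification $F_A \cong h_A \otimes_{\cat A} F$ (applying \eqref{equation:coyoneda} fiberwise over $B \in \cat B$). This restriction functor admits a left adjoint
\[ N \mapsto h^A \otimes_{\basering k} N, \quad \text{with} \quad (h^A \otimes_{\basering k} N)_{A'}^B = \cat A(A,A') \otimes_{\basering k} N(B), \]
and a right adjoint
\[ N \mapsto \Homdg_{\basering k}(h_A, N), \quad \text{with} \quad \Homdg_{\basering k}(h_A, N)_{A'}^B = \Homdg_{\basering k}(\cat A(A',A), N(B)), \]
both of which are easily verified by the bimodule Yoneda lemma. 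The plan is to apply these adjunctions to translate $\Homdg_{\cat B}(M, F_A)$ and $\Homdg_{\cat B}(F_A, M)$ into the bimodule Homs $\Homdg_{\cat A, \cat B}(h^A \otimes_{\basering k} M, F)$ and $\Homdg_{\cat A, \cat B}(F, \Homdg_{\basering k}(h_A, M))$ respectively, and then invoke the corresponding bimodule hypothesis on $F$.

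For the h-projective statement, I would verify that $\Homdg_{\basering k}(h_A, M)$ is acyclic whenever $M$ is acyclic: its component at $(A', B)$ is $\Homdg_{\basering k}(\cat A(A', A), M(B))$, and since $\cat A$ is h-projective each $\cat A(A', A)$ is an h-projective $\basering k$-dg-module, so $\Homdg_{\basering k}(-, M(B))$ preserves acyclicity; combined with acyclicity of $M(B)$, this gives acyclicity componentwise, hence globally. The h-projectivity of $F$ then implies the outer Hom is acyclic, so $F_A$ is h-projective. The h-injective statement is dual: the auxiliary bimodule $h^A \otimes_{\basering k} M$ has components $\cat A(A, A') \otimes_{\basering k} M(B)$, which are acyclic because $\cat A$ is h-flat and $M$ is acyclic; h-injectivity of $F$ then concludes.

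For the h-flat case, I would use a Fubini-type identification: viewing $F$ as a right $\cat B \otimes \opp{\cat A}$-dg-module and the external tensor $h_A \otimes_{\basering k} G$ as a left $\cat B \otimes \opp{\cat A}$-dg-module for $G \in \ldgm{\cat B}$, one has
\[ F_A \otimes_{\cat B} G \cong F \otimes_{\cat B \otimes \opp{\cat A}}\bigl(h_A \otimes_{\basering k} G\bigr). \]
Again, $h_A \otimes_{\basering k} G$ is componentwise acyclic once $G$ is, by the h-flatness of $\cat A$, and the bimodule h-flatness of $F$ finishes the argument. The main technical obstacle I anticipate is the careful variance bookkeeping required to establish the two adjunctions and the Fubini isomorphism in the notation of \eqref{equation:tensorhomdgm}, given that one of the dg-categories involved is the trivial one $\basering k$; once these structural isomorphisms are in place, each of the three statements reduces to a one-line componentwise acyclicity check.
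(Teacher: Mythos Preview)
Your proposal is correct and matches the paper's proof essentially line for line: the paper also reduces $\Homdg_{\cat B}(F_A,X)$ to $\Homdg_{\cat A,\cat B}(F,\Homdg_{\basering k}(h_A,X))$ via the coYoneda identification $F_A\cong h_A\otimes_{\cat A}F$ and tensor-hom, reduces $\Homdg_{\cat B}(X,F_A)$ to $\Homdg_{\cat A,\cat B}(X\otimes_{\basering k}h^A,F)$ via the Yoneda identification $F_A\cong\Homdg_{\cat A}(h^A,F)$, and uses exactly your Fubini isomorphism $F_A\otimes_{\cat B}X\cong F\otimes_{\cat B\otimes\opp{\cat A}}(h_A\otimes_{\basering k}X)$ for the h-flat case. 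The only difference is packaging: you frame the first two identities as left/right adjoints to the restriction $F\mapsto F_A$, whereas the paper writes them out directly from Yoneda and \eqref{equation:tensorhomdgm}.
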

\begin{proof}
    Let us check the first claim. We fix an acyclic dg-module $X \in \rdgm{\cat B}$ and $A \in \cat A$. Then, using the ``dual'' Yoneda lemma \eqref{equation:coyoneda} and the tensor-hom adjunction \eqref{equation:tensorhomdgm}, we have isomorphisms:
    \begin{align*}
        \Homdg_{\cat B}(F_A, X) & \cong \Homdg_{\cat B}(h_A \otimes_{\cat A} F, X) \\
        & \cong \Homdg_{\cat A, \cat B}(F, \Homdg_{\basering k}(h_A, X)).
    \end{align*}
    The $\cat A$-$\cat B$-dg-bimodule $\Homdg_{\basering k}(h_A,X)$ is acyclic, because $\cat A$ is h-projective and $X$ is acyclic. Since $F$ is h-projective, we conclude.

    Let us now prove the second claim. Again, we fix an acyclic dg-module $X \in \rdgm{\cat B}$ and $A \in \cat A$. Then, using the Yoneda lemma  \eqref{equation:yoneda} and the tensor-hom adjunction \eqref{equation:tensorhomdgm}, we have isomorphisms:
    \begin{align*}
        \Homdg_{\cat B}(X,F_A) &\cong \Homdg_{\cat B}(X, \Homdg_{\cat A}(h^A, F)) \\
        & \cong \Homdg_{\cat A, \cat B}(X \otimes_{\basering k} h^A, F).
    \end{align*}
    The $\cat A$-$\cat B$-dg-bimodule $X \otimes_{\basering k} h^A$ is acyclic, because $\cat A$ is h-flat and $X$ is acyclic. Since $F$ is h-injective, we conclude.

    Finally, let us prove the third claim. We fix an acyclic left $\cat B$-dg-module $X$ and an object $A \in \cat A$. We define the following $\cat B$-$\cat A$-dg-bimodule, which we may view as a left $\cat B \otimes \opp{\cat A}$-dg-module:
    \[
    \widetilde{X}^{A'}_B = h_A^{A'} \otimes_{\basering k} X_B.
    \]
    Since $\cat A$ is h-flat, $h_A^{A'} = \cat A(A',A)$ is h-flat, hence $\widetilde{X}$ is acyclic. Next, one can directly prove that:
    \[
    F_A \otimes_{\cat B} X \cong F \otimes_{\cat B \otimes \opp{\cat A}} \widetilde{X}.
    \]
    Since $F$ is h-flat, we conclude.
\end{proof}

The above result ensures the existence of termwise h-projective, h-flat, h-injective resolutions:
\begin{corollary} \label{corollary:termwiseres}
Let $\cat A, \cat B$ be dg-categories and let $F \in \dgm{\cat A}{\cat B}$ be an $\cat A$-$\cat B$-dg-bimodule. If $\cat A$ is h-projective, we have a quasi-isomorphism $Q(F) \to F$ of $\cat A$-$\cat B$-dg-bimodules, such that $Q(F)_A$ is h-projective (hence also h-flat) for all $A \in \cat A$.

Analogously, if $\cat A$ is h-flat, we have a quasi-isomorphism $F \to R(F)$ of $\cat A$-$\cat B$-dg-bimodules, such that $R(F)_A$ is h-injective for all $A \in \cat A$.

Analogously, if $\cat A$ is h-flat, we have a quasi-isomorphism $S(F) \to F$ of $\cat A$-$\cat B$-dg-bimodules, such that $S(F)_A$ is h-flat for all $A \in \cat A$.
\end{corollary}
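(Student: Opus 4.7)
The plan is to invoke the previous lemma together with the known existence of h-projective, h-injective and h-flat resolutions for dg-modules over an arbitrary dg-category. First, I would observe that an $\cat A$-$\cat B$-dg-bimodule $F$ is exactly a right dg-module over the dg-category $\cat B \otimes \opp{\cat A}$, so the general existence theorems (which the text invokes just above the definition of resolutions) furnish us with quasi-isomorphisms $Q(F)\to F$, $F\to R(F)$, $S(F)\to F$ in $\dgm{\cat A}{\cat B}$, where $Q(F)$ is h-projective, $R(F)$ is h-injective, and $S(F)$ is h-flat as bimodules.

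Next, I would apply the preceding lemma termwise. For the first claim, since $\cat A$ is assumed h-projective and $Q(F)$ is h-projective, the lemma yields that $Q(F)_A$ is h-projective as a right $\cat B$-dg-module for every $A \in \cat A$; recalling that h-projectivity implies h-flatness (as noted in the text), the parenthetical assertion follows for free. For the second claim, $\cat A$ is h-flat and $R(F)$ is h-injective, so the lemma gives $R(F)_A$ h-injective for each $A$. For the third claim, again $\cat A$ is h-flat and $S(F)$ is h-flat, so $S(F)_A$ is h-flat for each $A$.

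There is essentially no obstacle here: the statement is a packaging of the previous lemma, the only real content being the verification that the hypotheses on $\cat A$ match up correctly in each of the three cases. The sole point requiring mild care is to remember that by ``$\cat A$-$\cat B$-dg-bimodule'' we mean a right $\cat B \otimes \opp{\cat A}$-dg-module, so that the standard existence of h-projective/h-injective/h-flat resolutions from the dg-module theory applies directly without any extra work.
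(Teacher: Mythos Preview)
Your proposal is correct and matches the paper's approach exactly: the paper presents this corollary as an immediate consequence of the preceding lemma, and your argument spells out precisely the two steps involved (take a bimodule resolution, then apply the lemma termwise). There is nothing to add.
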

\begin{remark}
Obviously, if a given $\cat A$-$\cat B$-dg-bimodule $F$ is itself termwise h-projective, h-injective or h-flat, the identity morphisms can be taken as termwise h-projective, h-injective or h-flat resolutions.
\end{remark}
\subsubsection*{Derived categories}
By taking the localization of $H^0(\rdgm{\cat A})$ along quasi-isomorphisms (or, equivalently, the Verdier quotient by acyclic dg-modules) we obtain the \emph{derived category} $\dercomp(\cat A)$ of $\cat A$. For us, $\dercomp(\cat A)$ has the same objects as $\rdgm{\cat A}$. It is a triangulated category.

We say that a dg-module $F \in \dercomp(\cat A)$ is \emph{quasi-representable} if there is an isomorphism in $\dercomp(\cat A)$:
\begin{equation}
\cat A(-,A) \xrightarrow{\approx} F.
\end{equation}

Derived categories can be defined also in connection with dg-bimodules. If $\cat A$ and $\cat B$ are dg-categories, we denote by
\begin{equation} \label{equation:derivedtensor}
    \dercomp(\cat A, \cat B)
\end{equation}
the localization of $H^0(\pshdg(\widetilde{\cat A}, \cat B))$ along quasi-isomorphisms, where $\widetilde{\cat A} \to \cat A$ is an h-flat resolution of $\cat A$. Up to equivalence, it can be also defined as the localization of $H^0(\pshdg(\cat A, \widetilde{\cat B}))$ taking an h-flat resolution $\widetilde{\cat B}$ of $\cat B$, or even as the localization of $H^0(\pshdg(\widetilde{\cat A}, \widetilde{\cat B}))$, taking h-flat resolutions of both $\cat A$ and $\cat B$. This definition can be also expressed by saying that
\[
\dercomp(\cat A, \cat B) = \dercomp(\cat B \lotimes \opp{\cat A}),
\]
where $\cat B \lotimes \opp{\cat A}$ is the \emph{derived tensor product} of $\cat B$ and $\opp{\cat A}$, obtained from the tensor product by taking h-flat resolutions in either variable. This makes sure that our constructions are preserved by quasi-equivalences.

Given any dg-category $\cat A$, we may also define its \emph{derived dg-category} $\dercompdg(\cat A)$ as the full dg-subcategory of $\pshdg(\cat A)$ spanned by all h-projective dg-modules, or (equivalently, up to quasi-equivalence) as the full dg-subcategory of $\pshdg(\cat A)$ spanned by all h-injective dg-modules. We shall nevertheless make the following identification:
\begin{equation} \label{equation:deriveddgcat_H0}
    H^0(\dercompdg(\cat A)) = \dercomp(\cat A).
\end{equation}

\subsubsection*{Derived hom and tensor of dg-modules}
The hom complex and the tensor product of dg-modules can be derived, yielding constructions that are preserved by quasi-isomorphisms. Such derived hom and tensors are well defined just up to quasi-isomorphism. The general strategy to define them is simply taking the ordinary hom and tensor after applying suitable h-projective, h-injective or h-flat resolutions.

Let $\cat A$ be a dg-category, and let $F,G \in \rdgm{\cat A}$ be right $\cat A$-dg-modules. We define the \emph{derived hom} $\RHom_{\cat A}(F,G) \in \dercomp(\basering k)$ (up to isomorphism in that category) as follows:
\begin{equation}
    \begin{split}
   \RHom_{\cat A}(F,G) &= \Homdg_{\cat A}(Q(F),G) \\
   &= \Homdg_{\cat A}(F,R(G)) \\
   &= \Homdg_{\cat A}(Q(F),R(G)),
    \end{split}
\end{equation}
where $Q(F) \to F$ and $G \to R(G)$ are respectively h-projective and h-injective resolutions. We remark that the zeroth cohomology of the derived hom yields the hom in the derived category, namely:
\begin{equation} \label{equation:derivedhom_zerothcohomology}
    H^0(\RHom_{\cat A}(F,G)) \cong \dercomp(\cat A)(F,G),
\end{equation}
for $F, G \in \rdgm{\cat A}$. Moreover, up to a potential little abuse of notation, we will sometimes view the derived homs as the hom-complexes of the derived dg-category $\dercompdg(\cat A)$:
\begin{equation} \label{equation:dercatdg_derivedhom}
    \dercompdg(\cat A)(F,G) = \RHom_{\cat A}(F,G),
\end{equation}
even if $F$ and $G$ are not both h-projective or both h-injective.

We can also define a ``parametrized'' derived hom, using termwise resolutions (see Corollary \ref{corollary:termwiseres}). Hence, we will work with h-projective dg-categories -- this can be always done without loss of generality up to quasi-equivalence. Namely, let $\cat A, \cat B, \cat C$ be h-projective dg-categories and let $F \in \dgm{\cat A}{\cat C}$ and $G \in \dgm{\cat B}{\cat C}$ be dg-bimodules. We define $\RHom_{\cat C}(F,G) \in \dercomp(\cat A,\cat B)$ (up to isomorphism in that category) as follows:
\begin{equation} \label{equation:derivedhomdgmod}
    \begin{split}
        \RHom_{\cat C}(F,G) &= \Homdg_{\cat C}(Q(F),G) \\
        &= \Homdg_{\cat C}(F,R(G)) \\
        &= \Homdg_{\cat C}(Q(F),R(G)),
    \end{split}
\end{equation}
where $Q(F) \to F$ and $G \to R(G)$ are respectively termwise h-projective and h-injective resolutions -- in this case, such that $Q(F)_A \in \rdgm{\cat C}$ is h-projective for all $A \in \cat A$ and $R(G)_B \in \rdgm{\cat C}$ is h-injective for all $B \in \cat B$. In the end, we obtain a functor:
\begin{equation}
\RHom_{\cat C}(-,-) \colon \dercomp(\cat A, \cat C) \otimes \dercomp(\cat B,\cat C) \to \dercomp(\cat A,\cat B). 
\end{equation}
To be more precise, this is an isomorphism class of functors, depending on which h-projective and/or h-injective resolutions one chooses in the (equivalent) definitions given in \eqref{equation:derivedhomdgmod}.

Let us now describe the derived tensor product. Let $\cat A$ be a dg-category and let $F \in \rdgm{\cat A}$ and $G \in \ldgm{\cat A}$ be respectively a right and a left $\cat A$-dg-modules. We define the \emph{derived tensor product} $F \lotimes_{\cat A} G \in \dercomp(\basering k)$ (up to isomorphism in that category) as follows:
\begin{equation}
    \begin{split}
    F \lotimes_{\cat A} G &= S(F) \otimes_{\cat A} G \\
    &= F \otimes_{\cat A} S(G) \\
    &= S(F) \otimes_{\cat A} S(G),
    \end{split}
\end{equation}
where $S(F) \to F$ and $S(G) \to G$ are h-flat resolutions.

We can also define a ``parametrized'' derived tensor product, using termwise resolutions (see Corollary \ref{corollary:termwiseres}). Hence, we will work in this case with h-flat dg-categories -- this can be always done without loss of generality up to quasi-equivalence. Namely, let $\cat A, \cat B, \cat C$ be h-flat dg-categories and let $F \in \dgm{\cat A}{\cat B}, G \in \dgm{\cat B}{\cat C}$. We define the derived tensor product $F \lotimes_{\cat B} G \in \dercomp(\cat A, \cat C)$ (up to isomorphism in that category) as follows:
\begin{equation} \label{equation:derivedtensordgmod}
    \begin{split}
        F \lotimes_{\cat B} G &= S(F) \otimes_{\cat B} G \\
        &= F \otimes_{\cat B} S(G) \\
        &= S(F) \otimes_{\cat B} S(G),
    \end{split}
\end{equation}
where $S(F) \to F$ and $S(G) \to G$ are termwise h-flat resolutions, such that $S(F)_A \in \rdgm{\cat B}$ is h-flat for all $A \in \cat A$ and such that $S(G)^C \in \ldgm{\cat B}$ is h-flat for all $C \in \cat C$. In the end, we obtain a functor:
\begin{equation}
    - \lotimes_{\cat B} - \colon \dercomp(\cat A, \cat B) \otimes \dercomp(\cat B, \cat C) \to \dercomp(\cat A, \cat C).
\end{equation}
To be more precise, this is an isomorphism class of functors, depending on which h-flat resolutions one chooses in the (equivalent) definitions given in \eqref{equation:derivedtensordgmod}.

Taking h-projective dg-categories $\cat A,\cat B$ and $\cat C$, we have a \emph{derived tensor-hom adjunction} involving the parametrized derived hom and tensor, which we can express as the following isomorphisms in $\dercomp(\basering k)$:
\begin{equation} \label{equation:derivedtensorhomdgm}
\begin{split}
   \RHom_{\cat A, \cat C}(F \lotimes_{\cat B} G, H) & \cong \RHom_{\cat A,\cat B}(F, \RHom_{\cat C}(G,H)) \\
   & \cong \RHom_{\cat B, \cat C}(G, \RHom_{\cat A}(F,H)),
\end{split}
\end{equation}
natural in $F \in \dercomp(\cat A,\cat B), G \in \dercomp(\cat B, \cat C), H \in \dercomp(\cat A,\cat C)$. This is obtained directly from the ordinary tensor-hom adjunction \eqref{equation:tensorhomdgm} by introducing the suitable (termwise) resolutions of dg-modules.

\subsubsection*{Derived Yoneda lemma}

Let $\cat A$ be a dg-category and let $A \in \cat A$. Using the Yoneda lemma \eqref{equation:yoneda}, we can immediately show that $h_A$ is an h-projective right $\cat A$-dg-module. Hence, we can write:
\begin{equation} \label{equation:derivedYoneda}
\RHom(h_A,F) \cong \Homdg(h_A,F) \cong F^A = F(A),
\end{equation}
for any $F \in \rdgm{\cat A}$. This can be interpreted as a \emph{derived Yoneda lemma}. Taking zeroth cohomology and recalling \eqref{equation:derivedhom_zerothcohomology}, we get an isomorphism
\begin{equation}
    \dercomp(\cat A)(h_A,F) \cong H^0(F(A)),
\end{equation}
sometimes also called ``derived Yoneda lemma'', and from that the \emph{derived Yoneda embedding}
\begin{equation} \label{equation:derivedyonedaembedding}
    \begin{split}
        H^0(\cat A) &\hookrightarrow \dercomp(\cat A), \\
        A & \mapsto h_A = \cat A(-,A). 
    \end{split}
\end{equation}

Since $h_A$ is h-projective, it is in particular h-flat. Hence, using the ``dual'' Yoneda lemma \eqref{equation:coyoneda} we can write:
\begin{equation} \label{equation:derivedcoYoneda}
h_A \lotimes_{\cat A} F \cong h_A \otimes_{\cat A} F \cong F_A,
\end{equation}
for $F \in \ldgm{\cat A}$. This can be interpreted as a \emph{derived ``dual'' Yoneda lemma}.

\subsection{Pretriangulated dg-categories}\label{sec:pretriang-dg}
Let $\cat A$ be a dg-category. The dg-category of right $\cat A$-dg-modules $\rdgm{\cat A}$ has \emph{shifts} of objects and \emph{mapping cones} of closed degree $0$ morphisms, computed as termwise shifts and mapping cones. This allows us to give a definition of shifts and cones inside the dg-category $\cat A$, by means of a universal property.
\begin{definition} \label{definition:shift_cones_qis}
    Let $\cat A$ be a dg-category, $A \in \cat A$ be an object and $n \in \mathbb Z$. An \emph{$n$-shift} of $A$ is an object $A[n] \in \cat A$ together with an isomorphism
    \begin{equation} \label{equation:shift_quasirep}
    \cat A(-,A[n]) \xrightarrow{\sim} \cat A(-,A)[n]
    \end{equation}
    in $\dercomp(\cat A)$, where $A(-,A)[n]$ is the $n$-shift of the $\cat A$-dg-module $\cat A(-,A)$, computed termwise. Abusing notation, we will call any object $A[n] \in \cat A$ \emph{the} $n$-shift of $A$, uniquely determined up to isomorphism in $H^0(\cat A)$.

    Next, let $f \colon A \to B$ be a closed degree $0$ morphism in $\cat A$. A \emph{cone} of $f$ is an object $\cone(f) \in \cat A$ together with an isomorphism
    \begin{equation} \label{equation:cone_quasirep}
    \cat A(-,\cone(f)) \xrightarrow{\sim} \cat \cone(f_* \colon \cat A(-,A) \to \cat A(-,B))
    \end{equation}
    in $\dercomp(\cat A)$, where $\cone(f_*)$ is the termwise mapping cone of the induced morphism of $\cat A$-dg-modules $f_* \colon \cat A(-,A) \to \cat A(-,B)$. Abusing notation, we will call any object $\cone(f) \in \cat A$ \emph{the} cone of $f$, uniquely determined up to isomorphism in $H^0(\cat A)$.
\end{definition}

\begin{definition}[cf. {\cite{bondal-kapranov-enhanced}}] \label{definition:pretriangulated_dgcat}
    Let $\cat A$ be a (non empty) dg-category. We say that $\cat A$ is \emph{pretriangulated} if for any object $A \in \cat A$ and $n \in \mathbb Z$, the $n$-shift $A[n]$ exists in $\cat A$, and for any closed degree $0$ morphism $f$ in $\cat A$, the cone $\cone(f)$ exists in $\cat A$.
\end{definition}
\begin{remark}
    If $\cat A$ is pretriangulated, it has a zero object up to homotopy, namely, an object $0 \in \cat A$ such that
    \begin{equation} \label{equation:zeroobject}
    \cat A(A,0) \cong \cat A(0,A) \cong 0
    \end{equation}
    in $\dercomp(\basering k)$, for all objects $A \in \cat A$. Thanks to shifts, this is actually equivalent to requiring that $H^0(\cat A)$ has a zero object. Such object can be described, up to isomorphism in $H^0(\cat A)$, as the cone of any identity morphism.
\end{remark}

\begin{remark} \label{remark:pretriangles}
    Notice that we have a sequence of closed degree $0$ morphisms:
    \begin{equation} \label{equation:pretriangle_repr}
        A \xrightarrow{f} B \xrightarrow{j} \cone(f) \xrightarrow{p} A[1],
    \end{equation}
    induced by the morphisms of $\cat A$-dg-modules:
    \begin{equation} \label{equation:pretriangle_dgm}
        \cat A(-,A )\xrightarrow{f_*} \cat A(-,B) \to \cone(f) \to \cat A(-,A)[1],
    \end{equation}
    where the unnamed morphisms are given by the natural inclusion and projection morphisms. Both \eqref{equation:pretriangle_repr} and \eqref{equation:pretriangle_dgm} are called \emph{pretriangles}.
\end{remark}
The homotopy category $H^0(\cat A)$ of a pretriangulated dg-category $\cat A$ has a ``canonical'' structure of triangulated category. The crucial property of pretriangulated dg-categories is that, unlike triangulated categories, they have \emph{functorial shifts and cones}.
\begin{remark} \label{remark:opposite_pretriangulated}
    Let $\cat A$ be a dg-category. It can be proved that $\cat A$ is pretriangulated if and only if $\opp{\cat A}$ is pretriangulated. Indeed:
    \begin{itemize}
        \item If $A[n]$ is a $n$-shift of $A$ in $\cat A$, then $A[-n] \in \cat A$ is an object representing the $n$-shift of $A$ in $\opp{\cat A}$.
        \item If $\cone(f)$ is a cone of a closed degree $0$ morphism $f$ in $\cat A$, then $\cone(f)[-1] \in \cat A$ is an object representing the cone of the corresponding morphism $\opp{f}$ in $\opp{\cat A}$.
    \end{itemize}
\end{remark}
\begin{remark} \label{remark:strongly_pretriangulated}
    We may change the above Definition \ref{definition:shift_cones_qis} by requiring \eqref{equation:shift_quasirep} and \eqref{equation:cone_quasirep} be isomorphisms in $Z^0(\pshdg(\cat A))$ instead of $\dercomp(\cat A)$. We would then define ``strict'' notions of shifts and cones in a dg-category. If a dg-category $\cat A$ has such ``strict'' shifts and cones, we say that it is \emph{strongly pretriangulated}. An obvious example of strongly pretriangulated dg-category is given by the dg-category of $\cat A$-dg-modules $\pshdg(\cat A)$, for any given dg-category $\cat A$.

    If a dg-category is pretriangulated, it can be proved that it is quasi-equivalent to a strongly pretriangulated dg-category. Indeed, we can introduce the \emph{pretriangulated hull} $\pretr(\cat A)$ of any dg-category $\cat A$ as the closure of $\cat A$ inside $\pshdg(\cat A)$ (via the Yoneda embedding) under shifts and mapping cones, up to isomorphism in $Z^0(\pshdg(\cat A))$ (hence, ``strict'' shifts and cones). Then, we can easily check that $\cat A$ is pretriangulated if and only if the embedding $\cat A \hookrightarrow \pretr(\cat A)$ is a quasi-equivalence, and that $\cat A$ is strongly pretriangulated if and onyl if $\cat A \hookrightarrow \pretr(\cat A)$ is a (strict) dg-equivalence. Since $\pretr(\cat A)$ is strongly pretriangulated by construction, we immediately prove the above claim.
\end{remark}

\subsection{Quasi-functors}
Using dg-bimodules, we can describe the suitable ``homotopically coherent'' morphisms between dg-categories, vastly generalizing dg-functors. Such morphisms are called \emph{quasi-functors} and will be the main character of this paper.
\begin{definition}
    Let $\cat A$ and $\cat B$ be dg-categories, and assume that either $\cat A$ or $\cat B$ is h-flat. A \emph{quasi-functor} $F \colon \cat A \to \cat B$ is an $\cat A$-$\cat B$-dg-bimodule $F \in \dercomp(\cat A, \cat B)$, such that for all $A \in \cat A$, there is an isomorphism
    \[
    F_A \cong \cat B(-,\Phi_F(A))
    \]
    in $\dercomp(\cat B)$, for some $\Phi_F(A) \in \cat B$.

    We denote by $\QFun(\cat A, \cat B)$ the full subcategory of $\dercomp(\cat A, \cat B)$ spanned by quasi-functors. We will say that two quasi-functors are \emph{isomorphic} if they are isomorphic in $\QFun(\cat A, \cat B)$ and typically use the symbol $\cong$.
\end{definition}
\begin{remark}
    We have to make sure that the category $\QFun(\cat A, \cat B)$ is independent (up to equivalence) of any choice of h-flat resolution of $\cat A$ or $\cat B$. More precisely: if $\cat A$ and $\cat B$ are any dg-categories, and $\widetilde{\cat A} \to \cat A$, $\widetilde{\cat B} \to \cat B$ are h-flat resolutions, we have to make sure that
    \[
    \QFun(\widetilde{\cat A}, \cat B) \cong \QFun(\cat A, \widetilde{\cat B}).
    \]
    We can actually show that there are equivalences:
    \[
    \QFun(\widetilde{\cat A}, \cat B) \cong \QFun(\widetilde{\cat A}, \widetilde{\cat B}) \cong \QFun(\cat A, \widetilde{\cat B}),
    \]
    induced by the given h-flat resolutions. The first equivalence can be proved adapting \cite[Proposition 3.8 (2)]{canonaco-stellari-internalhoms}, whereas the second equivalence is dealt with in detail in \cite[Proposition 2.3.1]{genovese-lowen-vdb-tstruct-deform}.
\end{remark}
\begin{remark}
    The derived Yoneda embedding \eqref{equation:derivedyonedaembedding} can be enhanced to a quasi-functor, called the \emph{dg-derived Yoneda embedding}:
    \begin{equation} \label{equation:deriveddgyonedaembedding}
        \cat A  \hookrightarrow \dercompdg(\cat A).
    \end{equation}
    If we identify $\dercompdg(\cat A)$ with the full dg-subcategory of $\pshdg(\cat A)$ spanned by h-projective dg-modules, the above dg-derived Yoneda embedding is described by the usual dg-Yoneda embedding, since any representable dg-module $\cat A(-,A)$ is h-projective.
\end{remark}

\begin{remark} \label{remark:RHom_dgcat}
    There is actually a \emph{dg-category of quasi-functors} $\RHom(\cat A, \cat B)$ for given dg-categories $\cat A$ and $\cat B$, well defined up to quasi-equivalence, such that there is an equivalence $H^0(\RHom(\cat A, \cat B)) \cong \QFun(\cat A, \cat B)$. It is defined as the full dg-subcategory of the derived dg-category $\dercompdg(\cat A, \cat B)$ of $\cat A$-$\cat B$-dg-bimodules spanned by the quasi-functors. In particular, if $F, G \in \RHom(\cat A, \cat B)$, we may identify:
    \[
    \RHom(\cat A, \cat B)(F,G) = \RHom_{\cat A, \cat B}(F,G),
    \]
    and
    \[
    \QFun(\cat A, \cat B)(F,G)= H^0(\RHom_{\cat A, \cat B}(F,G)).
    \]
    This is the \emph{internal hom in the homotopy category of dg-categories} $\Hqe$ \cite{toen-dgcat-invmath} \cite{canonaco-stellari-internalhoms}. In particular, we have an isomorphism in $\kat{Hqe}$:
    \begin{equation} \label{equation:RHom_dgcat}
        \RHom(\cat A \lotimes \cat B, \cat C) \cong \RHom(\cat A, \RHom(\cat B, \cat C)),
    \end{equation}
    for dg-categories $\cat A, \cat B$ and $\cat C$. The dg-category $\RHom(\cat A, \cat B)$ is pretriangulated if $\cat B$ is pretriangulated. 
    
    We will almost never need this higher structure in our paper and we will mostly concentrate just on the (ordinary) category of quasi-functors $\QFun(\cat A, \cat B)$. Still, we remark that we have an equivalence:
    \begin{equation} \label{equation:qfun_dercatdg_bimod}
        \QFun(\cat A, \dercompdg(\cat B)) \cong \dercomp(\cat A, \cat B),
    \end{equation}
    obtained by restricting an $\cat A$-$\dercompdg(\cat B)$-dg-bimodule along the dg-derived Yoneda embedding $\cat B \hookrightarrow \dercompdg(\cat B)$ (see \eqref{equation:deriveddgyonedaembedding}). In other words, quasi functors $\cat A \to \dercompdg(\cat B)$ can be viewed as $\cat A$-$\cat B$-dg-bimodules, at least from a ``derived'' point of view. In particular, we have:
    \begin{equation} \label{equation:qfun_dercatdg_dercat}
    \begin{split}
        \QFun(\cat A, \dercompdg(\basering k)) & \cong \dercomp(\opp{\cat A}), \\
        \QFun(\opp{\cat A}, \dercompdg(\basering k)) & \cong \dercomp(\cat A).
    \end{split}
    \end{equation}
\end{remark}

Dg-functors can be viewed as quasi-functors, as follows. If $f \colon \cat A \to \cat B$ is a dg-functor, we have an associated $\cat A$-$\cat B$-dg-bimodule $h_f \in \dgm{\cat A}{\cat B}$:
\begin{equation}
    (h_f)_A^B = \cat B(B,f(A)).
\end{equation}
This is trivially a quasi-functor. The dg-bimodule (and quasi-functor) associated to the identity $1_{\cat A} \colon \cat A \to \cat A$ is the diagonal bimodule  $h_{\cat A} = \cat A \in \dgm{\cat A}{\cat A}$ defined in \eqref{equation:diagonalbimod}:
\begin{equation}
    (h_{\cat A})_A^{A'} = \cat A_{A}^{A'} = \cat A(A',A).
\end{equation}

Quasi-functors can be \emph{composed} using the (parametrized) derived tensor product \eqref{equation:derivedhomdgmod}. Indeed, given quasi-functors $F \colon \cat A \to \cat B$ and $G \colon \cat B \to \cat C$ (with suitable h-flatness assumptions on $\cat A, \cat B$ and $\cat C$), we can define
\begin{equation}
G \circ F = GF = F \lotimes_{\cat B} G \colon \cat A \to \cat C.
\end{equation}
We can easily show that $GF$ is indeed a quasi-functor, and this composition yields a functor
\[
- \circ - \colon \QFun(\cat B, \cat C) \otimes \QFun(\cat A, \cat B) \to \QFun(\cat A, \cat C).
\]
The composition is associative and unital up to isomorphism of quasi-functors, the units being the diagonal bimodules \eqref{equation:diagonalbimod}.

It is worth observing that, given a dg-functor $f \colon \cat A' \to \cat A$ and a quasi-functor $F \colon \cat A \to \cat B$, the composition $F \circ f$ can be written as follows:
\begin{equation} \label{equation:quasifunctor_compose_dgfunctor}
\begin{split}
    F \circ f &= h_f \lotimes_{\cat A} F \\
    &\cong h_f \otimes_{\cat A} F \\
    &\cong F_f
\end{split}
\end{equation}
the restriction of the dg-bimodule $F$ along $f$. We used the derived ``dual'' Yoneda lemma \eqref{equation:derivedcoYoneda} alongside the fact that $(h_f)_{A'} = h_{f(A')}$ is h-flat for all $A' \in \cat A'$.

If $F \colon \cat A \to \cat B$ is a quasi-functor, we may define its \emph{opposite} quasi-functor $\opp{F} \colon \opp{\cat A} \to \opp{\cat B}$ as follows:
\begin{equation} \label{equation:opposite_qfun}
    \opp{F} = \RHom_{\cat B}(F,h_{\cat A}).
\end{equation}
Concretely, we typically take a termwise h-projective resolution $Q(F) \to F$ and we set:
\[
(\opp{F})_A^B = \Homdg_{\cat B}(Q(F)_A, h_B),
\]
as an $\opp{\cat A}$-$\opp{\cat B}$-dg-bimodule. We have the typical properties, up to isomorphism of quasi-functors:
\[
\opp{(\opp{F})} \cong F, \qquad \opp{(GF)} \cong \opp{G}\opp{F}.
\]

Any quasi-functor $F \colon \cat A \to \cat B$ yields an ordinary functor
\begin{equation}
    H^0(F) \colon H^0(\cat A) \to H^0(\cat B).
\end{equation}
Taking $H^0$ is compatible with opposites and compositions:
\[
H^0(GF) \cong H^0(G) H^0(F), \qquad H^0(\opp{F}) \cong \opp{H^0(F)}.
\]

\section{Homotopy (co)limits and homotopy Kan extensions} \label{subsection:hokandg}

In this section, we explore the basics of the theory of homotopy limits and colimits inside a dg-category. We remark that any theory of limits and colimits in dg-category is necessarily a theory of \emph{weighted} limits and colimits, since dg-categories are an example of enriched categories. Then, in order to define \emph{weighted homotopy (co)limits}, we essentially follow the approach of \cite{riehl-cathtpy} (see also \cite[Definition 5.2]{lack-rosicky-homotopylocpres}), which can be summarized by the following slogan: \emph{a weighted homotopy (co)limit is obtained from a weighted (co)limit by applying suitable resolutions}. Finally, we will require the typical representability condition, up to quasi-isomorphism.

We remark that the same definitions of weighted homotopy (co)limits in dg-categories are given in \cite{genovese-lowen-symons-vdb-deformations}, albeit with a significantly different scope. Moreover, a similar theory is developed in \cite{imamura-homotopy-dg}.

\subsection{Weighted homotopy (co)limits}
We will assume that any dg-category is h-projective, or implicitly replace it with an h-projective resolution.

We first define \emph{homotopy weighted limits} very generally as suitable dg-bimodules.
\begin{definition}
    Let $F \colon \cat I \to \cat A$ be a quasi-functor between dg-categories, and let $W \in \dercomp(\opp{\cat I})$ be a left $\cat I$-dg-module, called \emph{weight}. We define:
    \[
    \wholim{F}{W} = \RHom_{\cat I}(W,F) \in \dercomp(\cat A).
    \]

    Recalling \eqref{equation:derivedhomdgmod}, we have:
    \[
    \wholim{F}{W} \cong \Homdg_{\cat I}(Q(W),F) \cong \Homdg_{\cat I}(W, R(F))
    \]
    in $\dercomp(\cat A)$, where $Q(W) \to W$ is an h-projective resolution of the weight $W$, and $F \to R(F)$ is an h-injective resolution of $F$ as an $I$-$\cat A$-dg-bimodule.
\end{definition}

The definition of weighted homotopy limits inside a dg-category $\cat A$ is given via (weak) representability:
\begin{definition}
    Let $F \colon \cat I \to \cat A$ be a quasi-functor between dg-categories, and let $W \in \dercomp(\opp{\cat I})$ be a weight. A \emph{$W$-weighted homotopy limit of $F$} is an object $X \in \cat A$ together with an isomorphism
    \[
    \cat A(-,X) \xrightarrow{\sim} \wholim{F}{W}
    \]
    in $\dercomp(\cat A)$. Abusing notation, we will denote $X = \wholim{F}{W}$. It is uniquely determined up to isomorphism in $H^0(\cat A)$, and will be called \emph{the} $W$-weighted homotopy limit of $F$.
\end{definition}

Weighted homotopy colimits are obtained by duality, using the notion of opposite quasi-functor defined in \eqref{equation:opposite_qfun}:
\begin{definition}
    Let $F \colon \cat I \to \cat A$ be a quasi-functor between dg-categories, and let $W \in \dercomp(\cat I)$ be a weight. A \emph{$W$-weighted homotopy colimit of $F$} is a $W$-weighted homotopy limit of $\opp{F} \colon \opp{\cat I} \to \opp{\cat A}$, viewing $W \in \dercomp(\opp{(\opp{\cat I})})$. 
    
    Unraveling this definition, we see that a $W$-weighted homotopy colimit of $F$ is an object $X \in \cat A$ together with an isomorphism
    \[
    \cat A(X,-) \xrightarrow{\sim} \holim^W \opp{F} = \RHom_{\opp{\cat I}}(W, \opp{F})
    \]
    in $\dercomp(\opp{\cat A})$. Abusing notation, we will denote $X = \whocolim{F}{W}$. It is uniquely determined up to isomorphism in $H^0(\cat A)$, and will be called \emph{the} $W$-weighted homotopy colimit of $F$. 
\end{definition}
\begin{remark}
    Recalling \eqref{equation:qfun_dercatdg_dercat}, a weight $W \in \dercomp(\opp{\cat I})$ (respectively in $\dercomp(\cat I)$ in the case of weighted colimits) can be also viewed as a quasi-functor $\cat I \to \dercompdg(\basering k)$ (respectively a quasi-functor $\opp{\cat I} \to \dercompdg(\basering k)$).
\end{remark}

Let $\cat A$ be a dg-category, and let $\dercompdg(\cat A)$ be its derived dg-category (see \eqref{equation:deriveddgcat_H0}). We may ask whether weighted homotopy limits and colimits exist in $\dercompdg(\cat A)$.  Recalling \eqref{equation:qfun_dercatdg_bimod}, we can view quasi-functors $\cat I \to \dercompdg(\cat A)$ both as right-quasi-representable $\cat I$-$\dercompdg(\cat A)$-dg-bimodules or $\cat I$-$\cat A$-bimodules via restriction along the dg-derived Yoneda embedding of $\cat A$; we will use this identification in the following result:
\begin{proposition}
    Let $\cat I, \cat A$ be dg-categories, let $F \colon \cat I \to \dercompdg(\cat A)$ be a quasi-functor, and let $W \in \dercomp(\opp{\cat I})$ be a weight. Then, the parametrized derived hom complex (viewing $F$ as an $\cat I$-$\cat A$-dg-bimodule):
    \[
    \RHom_{\cat I}(W,F) \in \dercompdg(\cat A),
    \]
    together with the natural Yoneda isomorphism in $\dercomp(\dercompdg(\cat A))$ (viewing $F$ also as an $\cat I$-$\dercompdg(\cat A)$-dg-bimodule):
    \[
    \RHom_\cat A(-,\RHom_{\cat I}(W,F)) \xrightarrow{\sim} \RHom_{\cat I}(W,F),
    \]
    is a $W$-weighted homotopy limit of $F$.
    
    Dually, let $F \colon \cat I \to \dercompdg(\cat A)$ be a quasi-functor and let $W \in \dercomp(\cat I)$ be a weight. Then, the parametrized derived tensor product (viewing $F$ as an $\cat I$-$\cat A$-dg-bimodule):
    \[
    W \lotimes_{\cat I} F \in \dercompdg(\cat A),
    \]
    together with the natural derived tensor-hom adjunction isomorphism in $\dercomp(\opp{\dercompdg(\cat A)})$ (viewing $F$ also as an $\cat I$-$\dercompdg(\cat A)$-dg-bimodule):
    \[
    \RHom_{\cat A}(W \lotimes_{\cat I} F, -) \xrightarrow{\sim} \RHom_{\cat I}(W, \RHom_{\cat A}(F,-)) = \RHom_{\cat I}(W,\opp{F}),
    \]
    is a $W$-weighted homotopy colimit of $F$.
\end{proposition}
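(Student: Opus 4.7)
The plan is to verify the weak representability using the derived tensor-hom adjunction \eqref{equation:derivedtensorhomdgm} together with the derived Yoneda lemma \eqref{equation:derivedYoneda}. The key bookkeeping is that, via the dg-derived Yoneda embedding \eqref{equation:deriveddgyonedaembedding}, a quasi-functor $F \colon \cat I \to \dercompdg(\cat A)$ admits two compatible descriptions: as an $\cat I$-$\dercompdg(\cat A)$-dg-bimodule, and by restriction as an $\cat I$-$\cat A$-dg-bimodule, cf.\ \eqref{equation:qfun_dercatdg_bimod}. I use the same symbol $F$ for both avatars; the derived Yoneda lemma precisely translates between them, since for any $Y \in \dercompdg(\cat A)$ and $i \in \cat I$ one has a natural identification $\dercompdg(\cat A)(Y, \Phi_F(i)) \cong \RHom_{\cat A}(Y, F_i)$, where $F_i$ on the right is regarded as a right $\cat A$-dg-module.

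For the homotopy limit case, I set $X := \RHom_{\cat I}(W, F) \in \dercompdg(\cat A)$ with $F$ in its $\cat I$-$\cat A$-dg-bimodule incarnation. For any $Y \in \dercompdg(\cat A)$, the derived tensor-hom adjunction yields
\begin{equation*}
\RHom_{\cat A}(Y, X) \;\cong\; \RHom_{\cat I}\bigl(W, \RHom_{\cat A}(Y, F)\bigr).
\end{equation*}
The pointwise derived Yoneda identification recalled above shows that the left $\cat I$-dg-module $\RHom_{\cat A}(Y, F)$ coincides with the value $F^{Y}$ of $F$ viewed as an $\cat I$-$\dercompdg(\cat A)$-dg-bimodule. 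Assembling naturality in $Y$ produces the required isomorphism $\RHom_{\cat A}(-, X) \xrightarrow{\sim} \RHom_{\cat I}(W, F)$ in $\dercomp(\dercompdg(\cat A))$, which exhibits $X$ as a $W$-weighted homotopy limit of $F$.

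The homotopy colimit case is dual. Setting $X := W \lotimes_{\cat I} F \in \dercompdg(\cat A)$, the second form of the derived tensor-hom adjunction gives
\begin{equation*}
\RHom_{\cat A}(X, Y) \;\cong\; \RHom_{\cat I}\bigl(W, \RHom_{\cat A}(F, Y)\bigr)
\end{equation*}
for every $Y \in \dercompdg(\cat A)$. The definition of the opposite quasi-functor in \eqref{equation:opposite_qfun}, combined with the same pointwise derived Yoneda identification, shows that the right $\cat I$-dg-module $\RHom_{\cat A}(F, Y)$ coincides with the value of $\opp{F}$ at $Y \in \opp{\dercompdg(\cat A)}$. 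This produces the promised natural isomorphism in $\dercomp(\opp{\dercompdg(\cat A)})$, exhibiting $X$ as a $W$-weighted homotopy colimit of $F$.

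The main obstacle is entirely organisational: one must keep straight the two avatars of $F$ (as $\cat I$-$\cat A$-dg-bimodule and as $\cat I$-$\dercompdg(\cat A)$-dg-bimodule, plus their opposites for the colimit), and ensure that the derived adjunctions and parametrized derived homs and tensors are computed with compatible termwise h-projective, h-injective, or h-flat resolutions, as supplied by Corollary \ref{corollary:termwiseres}. Once these choices are fixed, no input beyond the derived tensor-hom adjunction and the derived Yoneda lemma is required.
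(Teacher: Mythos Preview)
Your proof is correct and follows the same route as the paper's: the paper dismisses the limit case as ``straightforward'' and for the colimit case merely records that $\opp{F}$ is by definition $\RHom_{\cat A}(F,-)$ when $F$ is viewed as an $\cat I$-$\dercompdg(\cat A)$-bimodule. Your argument spells out the underlying derived tensor--hom adjunction and the derived Yoneda identification that make both statements go through, which is exactly the intended mechanism.
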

\begin{proof}
    The first claim is straightforward. For the dual claim, we just observe that $\opp{F}$ is by definition indeed $\RHom_{\cat A}(F,-)$ as an $\opp{\cat I}$-$\opp{\dercompdg(\cat A)}$-dg-bimodule, if we view $F$ as an $\cat I$-$\dercompdg(\cat A)$-dg-bimodule.
\end{proof}

\subsection{Homotopically (co)complete dg-categories}
With weighted homotopy limits and colimits, it is natural to give the following definition:
\begin{definition}
    Let $\cat A$ be a dg-category. We say that $\cat A$ is \emph{homotopically complete} if for any quasi-functor $F \colon \cat I \to \cat A$ and any weight $W \in \dercomp(\opp{\cat I})$, the $W$-weighted homotopy limit of $F$ exists in $\cat A$.

    Dually, we say that $\cat A$ is \emph{homotopically cocomplete} if for any quasi-functor $F \colon \cat I \to \cat A$ and any weight $W \in \dercomp(\cat I)$, the $W$-weighted homotopy colimit of $F$ exists in $\cat A$.
\end{definition}
\begin{notation}
    We will sometimes simplify terminology and say just ``complete'' or ``cocomplete'' instead of ``homotopically complete'' or ``homotopically cocomplete''.
\end{notation}

We recall that a dg-category $\cat A$ is pretriangulated if it has shifts and cones (Definition \ref{definition:pretriangulated_dgcat}). We can describe such shifts and cones in terms of homotopy weighted limits and colimits.
\begin{lemma} \label{lemma:shift_holim}
    Let $\cat A$ be a dg-category, let $A \in \cat A$ be an object and let $n \in \mathbb Z$ be an integer. Take the quasi-functor defined by:
    \begin{equation*}
        \begin{split}
        \Psi_A \colon \basering k & \to \cat A, \\
        \ast & \mapsto \cat A(-,A),
        \end{split}
    \end{equation*}
    where $\ast$ is the unique object of $\basering k$ viewed as a dg-category. Then, we have:
    \begin{equation}
        \RHom_{\basering k}(\basering k[-n], \Psi_A) \cong \cat A(-,A)[n]
    \end{equation}
    In particular, the $n$-shift of $A$ in $\cat A$ is the $\basering k[-n]$-weighted homotopy limit of $\Psi_A$.
\end{lemma}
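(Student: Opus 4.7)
The plan is to reduce the computation of $\RHom_{\basering k}(\basering k[-n], \Psi_A)$ to the ordinary $\Homdg$ and then apply the Yoneda lemma together with the elementary shift identity for Hom complexes.

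First I would observe that, viewed as a left $\basering k$-dg-module, $\basering k[-n]$ is h-projective: it is just a shift of the free rank-one module $\basering k$, and free modules are h-projective; since shifting is an autoequivalence of $\rdgm{\basering k}$ and preserves h-projectivity, the claim follows. Hence by the definition \eqref{equation:derivedhomdgmod} of the (parametrized) derived Hom we may take $\basering k[-n]$ itself as its own h-projective resolution, obtaining the isomorphism
\[
\RHom_{\basering k}(\basering k[-n], \Psi_A) \cong \Homdg_{\basering k}(\basering k[-n], \Psi_A)
\]
in $\dercomp(\cat A)$, where $\Psi_A$ is the $\basering k$-$\cat A$-dg-bimodule $(\ast, -) \mapsto \cat A(-,A)$.

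Next, I would apply the Yoneda lemma \eqref{equation:yoneda} (in its ``parametrized'' form over the base $\basering k$, whose only object is $\ast$ and whose endomorphism complex is $\basering k$ itself): this gives a natural isomorphism $\Homdg_{\basering k}(\basering k, \Psi_A) \cong \Psi_A(\ast) = \cat A(-,A)$ in $\dercomp(\cat A)$. Combining this with the standard shift identity $\Homdg(X[m], Y) \cong \Homdg(X, Y)[-m]$ for cochain complexes (specialized to $X = \basering k$, $m = -n$, $Y = \cat A(-,A)$), I obtain
\[
\Homdg_{\basering k}(\basering k[-n], \Psi_A) \cong \cat A(-,A)[n]
\]
in $\dercomp(\cat A)$, which is exactly the desired formula.

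For the final statement, it suffices to compare the isomorphism just produced with the defining quasi-representability condition \eqref{equation:shift_quasirep} for an $n$-shift: an object $X \in \cat A$ is a $\basering k[-n]$-weighted homotopy limit of $\Psi_A$ iff there is an isomorphism $\cat A(-,X) \xrightarrow{\sim} \RHom_{\basering k}(\basering k[-n], \Psi_A) \cong \cat A(-,A)[n]$ in $\dercomp(\cat A)$, which is precisely the condition that $X$ be an $n$-shift of $A$. The only mildly delicate point of the argument is to keep the sign conventions for shifts straight so that the $-n$ on the weight side produces $+n$ on the output side; everything else is a direct unwinding of the definitions.
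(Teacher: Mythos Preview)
Your proof is correct and follows essentially the same approach as the paper: both use the h-projectivity of $\basering k[-n]$ to identify $\RHom$ with $\Homdg$, and then invoke the Yoneda lemma (together with the shift identity for Hom complexes) to obtain $\cat A(-,A)[n]$. Your version is simply a more detailed unpacking of the paper's two-line argument.
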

\begin{proof}
    We observe that $\RHom_{\basering k}(\basering k[-n], A) \cong \Homdg_{\basering k}(\basering k[-n], A)$, because $\basering k[-n]$ is clearly h-projective. Then, the claim is a straightforward application of the Yoneda lemma.
\end{proof}
\begin{lemma} \label{lemma:cone_holim}
    Let $\cat A$ be a dg-category and let $f \colon A \to B$ be a closed degree $0$ morphism. Take $\Psi_f$ as the following quasi-functor:
    \begin{equation*}
        \begin{split}
            \Psi_f \colon \basering k[[1]] &\to \cat A,\\
            (0 \xrightarrow{e_0} 1) &\mapsto (f_* \colon \cat A(-,A) \to \cat A(-,B)),
        \end{split}
    \end{equation*}
    where $\basering k[[1]]$ is the dg-category freely generated by the diagram $(0 \xrightarrow{e_0} 1)$. Next, take the following left $\basering k[[1]]$-dg-module:
    \begin{equation*}
        \begin{split}
            \Psi_{\mathrm{incl}} \colon \basering k[[1]] &\to \rdgm{k}, \\
            (0 \xrightarrow{e_0} 1) & \mapsto (\mathrm{incl} \colon \basering k \to \cone(1_{\basering k})),
        \end{split}
    \end{equation*}
    where $\mathrm{incl}$ is explicitly defined as follows:
    \begin{equation*}
    \begin{tikzcd}
    \cdots \arrow[r] & 0 \arrow[r] & 0 \arrow[r] \arrow[d] & \basering k \arrow[d, equal] \arrow[r] & 0 \arrow[r] & \cdots \\
    \cdots \arrow[r] & 0 \arrow[r] & \basering k \arrow[r, equal] & \basering k \arrow[r]           & 0 \arrow[r] & \cdots
    \end{tikzcd}
    \end{equation*}
    Then, we have:
    \begin{equation}
        \RHom_{\basering k[[1]]}(\Psi_{\mathrm{incl}}, \Psi_f) \cong \cone(f_* \colon \cat A(-,A) \to \cat A(-,B))[-1].
    \end{equation}
    In particular, the cone of $f \colon A \to B$ shifted by $-1$ in $\cat A$ is the $\Psi_{\mathrm{incl}}$-weighted homotopy limit of $\Psi_f$
\end{lemma}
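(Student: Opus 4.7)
The plan is to exhibit $\Psi_{\mathrm{incl}}$ as an explicit mapping cone of corepresentable left $\basering k[[1]]$-dg-modules, and then apply the derived Yoneda lemma to reduce the desired computation to a standard fiber sequence. Write $h^i = \basering k[[1]](i,-)$ for $i = 0,1$: concretely $h^0$ sends $0 \mapsto \basering k,\ 1 \mapsto \basering k\cdot e_0 \cong \basering k$, and $h^1$ sends $0 \mapsto 0,\ 1 \mapsto \basering k$, all concentrated in degree zero. The generator $e_0$ induces, via the dual of Yoneda, a closed degree $0$ morphism $(e_0)^* \colon h^1 \to h^0$, which is zero at object $0$ and the identity at $1$.

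First I would verify, by a componentwise computation, the isomorphism of left $\basering k[[1]]$-dg-modules
\[
\Psi_{\mathrm{incl}} \;\cong\; \cone\bigl(h^1 \xrightarrow{(e_0)^*} h^0\bigr);
\]
at object $0$ both sides equal $\basering k$, at object $1$ both sides equal $\cone(1_{\basering k})$, and the structural morphism attached to $e_0$ matches exactly the inclusion of the degree-zero part depicted in the statement. This is the only step where one must chase the differential explicitly.

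Next, I will observe that $\basering k[[1]]$ is h-projective (its hom complexes are free $\basering k$-modules in degree $0$), so both $h^0$ and $h^1$ are h-projective left $\basering k[[1]]$-dg-modules, and therefore so is their cone $\Psi_{\mathrm{incl}}$. Applying the contravariant functor $\RHom_{\basering k[[1]]}(-,\Psi_f)$ to the distinguished triangle $h^1 \to h^0 \to \Psi_{\mathrm{incl}} \to h^1[1]$ yields a distinguished triangle of right $\cat A$-dg-modules. By the derived Yoneda lemma \eqref{equation:derivedYoneda}, $\RHom_{\basering k[[1]]}(h^i,\Psi_f) \cong \Psi_f(i)$ (i.e.\ $\cat A(-,A)$ for $i=0$ and $\cat A(-,B)$ for $i=1$), and the morphism induced by $(e_0)^*$ becomes $\Psi_f(e_0) = f_* \colon \cat A(-,A) \to \cat A(-,B)$. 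One thus obtains the triangle
\[
\RHom_{\basering k[[1]]}(\Psi_{\mathrm{incl}},\Psi_f) \to \cat A(-,A) \xrightarrow{f_*} \cat A(-,B) \to \RHom_{\basering k[[1]]}(\Psi_{\mathrm{incl}},\Psi_f)[1],
\]
which identifies $\RHom_{\basering k[[1]]}(\Psi_{\mathrm{incl}},\Psi_f)$ with the fiber of $f_*$, namely $\cone(f_*)[-1]$, as required. Finally, the representability statement follows from Definition~\ref{definition:shift_cones_qis}: if $\cone(f)$ exists in $\cat A$, its $(-1)$-shift has $\cat A(-,\cone(f)[-1]) \cong \cone(f_*)[-1]$, which is precisely the $\Psi_{\mathrm{incl}}$-weighted homotopy limit just computed.

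The only mildly delicate point is the componentwise identification of $\Psi_{\mathrm{incl}}$ with the cone of corepresentables as left dg-modules (not just as collections of complexes); everything afterwards is a routine cone-plus-Yoneda manipulation strictly parallel to the proof of Lemma~\ref{lemma:shift_holim}.
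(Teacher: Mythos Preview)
Your proof is correct and follows essentially the same route as the paper: the paper also exhibits $\Psi_{\mathrm{incl}}$ as the cone of the map between the corepresentables $\Psi_{(0\to\basering k)}=h^1$ and $\Psi_{1_{\basering k}}=h^0$ (their pretriangle \eqref{equation:weight_is_cone}), deduces h-projectivity, and then applies $\Homdg_{\basering k[[1]]}(-,G)$ together with Yoneda to obtain the cone description. The only cosmetic difference is that the paper packages the Yoneda identifications into an explicit commutative ladder \eqref{equation:pretriangles_cone_holim} valid for an arbitrary bimodule $G$, whereas you go straight to the distinguished triangle for $G=\Psi_f$.
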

\begin{proof}
    We will check the following more general claim. Let $G \in \pshdg(\basering k[[1]], \cat A)$ be a $\basering k[[1]]$-$\cat A$-dg-bimodule. Then, we have the following commutative diagram, where rows are isomorphisms of $\cat A$-dg-modules:
    \begin{equation} \label{equation:pretriangles_cone_holim}
        \begin{tikzcd}
    {\Homdg(\Psi_{\mathrm{incl}}, G)} \arrow[d] \arrow[r, "\sim"]       & {\cone(G_{e_0} \colon G_0 \to G_1)[-1]} \arrow[d] \\
    {\Homdg(\Psi_{1_{\basering k}}, G)} \arrow[d] \arrow[r, "\sim"]     & G_0 \arrow[d, "G_{e_0}"]                             \\
    {\Homdg(\Psi_{(0 \to \basering k)}, G)} \arrow[d] \arrow[r, "\sim"] & G_1 \arrow[d]                                        \\
    {\Homdg(\Psi_{\mathrm{incl}}[-1], G)} \arrow[r, "\sim"]             & \cone(G_{e_0} \colon G_0 \to G_1),               
    \end{tikzcd}
\end{equation}
where $\Psi_{(0 \to \basering k)}$ and $\Psi_{1_{\basering k}}$ are the left $\basering k[[1]]$-modules corresponding to the chain maps $0 \to \basering k$ and $1_{\basering k} \colon \basering k \to \basering k$. The left vertical arrows are induced (by precomposition) by the following commutative diagram of $\basering k$-dg-modules:
\begin{equation*}
    \begin{tikzcd}
    {\basering k[-1]} \arrow[d] \arrow[r, "{\mathrm{incl}[-1]}"] & {\cone(1_{\basering k}))[-1]} \arrow[d] \\
    0 \arrow[d] \arrow[r]                                        & \basering k \arrow[d, equal]                  \\
    \basering k \arrow[d, equal] \arrow[r, "1_{\basering k}"]           & \basering k \arrow[d]                  \\
    \basering k \arrow[r, "\mathrm{incl}"]                       & \cone(1_{\basering k}).                
    \end{tikzcd}
\end{equation*}
The vertical arrows are pretriangles, hence we obtain a pretriangle
\begin{equation} \label{equation:weight_is_cone}
\Psi_{\mathrm{incl}}[-1] \to \Psi_{(0 \to \basering k)} \to \Psi_{1_{\basering k}} \to \Psi_{\mathrm{incl}}
\end{equation}
of left $\basering k[[1]]$-modules. From this, we also get a pretriangle
\[
\Homdg(\Psi_{\mathrm{incl}}, G) \to \Homdg(\Psi_{1_{\basering k}}, G) \to \Homdg(\Psi_{(0 \to \basering k)}, G) \to \Homdg(\Psi_{\mathrm{incl}}[-1], G),
\]
and in particular $\Homdg(\Psi_{\mathrm{incl}}[-1], G)$ is isomorphic to the cone of $\Homdg(\Psi_{1_{\basering k}}, G) \to \Homdg(\Psi_{(0 \to \basering k)}, G)$. Now, defining the two middle isomorphisms in \eqref{equation:pretriangles_cone_holim} is straightforward. Thanks to the universal property of the cone, we may find the upper and bottom horizontal isomorphisms making \eqref{equation:pretriangles_cone_holim} commute.

Directly from \eqref{equation:pretriangles_cone_holim}, we deduce that:
\begin{itemize}
    \item $\Psi_{1_{\basering k}}$ and $\Psi_{(0 \to \basering k)}$ are h-projective $\basering k[[1]]$-dg-module, hence $\Psi_{\mathrm{incl}}$ is also h-projective, and
    \[
    \RHom_{\basering k[[1]]}(\Psi_{\mathrm{incl}}, G) \cong \Homdg_{\basering k[[1]]}(\Psi_{\mathrm{incl}}, G).
    \]
    \item Taking $G=\Psi_f$, we can immediately conclude. \qedhere
\end{itemize}
\end{proof}

The above Lemma \ref{lemma:shift_holim} and Lemma \ref{lemma:cone_holim} can be directly dualized, yielding the following:
\begin{lemma} \label{lemma:shift_hocolim}
Let $\cat A$ be a dg-category, let $A \in \cat A$ be an object and let $n \in \mathbb Z$ be an integer. Take the quasi-functor defined by:
\begin{equation*}
        \begin{split}
        \Psi_A \colon \basering k & \to \cat A, \\
        \ast & \mapsto \cat A(-,A),
        \end{split}
    \end{equation*}
    where $\ast$ is the unique object of $\basering k$ viewed as a dg-category. Then, we have:
    \[
    \RHom_{\basering k}(\basering k[n], \opp{\Psi}_A) \cong \cat A(A,-)[-n].
    \]
    In particular, the $n$-shift of $A$ in $\cat A$ is the $\basering k[n]$-weighted homotopy colimit of $\Psi_A$.
\end{lemma}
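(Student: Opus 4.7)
The plan is to derive this lemma directly from Lemma \ref{lemma:shift_holim} by passing to opposites, since the construction of weighted homotopy colimits is by definition nothing but a weighted homotopy limit in the opposite dg-category.

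First, I would unwind the definition of $W$-weighted homotopy colimit applied to $F = \Psi_A \colon \basering k \to \cat A$ with weight $W = \basering k[n] \in \dercomp(\basering k)$. By definition, the $\basering k[n]$-weighted homotopy colimit of $\Psi_A$ is the $\basering k[n]$-weighted homotopy limit of the opposite quasi-functor $\opp{\Psi}_A \colon \opp{\basering k} \to \opp{\cat A}$, where one uses the canonical identification $\opp{\basering k} = \basering k$ (as $\basering k$ is commutative and concentrated in one object in degree $0$). This identifies the colimit with $\RHom_{\basering k}(\basering k[n], \opp{\Psi}_A)$.

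Second, I would identify $\opp{\Psi}_A$ explicitly. Using the definition of opposite quasi-functor in \eqref{equation:opposite_qfun} together with the Yoneda lemma, since $\Psi_A$ corresponds as a $\basering k$-$\cat A$-dg-bimodule to $h_A = \cat A(-,A)$, the opposite $\opp{\Psi}_A$ corresponds to the $\opp{\basering k}$-$\opp{\cat A}$-dg-bimodule sending $\ast \mapsto \opp{\cat A}(-,A) = \cat A(A,-) = h^A$. In other words, $\opp{\Psi}_A$ is precisely the analogue of $\Psi_A$ for the dg-category $\opp{\cat A}$ and the object $A \in \opp{\cat A}$.

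Finally, I would invoke Lemma \ref{lemma:shift_holim} applied to $\opp{\cat A}$ with the integer $-n$ in place of $n$: this yields
\[
\RHom_{\basering k}(\basering k[n], \opp{\Psi}_A) \cong \opp{\cat A}(-,A)[-n] \cong \cat A(A,-)[-n],
\]
which is exactly the asserted formula in $\dercomp(\opp{\cat A})$. For the final clause, the object in $\opp{\cat A}$ representing this homotopy limit is, again by Lemma \ref{lemma:shift_holim}, the $(-n)$-shift of $A$ in $\opp{\cat A}$; by Remark \ref{remark:opposite_pretriangulated}, this coincides with $A[n]$ in $\cat A$, confirming that the $n$-shift of $A$ in $\cat A$ realizes the $\basering k[n]$-weighted homotopy colimit of $\Psi_A$. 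There is no substantial obstacle; the only minor point requiring care is correctly tracking the sign conventions when passing between shifts in $\cat A$ and $\opp{\cat A}$, and making sure the identifications $\opp{\basering k} = \basering k$ and $\opp{h_A} = h^A$ are used consistently.
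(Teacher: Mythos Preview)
Your proposal is correct and follows essentially the same approach as the paper: identify $\opp{\Psi}_A$ with the analogue of $\Psi_A$ for $\opp{\cat A}$, apply Lemma~\ref{lemma:shift_holim} to $\opp{\cat A}$ with the integer $-n$, and then invoke Remark~\ref{remark:opposite_pretriangulated} to translate the $(-n)$-shift in $\opp{\cat A}$ back to the $n$-shift in $\cat A$. Your explicit unwinding of the colimit definition and the care taken with the identification $\opp{h_A}=h^A$ are slightly more detailed than the paper's version, but the argument is the same.
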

\begin{proof}
    We observe that $\opp{\Psi}_A$ is the quasi-functor defined by:
\begin{equation*}
        \begin{split}
        \opp{\Psi}_A \colon \opp{\basering k} = \basering k & \to \opp{\cat A}, \\
        \ast & \mapsto \opp{\cat A}(-,A) = \cat A(A,-),
        \end{split}
\end{equation*}
then we may apply Lemma \ref{lemma:shift_hocolim} and get
\[
\RHom_{\basering k}(\basering k[n], \opp{\Psi}_A) \cong \opp{\cat A}(-,A)[-n] \cong \cat A(A,-)[-n].
\]
We conclude that the $(-n)$-shift of $A$ in $\opp{\cat A}$ is the $\basering k[n]$-weighted homotopy limit of $\opp{\Psi}_A$; recalling Remark \ref{remark:opposite_pretriangulated}, this means that the $n$-shift of $A$ in $\cat A$ is the $\basering k[n]$-weighted homotopy colimit of $\Psi_A$.
\end{proof}

\begin{lemma} \label{lemma:cone_hocolim}
 Let $\cat A$ be a dg-category and let $f \colon A \to B$ be a closed degree $0$ morphism. Take $\Psi_f$ as the following quasi-functor:
    \begin{equation*}
        \begin{split}
            \Psi_f \colon \basering k[[1]] &\to \cat A,\\
            (0 \xrightarrow{e_0} 1) &\mapsto (f_* \colon \cat A(-,A) \to \cat A(-,B)),
        \end{split}
    \end{equation*}
    where $\basering k[[1]]$ is the dg-category freely generated by the diagram $(0 \xrightarrow{e_0} 1)$. Next, take the following right $\basering k[[1]]$-dg-module:
    \begin{equation*}
        \begin{split}
            \Psi_{\mathrm{incl}} \colon \opp{\basering k[[1]]} &\to \rdgm{k}, \\
            (1 \xrightarrow{\opp{e}_0} 0) & \mapsto (\mathrm{incl} \colon \basering k \to \cone(1_{\basering k})),
        \end{split}
    \end{equation*}
    where $\mathrm{incl}$ is explicitly defined as follows:
    \begin{equation*}
    \begin{tikzcd}
    \cdots \arrow[r] & 0 \arrow[r] & 0 \arrow[r] \arrow[d] & \basering k \arrow[d, equal] \arrow[r] & 0 \arrow[r] & \cdots \\
    \cdots \arrow[r] & 0 \arrow[r] & \basering k \arrow[r, equal] & \basering k \arrow[r]           & 0 \arrow[r] & \cdots
    \end{tikzcd}
    \end{equation*}
    Then, we have:
    \begin{equation} \label{equation:cone_hocolim}
        \RHom_{\basering k[[1]]}(\Psi_{\mathrm{incl}}, \opp{\Psi}_f) \cong \cone(f^* \colon \cat A(B,-) \to \cat A(A,-))[-1].
    \end{equation}
    In particular, the cone of $f \colon A \to B$ in $\cat A$ is the $\Psi_{\mathrm{incl}}$-weighted homotopy colimit of $\Psi_f$.
\end{lemma}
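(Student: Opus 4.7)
The proof proceeds by dualizing Lemma \ref{lemma:cone_holim}, mirroring the strategy used for Lemma \ref{lemma:shift_hocolim}. First I observe that $\basering k[[1]]$ is freely generated by a single arrow $e_0 \colon 0 \to 1$, so $\opp{\basering k[[1]]}$ is canonically isomorphic to $\basering k[[1]]$ via the swap $0 \leftrightarrow 1$; under this identification, the right $\basering k[[1]]$-dg-module $\Psi_{\mathrm{incl}}$ defined here coincides with the left $\basering k[[1]]$-dg-module $\Psi_{\mathrm{incl}}$ of Lemma \ref{lemma:cone_holim}. Next I unfold $\opp{\Psi}_f$: by definition of the opposite quasi-functor it sends the reversed generating arrow to $f^* \colon \opp{\cat A}(-, B) \to \opp{\cat A}(-, A)$, i.e.\ to $f^* \colon \cat A(B, -) \to \cat A(A, -)$ viewed as a map of left $\cat A$-dg-modules. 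Thus $\opp{\Psi}_f$ plays precisely the role of a ``$\Psi_{\opp f}$'' for the closed degree zero morphism $\opp f \colon B \to A$ in $\opp{\cat A}$.

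Then I apply Lemma \ref{lemma:cone_holim} inside $\opp{\cat A}$ to the morphism $\opp f$; it gives directly
\[
\RHom_{\basering k[[1]]}(\Psi_{\mathrm{incl}}, \opp{\Psi}_f) \cong \cone\bigl(f^* \colon \cat A(B, -) \to \cat A(A, -)\bigr)[-1]
\]
in $\dercomp(\opp{\cat A})$, which is exactly \eqref{equation:cone_hocolim}. To conclude that $\cone(f)$ represents the $\Psi_{\mathrm{incl}}$-weighted homotopy colimit of $\Psi_f$, I combine this with the identification $\cat A(\cone(f), -) \cong \cone(f^*)[-1]$: by Remark \ref{remark:opposite_pretriangulated}, $\cone(\opp f)$ in $\opp{\cat A}$ is represented by $\cone(f)[-1] \in \cat A$, and the defining equivalence of $\cone(\opp f)$ then reads $\cat A(\cone(f)[-1], -) \cong \cone(f^*)$; shifting the first variable by $+1$ (which contravariantly shifts the hom-complex by $-1$) yields the desired relation, and completes the verification that $X = \cone(f)$ satisfies the required representability isomorphism in $\dercomp(\opp{\cat A})$.

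The main bookkeeping obstacle is keeping the direction conventions for opposites and shifts consistent. As an alternative that avoids opposite categories entirely, one could repeat the argument of Lemma \ref{lemma:cone_holim} on the right-module side: assemble a pretriangle $\Psi_{\mathrm{incl}}[-1] \to \Psi_{(0 \to \basering k)} \to \Psi_{1_{\basering k}} \to \Psi_{\mathrm{incl}}$ of right $\basering k[[1]]$-dg-modules, apply $\RHom_{\basering k[[1]]}(-, \opp{\Psi}_f)$, observe that $\Psi_{1_{\basering k}}$ and $\Psi_{(0 \to \basering k)}$ are h-projective so that $\Psi_{\mathrm{incl}}$ is too, and identify the resulting complex via the universal property of the mapping cone, exactly as in the commutative diagram \eqref{equation:pretriangles_cone_holim} but for right modules.
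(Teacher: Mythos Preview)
Your proof is correct and follows essentially the same route as the paper: identify $\opp{\Psi}_f$ with $\Psi_{\opp f}$, apply Lemma \ref{lemma:cone_holim} in $\opp{\cat A}$, and then invoke Remark \ref{remark:opposite_pretriangulated} to translate the $(-1)$-shifted cone of $\opp f$ in $\opp{\cat A}$ into the cone of $f$ in $\cat A$. Your added remarks (the explicit identification $\opp{\basering k[[1]]}\cong\basering k[[1]]$ and the alternative direct computation on the right-module side) are correct but not needed for the argument.
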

\begin{proof}
    We can check that $\opp{\Psi}_f$ is the following quasi-functor:
\begin{equation*}
        \begin{split}
            \opp{\Psi}_f = \Psi_{\opp{f}} \colon \opp{\basering k[[1]]} &\to \opp{\cat A},\\
            (1 \xrightarrow{\opp{e}_0} 0) &\mapsto (\opp{f}_* \colon \opp{\cat A}(-,B) \to \opp{\cat A}(-,A)),
        \end{split}
\end{equation*}
where $\opp{f}$ is the morphism $f \colon A \to B$ viewed as a morphism $\opp{f} \colon B \to A$ in $\opp{\cat A}$. Then, directly from Lemma \ref{lemma:cone_holim}, we get \eqref{equation:cone_hocolim}, namely, the cone of $\opp{f}$ shifted by $-1$ in $\opp{\cat A}$ is the $\Psi_{\mathrm{incl}}$-weighted homotopy limit of $\opp{\Psi}_f$. Recalling Remark \ref{remark:opposite_pretriangulated}, this means that the cone of $f$ in $\cat A$ is the $\Psi_{\mathrm{incl}}$-weighted homotopy colimit of $\Psi_f$, as claimed.
\end{proof}

With the above results, we may immediately prove:
\begin{proposition} \label{proposition:hocomplete_pretr}
    Let $\cat A$ be a dg-category. If $\cat A$ is either homotopically complete or homotopically cocomplete, it is pretriangulated.
\end{proposition}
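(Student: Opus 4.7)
The plan is to directly invoke Lemmas \ref{lemma:shift_holim} through \ref{lemma:cone_hocolim} preceding the proposition, which express both $n$-shifts and cones as specific weighted homotopy limits and colimits of explicit quasi-functors. Since by Definition \ref{definition:pretriangulated_dgcat} being pretriangulated amounts (assuming $\cat A$ non-empty) to the existence, for every $A \in \cat A$ and $n \in \mathbb Z$, of an $n$-shift $A[n]$, and for every closed degree $0$ morphism $f$, of a cone $\cone(f)$, all I need to do is exhibit each of these as the value of a weighted homotopy (co)limit whose existence is guaranteed by the assumed (co)completeness.

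First I would treat the homotopically complete case. By Lemma \ref{lemma:shift_holim} the $n$-shift of $A$ is (by weak representability) the $\basering k[-n]$-weighted homotopy limit of the quasi-functor $\Psi_A \colon \basering k \to \cat A$, and this exists in $\cat A$ by assumption; so all shifts are available. Next, Lemma \ref{lemma:cone_holim} realises $\cone(f)[-1]$ as the $\Psi_{\mathrm{incl}}$-weighted homotopy limit of $\Psi_f \colon \basering k[[1]] \to \cat A$, which again exists by completeness. One small extra step: Definition \ref{definition:pretriangulated_dgcat} asks for $\cone(f)$ itself, not $\cone(f)[-1]$, so I apply the $+1$-shift (available by the previous step) to obtain $\cone(f) \cong (\cone(f)[-1])[1]$ in $\cat A$. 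This settles the complete case.

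The homotopically cocomplete case is strictly analogous, and actually slightly cleaner. Lemma \ref{lemma:shift_hocolim} identifies the $n$-shift of $A$ as the $\basering k[n]$-weighted homotopy colimit of $\Psi_A$, and Lemma \ref{lemma:cone_hocolim} identifies $\cone(f)$ directly (with no auxiliary shift) as the $\Psi_{\mathrm{incl}}$-weighted homotopy colimit of $\Psi_f$. Both exist in $\cat A$ under cocompleteness, so $\cat A$ is again pretriangulated.

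There is essentially no obstacle: the substance of the proof is entirely absorbed into the four preceding lemmas, and the proposition reduces to selecting the correct diagram and weight in each case and invoking the definition of (co)completeness. The only point that requires a moment of thought is remembering, in the complete case, to compose the realisation of $\cone(f)[-1]$ with a shift in order to land on $\cone(f)$ itself; the dual situation for cocompleteness bypasses this step.
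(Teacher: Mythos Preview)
Your proposal is correct and follows exactly the same approach as the paper, which simply invokes Lemmas \ref{lemma:shift_holim}--\ref{lemma:cone_hocolim} in the two cases. You are in fact slightly more careful than the paper in spelling out that Lemma \ref{lemma:cone_holim} only produces $\cone(f)[-1]$, so one must use the already-established shifts to recover $\cone(f)$ itself.
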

\begin{proof}
    If $\cat A$ is homotopically complete, we may combine Lemma \ref{lemma:shift_holim} and \ref{lemma:cone_holim} to prove that $\cat A$ has shifts and cones. If $\cat A$ is homotopically cocomplete, we may instead apply Lemma \ref{lemma:shift_hocolim} and Lemma \ref{lemma:cone_hocolim}.
\end{proof}

We are now able to prove the following characterization of homotopically complete or cocomplete dg-categories:
\begin{proposition}\label{proposition:homotopically-complete}
    Let $\cat A$ be a dg-category. Then, $\cat A$ is homotopically complete if and only if it is pretriangulated and $H^0(\cat A)$ has arbitrary (small) products.

    Dually, $\cat A$ is homotopically cocomplete if and only if it is pretriangulated and $H^0(\cat A)$ has arbitrary (small) coproducts.
\end{proposition}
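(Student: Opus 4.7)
The plan is to prove the completeness statement directly; the cocompleteness case then follows by dualization, using Remark~\ref{remark:opposite_pretriangulated} together with the fact that weighted homotopy colimits in $\cat A$ are by definition weighted homotopy limits in $\opp{\cat A}$, and that coproducts in $H^0(\cat A)$ correspond to products in $H^0(\opp{\cat A})$.

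For the \emph{only if} direction, if $\cat A$ is homotopically complete then it is pretriangulated by Proposition~\ref{proposition:hocomplete_pretr}. To exhibit small products in $H^0(\cat A)$, given a family $(A_i)_{i\in S}$ I would view $S$ as a discrete $\basering k$-linear dg-category $\cat I$, let $F\colon\cat I\to\cat A$ be the obvious quasi-functor with $F^i\cong\cat A(-,A_i)$, and take the weight $W=\bigoplus_{i\in S}h^i\in\dercomp(\opp{\cat I})$, which is h-projective because $\cat I$ is discrete. By Yoneda,
\[
\RHom_{\cat I}(W,F)\cong\Homdg_{\cat I}(W,F)\cong\prod_i F^i\cong\prod_i\cat A(-,A_i),
\]
and the hypothesized existence of this weighted homotopy limit in $\cat A$ produces $X\in\cat A$ with $\cat A(-,X)\cong\prod_i\cat A(-,A_i)$ in $\dercomp(\cat A)$. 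Passing to $H^0$ and invoking the derived Yoneda embedding \eqref{equation:derivedyonedaembedding} realizes $X$ as the product of the $A_i$ in $H^0(\cat A)$.

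For the \emph{if} direction, assume $\cat A$ is pretriangulated and $H^0(\cat A)$ has small products. Given $F\colon\cat I\to\cat A$ and a weight $W\in\dercomp(\opp{\cat I})$, the task is to show $\RHom_{\cat I}(W,F)$ is quasi-representable. I would choose a semi-free resolution $Q(W)\to W$ with exhaustive cellular filtration $0=Q_{-1}\subset Q_0\subset Q_1\subset\cdots$ whose subquotients $Q_n/Q_{n-1}$ are direct sums of shifted corepresentables $h^{I_\alpha}[k_\alpha]$. Applying $\Homdg_{\cat I}(-,F)$ turns this into a tower in $\rdgm{\cat A}$ with fibres
\[
\Homdg_{\cat I}(Q_n/Q_{n-1},F)\cong\prod_\alpha F^{I_\alpha}[-k_\alpha]\cong\prod_\alpha\cat A(-,\Phi_F(I_\alpha))[-k_\alpha],
\]
each quasi-representable by shifts (from pretriangulation) and the assumed products in $H^0(\cat A)$. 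Inductively, using that cones of maps between quasi-representable dg-modules remain quasi-representable (pretriangulation again), every $\Homdg_{\cat I}(Q_n,F)$ is quasi-representable.

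The main obstacle is passing from the finite stages to the full resolution: $\RHom_{\cat I}(W,F)\cong\Homdg_{\cat I}(Q(W),F)$ is the homotopy inverse limit of the tower $(\Homdg_{\cat I}(Q_n,F))_n$, and such a homotopy limit is not a one-step finite construction. Since the filtration is exhaustive and each inclusion $Q_{n-1}\hookrightarrow Q_n$ is degreewise split, the ordinary inverse limit computes the homotopy inverse limit, and the latter fits into a standard Milnor-type triangle
\[
\holim_n\Homdg_{\cat I}(Q_n,F)\longrightarrow\prod_n\Homdg_{\cat I}(Q_n,F)\xrightarrow{1-\mathrm{shift}}\prod_n\Homdg_{\cat I}(Q_n,F),
\]
expressing the homotopy limit via one product and one cone. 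Both operations preserve quasi-representability under our hypotheses, so $\RHom_{\cat I}(W,F)$ is quasi-representable, completing the proof.
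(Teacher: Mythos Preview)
Your proposal is correct. The \emph{only if} direction is essentially identical to the paper's (your weight $\bigoplus_i h^i$ on a discrete $\cat I$ coincides with the constant weight $c_I$ used there). The \emph{if} direction, however, takes a genuinely different route.

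The paper argues abstractly: it lets $\cat W\subseteq\dercomp(\opp{\cat I})$ be the class of weights $M$ for which $\RHom_{\cat I}(M,F)$ is quasi-representable, checks that $\cat W$ contains the corepresentables and is closed under shifts, cones, and arbitrary direct sums, and then invokes compact generation of $\dercomp(\opp{\cat I})$ by corepresentables to conclude $\cat W$ is everything. Your approach is concretely constructive: you fix a semi-free resolution, prove quasi-representability at each finite stage by induction, and then pass to the limit via a Milnor-type triangle. Both arguments ultimately rest on the same closure properties (products, shifts, cones), but the paper's version avoids the extra analytic step of controlling the inverse limit of the tower.

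Two small points you leave implicit that the paper spells out: first, the fact that products in $H^0(\cat A)$ lift to isomorphisms $\cat A(-,A)\xrightarrow{\sim}\prod_i\cat A(-,A_i)$ in $\dercomp(\cat A)$ (this uses shifts and is needed every time you claim a product of quasi-representables is quasi-representable); second, that the transition maps $\Homdg_{\cat I}(Q_n,F)\to\Homdg_{\cat I}(Q_{n-1},F)$ are degreewise split epimorphisms, which is what makes the strict inverse limit agree with the homotopy inverse limit. Neither is a gap, but both deserve a sentence in a full write-up.
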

\begin{proof}
   Recalling that $\cat A$ is pretriangulated if and only if $\opp{\cat A}$ is pretriangulated, we see the second claim follows from the first by replacing $\cat A$ with its opposite; hence, we concentrate on proving the first claim.

    Let us assume that $\cat A$ is homotopically complete. Thanks to the above Proposition \ref{proposition:hocomplete_pretr}, we know that $\cat A$ is pretriangulated. Let $I$ be a set and let $\{A_i : i \in I\}$ be a family of objects in $\cat A$. We may view $I$ as a discrete category, and let
    \[
    c_I \colon \basering k[I] \to \rdgm{\basering k}
    \]
    be the constant dg-functor, mapping $i \in I$ to $\basering k$ (this is well defined, since $\basering k[I]$ is a dg-category freely generated by a category). We observe that $c_I$ is h-projective, indeed, if $X$ is any left $\basering k[I]$-dg-module, we have a natural isomorphism of complexes:
    \begin{equation} \label{equation:constant_discrete_hproj}
    \Homdg_{\basering k[I]}(c_I, X) \cong \prod_{i \in I} X_i,
    \end{equation}
    then we see that $\Homdg_{\basering k[I]}(c_I, X)$ is acyclic if $X$ is acyclic, being a product of acyclic $\basering k$-dg-modules.
    
    Next, let $F$ the quasi-functor (directly induced by a dg-functor) defined by
    \begin{equation*}
    \begin{split}
    F \colon &\basering k[I] \to \cat A, \\
    i &\mapsto \cat A(-,A_i).
    \end{split}
    \end{equation*}
    We claim that:
    \begin{equation} \label{equation:products_holim}
        \RHom_{\basering k[I]}(c_I, F) \cong \prod_{i \in I} \cat A(-,A_i)
    \end{equation}
    in $\dercomp(\cat A)$. This follows from a similar argument as for the above isomorphism \eqref{equation:constant_discrete_hproj}, combined with the fact that $\RHom$ is identified (up to quasi-isomorphism) with $\Homdg$, since $c_I$ is h-projective. From \eqref{equation:products_holim}, combined with the homotopical completeness hypothesis, we find an object $A \in \cat A$ and an isomorphism
    \[
    \cat A(-,A) \xrightarrow{\sim} \prod_{i \in I} \cat A(-,A_i)
    \]
    in $\dercomp(\cat A)$, from which we immediately deduce an isomorphism
    \[
    H^0(\cat A)(-,A) \xrightarrow{\sim} \prod_{i \in I} H^0(\cat A)(-,A_i),
    \]
    which proves that the product of $\{A_i : i \in I\}$ exists in $H^0(\cat A)$, as we wanted.

    For the converse implication, let us assume that $\cat A$ is pretriangulated and $H^0(\cat A)$ has products. We first see that such products in $H^0(\cat A)$ imply the existence of ``homotopy products'' in $\cat A$, namely, for any set $I$ and any family of objects $\{A_i : i \in I\}$ in $\cat A$, there is an object $A \in \cat A$ and an isomorphism
    \begin{equation} \label{equation:hoproducts}
    \cat A(-,A) \xrightarrow{\sim} \prod_{i \in I} \cat A(-,A_i) \tag{$\ast$}
    \end{equation}
    in $\dercomp(\cat A)$. Indeed, by hypothesis we have an object $A$ and an isomorphism
    \begin{equation} \label{equation:H0products}
    H^0(\cat A(-,A)) \xrightarrow{\sim} \prod_{i \in I} H^0(\cat A(-,A_i)) \tag{$H^0(\ast)$}
    \end{equation}
    of right $H^0(\cat A)$-modules. Since $\cat A$ has shifts, the above isomorphism yields an isomorphism of graded $H^*(\cat A)$-modules:
    \[ 
        H^*(\cat A(-,A)) \xrightarrow{\sim} \prod_{i \in I} H^*(\cat A(-,A_i)). 
    \]
    Hence, thanks to the isomorphism $H^0(\prod_i \cat A(-,A_i)) \cong \prod_i H^0(\cat A(-,A_i))$ and a direct application of the Yoneda lemma, we can lift \eqref{equation:H0products} to an isomorphism \eqref{equation:hoproducts}, as claimed.

    Now, let $F \colon \cat I \to \cat A$ be a quasi-functor and let $W \in \dercomp(\opp{\cat I})$ be a weight. We look for an object $X \in \cat A$ and an isomorphism
    \[
    \cat A(-,X) \xrightarrow{\sim} \RHom_{\cat I}(W,F)
    \]
    in $\dercomp(\cat A)$. We will use that the derived category $\dercomp(\opp{\cat I})$ is compactly generated by the representable dg-modules $\cat I(i,-)$, namely, it is the smallest triangulated subcategory of itself which contains all representables and it is also closed under arbitrary (small) direct sums. Let $\cat W \subseteq \dercomp(\opp{\cat I})$ be the full subcategory of $\dercomp(\opp{\cat I})$ spanned by the following family of objects:
    \[
    \{ M \in \dercomp(\opp{\cat I}) : \RHom_{\cat I}(M,F) \cong \cat A(-,A), \text{in $\dercomp(\cat A)$, for some $A \in \cat A$} \}.
    \]
    We are going to show that $\cat W$ contains the representables $\cat I(i,-)$ for all $i \in \cat I$, it is triangulated and closed under arbitrary direct sums.

    First, let $i \in \cat I$. Then:
    \[
    \RHom_{\cat I}(\cat I(i,-), F) \cong F_i \cong \cat A(-,\Phi_F(i))
    \]
    in $\dercomp(\cat A)$, because $F$ is a quasi-functor. Next, if $M \in \cat W$, we have:
    \[
    \RHom_{\cat I}(M[n],F) \cong \RHom_{\cat I}(M,F)[-n] \cong \cat A(-,A)[-n] \cong \cat A(-,A[-n]),
    \]
    using that $\cat A$ is closed under shifts. Moreover, let
    \[
    M \to M' \to M''
    \]
    be a distinguished triangle in $\dercomp(\opp{\cat I})$, with $M,M' \in \cat W$. Then, we get a distinguished triangle
    \[
    \RHom_{\cat I}(M'', F) \to \RHom_{\cat I}(M',F) \to \RHom_{\cat I}(M, F))
    \]
    in $\dercomp(\cat A)$. Since both $\RHom_{\cat I}(M',F)$ and $\RHom_{\cat I}(M, F)$ are isomorphic to representable $\cat A$-dg-modules in $\dercomp(\cat A)$ and $\cat A$ is pretriangulated, the same is true for $\RHom_{\cat I}(M'', F)$, as claimed. Finally, let $\{M_j : j \in J \}$ be a family of objects in $\cat W$. Then:
    \[
    \RHom_{\cat I}(\bigoplus_j M_j, F) \cong \prod_i \RHom_{\cat I}(M_j, F)
    \]
    in $\dercomp(\cat A)$. Since $\RHom_{\cat I}(M_j, F) \cong \cat A(-,A_j)$ in $\dercomp(\cat A)$ for some $A_j \in \cat A$ for all $j \in J$ and $H^0(\cat A)$ has arbitrary products, we may conclude (using the observation on ``homotopy products'' of the previous paragraph) that $\prod_j \RHom_{\cat I}(M_j, F) \cong \cat A(-, \prod_i A_j)$, as we wanted.
\end{proof}

\subsection{Pretriangulated dg-categories and finite weighted homotopy (co)limits} \label{subsection:pretr_dg_hocolim}

Weighted homotopy limits and colimits can be used to give an interesting characterization of pretriangulated dg-categories in terms of a suitable ``finite'' homotopy (co)completeness. We will restrict our attention to the following specific class of finite diagram categories:
\begin{definition}\label{definition:finite_directed}
    Let $I$ be a finite category, namely, having a finite set of objects and of morphisms. We say that $I$ is a (finite) \emph{direct category} if there is a non-negative integer $n > 0$ and a function  
    \[
    d \colon \operatorname{Ob}(I) \to \{0,\cdots n\}
    \]
    such that, for any non-identity morphism $\alpha \colon i \to j$, we have:
    \[
    d(i) < d(j).
    \]
We denote by $\kat{FinDir}$ the full sub-$2$-category of $\kat{Cat}$ formed by finite direct categories.
\end{definition}

We can prove:
\begin{proposition} \label{proposition:pretr_dgcat_finite_holims}
    Let $\cat A$ be a (non empty) dg-category. Then, the following are equivalent:
    \begin{enumerate}
        \item $\cat A$ is pretriangulated.
        \item For any finite direct category $I$, the dg-category $\cat A$ has all $W$-weighted homotopy limits $\wholim{F}{W}$, where $F \colon \basering k[I] \to \cat A$ is a quasi-functor and $W \in \pretr(\opp{\basering k[I]})$.
        \item For any finite direct category $I$, the dg-category $\cat A$ has all $W$-weighted homotopy colimits $\wholim{F}{W}$, where $F \colon \basering k[I] \to \cat A$ is a quasi-functor and $W \in \pretr(\basering k[I])$.
    \end{enumerate}
\end{proposition}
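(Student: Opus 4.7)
The plan is to establish $(1)\Leftrightarrow(2)$ directly and then deduce $(1)\Leftrightarrow(3)$ by duality, using Remark~\ref{remark:opposite_pretriangulated} together with the observation that the opposite $\opp{I}$ of any finite direct category is again a finite direct category (replace the degree function $d$ by $n-d$); under this duality, $W$-weighted homotopy limits in $\opp{\cat A}$ for $W \in \pretr(\opp{\basering k[I]}) = \pretr(\basering k[\opp I])$ correspond to $W$-weighted homotopy colimits in $\cat A$.

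For $(2)\Rightarrow(1)$, I would appeal to Lemmas~\ref{lemma:shift_holim} and~\ref{lemma:cone_holim}. An $n$-shift of $A \in \cat A$ is the weighted homotopy limit of the quasi-functor $\Psi_A\colon \basering k \to \cat A$ by the weight $\basering k[-n]$, and this weight lies in $\pretr(\basering k) = \pretr(\opp{\basering k[e]})$ as a shift of the representable left $\basering k$-dg-module. A (desuspended) cone of $f\colon A \to B$ is the weighted homotopy limit of $\Psi_f\colon \basering k[[1]] \to \cat A$ by the weight $\Psi_{\mathrm{incl}}$, and the pretriangle~\eqref{equation:weight_is_cone} exhibits $\Psi_{\mathrm{incl}}$ as a cone of a morphism between the representable left $\basering k[[1]]$-dg-modules $\Psi_{1_{\basering k}}$ and $\Psi_{(0\to\basering k)}$; hence $\Psi_{\mathrm{incl}} \in \pretr(\opp{\basering k[[1]]})$. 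Since both $e$ and $[1]$ are finite direct categories, the hypothesis $(2)$ supplies all the required shifts and cones in $\cat A$.

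For $(1)\Rightarrow(2)$, I fix a quasi-functor $F\colon \basering k[I] \to \cat A$ and consider the class
\[
\cat W = \{W \in \pretr(\opp{\basering k[I]}) : \wholim{F}{W} \text{ exists in } \cat A\}.
\]
The goal is to show $\cat W = \pretr(\opp{\basering k[I]})$ by induction on the construction of the pretriangulated hull from representable weights via shifts and cones. For a representable weight $W = \basering k[I](i,-)$, the derived Yoneda lemma~\eqref{equation:derivedYoneda} gives $\RHom_{\basering k[I]}(W,F) \cong F_i \cong \cat A(-,\Phi_F(i))$, so $\wholim{F}{W} = \Phi_F(i) \in \cat A$. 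Closure of $\cat W$ under shifts is immediate: if $\wholim{F}{W} \cong \cat A(-,X)$, then $\RHom(W[n],F) \cong \RHom(W,F)[-n] \cong \cat A(-,X[-n])$, and $X[-n]$ exists in $\cat A$ by pretriangulation.

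The main obstacle is closure of $\cat W$ under cones. Given a morphism $u\colon W \to W'$ in $\pshdg(\opp{\basering k[I]})$ with $W,W' \in \cat W$ and witnessing objects $X,X' \in \cat A$, applying the contravariant $\RHom(-,F)$ to the pretriangle $W \to W' \to \cone(u) \to W[1]$ produces a distinguished triangle in $\dercomp(\cat A)$ which identifies $\RHom(\cone(u),F)$ with the desuspended cone of a morphism $\cat A(-,X') \to \cat A(-,X)$. The derived Yoneda lemma lifts this morphism of representables to a class $[g] \in H^0(\cat A)(X',X)$; choosing a closed degree zero representative $g\colon X' \to X$ and using that $\cat A$ is pretriangulated, we obtain $\cone(g) \in \cat A$ together with a pretriangle realising $\cat A(-,\cone(g))$ as the cone of $\cat A(-,X') \to \cat A(-,X)$ in $\dercomp(\cat A)$. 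Assembling these identifications yields $\wholim{F}{\cone(u)} \cong \cone(g)[-1] \in \cat A$, as required. The delicate point is precisely the interplay between morphisms in $\dercomp(\cat A)$ and those in $H^0(\cat A)$, which is mediated by the derived Yoneda lemma together with the existence of cones in $\cat A$.
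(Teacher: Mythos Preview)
Your proof is correct and follows essentially the same approach as the paper: duality reduces to $(1)\Leftrightarrow(2)$, the implication $(2)\Rightarrow(1)$ invokes Lemmas~\ref{lemma:shift_holim} and~\ref{lemma:cone_holim} with the weights lying in the relevant pretriangulated hulls, and $(1)\Rightarrow(2)$ proceeds by showing the class of admissible weights contains representables and is closed under shifts and cones. The only stylistic difference is that the paper phrases the cone step using the strict $\Homdg$ and literal pretriangles (the weights in $\pretr(\opp{\basering k[I]})$ being h-projective), so that $\Homdg(\cone(u)[-1],F)$ is on the nose the cone of a morphism between quasi-representables, whereas you work in $\dercomp(\cat A)$ and invoke the derived Yoneda lemma to lift the connecting morphism to $H^0(\cat A)$; both routes are valid and lead to the same conclusion.
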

\begin{proof}
    We recall that $\cat A$ is pretriangulated if and only if $\opp{\cat A}$ is pretriangulated. Moreover, it is clear that a finite category $I$ is a direct category if and only if $\opp{I}$ is a direct category. Hence, by duality we only need to prove $(1) \Leftrightarrow (2)$.

    $(2) \Rightarrow (1)$. We know from Lemma \ref{lemma:shift_holim} that, for any object $A \in \cat A$, the $n$-shift $A[n]$ of $A$ ($n \in \mathbb Z$) can be identified with the $\basering k[-n]$-weighted homotopy limit of $\Psi_A$, defined on $\basering k[[0]]=\basering k$, where $[0]$ is the category having a single object and just the identity morphism, which is clearly a finite direct category. The weight $\basering k[-n]$ clearly lies in $\pretr(\opp{\basering k})$. We therefore conclude that $\cat A$ has all $n$-shifts, for any $n \in \mathbb Z$. Next, we know from Lemma \ref{lemma:cone_holim} that, given a closed degree $0$ morphism $f$ in $\cat A$, the shifted cone $\cone(f)[-1]$ can be identified with the $\Psi_{\mathrm{incl}}$-weighted homotopy limit of $\Psi_f$, defined on the category $\basering k[[1]]$, where $[1]$ is the category with two objects and a single non-identity arrow $0 \to 1$, which clearly is a finite direct category. Moreover, we see from \eqref{equation:weight_is_cone} that $\Psi_{\mathrm{incl}}$ is the cone of a morphism
    \[
    \Psi_{(0 \to \basering k)} \to \Psi_{1_{\basering k}},
    \]
    and it is immediate to see that actually:
    \begin{align*}
    \Psi_{(0 \to \basering k)} &= \basering k[I](1,-), \\
    \Psi_{1_{\basering k}} &= \basering k[I](0,-),
    \end{align*}
    as left $\basering k[I]$-modules, whence we immediately get that $\Psi_{\mathrm{incl}} \in \pretr(\opp{\basering k[[1]]})$. We conclude that the shifted cone $\cone(f)[-1]$ exists in $\cat A$; this, combined with the existence of all shifts, ensures that indeed $\cat A$ is pretriangulated.

    $(1) \Rightarrow (2)$. Let $I$ be a finite direct category, let $F \colon I \to \cat A$ be a quasi-functor and let $W \in \pretr(\opp{\basering k[I]})$. We want to prove that the right $\cat A$-dg-module
    \[
    \Homdg_{\basering k[I]}(W,F)
    \]
    is quasi-representable. We let
    \[
    \mathcal F = \{W \in \pretr(\opp{\basering k[I]}) : \Homdg_{\basering k[I]}(W,F) \text{ is quasi-representable}\}.
    \]
    By definition of the pretriangulated hull $\pretr(\opp{\basering k[I]})$, we will conclude if we show that $\mathcal F$ contains all representable left $\basering k[I]$-dg-modules and is closed under shifts and cones. If $W=\basering k[I](i,-)$ is representable, we have $\Homdg_{\basering k[I]}(W,F) \cong F_i$, which is quasi-representable since $F$ is a quasi-functor. Next, if $W \in \mathcal F$, then $W[n]$ satisfies:
    \[
    \Homdg_{\basering k[I]}(W[n],F) \cong \Homdg_{\basering k[I]}(W,F)[-n],
    \]
    which is quasi-representable, since $\Homdg_{\basering k[I]}(W,F)$ is quasi-representable and $\cat A$ is pretriangulated. Finally, if $f \colon W \to W'$ is a closed degree $0$ morphism in $\mathcal F$, then $\Homdg_{\basering k[I]}(\cone(f)[-1],F)$ sits in the following pretriangle:
    \[
    \Homdg_{\basering k[I]}(W',F) \xrightarrow{f^*} \Homdg_{\basering k[I]}(W,F) \to \Homdg_{\basering k[I]}(\cone(f)[-1],F).
    \]
    Since the first two objects of this pretriangle are quasi-representable and $\cat A$ is pretriangulated, so is the third. We obtain that $\cone(f)[-1] \in \mathcal F$, which we may combine with closure under shifts to conclude that indeed $\mathcal F$ is closed under shifts and cones.
    
\end{proof}
\subsection{Homotopy Kan extensions}

Again, by assumption, any dg category will be h-projective or will implicitly be replaced by an h-projective resolution.
\begin{definition}
    Let $u \colon \cat A \to \cat B$ be a quasi-functor, and let $\cat C$ be a dg-category which admits all shifts of objects (e.g. $\cat C$ is pretriangulated). We have a \emph{restriction} functor
\begin{equation}
\begin{split}
u^* \colon \QFun(\cat B, \cat C) &\to \QFun(\cat A, \cat C), \\
u^*(G) &= G \circ u.
\end{split}
\end{equation}

Let $F \colon \cat A \to \cat C$ be a quasi-functor. A \emph{right homotopy Kan extension of $F$ along $u$} is a quasi-functor
\[
u_*(F) \colon \cat B \to \cat C
\]
together with an isomorphism
\begin{equation} \label{equation:righthoKan}
\QFun(\cat A, \cat C)(u^*(-),F) \cong \QFun(\cat B, \cat C)(-,u_*(F)).
\end{equation}
Dually, a \emph{left homotopy Kan extension of $F$ along $u$} is a quasi-functor
\[
u_{!}(F) \colon \cat B \to \cat C
\]
together with an isomorphism
\begin{equation} \label{equation:lefthoKan}
\QFun(\cat A, \cat C)(u_!(F),-) \cong \QFun(\cat B, \cat C)(F,u_*(-)).
\end{equation}
Clearly, $u_*(F)$ and $u_!(F)$ are uniquely determined in $\QFun(\cat B, \cat C)$, if they exist, and they will be called \emph{the} right or left Kan extensions of $F$ along $u$.
\end{definition}
\begin{remark}
    We required $\cat C$ to have shifts in the above definition, so that the above \eqref{equation:righthoKan} and \eqref{equation:lefthoKan} actually lift to quasi-isomorphisms
    \begin{align*}
        \RHom(\cat A, \cat C)(u^*(-),F) &\cong \RHom(\cat B, \cat C)(-,u_*(F)), \\
        \RHom(\cat A, \cat C)(u_!(F),-) &\cong \RHom(\cat B, \cat C)(F,u_*(-)),
    \end{align*}
    where $\RHom$ here denotes the dg-category of quasi-functors, cf.\ Remark \ref{remark:RHom_dgcat}. We may also drop the requirement that $\cat C$ has shifts, but then we have to replace $\QFun(\cat A, \cat C)$ and $\QFun(\cat B, \cat C)$ with corresponding \emph{graded categories}. In any case, for all the purposes of this work, $\cat C$ will be at least pretriangulated.
\end{remark}
\begin{remark}
If $u_*(F)$ or dually $u_!(F)$ exists for all quasi-functors $F \colon \cat A \to \cat C$, we obtain a functor
\[
u_* \colon \QFun(\cat A, \cat C) \to \QFun(\cat B, \cat C),
\]
which is right adjoint to the restriction $u^*$, or dually
\[
u_! \colon \QFun(\cat A, \cat C) \to \QFun(\cat B, \cat C),
\]
which is left adjoint to the restriction $u^*$. 
\end{remark}

It turns out that we can always define the right Kan extension as a bimodule. Indeed, if $F \colon \cat A \to \cat C$ is a quasi-functor, we may define
\begin{equation}
u_*(F) = \RHom_{\cat A}(u,F) \in \dercomp(\cat B, \cat C).
\end{equation}
For any quasi-functor $G \colon \cat B \to \cat C$, we have the derived tensor-hom adjunction \eqref{equation:derivedtensorhomdgm}:
\[
\RHom_{\cat A, \cat C}(u \lotimes_{\cat B} G, F) \cong \RHom_{\cat B, \cat C}(G, \RHom_{\cat A}(u,F)),
\]
from which we immediately deduce, taking $H^0$ and recalling that $u \lotimes_{\cat B} G$ is indeed the composition of quasi-functors $G \circ u$ (see also Remark \ref{remark:RHom_dgcat}):
\[
\QFun(\cat A, \cat C)(G \circ u,F) \cong \dercomp(\cat B,\cat C)(G, \RHom_{\cat A}(u,F)).
\]
Hence, if $\RHom_{\cat A}(u,F) = u_*(F)$ were a quasi-functor, it would be by definition the right homotopy Kan extension of $F$. It turns out that homotopy completeness is the condition ensuring that right homotopy Kan extensions exist:
\begin{proposition} \label{proposition:righthokan_holim}
    Let $u \colon \cat A \to \cat B$ and let $F \colon \cat A \to \cat C$ be quasi-functors between dg-categories. Then, $u_*(F)$ is a quasi-functor if and only if, for all $B \in \cat B$, the weighted homotopy limit
    \[
    \wholim{F}{u^B}
    \]
    exists in $\cat C$. In that case, the induced functor $\Phi_{u_*(F)} \colon H^0(\cat B) \to H^0(\cat C)$ is given on objects by
    \[
    \Phi_{u_*(F)}(B) = \wholim{F}{u^B}.
    \]
    In this case, the right homotopy Kan extension $u_*(F)$ of $F$ along $u$ exists. 

    In particular, if $\cat C$ is homotopy complete, all right homotopy Kan extensions always exist.
\end{proposition}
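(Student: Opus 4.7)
The plan is to reduce everything to a direct evaluation of the candidate bimodule $u_*(F) = \RHom_{\cat A}(u, F) \in \dercomp(\cat B, \cat C)$ at each object $B \in \cat B$. Using the parametrized derived hom from \eqref{equation:derivedhomdgmod} together with the Einstein index convention, I would first establish, for $B \in \cat B$ and $C \in \cat C$, a natural isomorphism
\[
(u_*(F))_B^C \cong \RHom_{\cat A}(u^B, F^C),
\]
where $u^B$ and $F^C$ are the left $\cat A$-dg-modules obtained by fixing the $\cat B$- and $\cat C$-parameters respectively. Pulling the $\cat C$-variable back out of the inner Hom then yields an isomorphism in $\dercomp(\cat C)$
\[
(u_*(F))_B \cong \RHom_{\cat A}(u^B, F) = \wholim{F}{u^B},
\]
where the right-hand side is by definition the weighted homotopy limit of $F$ with weight $u^B \in \dercomp(\opp{\cat A})$.

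With this identification in hand, the main equivalence follows formally: $u_*(F)$ is a quasi-functor precisely when each $(u_*(F))_B$ is quasi-representable in $\dercomp(\cat C)$, which by the above coincides with saying that each $\wholim{F}{u^B}$ admits a representing object in $\cat C$, i.e.\ exists as a weighted homotopy limit in $\cat C$. The formula $\Phi_{u_*(F)}(B) \cong \wholim{F}{u^B}$ is then tautological, since both sides are \emph{the} representing object, uniquely determined up to isomorphism in $H^0(\cat C)$. The final ``in particular'' clause is automatic from the definition of homotopical completeness.

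For the universal property of $u_*(F)$ as a right homotopy Kan extension, I would observe that the bimodule identity $u_*(F) = \RHom_{\cat A}(u, F)$ holds regardless of quasi-representability, so the derived tensor-hom adjunction \eqref{equation:derivedtensorhomdgm} directly gives, for any quasi-functor $G \colon \cat B \to \cat C$,
\[
\RHom_{\cat A, \cat C}(u \lotimes_{\cat B} G, F) \cong \RHom_{\cat B, \cat C}(G, u_*(F)).
\]
Using that $u \lotimes_{\cat B} G \cong G \circ u$ (see \eqref{equation:quasifunctor_compose_dgfunctor} and Remark \ref{remark:RHom_dgcat}), taking $H^0$ and restricting to the quasi-functor subcategories then yields the defining adjunction \eqref{equation:righthoKan} as soon as $u_*(F)$ genuinely lies in $\QFun(\cat B, \cat C)$, which is precisely our hypothesis. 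The main obstacle in this plan is making the Fubini-type interchange $(u_*(F))_B^C \cong \RHom_{\cat A}(u^B, F^C)$ rigorous and compatible with the termwise h-projective/h-injective resolutions that enter \eqref{equation:derivedhomdgmod}; once this bookkeeping is settled, everything else is a clean translation between quasi-representability of a bimodule and existence of a weighted homotopy limit.
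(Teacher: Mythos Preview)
Your proposal is correct and follows exactly the approach the paper takes; the paper's own proof is the single line ``Immediate, from the definition of homotopy weighted limit and from the definition of homotopy complete dg-category,'' and you have simply unpacked what that sentence means. The Fubini-type identification $(u_*(F))_B \cong \RHom_{\cat A}(u^B,F)$ that you flag as the main obstacle is in fact built into the definition of the parametrized derived hom \eqref{equation:derivedhomdgmod}, since the termwise resolution used there is precisely what makes evaluation at a fixed $B$ commute with the $\RHom_{\cat A}$; no additional work is needed. Likewise, the universal property via the derived tensor--hom adjunction is already spelled out in the paragraph immediately preceding the proposition, so your final step just repeats that discussion.
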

\begin{proof}
    Immediate, from the definition of homotopy weighted limit and from the definition of homotopy complete dg-category.
\end{proof}
Left homotopy Kan extensions can be obtained by dualizing right homotopy Kan extensions. Indeed, let $u \colon \cat A \to \cat B$ and $F \colon \cat A \to \cat C$ be quasi-functors. We take their opposites:
\begin{align*}
    \opp{u} \colon \opp{\cat A} &\to \opp{\cat B}, \\
    \opp{F} \colon \opp{\cat A} & \to \opp{\cat C}.
\end{align*}
If $u_!(F)$ exists as a quasi-functor, we have an isomorphism
\[
\QFun(\cat B, \cat C)(u_!(F),-) \cong \QFun(\cat A, \cat C)(F,u^*(-)).
\]
With opposites, this is equivalent to
\[
\QFun(\opp{\cat B}, \opp{\cat C})(-, \opp{u_!(F)}) \cong \QFun(\opp{\cat A}, \opp{\cat C})((\opp{u})^*(-), \opp{F}).
\]
This implies that:
\begin{equation}
    \opp{u_!(F)} \cong (\opp{u})_*(\opp{F}) \colon \opp{\cat B} \to \opp{\cat C}.
\end{equation}
Hence:
\begin{proposition} \label{proposition:lefthokan_hocolim}
   Let $u \colon \cat A \to \cat B$ and let $F \colon \cat A \to \cat C$ be quasi-functors between dg-categories. Then, the left homotopy Kan extension $u_!(F)$ of $F$ along $u$ exists if and only if, for all $B \in \cat B$, the weighted homotopy colimit
    \[
    \whocolim{F}{(\opp{u})^B}
    \]
    exists in $\cat C$. In that case, the induced functor $\Phi_{u_!(F)} \colon H^0(\cat B) \to H^0(\cat C)$ is given on objects by
    \[
    \Phi_{u_!(F)}(B) = \whocolim{F}{(\opp{u})^B}.
    \]
    In this case, the right homotopy Kan extension $u_*(F)$ of $F$ along $u$ exists. 

    In particular, if $\cat C$ is homotopy cocomplete, all left homotopy Kan extensions always exist.
\end{proposition}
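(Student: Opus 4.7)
The strategy is pure duality: translate the statement across the opposite operation and quote Proposition \ref{proposition:righthokan_holim}. Concretely, I would start from the identity $\opp{u_!(F)} \cong (\opp{u})_*(\opp{F})$ established just before the proposition. This reduces the existence question for $u_!(F)$ to the existence question for the right homotopy Kan extension $(\opp{u})_*(\opp{F}) \colon \opp{\cat B} \to \opp{\cat C}$, because $\opp{(-)}$ is an involution on quasi-functors (up to isomorphism) and sends quasi-functors to quasi-functors.

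Next I would apply Proposition \ref{proposition:righthokan_holim} to the pair $(\opp{u}, \opp{F})$: this Kan extension exists as a quasi-functor if and only if, for every object $B \in \opp{\cat B}$ (i.e.\ every object of $\cat B$), the weighted homotopy limit $\wholim{\opp{F}}{(\opp{u})^B}$ exists in $\opp{\cat C}$. The definition of a weighted homotopy colimit (given right after the definition of weighted homotopy limit) says precisely that the existence of this homotopy limit in $\opp{\cat C}$ is the same thing as the existence of $\whocolim{F}{(\opp{u})^B}$ in $\cat C$. Passing back across the duality $\opp{(-)}$, this gives the desired criterion for the existence of $u_!(F)$.

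For the description of the induced functor on homotopy categories, I would apply $H^0$ to the isomorphism $\opp{u_!(F)} \cong (\opp{u})_*(\opp{F})$, using the compatibility $H^0(\opp{G}) \cong \opp{H^0(G)}$ recorded earlier. By Proposition \ref{proposition:righthokan_holim} the functor $\Phi_{(\opp{u})_*(\opp{F})} \colon H^0(\opp{\cat B}) \to H^0(\opp{\cat C})$ acts on objects by $B \mapsto \wholim{\opp{F}}{(\opp{u})^B}$; opposing back then gives $\Phi_{u_!(F)}(B) = \whocolim{F}{(\opp{u})^B}$ as claimed.

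Finally, the last sentence (homotopy cocompleteness of $\cat C$ implies the existence of all left homotopy Kan extensions) is immediate from the definition of a homotopically cocomplete dg-category once the pointwise criterion above has been established. I do not expect any genuine obstacle: the only point requiring a tiny bit of care is to ensure that applying $\opp{(-)}$ preserves the property of being a quasi-functor and that the direction of the weight (an object of $\dercomp(\cat I)$ versus $\dercomp(\opp{\cat I})$) is correctly tracked when swapping $\cat B$ with $\opp{\cat B}$, which is exactly what was set up in the discussion preceding the proposition.
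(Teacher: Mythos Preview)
Your proposal is correct and follows exactly the paper's approach: the paper establishes the identity $\opp{u_!(F)} \cong (\opp{u})_*(\opp{F})$ in the discussion immediately preceding the proposition and then states the result with a ``Hence:'', leaving the dualization of Proposition~\ref{proposition:righthokan_holim} implicit. Your write-up simply makes that dualization explicit, including the unwinding of the definition of weighted homotopy colimit and the passage through $H^0$, so there is nothing to add.
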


\section{The derivator associated to a dg-category}\label{section:main result}

Let $I$ and $J$ be small categories, and let $u \colon I \to J$ be a functor. We have a unique induced $\basering k$-linear functor
\[
\basering k[u] = u \colon \basering k[I] \to \basering k[J]
\]
between the free $\basering k$-linear categories generated by $I$ and $J$. Moreover, we shall view $\basering k[I]$ and $\basering k[J]$ as dg-categories concentrated in degree $0$.
\begin{lemma}
Let $I$ be a small category. Then, the dg-category $\basering k[I]$ is h-projective, namely, it has h-projective hom-complexes.
\end{lemma}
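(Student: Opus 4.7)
The plan is to unpack the hom-complexes of $\basering k[I]$ as free $\basering k$-modules sitting in degree $0$, and then verify directly from the definition that such complexes are h-projective.

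First, I would recall the concrete description of the free $\basering k$-linear category: for objects $i, j \in I$, we have
\[
\basering k[I](i,j) = \basering k^{(I(i,j))},
\]
the free $\basering k$-module on the set of morphisms $I(i,j)$, viewed as a dg-module concentrated in cohomological degree $0$. The lemma therefore reduces to the following claim: for any set $S$, the complex $\basering k^{(S)}$ concentrated in degree $0$ is an h-projective $\basering k$-dg-module.

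To prove this claim, I would compute $\Homdg_{\basering k}(\basering k^{(S)}, X)$ for an arbitrary complex $X$. Since $\basering k^{(S)}$ is concentrated in degree $0$ and free on $S$, a graded $\basering k$-linear map $\basering k^{(S)} \to X$ of degree $n$ is the same as a function $S \to X^n$, so
\[
\Homdg_{\basering k}(\basering k^{(S)}, X) \cong \prod_{s \in S} X
\]
as complexes of $\basering k$-modules (the differential is induced termwise by $d_X$ because $\basering k^{(S)}$ has zero differential). If $X$ is acyclic, then every factor is acyclic, and since arbitrary products are exact in $\Mod(\basering k)$, the product $\prod_{s \in S} X$ is acyclic as well. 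Hence $\Homdg_{\basering k}(\basering k^{(S)}, -)$ preserves acyclic dg-modules, which is precisely the h-projectivity of $\basering k^{(S)}$.

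The only nontrivial ingredient is the exactness of arbitrary products in $\Mod(\basering k)$, which is standard (products commute with taking cohomology when the terms are modules over a ring); everything else is a direct computation with the definitions. Applying the claim to each hom-complex $\basering k[I](i,j) = \basering k^{(I(i,j))}$ then gives that $\basering k[I]$ is h-projective as a dg-category.
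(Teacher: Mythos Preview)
Your proof is correct and follows the same approach as the paper: both identify $\basering k[I](i,j)$ with the free $\basering k$-module on $I(i,j)$ concentrated in degree $0$ and conclude h-projectivity from that. The paper's proof stops at ``free, hence h-projective,'' whereas you spell out the computation $\Homdg_{\basering k}(\basering k^{(S)},X)\cong\prod_{s\in S}X$ and invoke exactness of products in $\Mod(\basering k)$; this is a valid and slightly more explicit justification of the same fact.
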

\begin{proof}
Let $i,i'$ be objects of $I$. By definition, $\basering k[I](i,i')$ is the free $\basering k$-module generated by $I(i,i')$. Hence, it is h-projective.
\end{proof}
\begin{remark}
The above lemma, albeit simple, has the relevant consequence that we don't have to derive tensor products of the form $\basering k[I] \otimes \cat A$, for any dg-category $\cat A$.
\end{remark}

\subsubsection*{Restriction of quasi-functors along $u \colon \basering I \to \basering J$}

\begin{definition}
    Let $X \colon \basering k[J] \to \cat A$ be a quasi-functor. Then, we define a quasi-functor $u^*(X) \colon \basering k[I] \to \cat A$ as follows:
    \[
    u^*(X)_i^A = X_{u(i)}^A.
    \]

    Moreover, let $u,v \colon I \to J$ be functors, and let $\alpha \colon u \Rightarrow v$ be a natural transformation. We have an induced morphism of quasi-functors
    \[
    \alpha^* \colon u^*(X) \to v^*(X),
    \]
    defined in the obvious way.
\end{definition}

\subsection{The main result}

\begin{theorem} \label{theorem:derivator_dg}
Let $\cat A$ be a dg-category. We define the following $2$-functor:
\begin{equation} \label{equation:derivatordg_mainformula}
    \begin{split}
        \derivator_{\cat A} \colon \opp{\kat{Cat}} & \to \kat{CAT}, \\
        I & \mapsto \QFun(\basering k[I], \cat A).
    \end{split}
\end{equation}
Then, $\derivator_{\cat A}$ is a prederivator.

Moreover, if $\cat A$ is homotopically complete and cocomplete, $\derivator_{\cat A}$ is a stable derivator.
\end{theorem}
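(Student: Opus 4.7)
The plan is to verify in turn the prederivator structure, the axioms (Der1)--(Der5), pointedness, and stability, leaning heavily on the theory of quasi-functors and homotopy Kan extensions developed above.

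First, the $2$-functoriality of $\derivator_{\cat A}$ is essentially a bookkeeping exercise: restriction along $u \colon \basering k[I] \to \basering k[J]$ is a strict composition of dg-bimodules via \eqref{equation:quasifunctor_compose_dgfunctor}, hence strictly associative and unital. Axiom (Der1) reduces to the observation that the free $\basering k$-linear category functor preserves coproducts (a coproduct in $\Cat$ yields a disjoint union of dg-categories with no cross-morphisms), so that a quasi-functor out of such a disjoint union splits componentwise; the empty case gives $\basering k[\emptyset]$, and $\QFun(\basering k[\emptyset], \cat A)$ is trivially terminal. Axiom (Der2) is immediate because morphisms in $\QFun(\basering k[I], \cat A)$ are morphisms in $\dercomp(\basering k[I], \cat A)$, isomorphisms in the latter are termwise quasi-isomorphisms of bimodules, and termwise quasi-representability identifies this with the evaluationwise isomorphism condition in $H^0(\cat A) \simeq \derivator_{\cat A}(e)$.

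Axiom (Der3) is then a direct corollary of Propositions~\ref{proposition:righthokan_holim} and~\ref{proposition:lefthokan_hocolim}: homotopy completeness and cocompleteness of $\cat A$ guarantee the existence of all right and left homotopy Kan extensions along any quasi-functor, in particular along each $\basering k$-linearization $\basering k[I] \to \basering k[J]$. For axiom (Der4), I would identify the pointwise values at $j \in J$ in two ways: on one side the weighted homotopy colimit (resp.\ limit) with weight $\basering k[J](u(-),j)$ (resp.\ $\basering k[J](j,u(-))$) coming from the definition, and on the other the unweighted hocolim (resp.\ holim) over the slice category $(u/j)$ (resp.\ $(j/u)$) predicted by the axiom. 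Using $p_! \dashv p^*$ in the derived tensor-hom adjunction, this reduces to producing natural isomorphisms of right $\basering k[I]$-modules
\[
p_!(\basering k_{(u/j)}) \cong \basering k[J](u(-),j), \qquad q_!(\basering k_{(j/u)}) \cong \basering k[J](j,u(-)),
\]
where $p,q$ are the slice-category projections. Since both sides are h-projective and concentrated in degree zero, this is a classical non-derived computation via the pointwise Kan extension formula, with the explicit bijection sending $(i', f, g \colon i \to i')$ to $f \circ u(g)$. Identifying the canonical Beck--Chevalley transformation of (Der4) with this isomorphism is then a routine diagram chase.

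Pointedness is immediate: by Proposition~\ref{proposition:hocomplete_pretr}, $\cat A$ is pretriangulated, so $H^0(\cat A) \simeq \derivator_{\cat A}(e)$ admits a zero object. For (Der5), I would combine the internal-hom adjunction \eqref{equation:RHom_dgcat} with the h-projectivity of $\basering k[I]$ and $\basering k[[1]]$ (so that $\basering k[I\times [1]] \cong \basering k[I] \otimes \basering k[[1]]$ is already a correct derived tensor) to get $\derivator_{\cat A}(I \times [1]) \cong \QFun(\basering k[[1]], \RHom(\basering k[I], \cat A))$; since $\RHom(\basering k[I], \cat A)$ is pretriangulated, every arrow in its $H^0$ lifts to a closed degree $0$ bimodule morphism and every commuting square in $H^0$ lifts coherently, giving the essential surjectivity and fullness of the comparison to $\derivator_{\cat A}(I)^{[1]}$. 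Finally, for stability, both classes of squares in $\derivator_{\cat A}(\square)$ can be characterized, via the pointwise formulas of (Der4) together with Lemmas~\ref{lemma:cone_holim} and~\ref{lemma:cone_hocolim}, as the squares whose total cofiber (resp.\ total fiber) vanishes in $H^0(\cat A)$. In the triangulated structure on $H^0(\cat A)$, cones and fibers agree up to a shift, so these two vanishing conditions are equivalent, and the classes of cartesian and cocartesian squares coincide. The step I expect to be the main obstacle is maintaining strict compatibility between the Beck--Chevalley $2$-cells and the various identifications---the comma-category formula in (Der4), the internal-hom adjunction, and the cone/fiber descriptions---so that the universal properties match on the nose and not merely up to unspecified isomorphism.
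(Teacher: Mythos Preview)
Your overall architecture matches the paper's, and (Der1)--(Der3), (Der5), and pointedness are handled essentially as the paper does. The real issue is your justification of (Der4). You reduce the Beck--Chevalley isomorphism to an identification of weights, $q_!(\basering k_{(j/u)}) \cong \basering k[J](j,u(-))$ as left $\basering k[I]$-modules (and dually for $p$), and then assert that ``both sides are h-projective and concentrated in degree zero,'' so the underived computation suffices. This assertion is false in general. For instance, take $\basering k=\mathbb{F}_2$, let $I$ be the poset $\{0\to 1\leftarrow 0'\}$, $J=\{0\to 1\}$, and $u\colon I\to J$ the functor collapsing $0$ and $0'$ to $0$; then for $j=0$ the module $\basering k[J](0,u(-))$ is the constant functor $\basering k$ on $I$, which is \emph{not} projective (hence not h-projective) over $\basering k[I]$. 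More fundamentally, h-projectivity of the \emph{output} $q_!\basering k$ would not by itself guarantee $\mathbb{L}q_!\basering k\simeq q_!\basering k$; for that you need information about the input or about $q_!$ itself.

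What actually makes the argument work---and what the paper isolates as its key technical lemma---is that $q^*$ preserves h-injectives (Lemma~\ref{lemma:der4_restriction_hinj}), or equivalently that the underived $q_!$ is exact. The latter is immediate from the explicit formula $(q_!X)_i=\bigoplus_{f\in J(j,u(i))}X_{(i,f)}$, a direct sum of components of $X$. With this in hand, your weight comparison becomes legitimate: $\mathbb{L}q_!\basering k\simeq q_!\basering k\cong\basering k[J](j,u(-))$, and the derived tensor-hom adjunction finishes the job. So your route through (Der4) is salvageable, but the specific reason you give has to be replaced by exactly the content of Lemma~\ref{lemma:der4_restriction_hinj}. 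For stability, note that the paper takes a slightly different path than you sketch: rather than characterizing (co)cartesian squares via vanishing of a total (co)fiber, it shows directly that $\Omega\cong[-1]$ using a short exact sequence of weights \eqref{equation:exactness_pullback} and invokes the criterion from \cite{groth-ponto-shulman-mayervietoris}; your approach should also work but would need the characterization of the essential image of $(i_\llcorner)_!$ and $(i_\urcorner)_*$ spelled out.
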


\begin{remark} \label{remark:restriction_elements}
    We have an equivalence
    \[
    H^0(\cat A) \cong \derivator_{\cat A}(e),
    \]
    where $e$ is the terminal category. If $X \colon \basering k[I] \to \cat A$ is a quasi-functor (i.e. an object in $\derivator_{\cat A}(I)$ and $i \in I$ (namely, $i \colon e \to I$), then we will view $i^*(X)$ as an element of $H^0(\cat A)$, identifying it with the quasi-representable right $\cat A$-dg-module given by
    \[
    X_i = X(-,i) \qis \cat A(-,i^*(X))
    \]
\end{remark}

\begin{remark}
Given a dg-category $\cat A$ as above, we can associate to it an $\infty$-category $\cat C$ via the dg-nerve functor \cite[Construction~1.3.1.6]{lurie-higheralgebra}. In principle, this allows us to associate a derivator to the dg-category $\cat A$, as outlined in Example \ref{ex:der} (3). However, the aim of this paper is to provide a more direct connection between dg-categories and derivators with explicit descriptions of the Kan extensions. We will not attempt to prove that the derivator defined here is equivalent to the one arising from the dg-nerve construction, as such a comparison lies beyond the scope of this work.
\end{remark}

If $R$ is a ring, we may consider its derived dg-category $\dercompdg(R)$ and take the associated derivator according to the above formula \eqref{equation:derivatordg_mainformula}. We may also consider the derivator defined by:
\begin{equation} \label{equation:derivatorabelian}
    I \mapsto \dercomp(\Mod(R)^I),
\end{equation}
where $\Mod(R)^I$ denotes the abelian category of functors $I \to \Mod(R)$, see also \cite[Example 1.2 (2)]{groth-derivators}. We may prove the following:
\begin{corollary} \label{corollary:derivator_ring}
    Let $R$ be a ring. Then, the $2$-functors:
    \begin{align*}
        I & \mapsto \QFun(\basering k[I], \dercompdg(R)), \\
        I & \mapsto \dercomp(\Mod(R)^I),
    \end{align*}
    define equivalent derivators.
\end{corollary}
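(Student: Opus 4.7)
The plan is to exhibit a pseudonatural equivalence between the two prederivators by chasing a chain of standard identifications, and then to appeal to both being derivators. The first is a derivator by Theorem~\ref{theorem:derivator_dg}, since $\dercompdg(R)$ is pretriangulated and $H^0(\dercompdg(R)) = \dercomp(R)$ has all small products and coproducts, so Proposition~\ref{proposition:homotopically-complete} makes $\dercompdg(R)$ homotopically complete and cocomplete. The second is the well-known homotopy derivator of the projective model structure on chain complexes in $\Mod(R)^I$ with pointwise quasi-isomorphisms. Since an equivalence of prederivators between two derivators automatically preserves all Kan extensions (by essential uniqueness of adjoints), equivalence as derivators will follow from equivalence as prederivators.

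For each small category $I$, I would combine the identification \eqref{equation:qfun_dercatdg_bimod} with the fact that $\basering k[I]$ is h-projective (hence h-flat) to obtain
\[
\QFun(\basering k[I], \dercompdg(R)) \;\cong\; \dercomp(\basering k[I], R) \;=\; \dercomp(R \otimes \opp{\basering k[I]}),
\]
with no h-flat resolution needed in the second identity. Next, using $\pshdg(R \otimes \opp{\basering k[I]}) = \Fundg(\opp{R} \otimes \basering k[I], \rdgm{\basering k})$, the symmetry of $\otimes$, and the tensor-hom adjunction \eqref{equation:dgtensorhom}, this identifies with $\Fundg(\basering k[I], \rdgm{R})$. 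Since $\basering k[I]$ is concentrated in degree $0$ with zero differentials, a dg-functor $\basering k[I] \to \rdgm{R}$ is the same as a $\basering k$-linear functor $\basering k[I] \to Z^0(\rdgm{R}) = \operatorname{Ch}(\Mod(R))$, and by freeness just a functor $I \to \operatorname{Ch}(\Mod(R))$, i.e.\ an object of $\operatorname{Ch}(\Mod(R)^I)$. Quasi-isomorphisms of dg-bimodules are termwise in both arguments, hence correspond to pointwise quasi-isomorphisms of complexes over $I$, which coincide with quasi-isomorphisms in the abelian category $\Mod(R)^I$; so localization yields
\[
\dercomp(R \otimes \opp{\basering k[I]}) \;\simeq\; \dercomp(\Mod(R)^I).
\]

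Composing these equivalences gives the object-level equivalence $\derivator_{\dercompdg(R)}(I) \simeq \dercomp(\Mod(R)^I)$. To upgrade to a pseudonatural equivalence of $2$-functors I would verify that for any functor $u \colon I \to J$ the three avatars of restriction agree: precomposition with the dg-functor $\basering k[u]$ on quasi-functors, restriction of scalars along $\id_R \otimes \opp{\basering k[u]}$ on bimodules, and the exact functor $u^* \colon \Mod(R)^J \to \Mod(R)^I$ on functor categories are intertwined by the three identifications above; the analogous compatibility for a natural transformation $\alpha \colon u \Rightarrow v$ is obtained by the same unwinding. I expect the main (but mild) obstacle to be keeping the derived tensors strict enough to compare these three restrictions, but the h-projectivity of $\basering k[I]$ and the fact that each $\basering k[u]$ is an honest dg-functor ensure that no nontrivial resolutions intervene, keeping the verification essentially routine.
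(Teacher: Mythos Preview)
Your proposal is correct and takes essentially the same approach as the paper: both use the identification \eqref{equation:qfun_dercatdg_bimod} to pass from quasi-functors to $\dercomp(\opp{\basering k[I]} \otimes R)$, and then identify this with $\dercomp(\Mod(R)^I)$ by recognizing dg-modules over $\opp{\basering k[I]} \otimes R$ as complexes in $\Mod(R)^I$. The paper phrases the second step slightly differently (identifying the abelian categories $\Mod(R)^I \cong \Mod(\opp{\basering k[I]} \otimes R)$ first and then deriving), and it only asserts naturality in $I$ rather than spelling out the pseudonaturality check you sketch, but the argument is the same in substance.
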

\begin{proof}
    We observe that:
    \begin{align*}
    \Mod(R)^I &= \Fun(I,\Mod(R)) \\
    &\cong \Fun_{\basering k}(\basering k[I],\Mod(R)) \\
    &\cong \Fun_{\basering k}(\basering k[I] \otimes \opp{R}, \Mod(\basering k)) \\
    &=\Mod(\opp{\basering k[I]} \otimes R),
    \end{align*}
    from which we immediately deduce that:
    \[
    \dercomp(\Mod(R)^I) \cong \dercomp(\opp{\basering k[I]} \otimes R),
    \]
    naturally in $I \in \Cat$.
    
    Moreover, we deduce from \eqref{equation:qfun_dercatdg_bimod} the following equivalence, also natural in $I \in \Cat$:
    \begin{align*}
    \QFun(\basering k[I], \dercompdg(R)) &\cong \dercomp(\basering k[I], R) \\
    &= \dercomp(\opp{\basering k[I]} \otimes R).
    \end{align*}
    We thus conclude.
\end{proof}
The above corollary can be viewed as a ``bridge'' between the derivator associated to a dg-category and the derivator associated to the derived category of a ring.

\subsection{The proof of Theorem \ref{theorem:derivator_dg}}

The proof of Theorem \ref{theorem:derivator_dg} is given in the following subsections.

\subsubsection{Der1}
We need to show compatibility with coproducts.
\begin{proposition}
Let $\{I_i : i \in \mathfrak I\}$ be a family of small categories. Then, the natural functor
\[
\derivator_{\cat A}(\coprod_i I_i) \to \prod_i \derivator_{\cat A}(I_i)
\]
is an equivalence.
\end{proposition}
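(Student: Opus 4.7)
The plan is to reduce the statement to a decomposition property of the category of $\basering k[I]$-$\cat A$-dg-bimodules, and then observe that the defining condition of a quasi-functor is compatible with this decomposition.

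First I would record the structural fact that the free $\basering k$-linear category functor $I\mapsto \basering k[I]$ (viewed as landing in dg-categories concentrated in degree $0$) preserves coproducts: $\basering k[\coprod_i I_i] \cong \coprod_i \basering k[I_i]$, where the coproduct on the right is the coproduct in the category of small dg-categories. In particular, if each $\basering k[I_i]$ is h-projective (which is already established), then so is $\coprod_i \basering k[I_i]$, so all tensor products can be computed without further resolutions.

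Second, I would use that for any dg-category $\cat B$, a dg-functor out of a coproduct of dg-categories is the same datum as a family of dg-functors, so
\[
\Fundg\Bigl(\opp{\cat A}\otimes \coprod_i \basering k[I_i],\, \rdgm{\basering k}\Bigr) \;\cong\; \prod_i \Fundg(\opp{\cat A}\otimes \basering k[I_i],\, \rdgm{\basering k}).
\]
This gives an isomorphism of dg-categories $\pshdg(\coprod_i \basering k[I_i], \cat A)\cong \prod_i \pshdg(\basering k[I_i], \cat A)$. Since both acyclicity and the class of quasi-isomorphisms are defined pointwise on objects, and the objects of $\basering k[\coprod_i I_i]$ are exactly the disjoint union of the objects of the $\basering k[I_i]$, this isomorphism descends to an equivalence
\[
\dercomp(\basering k[\coprod_i I_i], \cat A) \;\simeq\; \prod_i \dercomp(\basering k[I_i], \cat A)
\]
by identifying the Verdier quotients (or equivalently by using compatible h-projective resolutions chosen componentwise).

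Third, I would check that under this equivalence, quasi-functors correspond to tuples of quasi-functors. The quasi-representability condition requires that for every object $B$ of $\basering k[\coprod_i I_i]$, the right $\cat A$-dg-module $F_B$ is quasi-representable. But every such object lies in exactly one component $\basering k[I_i]$, and $F_B$ is computed entirely from the $i$-th component of the tuple, so this condition is equivalent to each component being a quasi-functor separately. Hence the equivalence restricts to
\[
\QFun(\basering k[\coprod_i I_i], \cat A) \;\simeq\; \prod_i \QFun(\basering k[I_i], \cat A),
\]
and unwinding the construction one sees this is exactly the canonical comparison induced by restriction along the inclusions $I_j\hookrightarrow \coprod_i I_i$. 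The case of the empty family also yields $\derivator_{\cat A}(\emptyset)\simeq \ast$ as the empty product. The only genuinely non-formal step is the passage from the decomposition at the level of $\pshdg$ to the decomposition at the level of $\dercomp$, but since quasi-isomorphisms are detected pointwise on objects, this is essentially automatic.
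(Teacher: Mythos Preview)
Your argument is correct and follows essentially the same route as the paper: establish the isomorphism $\pshdg(\coprod_i \basering k[I_i],\cat A)\cong\prod_i\pshdg(\basering k[I_i],\cat A)$ from the universal property of the coproduct, descend to derived categories using that quasi-isomorphisms are detected objectwise, and then observe that the quasi-representability condition is checked object by object and hence factors through the product. The only cosmetic difference is that the paper uses the curried form $\Fundg(\coprod_i \basering k[I_i],\rdgm{\cat A})$ rather than distributing the tensor over the coproduct, and is slightly more explicit about invoking the universal property of localization (together with the remark that $H^0$ commutes with products of complexes) for the passage to $\dercomp$.
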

\begin{proof}
First, we observe that the natural dg-functor
\begin{equation} \label{equation:bimod_prod_coprod}
\dgm{\coprod_{i \in I} \basering k[I]}{\cat A} \to \prod_{i \in I} \dgm{\basering k[I_i]}{\cat A}
\end{equation}
is an isomorphism. Indeed, this is just
\[
\Fundg(\coprod_{i \in I} \basering k[I], \rdgm{\cat A}) \xrightarrow{\sim} \prod_{i \in I} \Fundg(\basering k[I_i], \rdgm{\cat A}).
\]

Next, we consider the following diagram:
\[
\begin{tikzcd}[row sep=10ex, column sep=large]
{H^0(\dgm{\coprod_{i \in I} \basering k[I]}{\cat A})} \arrow[d, "\mathrm{loc}"'] \arrow[r, "\sim", bend right] & {\prod_{i \in I}H^0( \dgm{\basering k[I_i]}{\cat A})} \arrow[l, "\sim"', bend right] \arrow[d, "\prod_{i \in I}\mathrm{loc}_i"] \\
{\dercomp(\dgm{\coprod_{i \in I} \basering k[I]}{\cat A})} \arrow[r, dashed, bend right]                        & {\prod_{i \in I}\dercomp(\dgm{\basering k[I_i]}{\cat A}).} \arrow[l, dashed, bend right]                                       
\end{tikzcd}
\]
The upper horizontal arrows are isomorphisms induced by \eqref{equation:bimod_prod_coprod} and its inverse; we remark that products of complexes (hence of dg-bimodules) are exact, and in particular the zeroth cohomology commutes with products. Moreover, these arrows map quasi-isomorphisms in $H^0(\dgm{\coprod_{i \in I} \basering k[I]}{\cat A})$ to products of quasi-isomorphisms in ${\prod_{i \in I}H^0( \dgm{\basering k[I_i]}{\cat A})}$ and vice-versa, by definition; this implies that both lower dashed arrows exist and they are inverse to one another, thanks to the universal property of localizations.

To conclude, we check that the dashed arrows preserve quasi-functors. This follows from the analogous fact regarding the upper isomorphisms, which is straightforward.
\end{proof}

\subsubsection{Der2}
We need to show the following:
\begin{proposition}
    Let $I \in \kat{Cat}$ and let $f \colon X \to Y$ be a morphism in $\derivator_{\cat A}(I)$. Then $f$ is an isomorphism if and only if $i^*(f) \colon i^*(X) \to i^*(Y)$ is an isomorphism in $H^0(\cat A)$ for all $i \in I$.
\end{proposition}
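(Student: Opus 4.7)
The ``only if'' direction is automatic: the restriction $i^* \colon \derivator_{\cat A}(I)\to\derivator_{\cat A}(e)\cong H^0(\cat A)$ is a functor, so it sends isomorphisms to isomorphisms. The real content is in the reverse implication, and the plan is to reduce it to two standard facts: (a) isomorphisms in the derived category of bimodules are detected componentwise, and (b) the derived Yoneda embedding is fully faithful.

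First I would unwind the definition of $\derivator_{\cat A}(I)$. By construction, $\QFun(\basering k[I],\cat A)$ is the full subcategory of $\dercomp(\basering k[I],\cat A)=\dercomp(\basering k[I]\otimes\opp{\cat A})$ on the quasi-functors; note that $\basering k[I]$ is already h-projective (in particular h-flat) by the lemma preceding the theorem, so no resolution on that side is needed. Thus a morphism $f\colon X\to Y$ in $\derivator_{\cat A}(I)$ is a morphism in $\dercomp(\basering k[I]\otimes\opp{\cat A})$, and it is an isomorphism there exactly when it is represented by a termwise quasi-isomorphism of dg-bimodules, i.e.\ when for every $A\in\cat A$ and every $i\in I$ the chain map $X_i^A\to Y_i^A$ is a quasi-isomorphism of $\basering k$-dg-modules.

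Next I would observe that, with the Einstein notation of the paper, the condition ``$X_i^A\to Y_i^A$ is a quasi-isomorphism for every $A$'' is precisely the statement that the induced morphism of right $\cat A$-dg-modules $f_i\colon X_i\to Y_i$ is a quasi-isomorphism, i.e.\ an isomorphism in $\dercomp(\cat A)$. Hence the hypothesis that $f$ is a termwise quasi-isomorphism of bimodules is equivalent to the family of conditions ``$f_i$ is an isomorphism in $\dercomp(\cat A)$ for every $i\in I$''.

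Finally I would translate this into a statement in $H^0(\cat A)$ via Remark \ref{remark:restriction_elements}. Since $X$ and $Y$ are quasi-functors, there are isomorphisms $X_i\qis \cat A(-,i^*(X))$ and $Y_i\qis\cat A(-,i^*(Y))$ in $\dercomp(\cat A)$, and the morphism $f_i$ corresponds under these to $(i^*(f))_*\colon\cat A(-,i^*(X))\to\cat A(-,i^*(Y))$. The derived Yoneda embedding $H^0(\cat A)\hookrightarrow\dercomp(\cat A)$ of \eqref{equation:derivedyonedaembedding} is fully faithful, so $f_i$ is an isomorphism in $\dercomp(\cat A)$ if and only if $i^*(f)$ is an isomorphism in $H^0(\cat A)$. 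Combining this with the previous step yields the ``if'' direction and completes the proof. I do not expect a serious obstacle; the only point requiring care is the bookkeeping between morphisms in $\dercomp(\basering k[I],\cat A)$ and termwise quasi-isomorphisms of bimodules, which is where the h-projectivity of $\basering k[I]$ intervenes.
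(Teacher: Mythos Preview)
Your proposal is correct and follows essentially the same approach as the paper: both arguments reduce to the fact that a morphism of quasi-functors is an isomorphism in $\dercomp(\basering k[I],\cat A)$ precisely when its components $f_i\colon X_i\to Y_i$ are quasi-isomorphisms, and then identify these components with $i^*(f)$ via Remark~\ref{remark:restriction_elements}. The paper's proof is more terse (one sentence for each step), while you spell out the derived Yoneda step explicitly; your closing remark about h-projectivity of $\basering k[I]$ is slightly misplaced --- that hypothesis is needed to avoid resolving when defining $\dercomp(\basering k[I],\cat A)$, but the ``isomorphism iff componentwise quasi-isomorphism'' step itself only uses that acyclicity of bimodules is termwise, via the cone.
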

\begin{proof}
    $f$ is a morphism of quasi-functors, and it is immediate that $f$ is an isomorphism in $H^0(\RHom(\basering k[I], \cat A))$, namely a quasi-isomorphism, if and only if its components $f_i \colon X_i \to Y_i$ are quasi-isomorphisms. But such components are identified with the morphisms $i^*(f)$ in $H^0(\cat A)$, recalling Remark \ref{remark:restriction_elements}.
\end{proof}

\subsubsection{Der3}

Let $u \colon I \to J$ be a functor, and let
\[
u^* \colon \derivator_{\cat A}(J) \to \derivator_{\cat A}(I)
\]
be the restriction functor. We want to show that $u^*$ has both a left and right adjoint. Since $\cat A$ is by assumption homotopically complete and cocomplete, the result follows from the general theory of homotopy Kan extensions, in particular Proposition \ref{proposition:righthokan_holim} and Proposition \ref{proposition:lefthokan_hocolim}.

\subsubsection{Der4}
We consider the following setting. Let $u \colon I \to J$ be a functor between categories, and let $j \in J$ be an object. We recall that the category $j/u$ is defined as follows:
\begin{itemize}
    \item Objects are pairs $(i,f)$ where $f \colon j \to u(i)$ is a morphism in $J$.
    \item A morphism $g \colon (i,f) \to (i',f')$ in $j/u$ is a morphism $g \colon i \to i'$ in $I$ such that $u(g) \circ f = f'$.
\end{itemize}

There is a natural ``projection'' functor
\begin{equation}
\begin{split}
    q \colon j / u & \to I, \\
    (i,f) & \mapsto i.
\end{split}
\end{equation}
Let $\pi \colon j / u \to e$ be the terminal functor, and denote by $j \colon e \to J$ the functor selecting the object $j \in J$. There is a ``base change'' natural transformation
\begin{equation}
    \alpha \colon j \circ \pi \to u \circ q.
\end{equation}

We recall Definition \ref{definition:derivator}. Proving the first part of the axiom (Der4) amounts to checking that the induced morphism
\begin{equation} \label{equation:der41}
    \alpha_* \colon  j^* \circ u_* \to \pi_* \circ q^*
\end{equation}
of functors
\[
 j^* \circ u_*, \pi_* \circ q^* \colon \QFun(I, \cat A) \to \QFun(\basering k[e], \cat A) \cong A
\]
is an isomorphism. The other part of (Der4) is completely dual: we consider the category $u/j$ having pairs $(i,f \colon u(i) \to j)$ as objects, the natural projection $p \colon u/j \to I$, the terminal functor $\pi \colon u/j \to e$ and the natural transformation
\[
\alpha \colon p \circ u \to j \circ \pi.
\]
This induces a morphism
\begin{equation} \label{equation:der42}
    \alpha_! \colon \pi_! \circ p^*  \to j^* \circ u_!.
\end{equation}
of functors
\[
\pi_! \circ p^*, j^* \circ u_! \colon \QFun(I, \cat A) \to \QFun(\basering k[e], \cat A) \cong A
\]
Proving the second part of (Der4) amounts to checking that this is an isomorphism. Fortunately:
\begin{lemma}
    If we prove that \eqref{equation:der41} is an isomorphism for all $u \colon I \to J$ and all $j \in J$, the same claim for \eqref{equation:der42} automatically follows.
\end{lemma}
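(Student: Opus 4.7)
The plan is to reduce the claim about $\alpha_!$ in $\derivator_{\cat A}$ to the already-established claim about $\alpha_*$, by exploiting the opposite construction on dg-categories and quasi-functors. First, I would verify that the assignment $F\mapsto \opp F$ on quasi-functors (recall \eqref{equation:opposite_qfun}) yields an equivalence of $2$-functors
\[
\opp{\derivator_{\cat A}} \;\simeq\; \derivator_{\opp{\cat A}},
\]
given on each $I\in\Cat$ by the equivalence $\opp{\QFun(\basering k[\opp I],\cat A)}\cong \QFun(\basering k[I],\opp{\cat A})$ and compatible with restriction along functors and with whiskering by natural transformations, since $\opp{(\basering k[\opp I])}=\basering k[I]$ and $F\mapsto \opp F$ is strictly compatible with composition and identities up to the involutive isomorphism $\opp{(\opp F)}\cong F$. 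Moreover $\opp{\cat A}$ is again homotopically complete and cocomplete, so $\derivator_{\opp{\cat A}}$ is also a stable derivator candidate, and in particular the first half of (Der4) applies to it.

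Next, I would set up the following dictionary under this equivalence. For a functor $u\colon I\to J$, the left adjoint $u_!$ of $u^*$ in $\derivator_{\cat A}$ is transported to the right adjoint $(\opp u)_*$ of $(\opp u)^*$ in $\derivator_{\opp{\cat A}}$, and the unit/counit of $(u_!\dashv u^*)$ correspond to the counit/unit of $((\opp u)^*\dashv (\opp u)_*)$. For each $j\in J$, the slice $u/j$ is canonically identified with $\opp{(j/\opp u)}$ (where $j/\opp u$ is the coslice of $\opp u$ at $j\in\opp J$), and under this identification the projection $p\colon u/j\to I$ becomes the opposite of the projection $q\colon j/\opp u\to \opp I$, while the terminal functor $\pi\colon u/j\to e$ becomes the opposite of $\pi\colon j/\opp u\to e$.

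With the dictionary in place, I would trace the natural transformation $\alpha\colon up \to j\pi$ (defining $\alpha_!$) through opposites: it becomes the transformation $j\pi\to (\opp u)q$ defining $\alpha_*$ in $\derivator_{\opp{\cat A}}$ for the data $(\opp u,j)$. Consequently, the morphism
\[
\alpha_!\colon \pi_!\circ p^*\longrightarrow j^*\circ u_!
\]
in $\derivator_{\cat A}$ is identified, via $\opp{\derivator_{\cat A}}\simeq \derivator_{\opp{\cat A}}$, with
\[
\alpha_*\colon j^*\circ (\opp u)_*\longrightarrow \pi_*\circ q^*
\]
in $\derivator_{\opp{\cat A}}$. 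Applying the assumed first part of (Der4) to $\derivator_{\opp{\cat A}}$ with the pair $(\opp u\colon \opp I\to \opp J,\;j\in \opp J)$, this latter morphism is an isomorphism, and hence so is $\alpha_!$.

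The main obstacle will be the careful bookkeeping: matching adjunction units and counits across the equivalence $\opp{\derivator_{\cat A}}\simeq \derivator_{\opp{\cat A}}$, and confirming that the canonical transformation $\alpha$ is indeed sent to its analogue on the opposite side rather than to some twist of it. Once this $2$-categorical matching is verified, the lemma becomes a purely formal consequence of applying the already-proven statement to the opposite dg-category.
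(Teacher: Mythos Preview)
Your proposal is correct and takes essentially the same approach as the paper's own proof: the paper simply states that \eqref{equation:der42} is \eqref{equation:der41} applied to $\opp{\cat A}$ and $\opp{u}\colon \opp{I}\to\opp{J}$, which is precisely the duality you spell out in detail. Your explicit identification $\opp{\derivator_{\cat A}}\simeq \derivator_{\opp{\cat A}}$ and the slice/coslice dictionary are exactly the ``direct application of duality'' the paper invokes without elaboration.
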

\begin{proof}
    It is a direct application of duality. Indeed, \eqref{equation:der42} is basically \eqref{equation:der41} applied to $\opp{\cat A}$ and $\opp{u} \colon \opp{I} \to \opp{J}$.
\end{proof}

To prove that \eqref{equation:der42}  is an isomorphism, we first unravel everything. If $F \colon I \to \cat A$ is a quasi-functor, then the morphism
\[
\alpha_*(F) \colon \pi_* \circ q^*(F) \to j^* u_*(F)
\]
is a morphism of right $\cat A$-dg-bimodules. For each $A \in \cat A$, it is a morphism in $\dercomp(\basering k)$:
\begin{equation} \label{equation:der4_unraveled}
    \RHom_{\basering k[j/u]}(\Homdg_{\basering k}(\basering k,\pi(-)), F_{q(-)}^A) \to \RHom_{\basering k[I]}(\basering k[J](j,u(-)), F^A).
\end{equation}
The strategy is to prove that the above morphism is an isomorphism \emph{when we replace the derived hom with the regular hom}. Namely:
\begin{lemma} \label{lemma:der4_underived}
    The natural morphism
    \begin{equation} \label{equation:der4_underived}
    \Homdg_{\basering k[j/u]}(\Homdg_{\basering k}(\basering k,\pi(-)), F_{q(-)}^A) \to \Homdg_{\basering k[I]}(\basering k[J](j,u(-)), F^A).
    \end{equation}
    is an isomorphism.
\end{lemma}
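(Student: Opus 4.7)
My plan is to identify both sides of \eqref{equation:der4_underived} explicitly as the same graded module of compatible families, and then to check that the natural morphism from the statement realizes this identification.

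First, I would unravel the right-hand side. For each $i \in I$, the left $\basering k$-dg-module $\basering k[J](j, u(i))$ is concentrated in degree $0$ and is free on the set $J(j, u(i))$. Using that dg-natural transformations of left $\basering k[I]$-dg-modules are computed as an end, a degree-$n$ cycle of $\Homdg_{\basering k[I]}(\basering k[J](j, u(-)), F^A)$ amounts to a family $(\xi_{i,f})$ with $\xi_{i,f} \in (F^A_i)^n$, indexed by $i \in I$ and $f \colon j \to u(i)$, satisfying the naturality condition
\[
F^A(g)(\xi_{i, f}) = \xi_{i',\, u(g) \circ f}
\]
for every morphism $g \colon i \to i'$ in $I$ and every $f \colon j \to u(i)$. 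The full complex structure (non-cycle elements, differential, $\basering k$-linearity) is obtained in the obvious way.

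Next, I would unravel the left-hand side. The weight $\Homdg_{\basering k}(\basering k, \pi(-))$ is just the constant left $\basering k[j/u]$-dg-module at $\basering k$. Hence the same end description yields that a degree-$n$ cycle of $\Homdg_{\basering k[j/u]}(\Homdg_{\basering k}(\basering k, \pi(-)), F^A_{q(-)})$ is a family $(\eta_{(i,f)})_{(i,f) \in j/u}$ with $\eta_{(i,f)} \in (F^A_i)^n$, satisfying
\[
F^A(g)(\eta_{(i, f)}) = \eta_{(i',\, u(g) \circ f)}
\]
for every morphism $g \colon (i, f) \to (i', u(g) \circ f)$ in the coslice category $j/u$. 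Since the objects of $j/u$ are exactly pairs $(i, f)$ and its morphisms are exactly the $g \colon i \to i'$ in $I$ with target $(i', u(g) \circ f)$, the two sets of compatible families coincide verbatim, and the assignment $\xi_{i,f} \leftrightarrow \eta_{(i,f)}$ defines a bijection of graded modules which is obviously compatible with the differentials.

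The remaining task is to check that this bijection is precisely the natural morphism \eqref{equation:der4_underived}. To do so, I would unwind $\alpha_*$ through the adjunction units and counits: Proposition \ref{proposition:righthokan_holim} identifies $j^* u_*(F)^A$ with $\Homdg_{\basering k[I]}(\basering k[J](j, u(-)), F^A)$ (as $\basering k[J](j, u(-))$ is the bimodule associated to $u$ evaluated at $j$), and identifies $\pi_* q^*(F)^A$ with $\Homdg_{\basering k[j/u]}(\basering k, F^A_{q(-)})$. A direct trace then shows that the base change $\alpha \colon j \circ \pi \to u \circ q$, composed with these identifications, sends a family $(\eta_{(i,f)})$ to the natural transformation whose component at $i$ sends $f \in J(j, u(i))$ to $\eta_{(i,f)}$ --- exactly our bijection. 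The main obstacle is purely bookkeeping: ensuring that the assorted parametrized Homs, their differentials, and the $\basering k$-linearities are kept coherently aligned at every step; once the bookkeeping is in place, the verification is immediate.
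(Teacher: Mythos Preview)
Your proposal is correct and is precisely the ``direct computation'' the paper invokes without detail: you unfold both sides as the same set of compatible families $(\xi_{i,f})_{i\in I,\,f\in J(j,u(i))}$ subject to $F^A(g)(\xi_{i,f})=\xi_{i',\,u(g)\circ f}$, which is exactly what the paper has in mind. There is no methodological difference to discuss; you have simply supplied the content behind the paper's one-line proof.
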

\begin{proof}
    This follows from a direct computation.
\end{proof}

Next, we know that the derived hom can be obtained from the regular hom by taking an h-injective dg-module in the second variable. In other words,
\[
\RHom(F,G) \cong \Homdg(F,G)
\]
if we take $G$ to be h-injective. In \eqref{equation:der4_unraveled}, we can certainly take $F$ to be an h-injective $\basering k[I]$-$\cat A$-dg-bimodule, without loss of generality. Then, we discover that we are actually in a very benign setting:
\begin{lemma}[{compare with \cite[Lemma 1.31]{groth-derivators}}] \label{lemma:der4_restriction_hinj}
    If $F$ is h-injective in \eqref{equation:der4_unraveled}, then:
    \begin{enumerate}
        \item $F^A$ is an h-injective $\basering k[I]$-dg-module for all $A \in \cat A$.
        \item Moreover, $F^A_{q(-)}$ is an h-injective left $\basering k[j/u]$-dg-module.
    \end{enumerate}
\end{lemma}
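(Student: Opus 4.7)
The plan is to prove (1) by a direct tensor-hom argument — essentially the $\cat A$-contravariant dual of the second (h-injective) claim of the lemma preceding Corollary~\ref{corollary:termwiseres} — and to deduce (2) by combining (1) with the observation that $q \colon j/u \to I$ is a discrete opfibration.

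For (1), I fix $A \in \cat A$, pick an arbitrary acyclic left $\basering k[I]$-dg-module $Y$, and show that $\Homdg_{\basering k[I]}(Y, F^A)$ is acyclic. By the (parametrized) Yoneda lemma, $F^A \cong \Homdg_{\cat A}(h_A, F)$ as a left $\basering k[I]$-dg-module, where $h_A = \cat A(-,A) \in \rdgm{\cat A}$. The tensor-hom adjunction \eqref{equation:tensorhomdgm} then yields
\[
\Homdg_{\basering k[I]}(Y, F^A) \cong \Homdg_{\basering k[I], \cat A}(Y \otimes_{\basering k} h_A, F).
\]
Since $\cat A$ is h-flat (all dg-categories here are assumed h-projective), each complex $h_A(A') = \cat A(A', A)$ is h-flat over $\basering k$, so $Y \otimes_{\basering k} h_A$ is termwise acyclic as a bimodule. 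The h-injectivity of $F$ then delivers the desired acyclicity.

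For (2), the key observation is that $q \colon j/u \to I$ is a \emph{discrete opfibration}: the fiber over $i \in I$ is the discrete set $\{(i,f) : f \in J(j, u(i))\}$, and every morphism $g \colon i_0 \to i$ in $I$ admits a unique cocartesian lift $g \colon (i_0, f) \to (i, u(g)\circ f)$ in $j/u$. By part (1), $F^A$ is h-injective in $\ldgm{\basering k[I]}$, and $F^A_{q(-)} = q^* F^A$ is its restriction along the induced dg-functor $\basering k[j/u] \to \basering k[I]$. By the adjunction
\[
\Homdg_{\basering k[j/u]}(Z, q^* F^A) \cong \Homdg_{\basering k[I]}(q_! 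Z, F^A),
\]
it suffices to check that $q_!$ preserves acyclic modules. The discrete opfibration property makes the inclusion of the fiber $q^{-1}(i) \hookrightarrow (q/i)$ final — every object of $(q/i)$ admits a unique morphism to an object in the fiber, given by the cocartesian lift — so the left Kan extension collapses to the exact direct sum
\[
(q_! Z)(i) \cong \bigoplus_{f \in J(j, u(i))} Z(i, f).
\]

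The main technical point to pin down will be the finality of the fiber inclusion $q^{-1}(i) \hookrightarrow (q/i)$ and the resulting coend reduction in the $\basering k$-linear setting; once that is in place, everything else is standard adjunction bookkeeping and exactness of direct sums.
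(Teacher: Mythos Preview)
Your proof is correct and follows essentially the same approach as the paper. For (1) the arguments are identical; for (2) the paper simply asserts the isomorphism $\Homdg_{\basering k[I]}\bigl(\bigoplus_{f \colon j \to u(-)} X_{(-,f)},\, F^A\bigr) \cong \Homdg_{\basering k[j/u]}(X, F^A_{q(-)})$ as a ``direct computation'', whereas you supply the conceptual reason (the $q_! \dashv q^*$ adjunction together with the discrete-opfibration formula for $q_!$) --- this is the same identity, just with the bookkeeping made explicit.
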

\begin{proof}
(1) Let $G \in \ldgm{\basering k[I]}$ be an acyclic dg-module. With an application of the Yoneda lemma and the tensor-hom adjunction, we have an isomorphism of complexes:
\[
    \Homdg_{\basering k[I]}(G,F^A) \cong \Homdg_{\basering k[I], \cat A}(G \otimes_{\basering k} \cat A(-,A), F).
\]
We recall that, by assumption, $\cat A$ is an h-projective dg-category. In particular, the hom complexes $\cat A(A',A'')$ are h-flat. This immediately implies that the $\basering k[I]$-$\cat A$-dg-bimodule $G \otimes_{\basering k} \cat A(-,A)$ is acyclic. Since $F$ is h-injective, we may conclude.

(2) Let $X \in \ldgm{\basering k[j/u]}$ be an acyclic dg-module. We claim that we have an isomorphism of complexes:
\[
\Homdg_{\basering k[I]}(\bigoplus_{f \colon j \to u(-)} X_{(-,f)}, F^A) \cong \Homdg_{\basering k[j/u]}(X,F^A_{q(-)}).
\]
This can be proved with a direct computation. Next, we see that the direct sum
\[
\bigoplus_{f \colon j \to u(i)} X_{(i,f)}
\]
is acyiclic for all $i \in \basering k[I]$, since $X$ is acyclic. From (1) above we know that $F^A$ is h-injective, so we may immediately conclude.
\end{proof}

Now, taking $F$ to be h-injective in \eqref{equation:der4_underived} and using the above Lemma \ref{lemma:der4_restriction_hinj}, the ``derived isomorphism'' \eqref{equation:der4_unraveled} immediately follows from Lemma \ref{lemma:der4_underived}.

\subsubsection{Der5 (strongness)}
In order to check the strongness property, we first prove a general result about quasi functors defined on $\basering k[[1]]$, where $[1]$ is the category with two objects and a single non-identity morphism:
\[
0 \xrightarrow{e_0} 1.
\]
\begin{proposition} \label{proposition:mor_cat_comparison}
    Let $\cat B$ be any dg-category. Then, the functor
    \begin{equation} \label{equation:functor_H0_underlying}
        \begin{split}
            \QFun(\basering k[[1]], \cat B) & \to \Fun_{\basering k}(\basering k[[1]], H^0(\cat B)), \\
            F & \mapsto H^0(F)
        \end{split}
    \end{equation}
is full and essentially surjective.
\end{proposition}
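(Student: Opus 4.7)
The plan is to address essential surjectivity and fullness separately, with essentially all the work happening in the fullness part.

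For essential surjectivity, I would exploit the fact that $\basering k[[1]]$ is the dg-category freely generated by a single arrow: a $\basering k$-linear functor $G \colon \basering k[[1]] \to H^0(\cat B)$ is simply the data of objects $B_i := G(i) \in \cat B$ and a class $[f] \in H^0(\cat B)(B_0, B_1)$. Lifting $[f]$ to a closed degree $0$ morphism $f \in \cat B(B_0, B_1)^0$ produces a strict dg-functor $\widetilde F \colon \basering k[[1]] \to \cat B$ with $\widetilde F(e_0) = f$, and the associated quasi-functor $h_{\widetilde F}$ satisfies $H^0(h_{\widetilde F}) = G$.

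For fullness, I would use essential surjectivity to reduce to the case where $F = h_{\widetilde F}$ and $F' = h_{\widetilde F'}$ arise from dg-functors with $\widetilde F(e_0) = (f \colon B_0 \to B_1)$ and $\widetilde F'(e_0) = (f' \colon B'_0 \to B'_1)$. The heart of the argument is then to compute $\RHom_{\basering k[[1]], \cat B}(F, F')$ via the explicit short exact sequence of bimodules
\[
0 \to R_{(1, B_0)} \xrightarrow{\iota} R_{(0, B_0)} \oplus R_{(1, B_1)} \to F \to 0,
\]
where $R_{(i, B)}$ denotes the representable $(\basering k[[1]] \otimes \opp{\cat B})$-module at $(i, B)$, and $\iota$ is given at position $1$ by $x \mapsto (x, -f_* x)$. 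Since the first two terms are h-projective, this identifies $F$ with $\cone(\iota)$ in $\dercomp(\basering k[[1]], \cat B)$. Applying $\RHom(-, F')$ and using derived Yoneda to identify $\Homdg(R_{(i, B)}, F') \cong \cat B(B, F'(i))$, one obtains a homotopy fiber sequence
\[
\RHom(F, F') \to \cat B(B_0, B'_0) \oplus \cat B(B_1, B'_1) \xrightarrow{\Phi} \cat B(B_0, B'_1), \qquad \Phi(\alpha_0, \alpha_1) = f' \alpha_0 - \alpha_1 f.
\]

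Passing to $H^0$, a morphism $F \to F'$ in $\QFun(\basering k[[1]], \cat B)$ is represented by a triple $(\alpha_0, \alpha_1, h)$, where $\alpha_i \in Z^0 \cat B(B_i, B'_i)$ and $h \in \cat B(B_0, B'_1)^{-1}$ satisfy $dh = f' \alpha_0 - \alpha_1 f$; its image under the comparison functor is the pair $([\alpha_0], [\alpha_1])$. Conversely, a morphism $H^0(F) \to H^0(F')$ in $\Fun_{\basering k}(\basering k[[1]], H^0(\cat B))$ is precisely a pair $([\alpha_0], [\alpha_1])$ satisfying $[f'][\alpha_0] = [\alpha_1][f]$ in $H^0(\cat B)$, and after choosing closed lifts $\alpha_i$ this is equivalent to the existence of a degree $-1$ homotopy $h$ with the required coboundary, producing the desired preimage. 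The main obstacle I expect is the construction and verification of the resolution displayed above together with the identification of the map $\Phi$ via Yoneda; this is routine but requires careful bookkeeping with bimodule structures over $\basering k[[1]] \otimes \opp{\cat B}$. Once the fiber sequence is in place, fullness follows immediately from the long exact sequence in cohomology.
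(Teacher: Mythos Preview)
Your approach is sound and reaches the same description of morphisms in $\QFun(\basering k[[1]],\cat B)$ as the paper: triples $(\alpha_0,\alpha_1,h)$ with $dh=f'\alpha_0-\alpha_1 f$, with the comparison functor forgetting $h$. The routes differ, however. The paper identifies the whole dg-category $\RHom(\basering k[[1]],\cat B)$ with an explicit dg-category $\underline{\operatorname{Mor}}(\cat B)$ by invoking an external result on strictly unital $A_\infty$-functors, and then reads off fullness and essential surjectivity from that concrete description. You instead write down the two-term resolution of $h_{\widetilde F}$ by representable bimodules and extract the homotopy fiber sequence directly. Your argument is more self-contained, avoiding $A_\infty$-machinery entirely; the paper's is terser but imports a black box.

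One step needs tightening. The phrase ``use essential surjectivity to reduce to the case where $F=h_{\widetilde F}$ and $F'=h_{\widetilde F'}$'' is not quite right: essential surjectivity of $\QFun\to\Fun$ says that every object of the \emph{target} is hit, not that every object of the \emph{source} is isomorphic to one induced by a dg-functor. You need the latter, and it does follow from your resolution. Given an arbitrary quasi-functor $F$, choose closed degree $0$ quasi-isomorphisms $\phi_i\colon h_{B_i}\to F_i$ (possible since $h_{B_i}$ is h-projective), pick a closed $f\colon B_0\to B_1$ representing $\phi_1^{-1}\circ F_{e_0}\circ\phi_0$ in $H^0(\cat B)$, and note that $h_{\widetilde F}\cong\cone(\iota)$ is h-projective; a homotopy witnessing $\phi_1 f_*\simeq F_{e_0}\phi_0$ then promotes $(\phi_0,\phi_1)$ to a genuine bimodule quasi-isomorphism $\cone(\iota)\to F$. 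With this in place, the reduction is valid and the rest of your argument goes through.
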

\begin{proof}
The idea is to identify the dg-category of quasi-functors $\RHom(\basering k[[1]], \cat B)$, up to quasi-equivalence, with the dg-category $\underline{\operatorname{Mor}}(\cat B)$ defined as follows:
\begin{itemize}
    \item Objects are closed degree $0$ morphisms in $\cat B$.
    \item A degree $p$ morphism from $f \colon A \to A'$ to $f' \colon A' \to B'$ is a triple $(u,v,h)$, where $u \colon A \to A'$ and $v \colon B \to B'$ are degree $p$ morphisms and $h \colon A \to B'$ is a degree $p-1$ morphism.
    \item The differentials are defined as follows:
    \[
    d(u,v,h)=(du,dv,dh + (-1)^p(f'u-vf)).
    \]
\end{itemize}
The identification can be made using \cite[Theorem C]{canonaco-ornaghi-stellari-ainf-ring} and finding an explicit isomorphism between the above $\underline{\operatorname{Mor}}(\cat B)$ and the dg-category of strictly unital $A_{\infty}$-functors $\basering k[[1]] \to \cat B$.

Furthermore, we will identify (up to isomorphism) the category $\Fun_{\basering k}(\basering k[[1]], H^0(\cat B))$ with the category of morphisms $\operatorname{Mor}(H^0(\cat B))$ of $H^0(\cat B)$, whose objects are morphisms in $H^0(\cat B)$ and morphisms are given by commutative diagrams in $H^0(\cat B)$.

Under the identifications $\RHom(\basering k[[1]], \cat B) \cong \underline{\operatorname{Mor}}(\cat B)$ and $\Fun_{\basering k}(\basering k[[1]], H^0(\cat B)) \cong \operatorname{Mor}(H^0(\cat B))$, the functor \eqref{equation:functor_H0_underlying} is described as follows:
\begin{equation}
    \begin{split}
        H^0(\underline{\operatorname{Mor}}(\cat B)) &\to \operatorname{Mor}(H^0(\cat B)), \\
        f & \mapsto [f], \\
        [(u,v,h)] & \mapsto ([u], [v]),
    \end{split}
\end{equation}
where $[-]$ denotes equivalence classes of morphisms in $H^0$. It is now easy to show that the above functor is indeed full and essentially surjective.
\end{proof}

We may now prove strongness. We need to show that the functor
\begin{equation} \label{equation:der5}
\mathrm{dia}_{[1], I} \colon \QFun(\basering k[[1] \times I], \cat A) \to \Fun_{\basering k}(\basering k[1], \cat A)
\end{equation}
is full and essentially surjective.

First, we observe that
\begin{align*}
\basering k[[1] \times I] & \cong \basering k[[1]] \otimes \basering k[I] \\
&\cong  \basering k[[1]] \lotimes \basering k[I],
\end{align*}
recalling that any dg-category of the form $\basering k[\cat I]$ is h-projective, hence h-flat, so that the derived tensor product can be identified with the ordinary one. Next, we need to use the quasi-equivalence \eqref{equation:RHom_dgcat}. In particular, taking $H^0$, we obtain:
\[
\QFun(\basering k[[1]] \otimes \basering k[I], \cat A) \cong \QFun(\basering k[[1]], \RHom(\basering k[I], \cat A)).
\]
The above functor $\mathrm{dia}_{[1], I}$ is then given by
\begin{align*}
\QFun(\basering k[[1]], \RHom(\basering k[I], \cat A)) & \to \Fun_{\basering k}(\basering k[[1]], \QFun(\basering k[I], \cat A)), \\
F & \mapsto H^0(F),
\end{align*}
which is full and essentially surjective by Proposition \ref{proposition:mor_cat_comparison} applied to $\cat B = \RHom(\basering k[I], \cat A)$.
\subsubsection{Stability}

Here, we prove that the derivator associated to a (homotopically complete and cocomplete) dg-category $\cat A$ is a \emph{stable} derivator.

Let $\derivator_{\cat A}$ the derivator associated to $\cat A$. We recall that, from a derivator-theoretic perspective, the loop space functor $\Omega$ is defined as the following composition:
\begin{equation} \label{equation:loopspace_derivator}
    \Omega = (\derivator_{\cat A}(e) \xrightarrow{(1,1)_!} \derivator_{\cat A}(\lrcorner) \xrightarrow{(i_{\lrcorner})_*} \derivator_{\cat A}(\square) \xrightarrow{(0,0)^*} \derivator_{\cat A}(e)),
\end{equation}
where $i_{\lrcorner} \colon \lrcorner \hookrightarrow \square$ is the following inclusion of diagrams:
\begin{equation}
    \begin{tikzcd}
                  & {(0,1)} \arrow[d, "e_0"] \\
{(1,0)} \arrow[r, "e_1"] & {(1,1)}          
\end{tikzcd} \hookrightarrow  \begin{tikzcd}
{(0,0)} \arrow[r] \arrow[d] & {(0,1)} \arrow[d, "e_0"] \\
{(1,0)} \arrow[r, "e_1"]           & {(1,1)}          
\end{tikzcd}
\end{equation}

We now recall \cite[Theorem 7.1]{groth-ponto-shulman-mayervietoris}, which asserts that a pointed derivator is stable if and only if the suspension-loop space adjunction $\Sigma \dashv \Omega$ is an adjoint equivalence. Clearly, this is equivalent to $\Omega$ being an equivalence.

\begin{proposition}
    Let $\cat A$ be a homotopically complete and cocomplete dg-category. Then, with the identification $\derivator_{\cat A}(e) =  H^0(\cat A)$, the loop space functor $\Omega$ defined by \eqref{equation:loopspace_derivator} is isomorphic to the $(-1)$-shift $[-1]$. In particular, $\Omega$ is an equivalence and the derivator $\derivator_{\cat A}$ associated to $\cat A$ is stable.
\end{proposition}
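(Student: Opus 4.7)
The plan is to compute $\Omega A$ for $A \in \derivator_{\cat A}(e) \cong H^0(\cat A)$ by unwinding the three-step composition in \eqref{equation:loopspace_derivator} and identifying the result with $A[-1]$. Note that $\cat A$ is pretriangulated by Proposition \ref{proposition:hocomplete_pretr}, so it admits a zero object $0$ and all shifts, and $\derivator_{\cat A}$ is pointed.

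For the first step, $(1,1) \colon e \to \lrcorner$ is a cosieve (since $(1,1)$ is terminal in $\lrcorner$, the only morphism out of it in $\lrcorner$ is the identity), so Proposition \ref{proposition_(co)sieves} identifies $(1,1)_!(A)$ with the extension-by-zero quasi-functor sending $(1,1) \mapsto A$ and $(0,1), (1,0) \mapsto 0$. For the second step, applying $(i_\lrcorner)_*$ and evaluating at $(0,0)$, Proposition \ref{proposition:righthokan_holim} expresses the result as the $W$-weighted homotopy limit of the cospan $0 \to A \leftarrow 0$, where $W = \basering k[\square]((0,0), i_\lrcorner(-)) \in \dercomp(\opp{\basering k[\lrcorner]})$. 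By definition, this is the homotopy pullback of $0 \to A \leftarrow 0$ in $\cat A$.

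To identify this homotopy pullback with $A[-1]$, I would realize it as the homotopy fiber of the morphism $0 \to A$, which by Lemma \ref{lemma:cone_holim} is $\cone(0 \to A)[-1] \cong A[-1]$ in $H^0(\cat A)$. Concretely, this requires exhibiting the weight $W$ as a shifted cone of representables in $\pretr(\opp{\basering k[\lrcorner]})$, in the spirit of the pretriangle \eqref{equation:weight_is_cone}, so that the homotopy limit formula combined with Lemma \ref{lemma:cone_holim} yields the desired shift. This is the main technical obstacle, since the weight $W$ is not itself representable and one must keep track of the maps carefully to ensure that the two legs of the cospan (both being zero) indeed collapse to the single homotopy fiber $A[-1]$. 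Naturality in $A$ is then immediate from functoriality of all the constructions involved.

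Finally, since the $(-1)$-shift is an equivalence of $H^0(\cat A)$ with quasi-inverse $[1]$ (both existing by pretriangulatedness of $\cat A$), the induced isomorphism $\Omega \cong [-1]$ shows that $\Omega$ is an equivalence of $\derivator_{\cat A}(e)$. By \cite[Theorem~7.1]{groth-ponto-shulman-mayervietoris} (already invoked in the paragraph preceding the statement), this is equivalent to $\derivator_{\cat A}$ being stable, completing the proof.
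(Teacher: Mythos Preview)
Your approach matches the paper's. The technical step you flag is handled there by the explicit short exact sequence $0 \to \basering k[\lrcorner]((1,1),-) \to \basering k[\lrcorner]((0,1),-) \oplus \basering k[\lrcorner]((1,0),-) \to W \to 0$ of left $\basering k[\lrcorner]$-modules; applying $\RHom_{\basering k[\lrcorner]}(-,F)$ and derived Yoneda yields the distinguished triangle $(i_\lrcorner)_*(F)_{(0,0)} \to F_{(0,1)} \oplus F_{(1,0)} \to F_{(1,1)}$ directly, so no detour through Lemma~\ref{lemma:cone_holim} is needed.
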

\begin{proof}
    Thanks to \cite[Proposition 1.23, Proposition 3.6, see also the paragraph before Definition 3.16]{groth-derivators} we easily deduce that an object $F \in \derivator_{\cat A}(\lrcorner)$, which is a quasi-functor $F \colon \basering k[\lrcorner] \to \cat A$, is in the essential image of $(1,1)_!$ if and only if
    \[
    F_{(0,1)} \cong 0, \qquad F_{(1,0)} \cong 0,
    \]
    in $\dercomp(\cat A)$. Moreover, $(1,1)_!$ is fully faithful, which implies that
    \[
    ((1,1)_!(X))_{(1,1)} = (1,1)^*((1,1)_!(X)) \cong X,
    \]
    naturally in $X \in \derivator_{\cat A}(e) \cong H^0(\cat A)$.

    We take a closer look at the right homotopy Kan extension $(i_{\lrcorner})_*(F)$, for a given quasi-functor $F \colon \basering k[\lrcorner] \to \cat A$. As a quasi-functor $k[\square] \to \cat A$, it is defined by:
    \[
    (i_{\lrcorner})_*(F)_? = \RHom_{\basering k[\lrcorner]}(\basering k[\square](?,i_{\lrcorner}(-)), F).
    \]
    Recalling the definition \eqref{equation:loopspace_derivator} of $\Omega$, we consider the evaluation of the above quasi-functor in $(0,0)$:
    \[
    (i_{\lrcorner})_*(F)_{(0,0)} = \RHom_{\basering k[\lrcorner]}(\basering k[\square]((0,0),i_{\lrcorner}(-)), F).
    \]

    A direct computation shows that we have a short exact sequence
    \begin{equation} \label{equation:exactness_pullback}
     0 \to \basering k[\lrcorner]((1,1),-) \xrightarrow{\begin{psmallmatrix}
         e_0^* \\ -e_1^*
     \end{psmallmatrix}} k[\lrcorner]((0,1),-) \oplus k[\lrcorner]((1,0),-) \to \basering k[\square]((0,0),i_{\lrcorner}(-)) \to 0,
     \end{equation}
     in $Z^0(\ldgm{\basering k[\lrcorner]})$, which induces a distinguished triangle in $\dercomp(\opp{\basering k[\lrcorner]})$. By applying $\RHom_{\basering k[\lrcorner]}(-,F)$, we find a distinguished triangle
     \[
     (i_{\lrcorner})_*(F)_{(0,0)} \to \RHom_{\basering k[\lrcorner]}(k[\lrcorner]((0,1),-) \oplus k[\lrcorner]((1,0),-), F) \to \RHom_{\basering k[\lrcorner]}(\basering k[\lrcorner]((1,1),-), F)
     \]
     in $\dercomp(\cat A)$. Applying the derived Yoneda lemma \eqref{equation:derivedYoneda} we immediately find the distinguished triangle
    \begin{equation} \label{equation:hopullback_hofiber}
         (i_{\lrcorner})_*(F)_{(0,0)} \to F_{(0,1)} \oplus F_{(1,0)} \xrightarrow{(F_{e_0}, -F_{e_1})} F_{(1,1)}.
     \end{equation}
     Now, we take $X \in H^0(\cat A)$ and consider $(1,1)_!(X)$. Thanks to the above remark, we know that
     \[
     ((1,1)_!(X))_{(0,1)} \cong 0, \quad ((1,1)_!(X))_{(1,0)} \cong 0, \quad ((1,1)_!(X))_{(1,1)} \cong X.
     \]
     We immediately conclude that
     \[
     (i_{\lrcorner})_*((1,1)_!(X))_{(0,0)} \cong X,
     \]
     from which we may immediately deduce that the definition \eqref{equation:loopspace_derivator} of $\Omega$ is indeed the functor $X \mapsto X[-1]$, from which the proof is complete.
\end{proof}

\subsection{The derivator of a pretriangulated dg-category, on finite direct categories}

Theorem \ref{theorem:derivator_dg} has a variant, which will allow us to define a derivator associated to any pretriangulated dg-category (without the stronger assumption of general homotopical (co)completeness). We recall the notion of \emph{finite direct category} (cf. Definition \ref{definition:finite_directed}). We may prove:
\begin{theorem} \label{theorem:derivator_dg_finite}
Let $\cat A$ be a pretriangulated dg-category. Then the $2$-functor
\[
    \begin{split}
        \derivator_{\cat A} \colon \opp{\kat{FinDir}} & \to \kat{CAT}, \\
        I & \mapsto \QFun(\basering k[I], \cat A).
    \end{split}
\]
is a derivator (defined on finite direct categories).
\end{theorem}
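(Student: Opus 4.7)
The plan is to retrace the proof of Theorem~\ref{theorem:derivator_dg}, isolating the one spot where homotopical (co)completeness of $\cat A$ was essential. The axioms (Der1), (Der2), (Der5), strongness, and (Der4) are verified by arguments that are either local or purely formal: they rely only on basic properties of quasi-functors, dg-bimodules, and the functor $H^0$. None of them invokes (co)completeness of $\cat A$, so they should transfer essentially verbatim to the present setting once one records two elementary observations: finite coproducts of finite direct categories are again finite direct, and the slice categories $u/j$, $j/u$ attached to a functor $u\colon I\to J$ between finite direct categories inherit the grading $d(i,f) = d(i)$ and are themselves finite direct, so that the derived-Hom computation used in the proof of (Der4) goes through unchanged.

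The substantive point will be (Der3). By Proposition~\ref{proposition:righthokan_holim} and Proposition~\ref{proposition:lefthokan_hocolim}, the existence of both Kan extensions along a functor $u\colon I\to J$ of finite direct categories reduces, for every $j\in J$, to the existence in $\cat A$ of the weighted homotopy limit $\wholim{F}{u^j}$ and the weighted homotopy colimit $\whocolim{F}{(\opp{u})^j}$, where
\[
u^j = \basering k[J](j,u(-))\in\ldgm{\basering k[I]}, \qquad (\opp{u})^j = \basering k[J](u(-),j)\in\rdgm{\basering k[I]}.
\]
By Proposition~\ref{proposition:pretr_dgcat_finite_holims}, such (co)limits exist in a pretriangulated $\cat A$ whenever the weights are (isomorphic in the derived category to) objects of $\pretr(\opp{\basering k[I]})$ and $\pretr(\basering k[I])$, respectively, since weighted homotopy (co)limits are derived invariants of the weight.

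The main work will therefore be the key lemma: \emph{for any $u\colon I\to J$ between finite direct categories and every $j\in J$, the weight $u^j$ is quasi-isomorphic to an object of $\pretr(\opp{\basering k[I]})$}, and dually for $(\opp{u})^j$. To prove it, I would exploit the direct structure of $I$ to construct a finite resolution of $u^j$ by direct sums of corepresentables. Concretely, setting $d_{\min} = \min\{d(i) : u^j(i)\neq 0\}$ and letting $M$ be the set of $i\in I$ with $d(i) = d_{\min}$ and $u^j(i)\neq 0$, each $u^j(i) = \basering k[J](j,u(i))$ is a finite free $\basering k$-module of some rank $r_i$ (as $J$ has finitely many morphisms); a choice of bases yields a closed degree-$0$ morphism
\[
\varphi\colon \bigoplus_{i\in M}\basering k[I](i,-)^{\oplus r_i}\longrightarrow u^j.
\]
Because $I$ is direct, $\basering k[I](i,i') = 0$ whenever $i\neq i'$ both have $d$-value $\leq d_{\min}$, while $\basering k[I](i,i) = \basering k\cdot\id_i$ for $i\in M$, so $\varphi$ is an isomorphism at every $i\in M$ and vanishes at every other object of $d$-value $\leq d_{\min}$. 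Hence $\cone(\varphi)$ has cohomology concentrated at $d$-values strictly greater than $d_{\min}$, and iterating the construction at most $n+1$ times (with $n$ the maximum $d$-value of $I$) terminates in an acyclic dg-module. The iterated construction produces a finite resolution of $u^j$ by direct sums of corepresentables, which is strictly in $\pretr(\opp{\basering k[I]})$; the dual argument handles $(\opp{u})^j$.

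Finally, for stability, I would repeat the argument of Theorem~\ref{theorem:derivator_dg} essentially verbatim: the loop-space functor $\Omega$ arises from Kan extensions along the inclusions $e\hookrightarrow\lrcorner\hookrightarrow\square$ of finite direct categories (which exist by the verified (Der3)) and is identified with the $[-1]$-shift via the short exact sequence~\eqref{equation:exactness_pullback} together with the pretriangulated structure of $\cat A$. The main obstacle throughout is the key lemma on $\pretr$-membership of the weights $u^j$; granted it, all remaining pieces are already in place from the earlier sections.
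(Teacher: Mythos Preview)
Your overall strategy matches the paper's: (Der1), (Der2), (Der4), (Der5) and stability carry over as you say, and the only substantive work is (Der3), which you correctly reduce via Propositions~\ref{proposition:righthokan_holim}, \ref{proposition:lefthokan_hocolim} and~\ref{proposition:pretr_dgcat_finite_holims} to showing that each weight $u^j=\basering k[J](j,u(-))$ lies, up to quasi-isomorphism, in $\pretr(\opp{\basering k[I]})$.

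Your argument for this key lemma has a gap: the map $\varphi$ is in general neither surjective nor injective, so $\cone(\varphi)$ is a genuine two-term complex and ``iterating the construction'' (which, as you describe it, takes as input a module concentrated in degree $0$ and chooses bases for its values) is not defined on $\cone(\varphi)$; in particular you do not obtain a ``finite resolution'' of $u^j$. For non-surjectivity, take $I=[1]\hookrightarrow J=\{a,0,1\}$ where $J$ carries an extra arrow $a\to 1$ distinct from the composite through $0$, and $j=a$: then the component of $\varphi$ at the object $1$ is $\basering k\to\basering k^2$ and misses that extra arrow. For non-injectivity, let $I=\{0,0',1\}$ with arrows $0\to 1$ and $0'\to 1$, and form $J$ by adjoining an object $j$ with arrows $j\to 0$, $j\to 0'$ whose composites to $1$ coincide: then the component of $\varphi$ at $1$ is $\basering k^2\to\basering k$ with nontrivial kernel. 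A clean repair is to run the induction on dg-modules $N$ whose values $N(i)$ are bounded complexes of finite free $\basering k$-modules, and at each stage map in $\bigoplus_{d(i)=d_0}\basering k[I](i,-)\otimes_{\basering k} N(i)$ (tensoring with the \emph{full complex} $N(i)$, not just a free module in one degree); this map is then a strict isomorphism at every object of degree $d_0$, the source lies visibly in $\pretr$, and the cone satisfies the same hypotheses with $d_0$ replaced by $d_0+1$. The paper sidesteps the issue altogether by writing down, in one stroke, a finite bar-type resolution of $u^j$ whose $k$-th term sums only over chains $i_0,\dots,i_k$ with $d(i_0)<\cdots<d(i_k)$; the strict increase of degrees forces the resolution to terminate, and exactness is checked directly.
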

\begin{proof}
    It follows with the same arguments of the proof of Theorem \ref{theorem:derivator_dg}, but we need to make sure that homotopy Kan extensions along a given functor $u \colon I \to J$ between finite direct categories exist as quasi-functors. We will concentrate on right homotopy Kan extensions, the argument for left homotopy Kan extensions will follow by duality.

    We know from Proposition \ref{proposition:pretr_dgcat_finite_holims} that $\cat A$ admits all $W$-weighted homotopy limits defined on $\basering k[I]$ where $I$ is a finite direct category and $W \in \pretr(\opp{\basering k[I]})$. Thanks to the description of right homotopy Kan extensions in terms of weighted homotopy limits (see Proposition \ref{proposition:righthokan_holim}), we only need to show that, for all $j \in J$, the left $\basering k[I]$-dg-module
    \[
    \basering k[J](j,u(-))
    \]
    lies in $\pretr(\opp{\basering k[I]})$. It is enough to show that it admits a \emph{finite free resolution}.

    Call $M=\basering k[J](j,u(-))$. A free resolution of $M$ is given by the bar complex. The key idea is that, since $I$ is a finite direct category, we may find a close variant of the bar complex which is finite, and every term will also be a finite direct sum of representable left $\basering k[I]$-modules.
    
    Let $d \colon \Ob(I) \to \{0,\cdots, n\}$ be a chosen degree function associated to $I$, and assume that $n$ is actually the maximum value attained by $d$. We define a resolution whose $k$-th term is
    \[
    B_k = \bigoplus_{\substack{i_0, \cdots, i_k \in \Ob(I) \\ d(i_0) < \cdots < d(i_k)}} \basering k[I](i_k,-) \otimes_{\basering k} \basering k[I](i_{k-1},i_k) \otimes_{\basering k} \cdots \otimes_{\basering k} \basering k[I](i_0,i_1) \otimes_{\basering k} M_{i_0}, 
    \]
    if $k \in \{0,\ldots,n\}$. Moreover, we set:
    \[
    B_{n+1} = B_{n+2} = \cdots = 0.
    \]
    The boundary operator
    \[
    \partial_{k} \colon B_{k} \to B_{k-1}
    \]
    is defined as the usual one of the bar complex, with components:
    \[
    \partial_{k}(f_k \otimes \ldots f_0 \ldots \otimes m) = \sum_{j=0}^{k-1} (-1)^j f_k \otimes \cdots \otimes f_{j+1}f_j \otimes \cdots \otimes m + (-1)^k  f_k \otimes \ldots \otimes f_0m. 
    \]
    Moreover, we have a map
    \begin{equation*}
        \begin{split}
            \partial_0 \colon B_0 &\to M, \\
            \partial_0(f_0 \otimes m) &= f_0m.
        \end{split}
    \end{equation*}
    Now, let $m$ be the non-negative integer such that. for all set of objects $\{i_0,\ldots, i_m\}$ in $I$ such that $d(i_0) < \cdots < d(i_m)$, we also have $d(i_m)=n$. Such $m$ exists since $I$ is a finite category. The fact that
    \[
    B_m \xrightarrow{\partial_m} B_{m-1} \xrightarrow{\partial_{m-1}} \cdots \xrightarrow{\partial_0} M
    \]
    is an exact sequence follows from standard arguments. Moreover, we may directly show that
    \[
    0 \to B_m \xrightarrow{\partial_{m}} B_{m-1}
    \]
    is also exact, namely, $\partial_{m}$ is injective (by convention we have $B_{-1}=M$). This fundamentally relies on the assumption that $I$ is a finite direct category. Let us show this in the case that $m=1$; the general case is more complicated but conceptually analogous. We have:
    \begin{align*}
    B_1 &= \bigoplus_{\substack{i_0, i_1 \in \Ob(I) \\ d(i_0) < d(i_1)=1}} \basering k[I](i_1,-) \otimes_{\basering k} \basering k[I](i_0,i_1) \otimes_{\basering k} M_{i_0},\\
    B_0 &= \bigoplus_{i_0 \in \Ob(I)} \basering k[I](i_0,-) \otimes_{\basering k} M_{i_0}.
    \end{align*}
    Let $j \in \Ob(I)$. If $d(j)=1$, then $\basering k[I](i_1, j)=0$ unless $j=i_1$, and $\basering k[I](j, j)\cong \basering k$.  Applying the natural isomorphisms $\basering k \otimes_{\basering k} ? \cong ?$, we obtain:
    \begin{align*}
    B_1 &= \bigoplus_{\substack{i_0 \in \Ob(I) \\ d(i_0) < 1}} \basering k[I](i_0,j) \otimes_{\basering k} M_{i_0},\\
    B_0 &= \bigoplus_{i_0 \in \Ob(I)} \basering k[I](i_0,j) \otimes_{\basering k} M_{i_0}.
    \end{align*}
    We can also write $B_0$ as follows:
    \[
    B_0 = \left(\bigoplus_{\substack{i_0 \in \Ob(I) \\ d(i_0) < 1}} \basering k[I](i_0,j) \otimes_{\basering k} M_{i_0}\right) \oplus M_j.
    \]
    The morphism $\partial_1 \colon B_1 \to B_0$ is defined (on each component) as follows:
    \[
    \partial_1(f \otimes m) = (f \otimes m, -fm) \in \left(\bigoplus_{\substack{i_0 \in \Ob(I) \\ d(i_0) < 1}} \basering k[I](i_0,j) \otimes_{\basering k} M_{i_0}\right) \oplus M_j
    \]
    This is an injective morphism. Moreover, if $j$ is such that $d(j)<1$, we have:
    \[
    B_1 = \bigoplus_{\substack{i_0, i_1 \in \Ob(I) \\ d(i_0) < d(i_1)=1}} \basering k[I](i_1,j) \otimes_{\basering k} \basering k[I](i_0,i_1) \otimes_{\basering k} M_{i_0} = 0,
    \]
    hence $\partial_1$ is injective also in this case.
    
    In the end, the complex $B_\bullet$ yields a finite free resolution of $M$, from which we conclude that indeed $M \in \pretr(\opp{\basering k[I]})$.

\end{proof}

\section{The derivator of a Frobenius exact category}
\label{section:derivator_Frobenius}

In \cite{keller-vossieck-derived} Keller and Vossieck proved that, if $\cat F$ is a Frobenius exact category and $\operatorname{Proj}(\cat F)$ the subcategory of projectives objects of $\cat F$, then the stable category $\underline{\cat F}$ is equivalent to the homotopy category of acyclic complex of objects of $\operatorname{Proj}(\cat F)$
\begin{equation}\label{equation:Frobenius-dg}
  \underline{\cat F}\simeq \operatorname{K_{ac}}(\operatorname{Proj}(\cat F)).
\end{equation}

This homotopy category admits a standard dg-enhancement, namely the full dg-subcategory $\operatorname{Ch^{dg}_{ac}}(\operatorname{Proj}(\cat F))$ of the pretriangulated dg-category $\operatorname{Ch^{dg}}(\operatorname{Proj}(\cat F))$. Note that $\operatorname{Ch^{dg}_{ac}}(\operatorname{Proj}(\cat F))$ is closed under shifts and cones in $\operatorname{Ch^{dg}}(\operatorname{Proj}(\cat F))$ by~\cite[Lemma~10.3]{buehler}, so it is itself pretriangulated and we have
 \[
    H^0(\operatorname{Ch^{dg}_{ac}}(\operatorname{Proj}(\cat F))) = \operatorname{K_{ac}}(\operatorname{Proj}(\cat F)).
    \]
 As a consequence $\operatorname{Ch^{dg}_{ac}}(\operatorname{Proj}(\cat F))$ is also the dg-enhancement of $\underline{\cat F}$.

Thanks to the main Theorem~\ref{theorem:derivator_dg_finite}, we can then explicitly write the derivator associated to the Frobenius category:
\begin{equation} \label{equation:derivatordg_Frobenius}
    \begin{split}
        \derivator_{\cat F} \colon I & \mapsto \QFun(\basering k[I], \operatorname{Ch^{dg}_{ac}}(\operatorname{Proj}(\cat F))).
    \end{split}
\end{equation}

The aim of this section is giving a better description of this derivator \eqref{equation:derivatordg_Frobenius}. Namely, we would like to prove that 
\begin{equation*}
         \QFun(\basering k[I], \operatorname{Ch^{dg}_{ac}}(\operatorname{Proj}(\cat F))) \cong \operatorname{K_{ac}}(\operatorname{Proj}(\cat{F}^I)).
     \end{equation*}
where $\operatorname{Proj}(\cat{F}^I)$ is the category of projective objects in the exact category of functors $I \to \cat F$ with the termwise exact structure.

We denote by $\operatorname{Ch^{dg}_{tc}}(\operatorname{Proj}(\cat F)^{I})$ the pretriangulated full dg-subcategory of $\operatorname{Ch^{dg}_{ac}}(\operatorname{Proj}(\cat F)^{I})$ spanned by the \emph{termwise contractible} complexes of objects of $\operatorname{Proj}(\cat F)^{I}$, namely, complexes $F^\bullet$ such that $F^\bullet_i$ is a contractible complex of objects of $\operatorname{Proj}(\cat F)$, for all $i \in I$.
Here $\operatorname{Proj}(\cat{F})^I$ is the category of functors $I \to \Proj(\cat F)$.
We also denote:
    \[
    H^0(\operatorname{Ch^{dg}_{tc}}(\operatorname{Proj}(\cat F)^{I})) = \operatorname{K_{tc}}(\operatorname{Proj}(\cat F)^{I}).
    \]
We remark that a termwise contractible complex is certainly acyclic but not necessarily contractible as a complex of objects in $\cat F^{I}$.

Given a functor $u\colon I\to J$ in $\kat{Cat}$, we will need the existence of certain Kan extensions $u_!\colon {\cat F}^I\to{\cat F}^J$ or $u_*\colon {\cat F}^I\to{\cat F}^J$. There is a well-known criterion for that.

\begin{proposition}[{\cite[Theorem~X.3.1]{maclane-categories}}]\label{proposition:Kan-extensions-existence}
Let $\cat F$ be any category and $u\colon I\to J$ be a functor in $\kat{Cat}$.
\begin{enumerate}
\item If colimits of all diagrams of shapes $u/j$ for each $j\in J$ exist in $\cat F$, then the restriction functor $u^*\colon {\cat F}^J\to{\cat F}^I$ has a left adjoint $u_!\colon {\cat F}^I\to{\cat F}^J$.
\item If limits of all diagrams of shapes $j/u$ for each $j\in J$ exist in $\cat F$, then the restriction functor $u^*\colon {\cat F}^J\to{\cat F}^I$ has a right adjoint $u_*\colon {\cat F}^I\to{\cat F}^J$.
\end{enumerate}
\end{proposition}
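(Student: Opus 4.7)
The statement is the classical pointwise Kan extension theorem, so the plan is to give the standard pointwise construction and verify the adjunction. Since parts (1) and (2) are dual (they exchange $I \leftrightarrow \opp{I}$, $J\leftrightarrow \opp{J}$, and $u/j \leftrightarrow j/u$), I will focus on (1); part (2) will follow by applying (1) to $\opp{u}\colon \opp{I}\to\opp{J}$ and the functor category $\opp{\cat F}^{\opp{I}} \cong \opp{(\cat F^I)}$.

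The plan is to define $u_!$ on objects by the pointwise colimit formula
\[
u_!(F)(j) = \colim\bigl(u/j \xrightarrow{p_j} I \xrightarrow{F} \cat F\bigr),
\]
where $p_j\colon u/j \to I$ is the projection $(i,\alpha)\mapsto i$; the hypothesis guarantees that this colimit exists. The next step is to make $u_!(F)$ functorial in $j$: any morphism $\beta\colon j \to j'$ in $J$ induces a functor $\beta_\ast\colon u/j \to u/j'$ by postcomposition of $\alpha$ with $\beta$, and the composite $F\circ p_{j'} \circ \beta_\ast = F\circ p_j$ produces a canonical cocone, inducing $u_!(F)(\beta)\colon u_!(F)(j) \to u_!(F)(j')$ by the universal property. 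Functoriality of $u_!(F)$ (i.e.\ compatibility with composition and identities of $\beta$) is a routine diagram chase. Similarly, a morphism $\varphi\colon F \to F'$ in $\cat F^I$ induces compatible maps on the diagrams $F\circ p_j \to F'\circ p_j$, hence a natural transformation $u_!(\varphi)\colon u_!(F) \to u_!(F')$.

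For the adjunction, I would construct the natural bijection
\[
\cat F^J\bigl(u_!(F),\, G\bigr) \;\cong\; \cat F^I\bigl(F,\, u^*(G)\bigr)
\]
directly from the universal property. By definition of the colimit, a natural transformation $\theta\colon u_!(F)\to G$ amounts, for each $j\in J$, to a cocone from $F\circ p_j$ to $G(j)$, i.e.\ a family of morphisms $\theta_{(i,\alpha)}\colon F(i)\to G(j)$ indexed by objects $(i,\alpha\colon u(i)\to j)$ of $u/j$, compatible with morphisms of $u/j$; one also needs naturality of these families in $j\in J$. Sending $\theta$ to the family $\eta_i := \theta_{(i,\,1_{u(i)})}\colon F(i)\to G(u(i))$ yields a natural transformation $F\to u^*G$. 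Conversely, a natural transformation $\eta\colon F\to u^*G$ produces the cocone entries
\[
F(i) \xrightarrow{\eta_i} G(u(i)) \xrightarrow{G(\alpha)} G(j),
\]
and the naturality of $\eta$ together with functoriality of $G$ gives both the compatibility on each $u/j$ and the naturality in $j$. Checking that these two assignments are mutually inverse is immediate from evaluating the second construction at $(i,1_{u(i)})$ and from the cocone-compatibility axiom.

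The main obstacle, such as it is, lies in the bookkeeping of the naturality of the cocones in the variable $j\in J$: one needs to check that the cocone for $j$ and the cocone for $j'$ assemble compatibly along any $\beta\colon j\to j'$. This is not difficult, since $\beta_\ast\colon u/j \to u/j'$ sends $(i,\alpha)$ to $(i,\beta\alpha)$, so the family $\{G(\beta\alpha)\eta_i\}$ factors through $\{G(\alpha)\eta_i\}$ post-composed with $G(\beta)$, giving exactly the required compatibility with $u_!(F)(\beta)$. Once this is verified, the bijection is natural in both $F$ and $G$ by construction, yielding the desired adjunction $u_! \dashv u^*$.
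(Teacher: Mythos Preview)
Your proposal is correct: it is the standard pointwise construction of Kan extensions, precisely the argument in the cited reference \cite[Theorem~X.3.1]{maclane-categories}. The paper itself offers no independent proof for this proposition and simply refers to Mac~Lane, so there is nothing further to compare.
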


Any additive category $\cat F$ has finite products and coproducts. Although this is a very limited collection of limits and colimits, it is still enough for the existence of Kan extensions in the following situation.

\begin{corollary}\label{corollary:Kan-extensions-existence}
Let $\cat F$ be an additive category and $u\colon I\to J$ be a functor in $\kat{Cat}$.
\begin{enumerate}
\item If the slice category $u/j$ is a disjoint union of finitely many categories with terminal objects for each $j\in J$, then the left Kan extension functor $u_!\colon {\cat F}^I\to{\cat F}^J$ exists.
\item If the slice category $j/u$ is a disjoint union of finitely many categories with initial objects for each $j\in J$, then the right Kan extension functor $u_*\colon {\cat F}^I\to{\cat F}^J$ exists.
\end{enumerate}
\end{corollary}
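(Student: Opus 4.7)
The plan is to deduce the corollary directly from Proposition \ref{proposition:Kan-extensions-existence} by showing that the relevant colimits (resp.\ limits) of diagrams $u/j\to\cat F$ (resp.\ $j/u\to\cat F$) actually exist in any additive category $\cat F$ under the stated hypotheses. Since the two parts are formally dual (replace $\cat F$ by $\opp{\cat F}$ and $u$ by $\opp{u}$, which exchanges $u/j$ with $\opp{(j/\opp{u})}$), I would only write out part (1) and then invoke duality for part (2).

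For part (1), fix $j\in J$ and write the slice as a finite disjoint union $u/j=\coprod_{\alpha=1}^{n}C_\alpha$ with each $C_\alpha$ admitting a terminal object $t_\alpha$. Given any $F\in\cat F^I$, write $p\colon u/j\to I$ for the canonical projection and consider the diagram $F\circ p\colon u/j\to\cat F$. Two elementary observations then suffice: first, a colimit over a disjoint union of categories is simply the coproduct of the colimits over the summands, and second, the colimit of any diagram $D\to\cat F$ whose domain $D$ has a terminal object $t$ exists and equals the value at $t$ (the cocone $F(d)\to F(t)$ induced by the unique arrows $d\to t$ is visibly universal). Combining these,
\[
\colim_{u/j}(F\circ p) \;\cong\; \coprod_{\alpha=1}^{n} F(p(t_\alpha)),
\]
which exists in $\cat F$ because finite coproducts exist in any additive category. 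Hence the hypothesis of Proposition \ref{proposition:Kan-extensions-existence}(1) is met and $u_!\colon\cat F^I\to\cat F^J$ exists, with the explicit pointwise formula displayed above.

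Part (2) follows by the dual argument: $j/u$ decomposes into finitely many components each with an initial object $i_\alpha$, the limit over a category with an initial object equals the value at that initial object, and a limit over a disjoint union is the product of limits over the summands, so
\[
\lim_{j/u}(F\circ q) \;\cong\; \prod_{\alpha=1}^{n} F(q(i_\alpha)),
\]
which again exists as a finite product in the additive category $\cat F$. Proposition \ref{proposition:Kan-extensions-existence}(2) then yields $u_*$.

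There is no real obstacle here; the content is purely the recognition that ``terminal objects compute colimits'' and ``initial objects compute limits'', combined with the fact that coproducts/products indexed by a disjoint union of shapes decompose accordingly. The only mildly delicate point is keeping the variance straight: terminal objects in $u/j$ correspond to \emph{colimits} needed for the \emph{left} Kan extension (because a terminal object is cocone-universal), and dually initial objects in $j/u$ correspond to \emph{limits} needed for the \emph{right} Kan extension.
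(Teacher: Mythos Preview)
The proposal is correct and takes essentially the same approach as the paper: both decompose the slice $u/j$ into its finitely many components with terminal objects, observe that the colimit over such a shape is the finite coproduct of the values at those terminal objects (which exists in any additive category), and then invoke Proposition~\ref{proposition:Kan-extensions-existence}; part (2) is handled by duality in both. Your write-up is slightly more explicit about why terminal objects compute colimits and why disjoint unions yield coproducts, but there is no substantive difference.
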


\begin{proof}
We will only prove part (1), part (2) is dual.
Let $j\in J$ and consider a decomposition $u/j=\coprod_{t=1}^{n_j} K_t$, where $n_j\ge 0$ is an integer and $K_t\in\kat{Cat}$ has a terminal object $k_t\in K_t$ for each $t$. If $X\colon \coprod_{t=1}^{n_j} K_t\to \cat{F}$ is a diagram of the shape $u/j$ in $\cat{F}$, the colimit clearly exists and it is just the finite coproduct $\bigoplus_{t=1}^{n_j}X_{k_t}$. It remains to apply Proposition~\ref{proposition:Kan-extensions-existence}(1).
\end{proof}

\begin{remark}\label{remark:Kan-extensions-existence}
Note that in the situation of Corollary~\ref{corollary:Kan-extensions-existence}(1), we have for each $X\in\cat{F}^I$ and $j\in I$ an isomorphism $u_!(X)_j\cong\coprod_{t=1}^{n_j} X_{k_t}$ that is functorial in $X$, where $(k_t, u(k_t)\to j)$ are the terminal objects of the connected components of the slice category $u/j$. A dual observation applies Corollary~\ref{corollary:Kan-extensions-existence}(2).
\end{remark}

Now we aim to obtain a description of the projective objects in $\cat{F}^I$ with the termwise exact structure.
Recall that an exact category $\cat E$ is \emph{weakly idempotent complete} if any section $s: x \to y$ has a kernel in $\cat E$ or, equivalently, any retraction $r: y \to z$ has a cokernel in $\cat E$ \cite[\S7]{buehler}.

\begin{proposition}\label{proposition:structure_projectives_F^I}
Let $\cat{F}$ be a weakly idempotent complete exact category with enough projectives and $I$ be a finite direct category. Then $X \in \operatorname{Proj}(\cat F^I)$ if and only if\
\[X \cong \bigoplus_{j\in I}j_!Y_j,\]
for some $Y_j \in \Proj{\cat{F}}$.
In particular, projective functors in $\cat{F}^I$ are termwise projective and there is an inclusion of categories:
        \[
        \operatorname{Proj}(\cat F^I) \hookrightarrow \operatorname{Proj}(\cat F)^I.
        \]
\end{proposition}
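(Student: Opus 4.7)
The ``if'' direction is immediate: for each $j \in I$, the functor $j_!\colon \cat F \to \cat F^I$ is left adjoint to the restriction $j^*=(-)_j$, which is exact termwise; hence $j_!$ preserves projectives, and finite direct sums of projectives are projective.

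For the converse, let $X \in \Proj(\cat F^I)$. First I note that each $X_j = j^* X$ is projective in $\cat F$: by Corollary~\ref{corollary:Kan-extensions-existence}(2) the restriction $j^*$ admits a right adjoint $j_*$ with $(j_* Y)_i = \prod_{f\colon i \to j} Y$, a \emph{finite} product (hence a biproduct, since $I$ is finite) and therefore exact, so $j^*$ preserves projectives. This already yields the inclusion $\Proj(\cat F^I) \hookrightarrow \Proj(\cat F)^I$ claimed in the proposition. Next, form the counit $\varepsilon \colon P := \bigoplus_{j \in I} j_! X_j \to X$, which at each index $i$ reads $\bigoplus_{j,\,f\colon j \to i} X_j \to X_i$ with $(j,f)$-component $X_f$; the $(i, 1_i)$-component being the identity, $\varepsilon$ is termwise a split epimorphism, hence a termwise deflation. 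Projectivity of $X$ provides a section in $\cat F^I$, exhibiting $X$ as the image of an idempotent $e \in R := \operatorname{End}_{\cat F^I}(P)$, and as a direct summand of $P$ thanks to weak idempotent completeness of $\cat F^I$ (inherited termwise from $\cat F$).

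The key structural input is the triangularity of $R$: by adjunction,
\[
\Hom_{\cat F^I}(j_! X_j, j'_! X_{j'}) \cong \Hom_{\cat F}\Bigl(X_j,\, \bigoplus_{f\colon j' \to j} X_{j'}\Bigr),
\]
which vanishes when $d(j) < d(j')$, as well as when $d(j) = d(j')$ with $j \neq j'$, since a direct category admits no non-identity morphism between objects of non-strictly-increasing degree. For $j = j'$, only the identity $1_j$ contributes and we obtain $\operatorname{End}_{\cat F}(X_j)$. Ordering the objects of $I$ compatibly with $d$, $R$ becomes block triangular with diagonal quotient $R/\mathfrak n \cong \prod_{j \in I} \operatorname{End}_{\cat F}(X_j)$, where $\mathfrak n$ is the two-sided ideal of strictly off-diagonal morphisms. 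Each composition in $\mathfrak n$ strictly decreases the degree, so $\mathfrak n^{n+1} = 0$ with $n$ the maximal value of $d$, and $\mathfrak n$ is nilpotent.

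The image $\bar e \in R/\mathfrak n$ of the idempotent $e$ is therefore diagonal, $\bar e = (e_{jj})_{j \in I}$ with each $e_{jj} \in \operatorname{End}_{\cat F}(X_j)$ idempotent. By the standard fact that idempotents modulo a nilpotent ideal lift uniquely up to conjugation by $1 + \mathfrak n$, there exists an invertible $u \in 1 + \mathfrak n \subset R$ with $u e u^{-1} = \operatorname{diag}(e_{jj})$, whence $X \cong \Img(e) \cong \Img(\operatorname{diag}(e_{jj})) = \bigoplus_{j \in I} \Img(e_{jj})$, the last image computed inside $j_! X_j$. Weak idempotent completeness splits $X_j$ as $Y_j \oplus Y_j'$ with $Y_j := \Img(e_{jj}) \in \Proj(\cat F)$ (a direct summand of the projective $X_j$), and additivity of $j_!$ identifies $\Img(e_{jj})$ inside $j_! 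X_j$ with $j_! Y_j$. Hence $X \cong \bigoplus_{j \in I} j_! Y_j$, as required. The main obstacle is the clean identification of the nilpotent ideal $\mathfrak n$ and the invocation of idempotent lifting; the remainder is adjunction bookkeeping.
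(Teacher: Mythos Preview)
Your approach via idempotent lifting modulo a nilpotent ideal is elegant and genuinely different from the paper's argument (which peels off one minimal object at a time via Lemma~\ref{lemma:splitmono} and inducts on $|I|$). However, there is a real gap in the final step. You write that ``weak idempotent completeness splits $X_j$ as $Y_j \oplus Y_j'$ with $Y_j := \Img(e_{jj})$'', but weak idempotent completeness only says that every section has a cokernel (equivalently, every retraction has a kernel); it does \emph{not} guarantee that an arbitrary idempotent splits. For a concrete counterexample, the category of even-dimensional vector spaces over a field is weakly idempotent complete (a section $V \hookrightarrow W$ has cokernel of even dimension $\dim W - \dim V$) yet not idempotent complete (the rank-one idempotent on $k^2$ has no image there). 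So the passage from ``$\operatorname{diag}(e_{jj})$ splits on $P$'' to ``each $e_{jj}$ splits on $X_j$'' is not automatic.

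The specific idempotents $e_{jj}$ arising here \emph{do} split, but establishing this is exactly where the content lies. One route: for a minimal object $i$ one has $P_i = X_i$ and $e_i = e_{ii}$, whence in fact $e_{ii} = 1_{X_i}$; then $i_! X_i$ is a direct summand of $u(X)$ (this uses weak idempotent completeness legitimately, since $i_! X_i \hookrightarrow u(X)$ is a section), and one reduces to $J = I \setminus \{i\}$. Another route is to show that $e_{jj} = 1 - \sigma\rho$ for a section $\sigma$ with retraction $\rho$ (in the case $I=[1]$ this is $1 - \phi s_1'$ with $s_1'\phi = 1$, forced by naturality of your section $s$). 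Either way, filling the gap amounts to essentially the same inductive work as in the paper's proof. So while the triangularisation and conjugation are correct and nicely packaged, the purported ``adjunction bookkeeping'' at the end conceals the substantive step.
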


The proof is based on the following lemma. We call $i \in I$ a \emph{minimal object} if $I(j,i)=\emptyset$ for any $j \ne i$. Any non-empty finite direct category clearly has a minimal object.

\begin{lemma}\label{lemma:splitmono}
Let $\cat{F}$ and $I$ be as in the proposition, let $i\in I$ be minimal and denote $J\coloneqq I\setminus \{i\}$. If 
\[i\colon e \to I, \hspace{1 cm} k\colon J \to I \]
are the obvious inclusion functors and $X \in \operatorname{Proj}(\cat F^I)$, then $i^*X\in\Proj(\cat F)$, the counit of adjunction \[\epsilon^i\colon i_!i^*X \to X\] is a split monomorphism and \[\coker\epsilon^i\cong k_!Y\] for some $Y \in \operatorname{Proj}(\cat F^J).$
\end{lemma}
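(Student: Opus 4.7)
Plan. Throughout, I record explicit formulas for the Kan extensions we shall need; all of them exist by Corollary~\ref{corollary:Kan-extensions-existence}. Minimality of $i$ forces $I(l,i)=\emptyset$ for $l\neq i$ and $I(i,i)=\{1_i\}$, so $i\colon e\to I$ is a sieve and $k\colon J\to I$ is a cosieve; hence both $i_*$ and $k_!$ are given by extension by zero, $i^*i_!=\mathrm{id}$ (so $i_!$ is fully faithful with identity unit), $k^*k_!=\mathrm{id}$ (so $k_!$ is fully faithful), and $(i_!A)_j=\bigoplus_{f\in I(i,j)}A$.

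For (1), the functor $i_*$ is exact on $\cat F^I$ with the termwise exact structure, since a conflation in $\cat F$ is sent to itself at the index $i$ and to the trivial conflation $0\to 0\to 0$ at all other indices. Since $i^*$ is left adjoint to the exact functor $i_*$, it preserves projectives, so $i^*X\in\Proj(\cat F)$.

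The main step is (2). I would embed $i_!X_i$ as a direct summand of a larger projective $Q$ together with a termwise deflation $\pi\colon Q\twoheadrightarrow X$ restricting to $\epsilon^i$ on that summand, and then turn a section of $\pi$ into a retraction of $\epsilon^i$ by combining the triangle identity at $i$ with full faithfulness of $i_!$. Concretely, pick for each $l\in J$ a deflation $P_l\twoheadrightarrow X_l$ with $P_l\in\Proj(\cat F)$ and set
\[ Q=i_!X_i\oplus\bigoplus_{l\in J}l_!P_l. \]
Minimality forces $(l_!P_l)_i=0$ for $l\in J$, so $Q\cong i_!X_i\oplus k_!Z$ for some $Z\in\cat F^J$, and the obvious map $\pi\colon Q\to X$ restricts to $\epsilon^i$ on $i_!X_i$; at each $j\in J$ it contains the deflation $P_j\twoheadrightarrow X_j$ as a direct summand of its domain, so $\pi$ is a termwise deflation (weak idempotent completeness makes a map $A\oplus B\to C$ with $B\twoheadrightarrow C$ a deflation automatically a deflation, as it factors through the retraction $A\oplus B\twoheadrightarrow B$). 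Let $s=(s_1,s_2)\colon X\to i_!X_i\oplus k_!Z$ be a section of $\pi$, which exists by projectivity of $X$. Applying $i^*$ and using both $i^*k_!=0$ and $i^*\pi|_{i_!X_i}=\mathrm{id}_{X_i}$ yields $i^*s_1=\mathrm{id}_{X_i}$. By full faithfulness of $i_!$, the map $i^*\colon\Hom_{\cat F^I}(i_!X_i,i_!X_i)\to\Hom_{\cat F}(X_i,X_i)$ is a bijection, and
\[ i^*(s_1\circ\epsilon^i)=i^*s_1\circ i^*\epsilon^i=\mathrm{id}_{X_i}\circ\mathrm{id}_{X_i}=\mathrm{id}_{X_i} \]
forces $s_1\circ\epsilon^i=\mathrm{id}_{i_!X_i}$, so $s_1$ is a retraction of $\epsilon^i$. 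The hardest part of the argument is orchestrating these three ingredients---the explicit free model $Q$, the triangle identity at $i$, and full faithfulness of $i_!$; individually none is deep, but the whole proof turns on choosing $Q$ so that the resulting section has a component with the required property.

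For (3), a split monomorphism has a cokernel, so $X\cong i_!X_i\oplus C$ with $C\coloneqq\coker\epsilon^i$. Evaluating at $i$ gives $C_i=\coker(\mathrm{id}_{X_i})=0$, hence $C\cong k_!Y$ for $Y\coloneqq k^*C\in\cat F^J$. As a direct summand of the projective $X$, $k_!Y$ is projective in $\cat F^I$. Finally, $k_!$ sends termwise deflations in $\cat F^J$ to termwise deflations in $\cat F^I$, and combined with the adjunction $k_!\dashv k^*$ together with $k^*k_!=\mathrm{id}$, the lifting property of $k_!Y$ against termwise deflations in $\cat F^I$ transfers via adjunction to the lifting property of $Y$ against deflations in $\cat F^J$, proving $Y\in\Proj(\cat F^J)$.
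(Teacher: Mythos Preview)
Your proof is correct and closely parallels the paper's. Both arguments construct a termwise deflation from an object of the form $i_!X_i\oplus(\text{something vanishing at }i)$ onto $X$ and split it using projectivity of $X$; the paper takes the ``something'' to be $\bigoplus_{j\ne i}j_!j^*X$ and then reduces to checking $\epsilon^i$ on each generator $j_!Y$ (an isomorphism when $j=i$, the zero map when $j\ne i$), whereas you extract the retraction $s_1$ directly from the section via full faithfulness of $i_!$. Your route is slightly more hands-on, the paper's slightly more structural, but they are essentially the same idea. Part~(3) is argued the same way in both.

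One small slip: your parenthetical ``as it factors through the retraction $A\oplus B\twoheadrightarrow B$'' is not correct---a map $(f,g)\colon A\oplus B\to C$ factors through the projection to $B$ only when $f=0$. The correct justification is that precomposing with the section $\iota_B\colon B\hookrightarrow A\oplus B$ gives the deflation $g$, and in a weakly idempotent complete exact category a morphism whose precomposition with some map is a deflation is itself a deflation (this is exactly B\"uhler's Proposition~7.6, the ``obscure axiom''). The conclusion stands.
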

\begin{proof}
Let us denote $\cat{P}=\Proj(\cat{F})$.
Consider $j \in I$, which we again identify with a functor $j\colon e\to I$. Then both the left adjoint $j_!$ and the right adjoint $j_*$ to $j^*\colon\cat F^I\to\cat F$ exist and are exact by Corollary~\ref{corollary:Kan-extensions-existence} and Remark~\ref{remark:Kan-extensions-existence}. Moreover, if we denote by
\[\epsilon^j:j_!j^*X \to X.\]
the counit of adjunction for $X$, then $j^*\epsilon^j$ is an isomorphism since the functor $j\colon e \to I$ is fully faithful.
Since the restriction $j^*$ is the left adjoint of an exact functor we have in particular that $j^*X \in \cat P$, for any $X \in \operatorname{Proj}(\cat F^I)$.  We can now consider the following map 
\[\oplus\epsilon\colon\bigoplus_{j \in I} j_!j^*X \stackrel{\epsilon^j}{\longrightarrow}X\]
where, since $j^*X \in \cat P$ as we discussed above, $\bigoplus_{j \in I} j_!j^*X \in \operatorname{Proj}(\cat F^I)$ because $j_!$ preserves projective objects. Moreover, since $j^*\epsilon^j$ is an isomorphism for all $j\in I$, $\oplus \epsilon$ is a termwise split deflation. Then, since we have a deflation to $X$ which is projective, $\oplus \epsilon$ is in fact a split epimorphism in $\cat{F}^I$ and $X$ is a direct summand of $\bigoplus_{j \in I} j_!j^*X$. As a consequence, to prove that $\epsilon^i\colon i_!i^*X \to X$ is a split monomorphism, we can assume, without loss of generality, that $X \cong j_!Y$ for some $Y \in \cat P$ and $j \in I$. Now, we have two cases:
\begin{itemize}
\item[(1)] If $j=i$, then $\epsilon^i\colon i_!i^*i_!Y\rightarrow i_!Y$ is an isomorphism. 
\item[(2)] If $j \ne i$, then $\epsilon^j\colon i_!i^*j_!Y\rightarrow j_!Y$
is trivially a split monomorphism, because \[i^*j_!Y =\coprod_{I(j,i)}Y=0\] by Remark~\ref{remark:Kan-extensions-existence}, as $I(j,i)=\emptyset$.
\end{itemize}
In particular, $\epsilon^i$ is an inflation because we assume that $\cat F$ is weakly idempotent complete.
Moreover, as $i^*\epsilon^i$ is an isomorphism, we have that $(\coker \epsilon^i)_i=0$, and so $\coker\epsilon^i$ is in the essential image of $k_!$, which is just the left extension by zero functor. Then we can put $Y=k^*(\coker\epsilon^i)\in\cat{F}^J$. Since $k_!$ is fully faithful and exact and $k_!Y\cong k_!k^*(\coker\epsilon^i)\cong\coker\epsilon^i$ is projective in $\cat{F}^I$, it follows that $Y\in\Proj(\cat{F}^J)$.
    \end{proof}

\begin{proof}[Proof of Proposition~\ref{proposition:structure_projectives_F^I}]
    We proceed by induction on the number of objects of $I$. The case $I=e$ is trivial. Otherwise, let $i\in I$ be minimal. Then by Lemma~\ref{lemma:splitmono}, we know that we can write \[X \cong i_!i^*X \oplus k_!Z,\] for some $X\in\Proj(\cat{F})$ and $Z \in \operatorname{Proj}(\cat F^J)$. Then, we can use the inductive hypothesis to decompose $Z$ and such decomposition is then preserved by $k_!$.
\end{proof}

The key idea behind this section is the description of the category $\operatorname{K_{ac}}(\operatorname{Proj}(\cat F^I))$ which we give in the next proposition. Let us first recall the following definition.

\begin{definition}[{cf. \cite[\S 2.3]{bondal-larsen-lunts-grothendieck}}] 
    Let $\mathcal{T}$ be a triangulated category. We call \emph{semiorthogonal decomposition} a pair $\langle \mathcal{T}_1,\mathcal{T}_2\rangle$ of strictly full triangulated subcategories such that: 
    \begin{itemize}
        \item[(a)] $\Hom_{\mathcal{T}}(T_2,T_1)=0$ for any $T_1 \in \mathcal{T}_1$ and $T_2 \in \mathcal{T}_2$.
        \item[(b)] For any objects $T \in \mathcal{T}$, there exists a triangle in $\mathcal{T}$ \[T_2\to T \to T_1\to,\]
        where $T_1 \in \mathcal{T}_1$ and $T_2 \in \mathcal{T}_2$.
    \end{itemize}
\end{definition}

\begin{proposition} \label{proposition:projective_frobenius_semiorthogonal} 
    Let $\cat F$ be a weakly idempotent complete Frobenius exact category and let $I$ be a finite direct category. Denote by
        \begin{align*}
        \operatorname{incl}_1 \colon \operatorname{K_{tc}}(\operatorname{Proj}(\cat F)^I) & \hookrightarrow \operatorname{K_{ac}}(\operatorname{Proj}(\cat F)^I), \\
        \operatorname{incl}_2 \colon \operatorname{K_{ac}}(\operatorname{Proj}(\cat F^I)) &\hookrightarrow \operatorname{K_{ac}}(\operatorname{Proj}(\cat F)^I),
        \end{align*}
        the inclusion functors. They induce the following semiorthogonal decomposition:
        \begin{equation}
            \operatorname{K_{ac}}(\operatorname{Proj}(\cat F)^I) = \langle   \operatorname{K_{tc}}(\operatorname{Proj}(\cat F)^I), \operatorname{K_{ac}}(\operatorname{Proj}(\cat F^I))\rangle.
        \end{equation}
        In particular, $\operatorname{K_{ac}}(\operatorname{Proj}(\cat F^I))$ is equivalent to the following Verdier quotient:
        \[
        \operatorname{K_{ac}}(\operatorname{Proj}(\cat F^I)) \cong \operatorname{K_{ac}}(\operatorname{Proj}(\cat F)^I) / \operatorname{K_{tc}}(\operatorname{Proj}(\cat F)^I).
        \]
\end{proposition}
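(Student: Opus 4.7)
The plan is to verify directly the two axioms defining the semiorthogonal decomposition: (a) $\Hom_{\operatorname{K}(\Proj(\cat F)^I)}(T_2, T_1) = 0$ whenever $T_2 \in \operatorname{K_{ac}}(\Proj(\cat F^I))$ and $T_1 \in \operatorname{K_{tc}}(\Proj(\cat F)^I)$, and (b) every $X \in \operatorname{K_{ac}}(\Proj(\cat F)^I)$ sits in a distinguished triangle $T_2 \to X \to T_1$ with $T_2 \in \operatorname{K_{ac}}(\Proj(\cat F^I))$ and $T_1 \in \operatorname{K_{tc}}(\Proj(\cat F)^I)$. The final Verdier quotient statement then follows from the standard dictionary between semiorthogonal decompositions and Verdier localizations.

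For (a), I would prove the stronger claim that the Hom-complex $\Homdg(T_2^\bullet, T_1^\bullet)$ is null-homotopic. By Proposition~\ref{proposition:structure_projectives_F^I}, each component $T_2^p$ splits as a finite direct sum $\bigoplus_j j_! Y_{p,j}$ with $Y_{p,j} \in \Proj(\cat F)$. The adjunction isomorphism $\Hom_{\cat F^I}(j_!Y, Z) \cong \Hom_{\cat F}(Y, Z_j)$ rewrites each row of the Hom double complex in terms of $\Homdg_{\cat F}(Y_{p,j}, (T_1^\bullet)_j)$, which is contractible by the termwise contractibility hypothesis on $T_1$. The pointwise contracting homotopies $h_j$ of each $(T_1)_j^\bullet$, which do \emph{not} globalize to a morphism in $\cat F^I$, can nevertheless be post-composed via the adjunction to give a contracting homotopy on the Hom complex that commutes with the horizontal differential induced by $d_{T_2}$, producing a contracting homotopy on the total complex.

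For (b), I would induct on $|I|$; the case $|I| = 1$ is trivial. For the inductive step, pick a minimal $i \in I$ and set $J := I\setminus\{i\}$, with inclusions $k\colon J\hookrightarrow I$ and $i\colon e\to I$. Minimality of $i$ makes $J$ a cosieve, so $k_!$ is extension by zero; both $k_!$ and $i_!$ are exact and preserve projectives, being left adjoints to exact restrictions. Given $X$, consider the counit $i_!i^*X \to X$, whose source lies in $\operatorname{K_{ac}}(\Proj(\cat F^I))$ and whose cone $C$ satisfies $C_i \cong \cone(\id_{X_i})$, hence is contractible at $i$. The induction hypothesis applied to $k^*C \in \operatorname{K_{ac}}(\Proj(\cat F)^J)$ yields a triangle $T_J \to k^*C \to C'_J$ with $T_J \in \operatorname{K_{ac}}(\Proj(\cat F^J))$ and $C'_J \in \operatorname{K_{tc}}$. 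Applying $k_!$ and composing with the counit $k_!k^*C \to C$ (whose own cone equals $C_i$ at $i$ and vanishes on $J$, hence lies in $\operatorname{K_{tc}}$) produces a map $k_!T_J \to C$ whose cone is an extension of $k_!C'_J$ and $\cone(k_!k^*C \to C)$, thus lies in $\operatorname{K_{tc}}$. Letting $T$ be the homotopy pullback of $X \to C \leftarrow k_!T_J$, the triangle $i_!i^*X \to T \to k_!T_J$ places $T$ in $\operatorname{K_{ac}}(\Proj(\cat F^I))$ by extension closure, while the standard homotopy-pullback identity gives $\cone(T \to X) \cong \cone(k_!T_J \to C) \in \operatorname{K_{tc}}$, completing the triangle.

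The principal technical obstacle is ensuring at each step that $T$ remains inside $\operatorname{K_{ac}}(\Proj(\cat F^I))$. I would sidestep explicit chain-level manipulations by performing all constructions (cones, counits, homotopy pullbacks) directly in the ambient triangulated category $\operatorname{K}(\Proj(\cat F)^I)$, relying on the fact that $\operatorname{K_{ac}}(\Proj(\cat F^I))$ is a full triangulated subcategory and hence closed under shifts, cones, and extensions.
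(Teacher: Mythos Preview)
Your argument for (b) is essentially correct and follows the same inductive scheme as the paper: pick a minimal object $i$, form the counit triangle $i_!i^*X \to X \to C$ (with $C$ contractible at $i$), pass to $J = I\setminus\{i\}$ via the inductive hypothesis, and reassemble. Where the paper invokes the octahedral axiom on two stacked triangles, you use a homotopy pullback; these are equivalent manoeuvres in a triangulated category.

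The gap is in (a). Your proposed contracting homotopy $H$ on $\Homdg(T_2^\bullet,T_1^\bullet)$, built from the componentwise homotopies $h_j$ via the adjunction $\Hom_{\cat F^I}(j_!Y,-)\cong\Hom_{\cat F}(Y,(-)_j)$, is \emph{not} natural in the first variable, hence does not commute with precomposition by $d_{T_2}$. Concretely, take a map $\phi\colon j_!Y\to j'_!Y'$ induced by some $\alpha\colon j'\to j$ in $I$; unwinding both sides of the would-be naturality square one finds the requirement $(T_1^{q-1})_\alpha\circ h_{j'}=h_j\circ (T_1^q)_\alpha$, i.e.\ that the family $(h_j)_j$ assemble to a morphism in $\cat F^I$, which is exactly what termwise contractibility does not provide. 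Equivalently, by Yoneda on the full subcategory $\Proj(\cat F^I)\subseteq\cat F^I$, any natural transformation $\Hom(-,T_1^q)\Rightarrow\Hom(-,T_1^{q-1})$ would have to come from an actual morphism $T_1^q\to T_1^{q-1}$ in $\cat F^I$. Without naturality of $H$ you are left with a full-plane double complex each of whose rows $\Homdg(T_2^p,T_1^\bullet)$ is contractible, and for unbounded double complexes this does not force the total complex to be acyclic.

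The paper handles (a) by the \emph{same} induction on $|I|$ used for (b). Given $Y\in\operatorname{K_{ac}}(\Proj(\cat F^I))$ and $Z\in\operatorname{K_{tc}}(\Proj(\cat F)^I)$, one uses the triangle $i_!i^*Y\to Y\to Y_{i\text{-}c}$; the first term satisfies $\Hom(i_!i^*Y,Z)\cong\Hom(i^*Y,i^*Z)=0$ since $i^*Z$ is contractible, and the third term, after showing it is in the essential image of $k_!$ (this is where Lemma~\ref{lemma:splitmono} and weak idempotent completeness enter), satisfies $\Hom(k_!k^*Y_{i\text{-}c},Z)\cong\Hom(k^*Y_{i\text{-}c},k^*Z)=0$ by the inductive hypothesis. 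You can simply graft this inductive argument for (a) onto the machinery you already set up for (b).
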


The proof will again use the induction on the number of objects of $I$ and,
to ease the notation, we denote the subcategory $\operatorname{Proj}(\cat F)$ of projective objects in $\cat F$ by $\cat P$.

As in Lemma~\ref{lemma:splitmono}, let $i \in I$ be a minimal object, i.e.\  such that $I(j,i)=\emptyset$ for any $j \ne i$, and let $J\coloneqq I\setminus \{i\}$ and we denote by
\begin{equation}\label{equation:minimial-object-functors}
i\colon e \to I, \hspace{1 cm} k\colon J \to I
\end{equation}
the corresponding full embeddings. As in the proof of Lemma~\ref{lemma:splitmono}, we apply Corollary~\ref{corollary:Kan-extensions-existence} and Remark~\ref{remark:Kan-extensions-existence}, and see that both $i^*$ and $k^*$ have exact left adjoint functors
 \[i_!\colon \cat F \to \cat F^I, \hspace{1 cm} k_!\colon \cat F^J \to \cat F^I, \]
respectively. Moreover, $i_!$ and $k_!$ are fully faithful and send projectives to projectives, so they induce dg-functors
\[i_!\colon \operatorname{Ch^{dg}_{ac}}(\cat P) \to \operatorname{Ch^{dg}_{ac}}(\operatorname{Proj}(\cat F^I)), \hspace{1 cm} k_!\colon \operatorname{Ch^{dg}_{ac}}(\operatorname{Proj}(\cat F^J)) \to \operatorname{Ch^{dg}_{ac}}(\operatorname{Proj}(\cat F^I)).\]
Moreover, by taking $Z^0$ or $H^0$, we respectively get 
\[i_!\colon \operatorname{Ch_{ac}}(\cat P) \to \operatorname{Ch_{ac}}(\operatorname{Proj}(\cat F^I)), \hspace{1 cm} k_!\colon \operatorname{Ch_{ac}}(\operatorname{Proj}(\cat F^J)) \to \operatorname{Ch_{ac}}(\operatorname{Proj}(\cat F^I)),\] 
and 
\[i_!\colon \operatorname{K_{ac}}(\cat P) \to \operatorname{K_{ac}}(\operatorname{Proj}(\cat F^I)), \hspace{1 cm} k_!\colon \operatorname{K_{ac}}(\operatorname{Proj}(\cat F^J)) \to \operatorname{K_{ac}}(\operatorname{Proj}(\cat F^I)).\] 
Observe that, by Proposition~\ref{proposition:structure_projectives_F^I} \[ \operatorname{K_{ac}}(\operatorname{Proj}(\cat F^I))\subseteq \operatorname{K_{ac}}(\cat P^I).\]

The inductive step will be taken care of by the following two lemmas.

\begin{lemma}\label{lemma:SOdecomp}
    There is a semiorthogonal decomposition of $\operatorname{K_{ac}}(\cat P^I)$ of the form \[\langle \operatorname{K_{i-c}}(\cat P^I), \operatorname{K_{ac}}(i_!\cat P) \rangle,\]
where $i_!\cat P$ is the essential image of 
\begin{equation*} 
        \begin{split}
            i_!\vert_{\cat P}\colon \cat P & \to \cat P^I \\
            X & \mapsto i_!X
        \end{split}
    \end{equation*}
    and \[\operatorname{K_{i-c}}(\cat P^I)\coloneqq\left\{ Z \in \operatorname{K_{ac}}(\cat P^I) \colon i^*Z \in \operatorname{K_{ac}}(\cat P) \text{ is contractible}\right\}.\]
    \end{lemma}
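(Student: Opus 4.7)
The plan is to verify the two defining properties of a semiorthogonal decomposition directly, exploiting the adjunction $i_!\dashv i^*$. Both $i_!$ and $i^*$ are additive functors between exact categories, so they extend termwise to chain complexes and descend to the homotopy categories $\operatorname{K}(\cat P)$, $\operatorname{K}(\cat P^I)$, where the adjunction is preserved. Moreover, since $i\colon e\hookrightarrow I$ is fully faithful and $i$ is minimal in a finite direct category, the degree function forces $I(i,i)=\{1_i\}$, and so Remark~\ref{remark:Kan-extensions-existence} yields that the unit $\eta\colon\id_{\cat F}\to i^*i_!$ is an isomorphism.

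For the Hom-vanishing, every object of $\operatorname{K_{ac}}(i_!\cat P)$ is, up to isomorphism, of the form $i_!X$ with $X\in\operatorname{K_{ac}}(\cat P)$. For any $T_1\in\operatorname{K_{i-c}}(\cat P^I)$ the adjunction gives
\[
\Hom_{\operatorname{K_{ac}}(\cat P^I)}(i_!X,T_1)\;\cong\;\Hom_{\operatorname{K}(\cat P)}(X,i^*T_1)\;=\;0,
\]
the last equality because $i^*T_1\cong 0$ in $\operatorname{K}(\cat P)$ by the very definition of $\operatorname{K_{i-c}}(\cat P^I)$.

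For the decomposition triangle, given $T\in\operatorname{K_{ac}}(\cat P^I)$, I would consider the chain map $\epsilon^i\colon i_!i^*T\to T$ induced degreewise by the counit and form the standard triangle
\[
i_!i^*T\xrightarrow{\epsilon^i}T\longrightarrow C\longrightarrow(i_!i^*T)[1]
\]
in $\operatorname{K}(\cat P^I)$, where $C=\cone(\epsilon^i)$ is termwise in $\cat P^I$ and acyclic (the short exact sequence $0\to T\to C\to(i_!i^*T)[1]\to 0$ is termwise split, and its long exact sequence of cohomology in $\cat F^I$ forces $C$ to be acyclic). Since $i^*$ is exact and evaluates at $i$, one has $i^*T\in\operatorname{Ch_{ac}}(\cat P)$, whence $i_!i^*T\in\operatorname{K_{ac}}(i_!\cat P)$. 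To place $C$ in $\operatorname{K_{i-c}}(\cat P^I)$, I would apply $i^*$ and use the triangle identity $i^*\epsilon^i\circ\eta_{i^*T}=\id_{i^*T}$: since $\eta_{i^*T}$ is an isomorphism, so is $i^*\epsilon^i$, and therefore $i^*C=\cone(i^*\epsilon^i)$ is contractible in $\operatorname{K}(\cat P)$.

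The only obstacle I foresee is bookkeeping: keeping clear which ambient category (termwise chain complexes in an exact category, the homotopy category of an additive subcategory, or the full $\operatorname{K_{ac}}$) is in play at each step, and verifying that the cone construction and the counit land inside the correct subcategories so that the resulting triangle genuinely witnesses the semiorthogonal decomposition. Once this is arranged, both axioms drop out immediately from the adjunction and the triangle identities.
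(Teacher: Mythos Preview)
Your proof is correct and follows essentially the same approach as the paper: both parts of the semiorthogonal decomposition are verified via the adjunction $(i_!,i^*)$, with Hom-vanishing coming from the adjunction isomorphism and contractibility of $i^*T_1$, and the decomposition triangle being the one on the counit $\epsilon^i\colon i_!i^*T\to T$, whose cone lies in $\operatorname{K_{i-c}}(\cat P^I)$ because $i^*\epsilon^i$ is an isomorphism.
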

    \begin{proof}
        We need to prove 
    \begin{itemize}
        \item[(a)] $\Hom_{\operatorname{K_{ac}}(\cat P^I)}(\operatorname{K_{ac}}(i_!\cat P),\operatorname{K_{i-c}}(\cat P^I))=0$
        \item[(b)] For any $X \in \operatorname{K_{ac}}(\cat P^I)$ there exists a triangle \[X_{i-p} \to X \to X_{i-c} \to\]
        where $X_{i-p} \in \operatorname{K_{ac}}(i_!\cat P)$ and $X_{i-c} \in \operatorname{K_{i-c}}(\cat P^I).$
    \end{itemize}
To prove (a), let $Y \in \operatorname{K_{ac}}(i_!\cat P)$, $Z \in \operatorname{K_{i-c}}(\cat P^I)$ and $f\colon Y \to Z$ a morphism in $\operatorname{K_{ac}}(\cat P^I)$. Then, by definition, $Y \cong i_!Y'$ for $Y' \in \cat P$ and we have an adjunction \begin{equation}\label{adj_i}
    (i_!,i^*) \colon \operatorname{K_{ac}}(\cat P) \rightleftharpoons \operatorname{K_{ac}}(\cat P^I).\end{equation}
A map $f\colon i_!Y' \to Z$ in $\operatorname{K_{ac}}(\cat P^I)$ 
corresponds to a map $g \colon Y' \to i^*Z$ in $\operatorname{K_{ac}}(\cat P),$ by the adjunction. Since $i^*Z$ is contractible, the map $g$ is nullhomotopic so, by adjunction, it follows that also $f$ is nullhomotopic.

To prove (b), let $X \in \operatorname{K_{ac}}(\cat P^I)$  and consider the triangle 
\[i_!i^*X \stackrel{\epsilon^i}{\to} X \to \cone(\epsilon^i)\to \]
where $\epsilon^i$ is the counit of the adjunction $\eqref{adj_i}$, applied to $X$. Then $i_!i^*X$ is an object in $\operatorname{K_{ac}}(i_!\cat P)$, by definition. Since $i^*$ is a triangulated functor, \[i^*(\cone(\epsilon^i)) \cong \cone(i^*(\epsilon^i))\]
and, since $i^*\epsilon^i$ is an isomorphism we have that $\cone(i^*(\epsilon^i))$ and then $i^*(\cone(\epsilon^i))$ are contractible. So, by definition, $\cone(\epsilon^i) \in \operatorname{K_{i-c}}(\cat P^I)$.
    \end{proof}
To connect the semiorthogonal decomposition we found with the desired one, we need the following proposition.

\begin{lemma} \label{lemma:image-of-k-shriek}
The essential image of $k_!\colon\operatorname{K_{ac}}(\Proj(\cat F^J)) \to \operatorname{K_{ac}}(\Proj(\cat F^I))$ is equal to
$\operatorname{K_{i-c}}(\cat P^I)\cap \operatorname{K_{ac}}(\operatorname{Proj}(\cat F^I))$.
\end{lemma}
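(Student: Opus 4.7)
The plan is to prove the two inclusions of essential images separately, with the nontrivial content in the reverse one.

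The inclusion of $k_!(\operatorname{K_{ac}}(\Proj(\cat F^J)))$ into $\operatorname{K_{i-c}}(\cat P^I)\cap\operatorname{K_{ac}}(\Proj(\cat F^I))$ is immediate: $k_!$ preserves projectives (because its right adjoint $k^*$ is exact) and is itself exact, so it sends $\operatorname{K_{ac}}(\Proj(\cat F^J))$ into $\operatorname{K_{ac}}(\Proj(\cat F^I))$; moreover $i^*k_!=0$ since $k/i=\emptyset$ by minimality of $i$ (cf.~Remark~\ref{remark:Kan-extensions-existence}), and the zero complex is in particular contractible.

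For the reverse inclusion, take $X^\bullet\in\operatorname{K_{i-c}}(\cat P^I)\cap\operatorname{K_{ac}}(\Proj(\cat F^I))$. Applying Lemma~\ref{lemma:splitmono} to $X^n\in\Proj(\cat F^I)$ at each degree $n$, the components $\epsilon^i_n\colon i_!i^*X^n\to X^n$ of the counit are split monomorphisms whose cokernels lie in $\Proj(\cat F^I)$. Hence $\epsilon^i\colon i_!i^*X^\bullet\to X^\bullet$ is a termwise split monomorphism of complexes, and its cokernel $C^\bullet$ has $C^n\in\Proj(\cat F^I)$ in every degree. Since $i\colon e\to I$ is fully faithful, $i^*\epsilon^i$ is an isomorphism and $i^*C^\bullet=0$; since $k_!$ is extension-by-zero, its essential image in $\cat F^I$ is exactly the full subcategory of diagrams vanishing at $i$, so we may write $C^\bullet\cong k_!Y^\bullet$ where $Y^\bullet\coloneqq k^*C^\bullet$, with the differentials uniquely determined by fully faithfulness of $k_!$.

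We thus obtain a termwise split short exact sequence of complexes
\[0\to i_!i^*X^\bullet\to X^\bullet\to k_!Y^\bullet\to 0,\]
which yields a distinguished triangle in $\operatorname{K_{ac}}(\cat P^I)$. By hypothesis $i^*X^\bullet$ is contractible and the additive functor $i_!$ preserves contractibility, so $i_!i^*X^\bullet=0$ in $\operatorname{K_{ac}}(\cat P^I)$ and hence $X^\bullet\cong k_!Y^\bullet$ there. It remains to verify $Y^\bullet\in\operatorname{K_{ac}}(\Proj(\cat F^J))$: termwise acyclicity follows from the long exact cohomology sequence of the displayed short exact sequence, while projectivity of each $Y^n$ descends along the adjunction $(k_!,k^*)$ using $k^*k_!\cong\id$ (or equivalently from Proposition~\ref{proposition:structure_projectives_F^I} applied to the projective summand $k_!Y^n$ of $X^n$). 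The main obstacle is upgrading the pointwise decompositions of Lemma~\ref{lemma:splitmono} to a single short exact sequence of complexes; this is bypassed entirely by taking the cokernel of $\epsilon^i$ directly in the category of complexes and invoking the description of the essential image of $k_!$ as the diagrams vanishing at $i$ to identify it.
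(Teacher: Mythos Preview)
Your proof is correct and follows essentially the same approach as the paper's: both take the counit $\epsilon^i\colon i_!i^*X\to X$, use Lemma~\ref{lemma:splitmono} to see it is a degreewise split monomorphism with projective cokernel, observe that $i_!i^*X$ is contractible, and conclude that $X$ is homotopy equivalent to $\coker(\epsilon^i)\cong k_!Y$ with $i^*\coker(\epsilon^i)=0$. One small terminological point: your appeal to a ``long exact cohomology sequence'' to deduce acyclicity of $Y^\bullet$ is not quite the right language in an exact category; the clean argument is that the termwise split short exact sequence yields a triangle in $\operatorname{K}(\cat P^I)$, and acyclic complexes form a triangulated subcategory (cf.\ \cite[Lemma~10.3]{buehler}), so acyclicity of $C^\bullet$ (hence of $Y^\bullet=k^*C^\bullet$) follows from that of the other two terms.
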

\begin{proof}
The inclusion $\Img k_!\subseteq\operatorname{K_{i-c}}(\cat P^I)\cap \operatorname{K_{ac}}(\Proj(\cat F^I))$ is easy. Any complex $X$ in the essential image of $k_!\colon\operatorname{Ch_{ac}}(\Proj(\cat F^J)) \to \operatorname{Ch_{ac}}(\Proj(\cat F^I))$ satisfies $X_i=0$, so is clearly contractible in the $i$-th component.

It remains to prove that, any $X \in \operatorname{K_{i-c}}(\cat P^I)\cap \operatorname{K_{ac}}(\Proj(\cat F^I))$ is homotopy equivalent to some $Z$ such that $i^*Z$ is strictly zero. 
So suppose that $X \in \operatorname{K_{ac}}(\Proj(\cat F^I))$ such that $i^*X$ is contractible and consider again the map
\[\epsilon^i\colon i_!i^*X \to X.\]
Since $i_!$ is an additive functor, $i_!i^*X$ is also contractible in $\operatorname{K_{ac}}(\Proj(\cat F^I))$. Moreover, thanks to Lemma~\ref{lemma:splitmono}, we know that $\epsilon^i$, when viewed as a map in $\operatorname{Ch_{ac}}(\Proj(\cat F^I))$, is a degreewise split monomorphism, and so there is a degreewise split exact sequence
\[ 0 \to i_!i^*X \stackrel{\epsilon^i}{\to} X \stackrel{\theta}{\to} \coker(\epsilon)\to 0 \]
in $\operatorname{Ch_{ac}}(\Proj(\cat F^I))$ inducing a triangle in $\operatorname{K_{ac}}(\Proj(\cat F^I))$ of the form
\[i_!i^*X \stackrel{\epsilon^i}{\to} X \stackrel{\theta}{\to} \coker(\epsilon)\to, \]
where $\coker(\epsilon)$ is the cokernel in $\operatorname{Ch_{ac}}(\Proj(\cat F^I))$. 
Since $i_!i^*X$ is contractible, $\theta$ is an isomorphism in $\operatorname{K_{ac}}(\Proj(\cat F^I))$. Moreover, since it is a cokernel in the additive category $\operatorname{Ch_{ac}}(\Proj(\cat F^I))$, $i_*(\coker(\epsilon))=\coker(i^*\epsilon)$ is strictly zero because $i^*\epsilon$ is an isomorphism in $\operatorname{Ch_{ac}}(\cat P)$, as seen in the proof of Lemma~\ref{lemma:splitmono}.
\end{proof}

\begin{proof}[Proof of Proposition~\ref{proposition:projective_frobenius_semiorthogonal}]

We proceed by induction on the number of objects of $I$. 
    The case $I=e$ is trivial.
For the inductive step, let $i \in I$ be a minimal object, $J\coloneqq I\setminus \{i\}$ and use the notation as in~\eqref{equation:minimial-object-functors}.
We again need to prove that 
    \begin{itemize}
        \item[(a)] $\Hom_{\operatorname{K_{ac}}(\cat P^I)}(\operatorname{K_{ac}}(\Proj(\cat F^I)),\operatorname{K_{tc}}(\cat P^I))=0$
        \item[(b)] For any $X \in \operatorname{K_{ac}}(\cat P^I)$ there exists a triangle \[X_{p} \to X \to X_{tc} \to\]
        where $X_{p} \in \operatorname{K_{ac}}(\Proj(\cat F^I))$ and $X_{tc} \in \operatorname{K_{tc}}(\cat P^I).$
    \end{itemize}

To prove (a), consider $Y \in \operatorname{K_{ac}}(\Proj(\cat F^I))$ and $Z \in \operatorname{K_{tc}}(\cat P^I)$. By Lemma~\ref{lemma:SOdecomp}, there is a triangle $Y_{i-p} \to Y \to Y_{i-c} \to$ with $Y_{i-p} \in \operatorname{K_{ac}}(i_!\cat P)$, so $Y_{i-p}\cong i_!i^*Y_{i-p}$, and $Y_{i-c} \in \operatorname{K_{i-c}}(\cat P^I)$. As both $Y_{i-p}$ and $Y$ lie in $\operatorname{K_{ac}}(\Proj(\cat F^I))$, so does $Y_{i-c}$. In particular, $Y_{i-c}\cong k_!k^*Y_{i-c}$ by Lemma~\ref{lemma:image-of-k-shriek}. Now
\[
\Hom_{\operatorname{K_{ac}}(\cat P^I)}(Y_{i-p},Z) \cong \Hom_{\operatorname{K_{ac}}(\cat P^I)}(i_!i^*Y_{i-p},Z) \cong
\Hom_{\operatorname{K_{ac}}(\cat P)}(i^*Y_{i-p},i^*Z)=0
\]
since $i^*Z$ is contractible, and likewise
\[
\Hom_{\operatorname{K_{ac}}(\cat P^I)}(Y_{i-c},Z) \cong \Hom_{\operatorname{K_{ac}}(\cat P^I)}(k_!k^*Y_{i-c},Z) \cong
\Hom_{\operatorname{K_{ac}}(\cat P^J)}(k^*Y_{i-c},k^*Z)=0
\]
by the inductive hypothesis, since clearly $k^*Z\in\operatorname{K_{tc}}(\cat P^J)$. Together, we have just shown that $\Hom_{\operatorname{K_{ac}}(\cat P^I)}(Y,Z) = 0$.

(b) If $X\in \operatorname{K_{ac}}(\cat P^I)$, consider again the triangle $X_{i-p}\to X\to X_{i-c}\to$ with $X_{i-p} \in \operatorname{K_{ac}}(i_!\cat P)$ and $X_{i-c} \in \operatorname{K_{i-c}}(\cat P^I)$ provided by Lemma~\ref{lemma:SOdecomp}, and let
\[\epsilon^k\colon k_!k^*X_{i-c} \to X_{i-c}\]
be the counit of the adjunction $(k_!,k^*) \colon \operatorname{K_{ac}}(\cat P^J) \rightleftharpoons \operatorname{K_{ac}}(\cat P^I)$. We know that $k^*\epsilon^k$ is an isomorphism (since $k_!$ is fully faithful) and $i^*k_!=0$ (since $k_!$ acts as the left extension by zero). Let us now invoke the inductive hypothesis and consider a triangle
\[ Y_p\overset{\theta}\to k^*X_{i-c}\to Y_{tc}\to \]
in $\operatorname{K_{ac}}(\cat P^J)$ with $Y_{p} \in \operatorname{K_{ac}}(\Proj(\cat F^J))$ and $Y_{tc} \in \operatorname{K_{tc}}(\cat P^J)$. 

We claim that the cone of the composition
$k_!Y_p \overset{k_!\theta}\to k_!k^*X_{i-c} \overset{\epsilon^k}\to X_{i-c}$, which we denote by $\zeta$, is termwise contractible. Indeed, as $k^*\epsilon^k$ is an isomorphism, we have
\[
k^*\cone(\zeta)\cong\cone(k^*\zeta)\cong\cone(k^*k_!\theta)\cong\cone(\theta)\in\operatorname{K_{tc}}(\cat P^J).
\]
Since $i^*k_!Y_p=0$, we also have
\[ i^*\cone(\zeta) \cong \cone(i^*\zeta) \cong i^*X_{i-c} = 0 \]
in $\operatorname{K_{ac}}(\cat P)$. As the essential images of $i\colon e\to I$ and $k\colon J\to I$ span all objects of $I$, it follows that $\cone(\zeta)\in\operatorname{K_{tc}}(\cat P^I)$, proving the claim.

All in all, we have triangles
\[ 
k_!Y_p\overset{\zeta}\to X_{i-c}\overset{\pi}\to X_{tc}\to
\qquad\text{and}\qquad
X_{i-p}\to X\overset{\rho}\to X_{i-c}\to
\]
with $X_{i-p}, k_!Y_p\in\operatorname{K_{ac}}(\Proj(\cat F^I))$ and $X_{tc}\in\operatorname{K_{tc}}(\cat P^I)$, and by applying the octahedral axiom, we obtain the required triangle
\[ X_p \to X \overset{\pi\rho}\to X_{tc}\to \]
with $X_p\in\operatorname{K_{ac}}(\Proj(\cat F^I))$ and $X_{tc}\in\operatorname{K_{tc}}(\cat P^I)$.
\end{proof}

Proposition \ref{proposition:projective_frobenius_semiorthogonal} can be ``enhanced'' to dg-categories (and the dg-quotient), as we show in the following side result. This will use the theory of adjoint quasi-functors (see \cite{genovese-adjunctions}) and of dg-quotients (see \cite{drinfeld-dgquotients}).
\begin{corollary}
    The inclusion dg-functor
    \[
    j_2 \colon \operatorname{Ch_{ac}^{dg}}(\operatorname{Proj}(\cat F^I)) \hookrightarrow \operatorname{Ch_{ac}^{dg}}(\operatorname{Proj}(\cat F)^I)
    \]
    has a right adjoint quasi-functor
    \[
    Q \colon \operatorname{Ch_{ac}^{dg}}(\operatorname{Proj}(\cat F)^I) \to \operatorname{Ch_{ac}^{dg}}(\operatorname{Proj}(\cat F^I))
    \]
    in the sense of \cite{genovese-adjunctions}. In particular, we may identify the dg-category $\operatorname{Ch_{ac}^{dg}}(\operatorname{Proj}(\cat F^I))$ with the following dg-quotient (cf. \cite{drinfeld-dgquotients}):
    \begin{equation}
        \operatorname{Ch_{ac}^{dg}}(\operatorname{Proj}(\cat F^I)) \cong \operatorname{Ch_{ac}^{dg}}(\operatorname{Proj}(\cat F)^I) / \operatorname{Ch_{tc}^{dg}}(\operatorname{Proj}(\cat F)^I).
    \end{equation}
\end{corollary}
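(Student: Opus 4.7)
The two assertions are tightly linked: producing the right adjoint quasi-functor $Q$ amounts to realizing the semiorthogonal projection of Proposition~\ref{proposition:projective_frobenius_semiorthogonal} at the dg-level, and once $Q$ is in hand the dg-quotient identification follows by a Bousfield-style argument via Drinfeld's universal property~\cite{drinfeld-dgquotients}. For brevity I will write $\cat A=\operatorname{Ch^{dg}_{ac}}(\operatorname{Proj}(\cat F)^I)$ and $\cat A'=\operatorname{Ch^{dg}_{ac}}(\operatorname{Proj}(\cat F^I))$, so that $j_2\colon\cat A'\hookrightarrow\cat A$ is the full dg-embedding arising from Proposition~\ref{proposition:structure_projectives_F^I}.

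For the adjunction, the representability criterion of~\cite{genovese-adjunctions} reduces the existence of a right adjoint quasi-functor to $j_2$ to checking that, for each $X\in\cat A$, the right $\cat A'$-dg-module $Y\mapsto\cat A(j_2(Y),X)$ is quasi-representable. The candidate representing object is $X_p\in\cat A'$ extracted from the distinguished triangle $X_p\to X\to X_{tc}$ given by Proposition~\ref{proposition:projective_frobenius_semiorthogonal}. Applying $\cat A(Y,-)$ to this triangle for $Y\in\cat A'$ yields a pretriangle of $\basering k$-dg-modules whose third term I claim is acyclic: its $n$-th cohomology is $\Hom_{\operatorname{K_{ac}}(\operatorname{Proj}(\cat F)^I)}(Y,X_{tc}[n])$, which vanishes by the semiorthogonality clause of Proposition~\ref{proposition:projective_frobenius_semiorthogonal}, using that shifts preserve termwise contractibility. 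This delivers a quasi-isomorphism $\cat A(Y,X_p)\qis\cat A(j_2 Y,X)$, hence the desired quasi-representability.

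For the dg-quotient identification, I invoke Drinfeld's universal property~\cite{drinfeld-dgquotients}, first noting that $\operatorname{Ch^{dg}_{tc}}(\operatorname{Proj}(\cat F)^I)$ is pretriangulated inside $\cat A$ (being closed under shifts and cones, as termwise contractibility is preserved by both). Any $X\in\operatorname{Ch^{dg}_{tc}}(\operatorname{Proj}(\cat F)^I)$ satisfies $QX\simeq 0$: its semiorthogonal triangle degenerates, forcing $X_p\simeq 0$ by the uniqueness of the decomposition. Hence $Q$ factors through the dg-quotient as a quasi-functor
\[
\bar Q\colon \cat A/\operatorname{Ch^{dg}_{tc}}(\operatorname{Proj}(\cat F)^I)\to\cat A'.
\]
Writing $q$ for the quotient dg-functor, the composite $q\circ j_2$ will be the quasi-inverse of $\bar Q$: full faithfulness of $j_2$ together with the adjunction give $\bar Q\circ(q\circ j_2)\simeq Q\circ j_2\simeq\operatorname{id}_{\cat A'}$, while the counit triangle $j_2QX\to X\to X_{tc}$ has its third term killed in $\cat A/\operatorname{Ch^{dg}_{tc}}$, so that the universal property of the quotient yields $(q\circ j_2)\circ\bar Q\simeq\operatorname{id}$.

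The main obstacle I anticipate is the dg-level bookkeeping. While $H^0$-level statements are handed to us directly by Proposition~\ref{proposition:projective_frobenius_semiorthogonal}, promoting unit/counit relations to quasi-isomorphisms of hom complexes, and to coherent quasi-natural transformations of quasi-functors, requires careful interplay between~\cite{genovese-adjunctions} and~\cite{drinfeld-dgquotients}, most notably in choosing h-projective or h-injective resolutions that make the various derived tensor and hom constructions strictly compatible with the cone/counit description of $X_p$.
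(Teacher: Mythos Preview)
Your proposal is essentially correct and, for the existence of $Q$, parallels the paper's proof closely: the paper simply cites external results (\cite[Lemma~2.1.3]{genovese-ramos-gabrielpopescu} or \cite[Remark~3.9]{lowen-julia-tensordg-wellgen}) to the effect that an $H^0$-level right adjoint to a fully faithful dg-functor lifts to a right adjoint quasi-functor, and your representability argument via the triangle $X_p\to X\to X_{tc}$ is precisely the content of those black boxes unpacked.

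For the dg-quotient identification the two routes diverge. You construct an explicit quasi-inverse $q\circ j_2$ to the induced $\bar Q$, arguing via the counit triangle that the composite is the identity on both sides. The paper instead verifies the \emph{defining} universal property of the Drinfeld quotient: for an arbitrary test dg-category $\cat D$ it shows that precomposition $Q^*\colon\RHom(\cat A',\cat D)\to\RHom(\cat A,\cat D)$ is quasi-fully faithful (this follows from invertibility of the unit $1\to Q\circ j_2$, i.e.\ from full faithfulness of $j_2$) and identifies the essential image of $H^0(Q^*)$ with the quasi-functors annihilating $\operatorname{Ch^{dg}_{tc}}(\operatorname{Proj}(\cat F)^I)$, using the distinguished triangle $j_2\circ Q\to 1\to j_1\circ Q'$ of quasi-functors, where $j_1$ is the inclusion of the termwise-contractible subcategory and $Q'$ its \emph{left} adjoint. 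Both are standard Bousfield-localization manoeuvres resting on the same semiorthogonal decomposition; the paper's approach has the advantage that working with the unit and counit as morphisms of quasi-functors (rather than promoting object-by-object triangles to natural transformations, as you must) keeps the coherence automatic and so circumvents most of the dg-level bookkeeping you correctly flag at the end.
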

\begin{proof}
    Denote, provisionally:
    \begin{align*}
        \cat A &= \operatorname{Ch_{tc}^{dg}}(\operatorname{Proj}(\cat F)^I), \\
        \cat B &=  \operatorname{Ch_{ac}^{dg}}(\operatorname{Proj}(\cat F^I)), \\
        \cat C &= \operatorname{Ch_{ac}^{dg}}(\operatorname{Proj}(\cat F)^I).
    \end{align*}
    The existence of the right adjoint quasi-functor $Q \colon \cat C \to \cat B$ follows from the existence of the right adjoint of $H^0(j_2) = \operatorname{incl}_2$, which is guaranteed by the above Proposition \ref{proposition:projective_frobenius_semiorthogonal}, see \cite[Lemma 2.1.3]{genovese-ramos-gabrielpopescu} or \cite[Remark 3.9]{lowen-julia-tensordg-wellgen}.

    Next, we take any dg-category $\cat D$ and we consider the following quasi-functor given by precomposition with $Q$:
    \[
    Q^* \colon \RHom(\cat B, \cat D) \to \RHom(\cat C, \cat D).
    \]
    We want to prove that:
    \begin{enumerate}
        \item It is quasi-fully faithful, namely, it induces quasi-isomorphisms between Hom complexes.
        \item The essential image of $H^0(Q^*)$ is the category
        \[
        \dercomp_{\cat A}^{\mathrm{qf}}(\cat C, \cat D)
        \]
        of quasi-functors $F \colon \cat C \to \cat D$ such that for any $A \in \cat A$, the image $F(A)$ is a zero object in $H^0(\cat D)$.
    \end{enumerate}
    This will ensure that $Q^*$ induces a quasi-equivalence
    \[
    \RHom(\cat B, \cat D) \cong \RHom_{\cat A}(\cat C, \cat D),
    \]
    where $\RHom_{\cat A}(\cat C, \cat D)$ is the full dg-subcategory of $\RHom(\cat C, \cat D)$ spanned by the objects in $\dercomp_{\cat A}^{\mathrm{qf}}(\cat C, \cat D)$. This precisely means that $\cat B$ is the dg-quotient of $\cat C$ modulo $\cat A$. A variant of this result is mentioned in \cite[Remark 2.1.8]{genovese-ramos-gabrielpopescu}; a version for triangulated categories and Verdier quotients is discussed in \cite[\S 4.9]{krause-localizationtheory}. Here, for completeness, we will present the main steps necessary to achieve a proof.

    Fully faithfulness of $j_2$ means that the unit
    \[
    \eta \colon 1 \to  Q \circ j_2
    \]
    is an isomorphism of quasi-functors. This implies that the corresponding counit map
    \[
    \eta^* j_2^* \circ Q^* \to 1
    \]
    of the adjunction $Q^* \dashv j_2^*$ is also an isomorphism, from which (by taking graded cohomology $H^*$ everywhere) we may deduce that $Q^*$ is indeed quasi-fully faithful. This proves (1).

    Let us now check (2). Given a quasi-functor $X$, we will denote by $\Phi_{X}$ the induced functor in $H^0$. First, if $F \colon \cat B \to \cat D$ is a quasi-functor, we have $\Phi_F \Phi_Q (A) \cong 0$ in $H^0(\cat D)$, because $\Phi_Q(A) \cong 0$ for $A \in \cat A$, thanks to the semiorthogonal decomposition $\cat C = \langle \cat A, \cat B \rangle$. On the other hand, let $G \colon \cat C \to \cat D$ be a quasi-functor such that $\Phi_G(A) \cong 0$ in $H^0(\cat D)$ for all $A \in \cat A$. We may define
    \[
    G' = G \circ j_2 \colon \cat B \to \cat D.
    \]
    We want to prove that we have an isomorphism of quasi-functors $G' \circ Q \cong G$. Thanks to the semiorthogonal decomposition $\cat C = \langle \cat A, \cat B \rangle$, we have a distinguished triangle of quasi-functors:
    \[
    j_2 \circ Q \to 1 \to j_1 \circ Q',
    \]
    where $j_1 \colon \cat A \to \cat C$ is the inclusion and $Q'$ is its left adjoint quasi-functor. Composing the above triangle with $G$, we obtain the following distinguished triangle (of quasi-functors, if we assume $\cat D$ to be pretriangulated; if not, it will still be a distinguished triangle of dg-bimodules):
    \[
    G \circ j_2 \circ Q \to G \to G \circ j_1 \circ Q'.
    \]
    Since $G$ maps $\cat A$ to zero objects, we may conclude that $G \circ j_1 \circ Q' \cong 0$, whence $G \to G \circ j_2 \circ Q$ becomes an isomorphism of quasi-functors. This concludes the proof.
\end{proof}
The above Proposition \ref{proposition:projective_frobenius_semiorthogonal} allows us to describe the derivator associated to the dg-category $\operatorname{Ch^{dg}_{ac}}(\operatorname{Proj}(\cat F))$ in terms of $\operatorname{K_{ac}}(\operatorname{Proj}(\cat F^I))$:
\begin{theorem} \label{theorem:derivator-Frobenius}
     Let $\cat F$ be a weakly idempotent complete Frobenius category and let $I$ be a finite direct category. Then, we have an equivalence of categories:
     \begin{equation}
         \QFun(\basering k[I], \operatorname{Ch^{dg}_{ac}}(\Proj(\cat F))) \cong \operatorname{K_{ac}}(\Proj(\cat F^I)) \cong
         \operatorname{K_{ac}}(\Inj(\cat{F}^I)).
     \end{equation}
\end{theorem}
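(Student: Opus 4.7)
The plan is to construct a functor
\[
\Phi\colon \operatorname{K_{ac}}(\Proj(\cat F^I)) \longrightarrow \QFun(\basering k[I], \operatorname{Ch^{dg}_{ac}}(\Proj(\cat F)))
\]
and prove it is an equivalence by combining a strictification argument with Proposition~\ref{proposition:projective_frobenius_semiorthogonal}. The equivalence with $\operatorname{K_{ac}}(\Inj(\cat F^I))$ will then follow by a dual argument, using that $\operatorname{Ch^{dg}_{ac}}(\Proj(\cat F)) = \operatorname{Ch^{dg}_{ac}}(\Inj(\cat F))$ since $\cat F$ is Frobenius.

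First I would define $\Phi$. By Proposition~\ref{proposition:structure_projectives_F^I}, each $X \in \operatorname{Ch_{ac}}(\Proj(\cat F^I))$ satisfies $X_i \in \Proj(\cat F)$ for every $i \in I$, so $i \mapsto X_i$ is a strict dg-functor $\basering k[I] \to \operatorname{Ch^{dg}_{ac}}(\Proj(\cat F))$ and hence a quasi-functor; this assignment descends to chain-homotopy classes. It is convenient to first define a functor $\tilde{\Phi}\colon \operatorname{K_{ac}}(\Proj(\cat F)^I) \to \QFun(\basering k[I], \operatorname{Ch^{dg}_{ac}}(\Proj(\cat F)))$ by the same formula on the larger domain. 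A termwise contractible $X \in \operatorname{K_{tc}}(\Proj(\cat F)^I)$ has $X_i \cong 0$ in $\operatorname{K_{ac}}(\Proj(\cat F)) = H^0(\operatorname{Ch^{dg}_{ac}}(\Proj(\cat F)))$ for every $i$, so axiom~(Der2) of the derivator from Theorem~\ref{theorem:derivator_dg_finite} forces $\tilde{\Phi}(X) = 0$. Thus $\tilde{\Phi}$ kills $\operatorname{K_{tc}}(\Proj(\cat F)^I)$, and Proposition~\ref{proposition:projective_frobenius_semiorthogonal} identifies the resulting Verdier quotient with $\operatorname{K_{ac}}(\Proj(\cat F^I))$, yielding $\Phi$.

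Next I would show $\Phi$ is an equivalence. For essential surjectivity, given a quasi-functor $F\colon \basering k[I]\to \operatorname{Ch^{dg}_{ac}}(\Proj(\cat F))$, I would proceed by induction on $|\Ob(I)|$. Pick a minimal $i \in I$ and set $J = I\setminus\{i\}$ as in Lemma~\ref{lemma:splitmono}; by induction, $k^*F$ strictifies to some $X' \in \operatorname{Ch_{ac}}(\Proj(\cat F)^J)$, and then, using that $\operatorname{Ch^{dg}}(\Proj(\cat F))$ is strongly pretriangulated (so admits strict cones and mapping cylinders), I would rectify the homotopy-coherent gluing data of $F$ around the new vertex $i$ to a strict diagram $X \in \operatorname{Ch_{ac}}(\Proj(\cat F)^I)$ with $\tilde{\Phi}(X)\cong F$. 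For full faithfulness, given $X, Y \in \operatorname{Ch_{ac}}(\Proj(\cat F^I))$, morphisms in $\QFun$ are computed by $H^0(\RHom_{\basering k[I], \operatorname{Ch^{dg}_{ac}}(\Proj(\cat F))}(X, Y))$; since $\basering k[I]$ is h-projective (as established at the opening of Section~\ref{section:main result}) and $X$ is termwise projective in $\cat F^I$, this derived hom collapses to the naive complex of natural transformations of diagrams modulo chain homotopy, which matches $\operatorname{K_{ac}}(\Proj(\cat F^I))(X, Y)$.

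The main obstacle is the strictification step in essential surjectivity: extracting a strict, termwise-projective, acyclic $I$-diagram representing an arbitrary quasi-functor. The finite direct structure of $I$ and the strong pretriangulatedness of the ambient $\operatorname{Ch^{dg}}(\Proj(\cat F))$ are the two key ingredients that make the inductive rectification feasible, and one must carefully track that each extension step stays inside the acyclic, termwise-projective locus. Finally, for the injective equivalence $\operatorname{K_{ac}}(\Proj(\cat F^I)) \cong \operatorname{K_{ac}}(\Inj(\cat F^I))$, I would rerun the whole argument with $\operatorname{Ch^{dg}_{ac}}(\Inj(\cat F))$ in place of $\operatorname{Ch^{dg}_{ac}}(\Proj(\cat F))$, invoking the dual of Proposition~\ref{proposition:structure_projectives_F^I} (characterizing $\Inj(\cat F^I)$ as termwise injective diagrams assembled via right Kan extensions) and the dual semiorthogonal decomposition of $\operatorname{K_{ac}}(\Inj(\cat F)^I)$; since $\cat F$ is Frobenius the two target dg-categories coincide, so both $\operatorname{K_{ac}}(\Proj(\cat F^I))$ and $\operatorname{K_{ac}}(\Inj(\cat F^I))$ identify with $\QFun(\basering k[I], \operatorname{Ch^{dg}_{ac}}(\Proj(\cat F)))$, completing the chain of equivalences.
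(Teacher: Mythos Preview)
Your overall architecture matches the paper's: both arguments funnel through Proposition~\ref{proposition:projective_frobenius_semiorthogonal}, identifying $\operatorname{K_{ac}}(\Proj(\cat F^I))$ with the Verdier quotient $\operatorname{K_{ac}}(\Proj(\cat F)^I)/\operatorname{K_{tc}}(\Proj(\cat F)^I)$, and both treat the injective side by passing to $\opp{\cat F}$ and $\opp{I}$. The difference lies in how the quotient is connected to $\QFun(\basering k[I],\operatorname{Ch^{dg}_{ac}}(\Proj(\cat F)))$.

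The paper does not attempt any strictification or any direct computation of $\RHom$. Instead it invokes a general result of Keller \cite[after Theorem~4.5]{keller-dgcat}: for any dg-category $\cat B$,
\[
\QFun(\basering k[I],\cat B)\;\cong\; H^0\!\bigl(\Fundg(\basering k[I],\cat B)\bigr)[\Sigma^{-1}],
\]
where $\Sigma$ is the class of objectwise homotopy equivalences. Since $\Fundg(\basering k[I],\operatorname{Ch^{dg}_{ac}}(\Proj(\cat F)))\cong\operatorname{Ch^{dg}_{ac}}(\Proj(\cat F)^I)$ and the cones of $\Sigma$ form exactly $\operatorname{K_{tc}}(\Proj(\cat F)^I)$, this one citation replaces both your essential surjectivity and full faithfulness steps.

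Your full faithfulness argument has a genuine gap. You assert that for $X,Y\in\operatorname{Ch_{ac}}(\Proj(\cat F^I))$ the derived hom $\RHom_{\basering k[I],\cat B}(h_X,h_Y)$ collapses to the naive $\Homdg$ because ``$\basering k[I]$ is h-projective and $X$ is termwise projective in $\cat F^I$''. Neither fact implies that $h_X$ is h-projective as a $\basering k[I]$--$\cat B$-bimodule: h-projectivity of $\basering k[I]$ only guarantees the existence of termwise resolutions (Corollary~\ref{corollary:termwiseres}), and termwise projectivity in $\cat F^I$ is a statement about the values $X^n\in\cat F^I$, not about the bimodule $h_X$. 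For a general dg-functor $F\colon\cat A\to\cat B$ the bimodule $h_F$ is \emph{not} h-projective, and the comparison $\Homdg\to\RHom$ can fail to be a quasi-isomorphism; this is precisely why quasi-functors are needed in the first place. Proving the collapse in your specific situation is tantamount to proving Keller's localization result for the source $\basering k[I]$, so you are not avoiding it but rather hiding it inside an unjustified sentence. Your inductive strictification for essential surjectivity is plausible for finite direct $I$, but it too is a hand-rolled special case of the same result, and you would need a careful argument that the rectified diagram lands in the acyclic locus at each stage.
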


\begin{proof}
    Let us consider the dg-category of dg-functors $\Fundg(\basering k[I], \operatorname{Ch^{dg}_{ac}}(\operatorname{Proj}(\cat F)))$. It is straightforward to prove:
    \begin{equation} \label{equation:complexes_projF_dgfunct}
    \Fundg(\basering k[I], \operatorname{Ch^{dg}_{ac}}(\operatorname{Proj}(\cat F))) \cong \operatorname{Ch^{dg}_{ac}}(\operatorname{Proj}(\cat F)^I). \tag{$\ast$}
    \end{equation}
    From \cite[after Theorem 4.5]{keller-dgcat}, we know that
    \[
    \QFun(\basering k[I], \operatorname{Ch^{dg}_{ac}}(\operatorname{Proj}(\cat F))) \cong H^0(\Fundg(\basering k[I], \operatorname{Ch^{dg}_{ac}}(\operatorname{Proj}(\cat F))))[\Sigma^{-1}],
    \]
    where $\Sigma$ is the family of morphisms $\varphi \colon F \to G$ such that $\varphi_i \colon F_i \to G_i$ is an isomorphism in $\operatorname{K_{ac}}(\operatorname{Proj}(\cat F))$ for all $i \in I$. It is clear that, under the equivalence \eqref{equation:complexes_projF_dgfunct}, the cones of morphisms in $\Sigma$ yield precisely $\operatorname{K_{tc}}(\operatorname{Proj}(\cat F)^I)$. Hence, using Proposition \ref{proposition:projective_frobenius_semiorthogonal}, we get:
    \[
    \QFun(\basering k[I], \operatorname{Ch^{dg}_{ac}}(\operatorname{Proj}(\cat F))) \cong \operatorname{K_{ac}}(\operatorname{Proj}(\cat F)^I) / \operatorname{K_{tc}}(\operatorname{Proj}(\cat F)^I) \cong \operatorname{K_{ac}}(\operatorname{Proj}(\cat F^I)),
    \]
    as claimed.

    For the other equivalence, we use that $\opp{\cat F}$ is again a weakly idempotent complete Frobenius exact category with $\Proj(\opp{\cat F})=\opp{\Inj(\cat F)}=\opp{\Proj(\cat F)}$ and that $\opp{I}$ is a finite direct category. By the previous part, we have an equivalence
    \[
    \QFun(\basering k[\opp{I}], \operatorname{Ch^{dg}_{ac}}(\Proj(\opp{\cat F}))) \cong \operatorname{K_{ac}}(\Proj((\opp{\cat F})^{\opp{I}})).
    \]
    It remains to note that
    \[
    \QFun(\basering k[\opp{I}], \operatorname{Ch^{dg}_{ac}}(\Proj(\opp{\cat F}))) \cong
    \QFun(\opp{\basering k[I]}, \opp{\operatorname{Ch^{dg}_{ac}}(\Proj(\cat F))}) \cong
    \opp{\QFun(\basering k[I], \operatorname{Ch^{dg}_{ac}}(\Proj(\cat F)))}
    \]
    and
    \[
    \operatorname{K_{ac}}(\Proj((\opp{\cat F})^{\opp{I}})) \cong
    \operatorname{K_{ac}}(\opp{\Inj(\cat F^I)}) \cong
    \opp{\operatorname{K_{ac}}(\Inj(\cat F^I))}.
    \qedhere
    \]
\end{proof}

\appendix

\section{A direct approach to the derivator of a Frobenius exact category}
\smallskip
\begin{center}by \textsc{Jan Šťovíček}\end{center}
\medskip

\subsection{Gorenstein exact categories}
\label{subsection:Gorenstein}

The main problem we face when trying to explicitly describe the stable derivator of a Frobenius exact category $\cat F$ is that diagram categories $\cat F^I$ (with the componentwise exact structure) are no longer Frobenius. However, as we will show in~\S\ref{subsection:diagrams_over_Frobenius}, they are Gorenstein in the following sense, which is very close to~\cite[Chapter~11, Definition~4.1]{sauter-habilitation}:

\begin{definition} \label{definition:Gorenstein-exact}
Let $\cat F$ be an exact category and $n\ge 0$ be a non-negative integer. We say that $\cat F$ is \emph{$n$-Gorenstein} if
\begin{enumerate}
\item it has enough projective objects and enough injective objects, and
\item all projective objects have injective dimension at most $n$ and all injective objects have projective dimension at most $n$.
\end{enumerate}

We say that $\cat F$ is Gorenstein if it is $n$-Gorenstein for some $n\ge 0$.
\end{definition}

Note that $\cat F$ is $0$-Gorenstein if and only if it is Frobenius. Our definition is inspired by the Iwanaga--Gorenstein rings~\cite{iwanaga-part1,iwanaga-part2} and, in particular, by the properties of their module categories. As we are going to show, key features of that setting are still valid in the generality of Definition~\ref{definition:Gorenstein-exact}. First of all, we have the following staightforward generalization of~\cite[Theorem~9]{iwanaga-part1}.

\begin{lemma} \label{lemma:finite_dimension_Gorenstein}
Let $\cat F$ be an $n$-Gorenstein exact category and $X\in\cat F$. The the following are equivalent:
\begin{enumerate}
\item $X$ has finite projective dimension,
\item $X$ has finite injective dimension,
\item the projective dimension of $X$ is at most $n$,
\item the injective dimension of $X$ is at most $n$.
\end{enumerate}
\end{lemma}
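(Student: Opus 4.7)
The plan is to close the implication cycle $(3) \Rightarrow (1) \Rightarrow (4) \Rightarrow (2) \Rightarrow (3)$, noting that $(3) \Rightarrow (1)$ and $(4) \Rightarrow (2)$ are immediate from the definitions of projective and injective dimension. The two non-trivial implications, $(1) \Rightarrow (4)$ and its dual $(2) \Rightarrow (3)$, then carry the whole argument, and once both are in hand, chaining them yields the equivalence of all four conditions.

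For $(1) \Rightarrow (4)$, I would proceed by induction on $d = \operatorname{pd}(X)$, which is finite by hypothesis. The base case $d = 0$ is the content of the Gorenstein assumption itself: any projective object has injective dimension at most $n$. For the inductive step, having enough projectives gives a conflation $0 \to X' \to P_0 \to X \to 0$ with $P_0$ projective, and truncating a projective resolution of $X$ shows that $\operatorname{pd}(X') \le d-1$. By the inductive hypothesis $\operatorname{id}(X') \le n$, while $\operatorname{id}(P_0) \le n$ by the Gorenstein hypothesis. The Yoneda Ext long exact sequence
\[
\operatorname{Ext}^{i}(-,P_0) \longrightarrow \operatorname{Ext}^{i}(-,X) \longrightarrow \operatorname{Ext}^{i+1}(-,X')
\]
then has both outer terms vanishing as soon as $i > n$, forcing $\operatorname{Ext}^{i}(-,X) = 0$ in that range and hence $\operatorname{id}(X) \le n$, as required.

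The implication $(2) \Rightarrow (3)$ is formally dual: one inducts on $\operatorname{id}(X)$, replacing the conflation above by a conflation $0 \to X \to I^{0} \to X'' \to 0$ with $I^{0}$ injective, and invokes the Gorenstein hypothesis that injectives have projective dimension at most $n$. Concatenating the implications then yields the full equivalence $(1) \Leftrightarrow (2) \Leftrightarrow (3) \Leftrightarrow (4)$.

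The only point that needs real care is that Yoneda Ext behaves as expected in an exact category: the long exact sequence above must be genuinely long exact, and the admissible subobject of $P_0$ appearing in a deflation onto $X$ must really have projective dimension strictly less than $\operatorname{pd}(X)$. Both facts are standard for exact categories with enough projectives (see e.g.\ Bühler's survey), so I anticipate no serious obstacle beyond bookkeeping; the argument is essentially a short double induction anchored by the Gorenstein hypothesis.
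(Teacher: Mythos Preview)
Your proposal is correct and follows essentially the same approach as the paper: both establish the trivial implications $(3)\Rightarrow(1)$ and $(4)\Rightarrow(2)$, prove $(1)\Rightarrow(4)$ by induction along a finite projective resolution using that each conflation $0\to Z_i\to P_{i-1}\to Z_{i-1}\to 0$ preserves the bound $\operatorname{id}\le n$, and note that $(2)\Rightarrow(3)$ is dual. The only cosmetic difference is that the paper fixes the whole resolution at once and inducts downward on the syzygy index, whereas you induct on $\operatorname{pd}(X)$ and peel off one conflation at a time; these are the same argument in different clothing.
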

\begin{proof}
The implications $(3)\Rightarrow(1)$ and $(4)\Rightarrow(2)$ are trivial.

Let us prove that $(1)$ implies $(4)$. Suppose that $X\in\cat F$ has a finite projective resolution
\begin{equation} \label{equation:Gore_resolution}
0 \to P_m \to P_{m-1} \to \cdots \to P_1 \to P_0 \to X \to 0
\end{equation}
and denote by $Z_i$ for $i=0, \dots, m$ the objects that fit into conflations
\begin{equation} \label{equation:Gore_confl_in_resolution}
0\to Z_i\to P_{i-1}\to Z_{i-1}\to 0
\end{equation}
(where we put $Z_0=X$ by convention and necessarily $Z_m=P_m$). Then the injective dimension of each $P_i$ is at most $n$ by assumption, and it follows by induction on $i=m, m-1, \dots, 0$, using the conflations~\eqref{equation:Gore_confl_in_resolution}, that the injective dimension of each $Z_i$ is at most $n$. This implies $(4)$.

Finally, the proof of $(2)\Rightarrow(3)$ is dual.
\end{proof}

Now we define three classes of objects of interest in a Gorenstein exact category.

\begin{definition} \label{definition:Gorenstein_key_classes}
Let $\cat F$ be a Gorenstein exact category. We will define:
\begin{enumerate}
\item \emph{Weakly trivial objects} as those satisfying the equivalent conditions of Lemma~\ref{lemma:finite_dimension_Gorenstein} (the terminology is motivated by Example~\ref{example:Gorentstein} below). The class of weakly trivial objects will be denoted by $\WTriv(\cat{F})$.
\item \emph{Gorenstein projective} objects as those in the class
\[ \GProj(\cat F)=\{X\in\cat F\mid \Ext^j_{\cat F}(X,\Proj(\cat F))=0 \textrm{ for all } j>0 \}. \]
\item \emph{Gorenstein injective} objects as those in the class
\[ \GInj(\cat F)=\{X\in\cat F\mid \Ext^j_{\cat F}(\Inj(\cat F),X)=0 \textrm{ for all } j>0 \}. \]
\end{enumerate}
\end{definition}

\begin{remark} \label{remark:closure_prop_of_weakly_triv}
Lemma~\ref{lemma:finite_dimension_Gorenstein} implies that the class $\WTriv(\cat{F})$ of weakly trivial objects enjoys strong closure properties in $\cat F$. It is closed under kernels of deflations and extensions (because the class of objects of finite projective dimension always has these closure properties), and also under cokernels of inflations (because the class of objects of finite injective dimension satisfies this). It is also closed under retracts. In the terminology of~\cite{hovey_abelian_models}, such subcategories are called \emph{thick}.
\end{remark}

The three classes are linked by the notion of cotorsion pairs. This observation essentially goes back to~\cite{auslander-buchweitz}.

\begin{definition} \label{definition:cotorsion_pair}
Given an exact category $\cat F$, a pair of classes of objects $(\cat X,\cat Y)$ is called a \emph{cotorsion pair} if $\cat X = \{X\in\cat F\mid \Ext^1_{\cat F}(X,\cat Y)=0 \}$ and $\cat Y = \{Y\in\cat F\mid \Ext^1_{\cat F}(\cat X,Y)=0 \}$.
A cotorsion pair is called $(\cat X,\cat Y)$ is called
\begin{itemize}
\item \emph{hereditary} if also $\Ext^i_{\cat F}(\cat X,\cat Y)=0$ for all $i\ge 1$, and
\item \emph{complete} if for each $Z\in\cat F$, there exist conflations $0\to Y_Z\to X_Z\to Z\to 0$ and $0\to Z\to Y^Z\to X^Z\to 0$ with $X_Z,X^Z\in\cat X$ and $Y_Z,Y^Z\in\cat Y$.
\end{itemize}
\end{definition}

\begin{remark}
The conflations in the definition of a complete cotorsion pair are not required to be unique and are usually called \emph{approximation sequences} of $Z$. The reason for the terminology is the following. Given any morphism $f\colon X\to Z$ with $X\in\cat X$, the exact sequence
\[ \Hom_{\cat F}(X,X_Z)\to\Hom_{\cat F}(X,Z)\to\Ext^1_{\cat F}(X,Y_Z)=0  \]
shows that $f$ factors through the deflation $X_Z\to Z$. Thus, $X_Z\to Z$ is what is usually called a \emph{right $\cat X$-approximation} of $Z$. Dually, the morphism $Z\to Y^Z$ is a \emph{left $\cat Y$-approximation} of $Z$ (cf.~\cite{auslander-reiten-contravar-fin}).
\end{remark}

\begin{proposition}\label{proposition:Gore_cotorsion_pairs}
Let $\cat F$ be a Gorenstein exact category. Then
\[
(\GProj(\cat{F}),\WTriv(\cat{F}))
\qquad\text{and}\qquad
(\WTriv(\cat{F}),\GInj(\cat{F}))\]
are complete hereditary cotorsion pairs. Moreover, we have $\GProj(\cat{F})\cap\WTriv(\cat{F})=\operatorname{Proj}(\cat F)$ and $\WTriv(\cat{F})\cap\GInj(\cat{F})=\operatorname{Inj}(\cat F)$.
\end{proposition}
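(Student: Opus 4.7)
The plan is to verify the four assertions in sequence: (1) the intersection identities, (2) the Ext-vanishing $\Ext^{\geq 1}(\GProj(\cat F), \WTriv(\cat F)) = 0$ and its dual (which give hereditariness as well as the ``easy'' inclusions of the cotorsion pair axioms), (3) completeness via approximation sequences, and (4) the remaining Ext-orthogonality inclusions, deduced formally from (2) and (3).

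For $\GProj(\cat F) \cap \WTriv(\cat F) = \Proj(\cat F)$, I will argue that if $X$ lies in the intersection with $d := \operatorname{pd}(X) \geq 1$, then a projective resolution $0 \to P_d \to \cdots \to P_0 \to X \to 0$ combined with iterated dimension-shifting (using $\Ext^{\geq 1}(X, P_i) = 0$) identifies $\Ext^1(\Omega^{d-1}X, P_d) \cong \Ext^d(X, P_d) = 0$; the top conflation $0 \to P_d \to P_{d-1} \to \Omega^{d-1}X \to 0$ therefore splits, exhibiting $\Omega^{d-1}X$ as a direct summand of a projective and contradicting the minimality of $d$, forcing $X \in \Proj(\cat F)$. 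The dual argument handles $\WTriv \cap \GInj = \Inj$. For hereditariness, I would use that each $Y \in \WTriv(\cat F)$ admits a finite projective resolution (of length $m\leq n$ by Lemma~\ref{lemma:finite_dimension_Gorenstein}) and dimension-shift in the second argument to obtain $\Ext^j(X, Y) \cong \Ext^{j+m}(X, P_m) = 0$ for $X \in \GProj$ and $j \geq 1$; the dual vanishing $\Ext^{\geq 1}(\WTriv, \GInj) = 0$ follows from finite injective coresolutions.

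Completeness will be the main obstacle. The key preliminary observation is that the $n$-th syzygy $\Omega^n Z$ of a projective resolution of any $Z \in \cat F$ lies in $\GProj(\cat F)$: iterated dimension-shifting in the first argument (using $\Ext^{\geq 1}(P_i, -) = 0$) gives $\Ext^j(\Omega^n Z, P) \cong \Ext^{j+n}(Z, P)$, which vanishes for $j \geq 1$ since every projective has injective dimension at most $n$. Consequently every $Z \in \cat F$ has finite $\GProj(\cat F)$-dimension (at most $n$), so $\GProj(\cat F) \supseteq \Proj(\cat F)$ is a resolving subcategory through which every object can be resolved in finitely many steps. I would then invoke the Auslander--Buchweitz approximation theorem~\cite{auslander-buchweitz}, whose hypotheses (closure of $\GProj$ under extensions, direct summands and kernels of deflations inside $\GProj$, together with the vanishing $\Ext^{\geq 1}(\GProj, \Proj) = 0$) will have been verified by this point; the theorem delivers, for each $Z$, the two approximation conflations $0 \to W_Z \to X_Z \to Z \to 0$ and $0 \to Z \to W^Z \to X^Z \to 0$ with $X_Z, X^Z \in \GProj(\cat F)$ and $W_Z, W^Z \in \WTriv(\cat F)$. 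The analogous approximations for the pair $(\WTriv, \GInj)$ arise by formal duality in $\opp{\cat F}$, which is again a weakly idempotent complete $n$-Gorenstein exact category with the roles of Gorenstein projectives and Gorenstein injectives interchanged.

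Finally, completeness will close the cotorsion-pair axioms. For any $Y$ with $\Ext^1(\GProj(\cat F), Y) = 0$, applying the second approximation $0 \to Y \to W^Y \to X^Y \to 0$ forces $\Ext^1(X^Y, Y) = 0$, so this conflation splits and realizes $Y$ as a direct summand of $W^Y \in \WTriv(\cat F)$; the thickness recorded in Remark~\ref{remark:closure_prop_of_weakly_triv} then gives $Y \in \WTriv(\cat F)$. Symmetrically, any $X'$ with $\Ext^1(X', \WTriv(\cat F)) = 0$ will be realized as a direct summand of an object of $\GProj$ by splitting the first approximation, whence $X' \in \GProj$ by closure of $\GProj$ under direct summands. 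The pair $(\WTriv, \GInj)$ is handled by the same argument applied to $\opp{\cat F}$.
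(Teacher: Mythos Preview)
Your proposal is correct and follows essentially the same approach as the paper: both arguments establish hereditariness by dimension-shifting along a finite projective resolution of a weakly trivial object, observe that $n$-th syzygies are Gorenstein projective, invoke the Auslander--Buchweitz approximation theorem for completeness, and then use the resulting approximation sequences together with thickness of $\WTriv(\cat F)$ to close the cotorsion-pair axioms. The only noteworthy differences are cosmetic: you prove the intersection identity first via a minimal-projective-dimension contradiction, whereas the paper deduces it last by splitting a projective cover; and you derive the inclusion $\{X:\Ext^1(X,\WTriv)=0\}\subseteq\GProj$ from completeness, while the paper proves it directly beforehand by noting that cosyzygies of projectives lie in $\WTriv$. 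One small slip: you refer to $\opp{\cat F}$ as ``weakly idempotent complete,'' but the proposition does not assume this, and your argument does not actually use it.
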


\begin{proof}
We will only prove the facts related to $(\GProj(\cat{F}),\WTriv(\cat{F}))$, the case of Gorenstein injectives is dual. The arguments are fairly standard and use the insight from~\cite{auslander-buchweitz}.

First of all, if $G\in\GProj(\cat{F})$, $X\in\WTriv(\cat{F})$ and we fix a projective resolution~\eqref{equation:Gore_resolution}, then the long exact sequences of $\Ext$-groups obtained by applying $\Hom_{\cat F}(G,-)$ to the conflations~\eqref{equation:Gore_confl_in_resolution} imply that (in that notation) $\Ext^j_{\cat F}(G,Z_i)=0$ for all $i=m, m-1, \dots, 0$ and $j>1$. That is, $\Ext^j_{\cat F}(\GProj(\cat F),\WTriv(\cat{F}))=0$ for all $j>0$. From there it follows that
\[ \GProj(\cat{F}) = \{X\in\cat F\mid \Ext^1_{\cat F}(X,\WTriv(\cat{F}))=0 \}. \]
Indeed, we have just proved the inclusion $\subseteq$ and, on the other hand, if $G$ is contained in the class on the right-hand side, then $\Ext^1_{\cat F}(G,X)=0$ for all cosyzygies of projective objects $X$ in $\cat F$ (as these belong to $\WTriv(\cat{F})$ by Remark~\ref{remark:closure_prop_of_weakly_triv}). Hence, by dimension shifting, $\Ext^j_{\cat F}(G,\operatorname{Proj}(\cat F))=0$ for all $j>0$, and so $G\in\GProj(\cat{F})$.

If $n\ge 0$ is such that $\cat F$ is $n$-Gorenstein, then any projective object has injective dimension at most $n$ by Lemma~\ref{lemma:finite_dimension_Gorenstein}.
A simple dimension-shifting argument then shows for each $Z\in\cat F$ that any $n$-th syzygy of $Z$ belongs to $\GProj(\cat{F})$. That is, there is an exact sequence
\[ 0 \to G \to P_{n-1} \to \cdots \to P_1 \to P_0 \to X \to 0 \]
in $\cat F$ such that $G, P_{n-1}, \dots, P_0\in\GProj(\cat{F})$ (of course, the objects $P_i$ are even projective).

Now, \cite[Theorem~1.1]{auslander-buchweitz} applies with $\mathbf{X}=\GProj(\cat{F})$ and $\omega=\operatorname{Proj(\cat F)}$ (in the notation of~\cite{auslander-buchweitz}), and provides us for each $Z\in\cat F$ with conflations $0\to W_Z\to G_Z\to Z\to 0$ and $0\to Z\to W^Z\to G^Z\to 0$ such that $G_Z,G^Z\in\GProj(\cat{F})$ and $W_Z,W^Z\in\WTriv(\cat{F})$. Strictly speaking, \cite[Theorem~1.1]{auslander-buchweitz} is stated under somewhat more restrictive assumptions, but its proof perfectly applies in our setting.

Finally note that given $Z\in\cat F$ is such that $\Ext^1_{\cat F}(\GProj(\cat F),Z)=0$, then the conflation $0\to Z\to W^Z\to G^Z\to 0$ from the last paragraph splits and consequently $Z\in\WTriv(\cat{F})$ by Remark~\ref{remark:closure_prop_of_weakly_triv}. All in all, we have shown that $(\GProj(\cat F),\WTriv(\cat{F}))$ is a complete hereditary cotorsion pair in~$\cat F$. Clearly also $\operatorname{Proj}(\cat F)\subseteq\GProj(\cat{F})\cap\WTriv(\cat{F})$, and the other inclusion follows from the fact that given $G\in\GProj(\cat{F})\cap\WTriv(\cat{F})$, any conflation $0\to W\to P\to G\to 0$ with $P\in\operatorname{Proj}(\cat F)$ splits as $W\in\WTriv(\cat F)$ by Remark~\ref{remark:closure_prop_of_weakly_triv}.
\end{proof}

Note that since the subcategories $\GProj(\cat{F})$ and $\GInj(\cat{F})$ are closed under extensions in $\cat F$, they themselves are naturally exact categories with the exact structure induced from that on $\cat F$.

\begin{corollary}\label{corollary:GP_and_GI_are_Frobenius}
Let $\cat F$ be a Gorenstein exact category.
Then the exact categories $\GProj(\cat{F})$ and $\GInj(\cat{F})$ are Frobenius. Moreover,
\[ \Proj(\GProj(\cat{F}))=\Inj(\GProj(\cat{F}))=\Proj(\cat F) \]
and
\[ \Proj(\GInj(\cat{F}))=\Inj(\GInj(\cat{F}))=\Inj(\cat F). \]
\end{corollary}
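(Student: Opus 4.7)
The plan is to reduce the statement to verifying the Frobenius axioms for $\GProj(\cat F)$, with the $\GInj(\cat F)$ case following by duality. Concretely, $\cat F^{\mathrm{op}}$ is again a Gorenstein exact category with $\Proj(\cat F^{\mathrm{op}})$ corresponding to $\Inj(\cat F)$, and a direct computation with Ext-groups shows $X\in\GProj(\cat F^{\mathrm{op}})$ if and only if $\Ext^j_{\cat F}(\Inj(\cat F),X)=0$ for all $j>0$, i.e.\ $X\in\GInj(\cat F)$; so the second claim will follow from the first applied to $\cat F^{\mathrm{op}}$. Thus the task reduces to showing that $\GProj(\cat F)$ (which is clearly exact, being extension-closed in $\cat F$) has projectives and injectives both equal to $\Proj(\cat F)$ and has enough of them.

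For the identification $\Proj(\GProj(\cat F)) = \Inj(\GProj(\cat F)) = \Proj(\cat F)$, the inclusions $\supseteq$ are formal: any $P\in\Proj(\cat F)$ remains projective in any extension-closed subcategory, and the very definition of $\GProj$ gives $\Ext^1_{\cat F}(G,P)=0$ for $G\in\GProj$, so $P$ is injective in $\GProj$ as well. For the reverse inclusions I invoke the complete hereditary cotorsion pair $(\GProj(\cat F),\WTriv(\cat F))$ of Proposition~\ref{proposition:Gore_cotorsion_pairs}. If $P$ is projective in $\GProj$, then for any $X\in\cat F$ the special precover $0\to W\to G_X\to X\to 0$ gives a long exact sequence sandwiching $\Ext^1_{\cat F}(P,X)$ between $\Ext^1_{\cat F}(P,G_X)=0$ (projectivity of $P$ in $\GProj$) and $\Ext^2_{\cat F}(P,W)=0$ (hereditariness), whence $P\in\Proj(\cat F)$. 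If $I$ is injective in $\GProj$, the special $\WTriv$-preenvelope $0\to I\to W\to G\to 0$ splits (as $\Ext^1_{\cat F}(G,I)=0$), so $I$ is a retract of $W\in\WTriv$, and thickness of $\WTriv$ (Remark~\ref{remark:closure_prop_of_weakly_triv}) together with the identity $\GProj\cap\WTriv=\Proj(\cat F)$ yield $I\in\Proj(\cat F)$.

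Enough projectives in $\GProj(\cat F)$ is immediate: for $G\in\GProj$, a deflation $P\twoheadrightarrow G$ from a projective of $\cat F$ has kernel in $\GProj$, because $\GProj$ is closed under kernels of deflations (a standard consequence of hereditariness). The main obstacle is enough injectives, i.e.\ for each $G\in\GProj$ producing a conflation $0\to G\to P\to G'\to 0$ with $P\in\Proj(\cat F)$ and $G'\in\GProj$. My plan for this is a pullback construction combining enough injectives in $\cat F$ with the $\GProj$-precover side of the cotorsion pair. Namely, first use enough injectives to embed $0\to G\to I\to Z\to 0$ with $I\in\Inj(\cat F)$, then apply the special $\GProj$-precover to $Z$ to obtain $0\to W_Z\to G_Z\to Z\to 0$ with $G_Z\in\GProj$ and $W_Z\in\WTriv$, and finally form the pullback $F\coloneqq I\times_Z G_Z$. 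Since the pullback of a deflation along any morphism in an exact category is a deflation with the same kernel, this yields two conflations
\[
0\to W_Z\to F\to I\to 0
\qquad\text{and}\qquad
0\to G\to F\to G_Z\to 0.
\]
From the first, using $I\in\Inj(\cat F)\subseteq\WTriv(\cat F)$ (by the $n$-Gorenstein assumption) and thickness of $\WTriv$, one gets $F\in\WTriv$; from the second, using closure of $\GProj$ under extensions, one gets $F\in\GProj$. Hence $F\in\GProj\cap\WTriv=\Proj(\cat F)$, and the second conflation is the desired inflation of $G$ into a projective with Gorenstein projective cokernel. The hard part is exactly this last step, where the interplay of the two classes forces $F$ into the intersection that equals $\Proj(\cat F)$.
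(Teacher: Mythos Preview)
Your proof is correct and largely parallels the paper's argument: the duality reduction to $\GProj(\cat F)$, the identifications $\Proj(\GProj(\cat F))=\Inj(\GProj(\cat F))=\Proj(\cat F)$, and the ``enough projectives'' step all match the paper's reasoning.

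The one place where you take a different route is the ``enough injectives'' step. Your pullback construction works, but it is more elaborate than necessary. The paper simply applies the special $\WTriv$-preenvelope from the complete cotorsion pair $(\GProj(\cat F),\WTriv(\cat F))$ directly to $G$: the approximation sequence $0\to G\to W^G\to G'\to 0$ with $W^G\in\WTriv(\cat F)$ and $G'\in\GProj(\cat F)$ already has $W^G\in\GProj(\cat F)$ by extension closure (since both $G$ and $G'$ lie in $\GProj$), hence $W^G\in\GProj(\cat F)\cap\WTriv(\cat F)=\Proj(\cat F)$, and this conflation is the desired injective envelope in $\GProj(\cat F)$. Your argument essentially rebuilds this preenvelope by hand via the pullback of $I\twoheadrightarrow Z$ against $G_Z\twoheadrightarrow Z$; the outcome is the same, but the paper's route avoids the detour through $\Inj(\cat F)$ and the pullback.
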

\begin{proof}
Since $\GProj(\cat{F})$ is a left half of a complete hereditary cotorsion pair in $\cat F$, it is closed under taking syzygies in $\cat F$ and, hence, $\GProj(\cat{F})$ has enough projectives and $\operatorname{Proj}(\GProj(\cat{F}))=\operatorname{Proj}(\cat F)$. The projective objects in $\GProj(\cat{F})$ are also injective, as seen directly from Definition~\ref{definition:Gorenstein_key_classes}. Finally, we know from the completeness of the cotorsion pair $(\GProj(\cat{F}),\WTriv(\cat{F}))$ that for any $G\in\GProj(\cat{F})$ there exists a conflation $0\to G\to P\to G'\to 0$ with $P\in\WTriv(\cat{F})$ and $G'\in\GProj(\cat{F})$ and, since $\GProj(\cat{F})$ is closed under extensions in $\cat F$, we in fact have $P\in\GProj(\cat{F})\cap\WTriv(\cat{F})=\Proj(\cat F)$. This shows that $\GProj(\cat{F})$ also has enough injectives and $\Inj(\GProj(\cat{F}))=\Proj(\cat F)$.

An analogous argument shows that $\GInj(\cat{F})$ is Frobenius with $\Proj(\GInj(\cat{F}))=\Inj(\GInj(\cat{F}))=\Inj(\cat F)$.
\end{proof}

The following is the main result of this section.

\begin{proposition}\label{proposition:GProj-vs-GInj}
The stable categories $\underline{\GProj(\cat{F})}$ and $\underline{\GInj(\cat{F})}$ are triangle equivalent. More specifically, the equivalence $F\colon\underline{\GProj(\cat{F})}\xrightarrow{\sim}\underline{\GInj(\cat{F})}$ is, up to a canonical natural isomorphism, given as follows:
\begin{enumerate}
\item For each $G\in\underline{\GProj(\cat{F})}$ we fix an approximation sequence $0\to G\xrightarrow{\eta_G} F(G)\to W\to 0$ with $F(G)\in\GInj(\cat{F})$ and $W\in\WTriv(\cat{F})$; this determines $F$ on objects.
\item Given a morphism $\bar{f}$ in $G\in\underline{\GProj(\cat{F})}$, represented by a morphism $f\colon G\to G'$ in $\GProj(\cat{F})$, the value $F(\bar f)$ is represented by any morphism $g\colon F(G)\to F(G')$ which fits into the following commutative square in $\cat F$:
\[
\begin{tikzcd}
G \arrow[d, swap, "f"] \ar[r, hookrightarrow, "\eta_G"] & F(G) \ar[d, "g"] \\
G' \arrow[r, hookrightarrow, "\eta_{G'}"] & F(G').
\end{tikzcd}
\]
\end{enumerate}
The inverse $\underline{\GInj(\cat{F})}\xrightarrow{\sim}\underline{\GProj(\cat{F})}$ is constructed dually.
\end{proposition}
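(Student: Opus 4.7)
The plan is to verify the prescription in the statement directly, construct the inverse dually via the other cotorsion pair, and prove both compositions are the identity; the main technical challenge is quasi-inverseness, which exploits the interplay between the two cotorsion pairs. For the definition of $F$, completeness of the cotorsion pair $(\WTriv(\cat F),\GInj(\cat F))$ from Proposition~\ref{proposition:Gore_cotorsion_pairs} yields, for each $G\in\GProj(\cat F)$, a conflation $0\to G \xrightarrow{\eta_G} F(G) \to W\to 0$ with $F(G)\in\GInj(\cat F)$ and $W\in\WTriv(\cat F)$. Given $f\colon G\to G'$, the vanishing $\Ext^1_{\cat F}(W,F(G'))=0$ lifts $\eta_{G'}\circ f$ to some $g\colon F(G)\to F(G')$. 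Two such lifts differ by a morphism factoring as $F(G)\twoheadrightarrow W \xrightarrow{h} F(G')$; since $\cat F$ has enough injectives and $\WTriv(\cat F)$ is closed under cokernels of inflations (Remark~\ref{remark:closure_prop_of_weakly_triv}), we embed $W\hookrightarrow I$ with $I\in\Inj(\cat F)$ and $I/W\in\WTriv(\cat F)$ and extend $h$ to $I\to F(G')$ using $\Ext^1_{\cat F}(I/W,F(G'))=0$. By Corollary~\ref{corollary:GP_and_GI_are_Frobenius}, $I\in\Inj(\GInj(\cat F))$, so the difference vanishes in $\underline{\GInj(\cat F)}$. A symmetric argument, embedding a factor $P\in\Proj(\cat F)$ into an injective and extending through $\eta_G$, shows that if $f$ factors through a projective then some (hence every) lift factors through an injective; so $F$ descends to a well-defined additive functor on stable categories.

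Dually the cotorsion pair $(\GProj(\cat F),\WTriv(\cat F))$ produces $F'\colon \underline{\GInj(\cat F)}\to \underline{\GProj(\cat F)}$ via approximation sequences $0\to W^H\to F'(H)\xrightarrow{\epsilon_H} H\to 0$. To see $F'\circ F\cong \id$ on $\underline{\GProj(\cat F)}$, I use that $\epsilon_{F(G)}$ is a right $\GProj$-approximation, so $\eta_G$ factors as $\eta_G=\epsilon_{F(G)}\circ \phi_G$ for some $\phi_G\colon G\to F'(F(G))$; this $\phi_G$ is well defined in $\underline{\GProj(\cat F)}$ because the difference of two lifts is a morphism $G\to W^{F(G)}$ which lifts across a deflation $P\twoheadrightarrow W^{F(G)}$ from a projective of $\cat F$ (whose kernel sits in $\WTriv(\cat F)$), using the vanishing $\Ext^1_{\cat F}(\GProj,\WTriv)=0$. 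The main obstacle is showing that $\phi_G$ is an isomorphism. I would form the pullback
\[
\begin{tikzcd}
P \ar[r, "p"] \ar[d, "q"'] & G \ar[d, "\eta_G"] \\
F'(F(G)) \ar[r, "\epsilon_{F(G)}"'] & F(G),
\end{tikzcd}
\]
which yields conflations $0\to W^{F(G)}\to P\to G\to 0$ and $0\to P\to F'(F(G))\to W_G\to 0$. Applying $\underline{\Hom}_{\GProj(\cat F)}(-,H)$ for $H\in\GProj(\cat F)$ and chasing the long exact sequences of $\Ext$, combined with the fact that every morphism from an object of $\GProj(\cat F)$ into an object of $\WTriv(\cat F)$ factors through $\Proj(\cat F)$ (proved via the same projective-cover lifting trick and the cotorsion-pair Ext-vanishing), shows that both $p^*$ and $q^*$ are bijections, hence $\phi_G$ is an iso. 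Naturality is a straightforward diagram chase, and $F\circ F'\cong\id$ is established symmetrically.

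For the triangulated structure, the suspension in $\underline{\GProj(\cat F)}$ is computed by conflations $0\to G\to P\to \Sigma G\to 0$ with $P\in\Proj(\cat F)$ (the projective-injective objects of $\GProj(\cat F)$ by Corollary~\ref{corollary:GP_and_GI_are_Frobenius}), while in $\underline{\GInj(\cat F)}$ the suspension uses $\Inj(\cat F)$. A pushout of such a suspension conflation along $\eta_G$, followed by a left $\GInj$-approximation of the middle term, identifies $F(\Sigma G)$ with $\Sigma F(G)$ up to stable equivalence, and distinguished triangles are preserved by the universal properties of the approximation sequences together with the already-established functoriality. Altogether $F$ is a triangle equivalence with inverse $F'$.
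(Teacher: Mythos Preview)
Your approach differs genuinely from the paper's: you construct $F$ by hand and attempt to verify quasi-inverseness directly, whereas the paper invokes exact model structures (Proposition~\ref{proposition:exact-model} and Example~\ref{example:Gorentstein}). There, the two cotorsion pairs of Proposition~\ref{proposition:Gore_cotorsion_pairs} yield two model structures on $\cat F$ sharing the \emph{same} class $W$ of weak equivalences, whence $\underline{\GProj(\cat F)}\simeq\cat F[W^{-1}]\simeq\underline{\GInj(\cat F)}$; the explicit form of $F$ then follows because $\eta_G\in W$ becomes invertible in $\cat F[W^{-1}]$. Your construction of $F$ and its well-definedness on the stable categories are correct and coincide with what one extracts from the model-category argument when unpacked.

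The quasi-inverse step, however, has a real gap. The pullback $P$ is in general \emph{not} Gorenstein projective: the conflation $0\to W^{F(G)}\to P\to G\to 0$ splits (as $\Ext^1_{\cat F}(\GProj,\WTriv)=0$), so $P\cong G\oplus W^{F(G)}$, and $W^{F(G)}\in\WTriv(\cat F)$ is typically not projective. Hence $\underline{\Hom}_{\GProj(\cat F)}(-,H)$ cannot be evaluated at $P$. If you reinterpret $\underline{\Hom}$ as the stable Hom in $\cat F$ modulo projectives, bijectivity of $p^*$ would require $\underline{\Hom}(W^{F(G)},H)=0$ for $H\in\GProj(\cat F)$, i.e.\ that every map from $\WTriv$ to $\GProj$ factors through a projective---this is the \emph{wrong direction} of your lemma (only $\underline{\Hom}(\GProj,\WTriv)=0$ holds in general). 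The covariant variant is no better: after identifying $\underline{\Hom}(H,P)\cong\underline{\Hom}(H,G)$ via $p_*$, the map $q_*$ becomes $(\phi_G)_*$ itself, so the argument is circular. The fix is to discard $P$ and show directly that $(\eta_G)_*$ and $(\epsilon_{F(G)})_*$ are bijections on $\underline{\Hom}(H,-)$ for every $H\in\GProj(\cat F)$, using only $\underline{\Hom}(\GProj,\WTriv)=0$ and $\Ext^1_{\cat F}(\GProj,\WTriv)=0$; then $(\phi_G)_*=(\epsilon_{F(G)})_*^{-1}\circ(\eta_G)_*$ and Yoneda concludes. This is precisely what the paper's model-category framework packages for free.
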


Although a direct proof is not very hard either, we can explain the proposition conceptually using Hovey's insight from~\cite{hovey_abelian_models}. This has a lot to do with localization of categories and model structures. Recall that given any category $\cat F\in\kat{CAT}$ and a class of morphisms $W$ (whose elements are usually called \emph{weak equivalences}), there is always a universal functor $Q\colon\cat F\to\cat F[W^{-1}]$ making all morphisms in $W$ invertible, up to a potential set-theoretic nuisance that the Hom-sets in $\cat F[W^{-1}]$ are not small; see \cite[Lemma I.1.2]{gabriel-zisman}. In general, however, it is a challange to get any control over the category $\cat F[W^{-1}]$.

The situation is much more favorable if we have more structure on $\cat F$, so-called model category structure. This entails that we have two additional classes of morphisms $Cof$ and $Fib$ along with $W$, called \emph{cofibrations} and \emph{fibrations}, respectively, satisfying certain axioms. We will not state a complete definition here as it is not really necessary for our purpose, but it can be found, for instance, in~\cite[Definition~1.1.3]{hovey_modelcategories} or~\cite[Definition~7.1.3]{hirschhorn-modelcategories}. In this case, there is a full and dense functor $\cat F_{cf}\to\cat F[W^{-1}]$, where $\cat F_{cf}\subseteq\cat F$ is a full subcategory of $\cat F$ given by the so-called cofibrant and fibrant objects. We again will only refer to \cite[Theorem 1.2.10]{hovey_modelcategories} for details at this level of generality, but we will make all the above-mentioned classes and the required conditions on them precise in the coming proposition in the context of exact categories.
Here is a summary of results relevant for us, originating in~\cite{hovey_abelian_models} and generalized to the context of exact categories in~\cite{gillespie-exactmodels,stovicek-exactmodels} later.

\begin{proposition}\label{proposition:exact-model}
Let $\cat F$ be a weakly idempotent exact category and $\cat Cof, \cat W, \cat Fib$ be three full subcategories of $\cat F$ such that
\begin{enumerate}
\item $(\cat Cof,\cat W\cap\cat Fib)$ and $(\cat Cof\cap\cat W,\cat Fib)$ are complete cotorsion pairs in $\cat F$ and
\item $\cat W$ is closed under retracts and for any conflation $0\to X\to Y\to Z\to 0$ in $\cat F$ with two terms in $\cat W$, the third term also lies in $\cat W$.
\end{enumerate}
Then $\cat F$ together with the following classes of morphisms forms a model category:
\begin{itemize}
\item $Cof$, the class of cofibrations, consists of inflations with cokernels in $\cat Cof$,
\item $Fib$, the class of fibrations, consists of deflations with kernels in $\cat Fib$,
\item $W$, the class of weak equivalences, consists of the compositions $w=w_d\circ w_i$, where $w_d$ is a deflation with kernel in $\cat W$ and $w_i$ is an inflation with cokernel in $\cat W$.
\end{itemize}

Moreover, if the cotorsion pairs above are hereditary, then:
\begin{enumerate}
\item[(a)] The extension closed full subcategory $\cat F_{cf}:=\cat Cof\cap\cat Fib$ with the exact structure restricted from $\cat F$ is Frobenius exact, and the class of projective injective objects is precisely $\cat Cof\cap\cat W\cap\cat Fib$.
\item[(b)] The composition of functors $\cat F_{cf}\to\cat F\to\cat F[W^{-1}]$ induces equivalence
\[ \underline{\cat F_{cf}}\simeq\cat F[W^{-1}]. \]
\end{enumerate}
\end{proposition}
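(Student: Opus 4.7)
The proof follows the Hovey correspondence between model structures and pairs of compatible cotorsion pairs, adapted to exact categories as in \cite{hovey_abelian_models}, \cite{gillespie-exactmodels}, and \cite{stovicek-exactmodels}. I would split the argument into three stages: verifying the model axioms, establishing the Frobenius structure on $\cat F_{cf}$ under the hereditary hypothesis, and identifying the localization with the stable category.

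For the model axioms, the two-out-of-three property for $W$ is immediate from hypothesis (2): by the very definition of a weak equivalence, any $w$ factors as an inflation followed by a deflation whose cokernel/kernel lie in $\cat W$, and conflations in $\cat F$ propagate $\cat W$-membership; combining two weak equivalences and examining the pullback/pushout-free combinatorics of the resulting deflation-inflation strings yields the 2-out-of-3 law. Closure of $Cof$, $Fib$, and $W$ under retracts is routine: $Cof$ and $Fib$ inherit this from the fact that each half of a complete cotorsion pair is closed under retracts, while $W$ uses hypothesis (2) directly. The lifting axioms follow from the $\Ext^1$-orthogonality built into the two cotorsion pairs: a lifting problem against a trivial fibration reduces to showing $\Ext^1_{\cat F}(\cat Cof, \cat W \cap \cat Fib) = 0$, which holds by definition, and dually for the other lifting class. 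The central technical step is the construction of the functorial factorizations, which I would carry out using the completeness of the two cotorsion pairs combined with the small object argument adapted to exact categories (as in \cite{stovicek-exactmodels}); the weak idempotent completeness hypothesis ensures that the intermediate constructions produce genuine inflations and deflations in $\cat F$.

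Under the hereditary hypothesis, I would establish the Frobenius structure on $\cat F_{cf}$ as follows. Since each of $\cat Cof, \cat Fib$ is closed under extensions in $\cat F$ (as halves of cotorsion pairs), so is $\cat F_{cf} = \cat Cof \cap \cat Fib$; hence it inherits an exact structure from $\cat F$. Given $X \in \cat F_{cf}$, apply the approximation sequence in $(\cat Cof, \cat W \cap \cat Fib)$ to produce $0 \to V \to C \to X \to 0$ with $C \in \cat Cof$ and $V \in \cat W \cap \cat Fib$; hereditarity together with the extension-closure of $\cat Fib$ forces $C \in \cat F_{cf}$, and a similar argument using hereditarity of the other cotorsion pair shows $V \in \cat Cof \cap \cat W \cap \cat Fib$, yielding a projective presentation of $X$ in $\cat F_{cf}$. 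A dual argument using the other cotorsion pair gives injective copresentations. Finally, an object $P$ of $\cat F_{cf}$ is projective in $\cat F_{cf}$ iff $\Ext^1_{\cat F}(P,-)$ vanishes on $\cat F_{cf}$, which by the cotorsion pair characterization amounts to $P \in \cat Cof \cap \cat W \cap \cat Fib$; the symmetric statement for injectives identifies the projective-injective class as claimed.

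Finally, the identification $\underline{\cat F_{cf}} \simeq \cat F[W^{-1}]$ is the classical Quillen-type consequence of having a full exact model structure: every object of $\cat F$ admits a cofibrant-fibrant replacement via composing both factorizations, so the composite $\cat F_{cf} \hookrightarrow \cat F \to \cat F[W^{-1}]$ is essentially surjective; fullness and faithfulness reduce to the observation that for $X, Y \in \cat F_{cf}$, two parallel morphisms coincide in $\cat F[W^{-1}]$ iff they are left-homotopic (equivalently right-homotopic) in the model-categorical sense, which here specializes to the stable-equivalence relation of factoring their difference through an object of $\cat Cof \cap \cat W \cap \cat Fib$. The main obstacle throughout is the construction of the functorial factorizations: an exact category lacks general pullbacks and pushouts, and one must verify that the small object argument can be executed entirely within $\cat F$ using only the cotorsion pair approximations together with weak idempotent completeness to guarantee that all objects arising in the transfinite composition remain in $\cat F$ and that all connecting morphisms are inflations or deflations of the required kind.
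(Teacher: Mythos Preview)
The paper's own proof is simply a collection of citations to \cite{gillespie-exactmodels}, \cite{stovicek-exactmodels}, and \cite{hovey_abelian_models}, so your proposal is effectively an attempt to sketch the content of those references. Much of what you write is on the right track, but there is one genuine misconception.

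Your treatment of the factorization axiom is wrong in a way that the paper explicitly warns about. You propose to build ``functorial factorizations'' via ``the small object argument adapted to exact categories'' with ``transfinite composition''. In the present generality no such argument is available: $\cat F$ need not have the filtered colimits the small object argument requires, and indeed the remark immediately following the proposition in the paper points out that ``there is no guarantee that the factorization of maps \dots\ is functorial''. The factorizations in \cite{gillespie-exactmodels,stovicek-exactmodels} are obtained by hand from the approximation sequences of the two complete cotorsion pairs, using only finite pushouts along inflations and pullbacks along deflations (which exist in any exact category). Concretely, given $f\colon A\to B$, one factors it by first taking an approximation sequence for a suitably chosen object and then forming a single pushout or pullback; no transfinite process is involved, and weak idempotent completeness enters to ensure that certain composites are again inflations or deflations (via the Obscure Axiom), not to control a tower construction. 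You should replace the small-object paragraph by this finite construction.

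Your arguments for part~(a) and part~(b) are essentially the correct ones, and your identification of the homotopy relation on $\cat F_{cf}$ with factorization through $\cat Cof\cap\cat W\cap\cat Fib$ is exactly what \cite[Proposition~4.4]{gillespie-exactmodels} establishes.
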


\begin{proof}
The existence of the model structure was proved in~\cite[Theorem 3.3]{gillespie-exactmodels} or~\cite[Theorem 6.9]{stovicek-exactmodels}, following the arguments of \cite[Theorem 2.2]{hovey_abelian_models}. The `moreover' part can be found in~\cite[Corollary 4.8 and Proposition~5.2(4)]{gillespie-exactmodels} or~\cite[Theorem 6.21]{stovicek-exactmodels}.
\end{proof}

\begin{remark}
A cautious reader might have noticed a certain mismatch in definitions: Both~\cite{hirschhorn-modelcategories,hovey_modelcategories} require that model categories be complete and cocomplete, which is certainly not true in general about a weakly idempotent exact category as in Proposition~\ref{proposition:exact-model}. However, one of the points checked in detail~\cite{gillespie-exactmodels,stovicek-exactmodels} is that even the limited repertoire of (co)limits available in $\cat F$ (i.e.\ finite (co)products and certain pullbacks and pushouts) suffices to prove the last conclusion of Proposition~\ref{proposition:exact-model}. We will abuse the terminology following~\cite{gillespie-exactmodels} here and still call $\cat F$ a model category.

For cognoscenti, there is another subtle aspect in which the above model structure differs from~\cite{hirschhorn-modelcategories,hovey_modelcategories}. Namely, there is no guarantee that the factorization of maps into a composition of a fibration with a trivial cofibration and a trivial fibration with a cofibration is functorial. However, this is also something that is not necessary anywhere in the proof of the last part of Proposition~\ref{proposition:exact-model}.
\end{remark}

\begin{example}\label{example:Gorentstein}
If $\cat F$ is a weakly idempotent complete Gorenstein exact category, then Propositions~\ref{proposition:Gore_cotorsion_pairs} and~\ref{proposition:exact-model} yield two model category category structures with the same class of weak equivalences $W$:
\begin{enumerate}
\item the so-called \emph{Gorenstein projective} model structure given by the triple $(\cat C, \cat W, \cat F) = (\GProj(\cat F), \WTriv(\cat F), \cat F)$ and
\item the so-called \emph{Gorenstein injective} model structure given by the triple of full subcategories $(\cat C, \cat W, \cat F) = (\cat F, \WTriv(\cat F), \GInj(\cat F))$ and\end{enumerate}
In particular, we have category equivalences (and even triangle equivalences by~\cite[Theorem~6.21]{stovicek-exactmodels})
\begin{equation}\label{equation:Gorenstein-equivalences}
\underline{\GProj(\cat F)} \overset{\sim}\to \cat F[W^{-1}] \overset{\sim}\leftarrow \underline{\GInj(\cat F)}.
\end{equation}
By Proposition~\ref{proposition:exact-model}, the weak equivalences are described as compositions of the form $w=w_d\circ w_i$, where $w_d$ is degreewise a retraction in $\cat F$ with projective kernel and $w_i$ is degreewise a section in $\cat F$ with projective cokernel. In particular, if $f\colon X\to Y$ is a weak equivalence, then $f_j\colon X_j\to Y_j$ induces an isomorphism in $\underline{\cat F}$ for each $j\in I$.

We claim that $W$ consists precisely of degreewise stable isomorphisms. Indeed, using the axioms for model categories applied to the model structure (1) above, each morphism $f\colon X\to Y$ in $\cat F^I$ factors as $f=g\circ w_i$, where $g$ is degreewise a deflation and $w_i$ is degreewise a section with projective cokernel. If $f$ is degreewise a stable isomorphism to start with, then so is $g$, and it is a standard fact that a deflation in a Frobenius exact category is a stable isomorphism if and only if it is a retraction with a projective kernel. Hence, all degreewise stable isomorphisms belong to $W$, and the converse inclusion was shown in the previous paragraph. This proves the claim.
\end{example}

\begin{proof}[Proof of Proposition~\ref{proposition:GProj-vs-GInj}]
This is now an immediate consequence of the equivalences from~\eqref{equation:Gorenstein-equivalences} and the fact that the map $\eta_G$ from the approximation sequence $0\to G\xrightarrow{\eta_G} F(G)\to W\to 0$ in the statement of the proposition is a weak equivalence, so becomes an isomorphism in $\cat F[W^{-1}]$.
\end{proof}

\begin{remark} \label{remark:Quillen-equivalence-and-more}
\begin{enumerate}
\item A usual way in which a homotopy theorist would explain the equivalences from~\eqref{equation:Gorenstein-equivalences} is that the identity functor on $\cat F$ is a Quillen equivalence between the Gorenstein projective and Gorenstein injective model structures.

\item We can say a little more about how the functor $Q\colon \cat F\to\cat F[W^{-1}]$ operates on Hom-groups for a Gorenstein exact category. Namely, if $X\in\GProj(\cat F)$ and $Y\in\cat F$, then there is an isomorphism
\[ Q\colon \underline{\Hom}_{\cat F}(X,Y) \overset{\sim}\to \Hom_{\cat{F}[W^{-1}]}(QX,QY), \]
where $\underline{\Hom}_{\cat F}(X,Y)$ is the factor of $\Hom_{\cat F}(X,Y)$ modulo the morphisms which factor through a projective object. Similarly, if $X\in\cat F$ and $Y\in\GInj(\cat F)$, we have an isomorphism
\[ Q\colon \overline{\Hom}_{\cat F}(X,Y) \overset{\sim}\to \Hom_{\cat{F}[W^{-1}]}(QX,QY), \]
where $\overline{\Hom}_{\cat F}(X,Y)$ is the factor of $\Hom_{\cat F}(X,Y)$ modulo the morphisms which factor through a injective object. These follow from \cite[Proposition~4.4(5)]{gillespie-exactmodels}, in view of the standard fact that given a cofibrant object $X$ and a fibrant object $Y$ in a model category, then $\Hom_{\cat F}(X,Y)\to\Hom_{\cat F[W^{-1}]}(QX,QY)$ is surjective and two morphisms $f,g\colon X\to Y$ are identified if and only if they are homotopic in the sense of model categories (see \cite[Theorem~1.2.10(ii)]{hovey_modelcategories} for details).
\end{enumerate}
\end{remark}

\subsection{Categories of diagrams in a Frobenius exact category}
\label{subsection:diagrams_over_Frobenius}

Now we get to our main source of Gorenstein exact categories: if $\cat F$ is a Frobenius exact category and $I$ a finite direct category (in the sense of Definition~\ref{definition:finite_directed}), then $\cat F^I$ with the degree-wise exact structure (where a sequence $0\to X\to Y\to Z\to 0$ is exact provided that $0\to X_i\to Y_i\to Z_i\to 0$ is exact in $\cat F$ for all $i\in I$) turns out to be Gorenstein (Proposition~\ref{proposition:Gorenstein-diagrams}).

The key step is contained in the following lemma, where we recall from Section~\ref{section:derivator_Frobenius} that $\cat F^I$ has enough projectives and injectives.

\begin{lemma}\label{lemma:finite_homological_dim_F^I_general}
Let $\cat F$ be an exact category with enough projectives $\cat P:=\Proj(\cat F)$, let $I$ be a finite direct category with a degree function $d\colon \operatorname{Ob}(I) \to \{0,\cdots n\}$, and let $X\in{\cat P}^I$. Then $X$ has projective dimension at most $n$ in ${\cat F}^I$.
\end{lemma}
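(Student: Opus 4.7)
The plan is to construct an explicit finite projective resolution of $X$ in $\cat F^I$ of length at most $n$, via the normalized bar complex of the diagram. For each $k\geq 0$, set
\[ P_k \;=\; \bigoplus_{\sigma=(i_0\xrightarrow{f_1}\cdots\xrightarrow{f_k} i_k)} (i_k)_!\, X_{i_0}, \]
where $\sigma$ runs over strictly $d$-increasing chains in $I$ (i.e.\ $d(i_0)<\cdots<d(i_k)$, equivalently all $f_\ell$ are non-identity, since $I$ is direct). Each $P_k$ is a finite direct sum of objects of the form $(i)_!Y$ with $Y=X_{i_0}\in\cat P$, hence is projective in $\cat F^I$ by Proposition~\ref{proposition:structure_projectives_F^I}. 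Since a chain of length $k$ requires $k+1$ distinct values of $d$ drawn from $\{0,\ldots,n\}$, we get $P_k=0$ for $k>n$.

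Equip $P_\bullet$ with the standard simplicial alternating sum $\partial_k=\sum_{\ell=0}^{k}(-1)^\ell d_\ell$, where $d_0$ drops $i_0$ (and applies $X(f_1)$ to $X_{i_0}\to X_{i_1}$), $d_\ell$ for $0<\ell<k$ replaces $f_\ell,f_{\ell+1}$ by $f_{\ell+1}f_\ell$, and $d_k$ drops $i_k$ via the natural morphism $(i_k)_!\to(i_{k-1})_!$ induced by precomposition with $f_k$. Augment with the sum of counits $\epsilon\colon P_0=\bigoplus_i(i)_!X_i\to X$. The simplicial identities give $\partial^2=0$ and $\epsilon\partial_1=0$.

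Exactness of $P_\bullet\to X$ in $\cat F^I$ is then checked pointwise: for $j\in I$,
\[ (P_k)_j \;=\; \bigoplus_{(i_0\to\cdots\to i_k\to j)} X_{i_0}, \]
summed over chains where the first $k$ arrows are non-identity (strictly $d$-increasing) and the final arrow $i_k\to j$ is arbitrary (possibly $\id_j$ when $i_k=j$). This is the normalized simplicial bar complex of the coefficient system on the slice $I/j$ given by $X_{?}$. I construct a chain contraction: let $s\colon X_j\to(P_0)_j$ be the inclusion into the $(j,\id_j)$-summand, and define $h_k\colon(P_k)_j\to(P_{k+1})_j$ on the summand of a chain $(i_0\to\cdots\to i_k\xrightarrow{g} j)$ as the identity on $X_{i_0}$ into the summand of $(i_0\to\cdots\to i_k\xrightarrow{g} j\xrightarrow{\id_j} j)$ when $g\neq\id_j$, and as zero when $g=\id_j$. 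Since each $(P_k)_j$ is a finite direct sum of objects of $\cat P$, verifying $\partial h+h\partial=\id-s\epsilon$ shows the complex is split and hence exact in $\cat F$, which gives the required pointwise exactness in $\cat F^I$.

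The main technical point, and the reason the direct-category assumption is essential, is the verification of the contraction identities, complicated by the case split on whether the final morphism is $\id_j$. For chains with $g\neq\id_j$, the assumption that $I$ is direct guarantees $d(i_k)<d(j)$, so appending $g$ to $i_0\to\cdots\to i_k$ yields again a strictly $d$-increasing chain of length $k+1$; without this, $h_k$ would land in a degenerate simplex not present in the normalized complex. The boundary cases ($\ell=0$ and $\ell=k+1$ in the computation of $\partial_{k+1}h_k$, and chains with $g=\id_j$) need careful bookkeeping, but telescope exactly to produce the identity minus $s\epsilon$ on the original chain.
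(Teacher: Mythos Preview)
Your argument is correct and gives a genuinely different route from the paper's. The paper proceeds by induction on the codomain of the degree function: it takes the single deflation $p\colon\bigoplus_{j\in I}j_!(X_j)\to X$ (which is your $\epsilon\colon P_0\to X$), observes that the kernel $K$ vanishes at all objects of degree~$0$, identifies $K$ with $k_!k^*K$ for the inclusion $k\colon J\hookrightarrow I$ of objects of positive degree, and then applies the inductive hypothesis to $k^*K\in\cat F^J$. Unwinding this induction produces exactly your bar resolution $P_\bullet$, but the paper never writes it out and never needs a contracting homotopy: each syzygy is handled by a single adjunction argument. Your approach, by contrast, assembles the entire normalised bar complex at once and verifies pointwise split exactness by the explicit extra-degeneracy $h_\bullet$. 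This is closer in spirit to the bar-resolution argument the paper uses in the proof of Theorem~\ref{theorem:derivator_dg_finite} (for left $\basering k[I]$-modules), so you are reusing an idea already present elsewhere in the paper in a new context. The paper's inductive proof is shorter and avoids the simplicial bookkeeping; your proof is more explicit and exhibits the resolution concretely.

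Two small points. First, your appeal to Proposition~\ref{proposition:structure_projectives_F^I} is slightly off: that proposition assumes $\cat F$ is weakly idempotent complete, which is not among the hypotheses here. You only need the easy direction---that each $j_!Y$ with $Y\in\cat P$ is projective in $\cat F^I$---and this follows directly from $j_!$ being left adjoint to the exact functor $j^*$, with no idempotent-completeness needed. Second, to conclude that the pointwise split-exact complex $P_\bullet\to X$ is an honest projective resolution in the exact category $\cat F^I$, you are implicitly using that each partial map $P_k\to Z_{k-1}$ (onto the previous syzygy) is a deflation and hence has a kernel; this holds because pointwise split epimorphisms are deflations in the termwise exact structure and kernels in $\cat F^I$ are computed pointwise, but it is worth saying so.
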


\begin{proof}
We will proceed by induction on $n$. As in the proof of Lemma~\ref{lemma:splitmono}, we consider for each $j\in I$ the counit of adjunction $\epsilon^j\colon j_!(X_j)\to X$ and put them together to form a map
\[ p := (\epsilon_j)_{j\in I}\colon \bigoplus_{j\in I} j_!(X_j)\to X \]
from $P:=\bigoplus_{j\in I} j_!(X_j)$ to $X$.
Since all $X_j$ are assumed to be projective in $\cat F$, the domain of $p$ is projective in $\cat F^I$ and, as $j\colon e\to I$ are fully faithful, the maps $j^*\epsilon^j\colon (j_!(X_j))_j\to X_j$ are isomorphisms. It follows that $p$ is termwise a split epimorphism, so a deflation in $\cat F^I$. Let us denote $K:=\ker p$. Note also that for any $i,j\in I$ with $d(i)=0<d(j)$ we have $(j_!(X_j))_i=0$, and so $p_i\colon P_i\to X_i$ is an isomorphism. Consequently, $K_i=0$ for each $i\in I$ with $d(i)=0$.

Let now $J\subseteq I$ be a full subcategory given by the objects of degree at least one and denote by $k\colon J\to I$ the full embedding. Then $K\cong k_!k^*K$ and note that $k_!\colon {\cat F}^J\to{\cat F}^I$ is exact (it is the left extension by zero functor) and preserves projectives (being a left adjoint to the exact restriction functor). If $J=\emptyset$, then simply $K=0$ and $X\in\cat{F}^I$ is projective. If $J$ is not empty, it is a finite direct category with a degree function $d'\colon\operatorname{Ob}(J) \to \{0,\dots, n-1\}$ given by $d'(j)=d(j)-1$, so the projective dimension of $k^*K$ in $\cat{F}^J$ is at most $n-1$  by the inductive hypothesis, and so is the projective dimension of $K$ in $\cat{F}^I$ by the above discussion. Hence, the projective dimension of $X$ in $\cat{F}^I$ is at most $n$.
\end{proof}

As a consequence, we can characterize objects of finite homological dimension in $\cat F^I$.

\begin{lemma}\label{lemma:finite_homological_dim_F^I}
Let $\cat F$ be a Frobenius exact category and $I$ a finite direct category with a degree function $d\colon \operatorname{Ob}(I) \to \{0,\cdots n\}$. Then the following are equivalent for $X\in\cat F^I$:
\begin{enumerate}
\item $X$ has finite projective dimension in $\cat F^I$,
\item $X$ has finite injective dimension in $\cat F^I$,
\item $X_i$ is projective in $\cat F$ for each $i\in I$.
\end{enumerate}
If $X$ satisfies the equivalent conditions, then both the projective and the injective dimension of $X$ is at most $n$.
\end{lemma}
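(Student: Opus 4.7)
The plan is to establish the four implications $(3)\Rightarrow(1)$, $(3)\Rightarrow(2)$, $(1)\Rightarrow(3)$, $(2)\Rightarrow(3)$, with the uniform dimension bound emerging directly from the first two. Throughout, the crucial ingredients will be the preceding Lemma~\ref{lemma:finite_homological_dim_F^I_general}, a small piece of Proposition~\ref{proposition:structure_projectives_F^I}, and the standard Frobenius identity $\Proj(\cat F)=\Inj(\cat F)$; the last of these also makes $\opp{\cat F}$ a Frobenius exact category with $\Proj(\opp{\cat F})=\opp{\Proj(\cat F)}$, which is what powers all the duality arguments.

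First, $(3)\Rightarrow(1)$ together with the bound $\mathrm{pd}_{\cat F^I}(X)\le n$ is exactly the content of Lemma~\ref{lemma:finite_homological_dim_F^I_general} applied to $\cat F$ and $I$, since under $(3)$ we have $X\in\Proj(\cat F)^I$. The implication $(3)\Rightarrow(2)$ and the bound $\mathrm{id}_{\cat F^I}(X)\le n$ follow by dualizing: $\opp{I}$ is a finite direct category via the degree function $i\mapsto n-d(i)$, and viewing $X$ as an object of $\opp{\cat F}^{\opp I}=\opp{(\cat F^I)}$ and applying Lemma~\ref{lemma:finite_homological_dim_F^I_general} there yields $\mathrm{pd}_{\opp{(\cat F^I)}}(X)\le n$, which is exactly $\mathrm{id}_{\cat F^I}(X)\le n$.

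For $(1)\Rightarrow(3)$, the plan is to take a finite projective resolution $0\to P_m\to\cdots\to P_0\to X\to 0$ in $\cat F^I$. The exact structure on $\cat F^I$ is componentwise, and for each $i\in I$ the evaluation functor $i^*\colon\cat F^I\to\cat F$ is exact and preserves projective objects, since it admits an exact left adjoint $i_!$ (cf.\ the very beginning of the proof of Proposition~\ref{proposition:structure_projectives_F^I}). Hence evaluating at $i$ produces a finite projective resolution $0\to (P_m)_i\to\cdots\to (P_0)_i\to X_i\to 0$ of $X_i$ in $\cat F$. At this point I would invoke the standard fact that in a Frobenius exact category any object of finite projective dimension is projective: if $0\to Q_k\to\cdots\to Q_0\to Y\to 0$ is a resolution with $k\ge 1$, the left-most conflation $0\to Q_k\to Q_{k-1}\to K\to 0$ splits because $Q_k\in\Proj(\cat F)=\Inj(\cat F)$, so $K$ is a direct summand of the projective $Q_{k-1}$, hence projective; induction on $k$ collapses the resolution to length $0$. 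Thus each $X_i$ is projective in $\cat F$. Finally, $(2)\Rightarrow(3)$ is obtained by applying the already-established $(1)\Rightarrow(3)$ inside $\opp{\cat F}^{\opp I}=\opp{(\cat F^I)}$: finite injective dimension of $X$ in $\cat F^I$ becomes finite projective dimension in $\opp{(\cat F^I)}$, yielding $X_i\in\Proj(\opp{\cat F})=\opp{\Inj(\cat F)}$, i.e.\ $X_i\in\Inj(\cat F)=\Proj(\cat F)$.

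I do not anticipate any substantive obstacle: once Lemma~\ref{lemma:finite_homological_dim_F^I_general} is in hand, the entire argument is a short bookkeeping exercise in duality together with the one truly Frobenius-specific input, namely that finite projective dimension in $\cat F$ forces projectivity. If anything, the only point requiring a little care is checking that the implication $(1)\Rightarrow(3)$ only uses the ``easy half'' of Proposition~\ref{proposition:structure_projectives_F^I} (the preservation of projectives by $i^*$), so that no weakly idempotent completeness hypothesis on $\cat F$ is implicitly needed.
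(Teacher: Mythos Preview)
Your overall strategy is the same as the paper's: reduce $(3)\Rightarrow(1)$ (and its dual) to Lemma~\ref{lemma:finite_homological_dim_F^I_general}, and for $(1)\Rightarrow(3)$ evaluate a finite projective resolution at each $i\in I$ to obtain a finite projective resolution of $X_i$ in $\cat F$, then use that in a Frobenius category finite projective dimension forces projectivity. The duality reductions via $\opp{\cat F}^{\opp I}$ are exactly what the paper has in mind when it says ``$(2)\Leftrightarrow(3)$ is dual''.

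There is, however, one slip in your justification for $(1)\Rightarrow(3)$. You write that $i^*$ preserves projectives ``since it admits an exact left adjoint $i_!$''. That is the wrong adjoint: having an exact \emph{left} adjoint makes $i^*$ preserve \emph{injectives}, not projectives. What you need (and what the paper invokes) is that $i^*$ has an exact \emph{right} adjoint $i_*$; this exists and is exact by Corollary~\ref{corollary:Kan-extensions-existence}(2) and Remark~\ref{remark:Kan-extensions-existence}, since for the inclusion $i\colon e\to I$ each coslice $j/i$ is the finite discrete category $I(j,i)$. With that correction your argument goes through, and your concern about accidentally importing weak idempotent completeness is unfounded: the existence and exactness of $i_*$ require only that $\cat F$ be additive.
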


\begin{proof}
We will only prove $(1)\Leftrightarrow(3)$, as $(2)\Leftrightarrow(3)$ is dual. Note also that $(3)\Rightarrow(1)$ is an immediate consequence of Lemma~\ref{lemma:finite_homological_dim_F^I_general} and the maximum projective dimension is bounded by $n$, so it remains to prove $(1)\Rightarrow(3)$. To that end, let $X\in\cat F^I$ of finite projective dimension and consider a projective resolution
\[
0 \to P_m \to P_{m-1} \to \cdots \to P_1 \to P_0 \to X \to 0
\]
Let also $i\in I$. Note that the restriction functor $i^*\colon\cat F^I\to\cat F$ has an exact right adjoint by Corollary~\ref{corollary:Kan-extensions-existence} and Remark~\ref{remark:Kan-extensions-existence}, so $i^*$ sends projectives in $\cat F^I$ to projectives in $\cat F$. If we apply $i^*$ to the resolution above, we see that $X_i=i^*X\in\cat F$ has a finite projective dimension in $\cat F$. However, since $\cat F$ is Frobenius, this simply means that $X_i$ is projective in $\cat F$.
\end{proof}

Now we can prove the promised fact about diagram categories of Frobenius exact categories.

\begin{proposition}\label{proposition:Gorenstein-diagrams}
Let $\cat F$ be a Frobenius exact category and $I$ be a finite direct category with a degree function $d\colon \operatorname{Ob}(I) \to \{0,\cdots n\}$.
Then $\cat F^I$ is an $n$-Gorenstein exact category.
\end{proposition}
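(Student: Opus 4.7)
The plan is to verify the two defining conditions of an $n$-Gorenstein exact category (Definition~\ref{definition:Gorenstein-exact}) for $\cat F^I$, drawing everything from results already proved in the excerpt. The existence of enough projectives and enough injectives in $\cat F^I$ was already observed in Section~\ref{section:derivator_Frobenius}; the content is thus in the bound on homological dimensions.

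First I would handle the projectives. By Proposition~\ref{proposition:structure_projectives_F^I} (applied to $\cat F$, which is weakly idempotent complete since it is Frobenius, and has enough projectives), every $P \in \Proj(\cat F^I)$ is termwise projective in $\cat F$. Then I can invoke Lemma~\ref{lemma:finite_homological_dim_F^I} directly: condition (3) of that lemma holds for $P$, so $P$ has both projective and injective dimension at most $n$ in $\cat F^I$. In particular, the injective dimension of any projective object of $\cat F^I$ is at most $n$.

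For the injectives, I would argue by duality. Since $\cat F$ is Frobenius, $\opp{\cat F}$ is again Frobenius and $\Proj(\opp{\cat F}) = \opp{\Inj(\cat F)} = \opp{\Proj(\cat F)}$; moreover $\opp{I}$ is a finite direct category with the same bound $n$ on its degree function. Under the identification $\opp{(\cat F^I)} \cong (\opp{\cat F})^{\opp{I}}$, the projectives of $(\opp{\cat F})^{\opp{I}}$ correspond to the injectives of $\cat F^I$, so the previous step applied to $\opp{\cat F}$ and $\opp{I}$ shows that every $E \in \Inj(\cat F^I)$ is termwise in $\Inj(\cat F) = \Proj(\cat F)$. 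A second application of Lemma~\ref{lemma:finite_homological_dim_F^I} now bounds the projective dimension of $E$ in $\cat F^I$ by $n$.

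Combining these two bounds with the already-noted fact that $\cat F^I$ has enough projectives and injectives verifies both clauses of Definition~\ref{definition:Gorenstein-exact}, so $\cat F^I$ is $n$-Gorenstein. No serious obstacle is expected: all the hard work has been done in Proposition~\ref{proposition:structure_projectives_F^I} and Lemma~\ref{lemma:finite_homological_dim_F^I}, and what remains is to assemble them, together with the self-duality of the Frobenius and finite-direct-category hypotheses, to treat the injective side.
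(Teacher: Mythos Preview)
Your overall strategy is right, and once repaired it matches the paper's one-line proof, but there is a genuine slip: you assert that a Frobenius exact category is automatically weakly idempotent complete in order to invoke Proposition~\ref{proposition:structure_projectives_F^I}. That implication is not true in general, and the paper does not assume it here (note that weak idempotent completeness is carried as a separate hypothesis throughout Section~\ref{section:derivator_Frobenius} and the appendix).

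Fortunately the detour through Proposition~\ref{proposition:structure_projectives_F^I} is unnecessary. Lemma~\ref{lemma:finite_homological_dim_F^I} already applies directly: a projective object of $\cat F^I$ has projective dimension~$0$, so condition~(1) of that lemma holds, and the final clause gives injective dimension at most~$n$. Dually, an injective object has injective dimension~$0$, so condition~(2) holds and the projective dimension is at most~$n$. You never need to know that projectives (or injectives) are termwise projective. This is exactly the paper's argument (``follows from the two previous lemmas''): Lemma~\ref{lemma:finite_homological_dim_F^I_general} is only used inside the proof of Lemma~\ref{lemma:finite_homological_dim_F^I}, and then the latter does all the work. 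Your duality argument for the injective side is fine but, for the same reason, also superfluous.
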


\begin{proof}
The fact that every projective object has injective dimension at most $n$ and vice versa follows from the two previous lemmas.
\end{proof}

\subsection{Presentations of stalks}
\label{subsection:stalks}

Let again $I$ be a finite direct category, and $\cat F$ any additive category.
We call a representation $X\in\cat F^I$ a \emph{stalk representation} if $X_i=0$ for all but at most one $i\in I$. Given $P\in\cat F$ and $i\in I$, we denote by $\stalk{i}(P)\in\cat F^I$ the stalk representation with $\stalk{i}(P)_i=P$ and $\stalk{i}(P)_j=0$ for all $j\ne i$. Our aim in the section will be to relate syzygies of stalk representations $\stalk{i}(P)$ in the degreewise split exact structure on $\cat F^I$ with concepts related to Reedy categories~\cite[Chapter~15]{hirschhorn-modelcategories}.

To that end, we will denote by $\partial(I/i)$ the full subcategory obtained from the slice category $I/i$ by removing the terminal object $(i, \id\colon i\to i)$; compare to~\cite[Definition~15.2.3]{hirschhorn-modelcategories}. We will further denote by $\pi_i\colon \partial(I/i)\to I$ the canonical projection functor sending $(k, f\colon k\to i)$ to $k$. Note that given any category $\cat F$ and an $I$-shaped diagram $X\in\cat F^I$, the restricted diagram $\pi_i^*(X)\in\cat F^{\partial(I/i)}$ has a canonical cocone with apex $X_i$ in $\cat F$. This is because for any $a=(k, f\colon k\to i)\in\partial(I/i)$, we have $\pi_i^*(X)_a=X_k$ by definition and the cocone is formed by the maps $X_f\colon \pi_i^*(X)_a=X_k\to X_i$. If the colimit of $\pi_i^*(X)$ exists in $\cat F$, we denote $L_i(X) := \colim_{\partial(I/i)}\pi_i^*(X)$ and call it the \emph{latching object} of $X$ at $i$, and the canonical morphism in $\cat F$,
\[ \lambda_i(X)\colon L_i(X) \to X_i, \]
is called \emph{latching map} (see~\cite[Definition 15.2.5]{hirschhorn-modelcategories}).
Dually, we denote by $\partial(i/I)$ the full subcategory obtained from the slice category $i/I$ by removing the initial object $(i, \id\colon i\to i)$. If $Y\in\cat F^I$ and the limit $\lim_{\partial(i/I)}\rho_i^*(Y)$ of an analogously defined diagram given by the projection functor $\rho_i\colon\partial(i/I)\to I$ exists, we denote it by $M_i(Y)$ and call it the \emph{matching object}. In this case, there is also a canonical map $\mu_i(Y)\colon Y_i\to M_i(Y)$ which we call the \emph{matching map}.

Now we first compute the latching map in a special case where we substitute for $\cat F$ the category of sets (which is not additive, but the definitions still makes sense for it).

\begin{lemma}\label{lemma:represented_presheaves_are_Reedy}
Let $I$ be a finite direct category, $i,j\in I$ be two objects, and consider the diagram $X:=I(i,-)\in\Set^I$. Then $\lambda_j(X)\colon L_j(X) \to X_j=I(i,j)$ is injective and
\[
\Img\lambda_j(X)=\begin{cases}
I(i,j), & \textrm{if $i\neq j$,} \\
\emptyset, & \textrm{if $i=j$.}
\end{cases}
\]
\end{lemma}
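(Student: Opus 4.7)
The plan is to unravel the latching object as a set-valued colimit and then exploit two immediate consequences of $I$ being direct: the only endomorphism of any object is the identity (a non-identity $k\to k$ would force $d(k)<d(k)$), and hence every $(k,f)\in\partial(I/j)$ automatically satisfies $k\neq j$. Writing out the colimit in $\Set$,
\[
L_j(X) = \Bigl(\coprod_{(k,f)\in \partial(I/j)} I(i,k)\Bigr)\Big/\!\sim,
\]
where $\sim$ is generated by $(\alpha,(k,f))\sim (g\alpha,(k',f'))$ for each morphism $g\colon (k,f)\to(k',f')$ in $\partial(I/j)$, and the latching map reads $\lambda_j([\alpha,(k,f)])=f\alpha$.

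First I would dispose of the case $i=j$. For every $(k,f)\in\partial(I/i)$ we have $k\neq i$ and, since $f\colon k\to i$ is non-identity, $d(k)<d(i)$. Any morphism $i\to k$ would then be non-identity (as $k\neq i$), forcing the contradictory inequality $d(i)<d(k)$. Hence $I(i,k)=\emptyset$ for every object of $\partial(I/i)$, so $L_j(X)=\emptyset$ and $\lambda_j$ is vacuously injective with empty image, as required.

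For the case $i\neq j$, I would exhibit a canonical form for each class of $L_j(X)$. Given $[\alpha,(k,f)]$ with $\alpha\colon i\to k$, the morphism $\alpha$ induces a morphism $\alpha\colon(i,f\alpha)\to(k,f)$ in $I/j$, and it lies in $\partial(I/j)$ because $i\neq j$ ensures both endpoints differ from $(j,\id_j)$. Applying $\pi_j^*X=I(i,-)$ to this morphism sends $\id_i\in I(i,i)$ to $\alpha\cdot\id_i=\alpha$, yielding $(\alpha,(k,f))\sim(\id_i,(i,f\alpha))$. Thus every class has the form $[\id_i,(i,\phi)]$ for some $\phi\in I(i,j)$, and the latching map sends such a class to $\phi$. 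This immediately gives surjectivity of $\lambda_j$ onto $I(i,j)$, and injectivity too, since on canonical forms $\lambda_j$ recovers $\phi$ from $[\id_i,(i,\phi)]$.

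I do not anticipate any serious obstacle: the only care needed lies in the bookkeeping of the colimit equivalence relation, and the direct-category hypothesis is used exactly to forbid the ``wrong-direction'' morphisms $i\to k$ in the case $i=j$ and to guarantee $(i,\phi)\in\partial(I/j)$ in the case $i\neq j$.
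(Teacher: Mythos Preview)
Your proof is correct and follows essentially the same route as the paper: both arguments hinge on the observation that any representative $(\alpha,(k,f))$ in the colimit is equivalent to $(\id_i,(i,f\alpha))$, which the paper uses to prove injectivity and you package as a ``canonical form'' to get bijectivity in one stroke. The only cosmetic difference is that you separate the case $i=j$ at the outset (showing $L_j(X)=\emptyset$ directly), whereas the paper computes the image uniformly as the set of non-identity morphisms $i\to j$; both are fine.
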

\begin{proof}
The category of sets is cocomplete, so latching objects exist for all $X\in\Set^I$ and $j\in I$. Moreover, there is a well known construction of the corresponding colimit as the disjoint union of values of $\pi_j^*(X)$ at objects of $\partial(I/j)$ modulo an equivalence relation. More precisely,
$L_j(X) = \colim_{\partial(I/j)}\pi_j^*(X) = \big(\coprod_{(k,f\colon k\to j)}X_k\big)\,/\!\sim$,
where the equivalence relation is the smallest one such that $x\sim h(x)$ whenever $(k,f\colon k\to j)\to(l,g\colon l\to j)$ is a morphism in $\partial(I/j)$ represented by a morphism $h\colon k\to l$ in $I$, and whenever $x\in X_k$ (and so $h(x)\in X_l$).

Specializing to $X=I(i,-)$, we get the formula
$L_j(X)=\big(\coprod_{(k,f\colon k\to j)}I(i,k)\big)\,/\!\sim$,
where the equivalence relation is the smallest one such that $x\sim y$ whenever $x\colon i\to k$ is in a copy of $I(i,k)$ indexed by $(k,f\colon k\to j)$ and $y\colon i\to l$ is in a copy of $I(i,l)$ indexed by $(l,g\colon l\to j)$, and there is a commutative diagram in $I$ of the form
\[
\begin{tikzcd}[row sep=small]
&& k \arrow[drr, "f"] \ar[dd] \\
i \arrow[urr, "x"] \arrow[drr, swap, "y"] &&&& j \\
&& l \arrow[urr, swap, "g"] \\
\end{tikzcd}
\]
Moreover, the latching map $\lambda_j(X)\colon L_j(X)\to X_j=I(i,j)$ then maps an equivalence class $[x]_\sim$ given by $x\colon i\to k$ from a copy of $I(i,k)$ indexed by $(k, f\colon k\to j)\in\partial(I/i)$ simply to the composition $f\circ x\in I(i,j)$.

In order to prove injectivity of $\lambda_j(X)$ for $X=I(i,-)$, suppose that $\lambda_j(X)([x]_\sim)=\lambda_j(X)([y]_\sim)\in I(i,j)$, where $(x\colon i\to k)\in X_a=I(i,k)$ for $a=(k, f\colon k\to j)$ and $(y\colon i\to l)\in X_b=I(i,l)$ for $b=(l, g\colon l\to j)$. In other words, $f\circ x=g\circ y$. If we denote this composition by $h\colon i\to j$ and put $c=(i,h\colon i\to j)$, then $x$ is the image of $1_i\in X_c$ under the morphism $X_c\to X_a$ in $\partial(I/i)$ given by $x\colon i\to k$, and similarly $y$ is the image of $1_i$ under the morphism $X_c\to X_b$ given by $y\colon i\to k$. It follows that $[x]_\sim=[1_i]_\sim=[y]_\sim$ in $L_j(X)$, so $\lambda_j(X)$ is injective, as required.

The same argument shows that the image of $\lambda_j(X)$ consists of compositions $f\circ x$, where $f\colon k\to j$ is a non-identity morphism in $I$ and $x\colon i\to k$ is any morphism in $I$. Such compositions (even when restricting only to $x=1_i$) give precisely all non-identity morphisms $i\to j$.
\end{proof}

In order to make use of the latter lemma for an additive category $\cat F$, observe that we have a functor $-\times P\colon\Set_{\mathsf{fin}}\to\cat F$, where $\Set_{\mathsf{fin}}\subseteq\Set$ is the full subcategory of finite sets, which sends a finite set $S$ to an $S$-fold coproduct of $P$ in $\cat F$ and satisfies the adjunction formula
\[ \Hom_{\cat F}(S\times P,Y)\cong\Hom_{\Set}(S,\Hom_{\cat F}(P,Y)). \]
In particular, $-\times P\colon\Set_{\mathsf{fin}}\to\cat F$ preserves finite colimits. This also proves the existence of such finite colimits in $\cat F$.
Analogously, the functor $P^{(-)}\colon\Set_{\mathsf{fin}}^\mathrm{op}\to\cat F$ which sends $S$ to the $S$-fold product of $P$, satisfies the adjunction formula
\[ \Hom_{\cat F}(Y, P^S)\cong\Hom_\Set(S,\Hom_{\cat F}(Y,P)). \]
In particular, $P^{(-)}\colon\Set_{\mathsf{fin}}^\mathrm{op}\to\cat F$ sends finite colimits of finite sets to limits in $\cat F$ (and such limit then exist in $\cat F$).

\begin{lemma}\label{lemma:represented_functors_are_Reedy}
Let $\cat F$ be an additive category, $I$ be a finite direct category and $i,j\in I$.
\begin{enumerate}
\item Let $P\in\cat F$ and $X=i_!(P)\in\cat F^I$ (where we use the notation from Proposition~\ref{proposition:Kan-extensions-existence}). Then the latching object $L_j(X)$ exists in $\cat F$. Moreover, the latching morphism $\lambda_j(X)\colon L_j(X)\to X_j$ is an isomorphism for $i\ne j$, and a section with cokernel $P$ when $i=j$.
\item Let $Q\in\cat F$ and $Y=i_*(Q)\in\cat F^I$. Then the matching object $M_j(Y)$ exists in $\cat F$. Moreover, the matching morphism $\mu_j(Y)\colon Y_j\to M_j(Y)$ is an isomorphism for $i\ne j$, and a retraction with kernel $Q$ when $i=j$.
\end{enumerate}
\end{lemma}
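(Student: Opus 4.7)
The plan is to reduce the entire computation to the set-valued case already handled in Lemma~\ref{lemma:represented_presheaves_are_Reedy} by exploiting the copower construction $-\times P\colon\Set_{\mathsf{fin}}\to\cat F$ that was introduced just before the lemma. Concretely, using the explicit formula for left Kan extensions from Corollary~\ref{corollary:Kan-extensions-existence} and Remark~\ref{remark:Kan-extensions-existence} (applied to $i\colon e\to I$, for which the slice $i/j$ is the discrete category on the set $I(i,j)$), one obtains an isomorphism $i_!(P)_j\cong I(i,j)\times P$, natural in $j$. In other words, the $\cat F^I$-valued diagram $i_!(P)$ is simply the image of the set-valued diagram $I(i,-)\in\Set^I$ under the functor $-\times P$ applied pointwise.

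Next I would invoke the fact that $-\times P$ is a left adjoint (to $\Hom_{\cat F}(P,-)$) and therefore preserves all colimits that exist. This yields a natural isomorphism
\[
L_j(i_!(P)) \;\cong\; L_j(I(i,-))\times P,
\]
and identifies the latching map $\lambda_j(i_!(P))$ with $\lambda_j(I(i,-))\times P$. In particular the latching object exists in $\cat F$, which was one of the claims.

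Now one just reads off the conclusion from Lemma~\ref{lemma:represented_presheaves_are_Reedy}. If $i\ne j$, that lemma says $\lambda_j(I(i,-))$ is a bijection of finite sets, so tensoring with $P$ gives an isomorphism in $\cat F$, as required. If $i=j$, note that since $I$ is a direct category the degree inequality $d(i)<d(i)$ forces $I(i,i)=\{1_i\}$, and Lemma~\ref{lemma:represented_presheaves_are_Reedy} gives $L_i(I(i,-))=\emptyset$ (the injection has empty image). Applying $-\times P$ gives $L_i(i_!(P))=0$ and $i_!(P)_i=P$, so $\lambda_i(i_!(P))$ is the zero morphism $0\to P$, which is visibly a (split) section with cokernel $P$.

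Part (2) is obtained by dualising: apply part (1) to the opposite additive category $\cat F^{\mathrm{op}}$ and the finite direct category $\opp{I}$, using that $i_*(Q)$ in $\cat F^I$ corresponds to $i_!(Q)$ in $(\cat F^{\mathrm{op}})^{\opp{I}}$ and that matching objects in $\cat F^I$ are latching objects in $(\cat F^{\mathrm{op}})^{\opp{I}}$. The only conceptual point to verify is that this duality really does swap $\partial(I/i)$ with $\partial(i/I)$ and $\lambda$ with $\mu$, which is straightforward from the definitions. I do not expect any serious obstacle; the only minor care needed is the observation (used above) that finite direct categories have no non-identity endomorphisms, so that the $i=j$ case of Lemma~\ref{lemma:represented_presheaves_are_Reedy} produces a strict zero rather than a nonzero summand.
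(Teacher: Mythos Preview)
Your proof is correct and follows essentially the same strategy as the paper: identify $i_!(P)$ with $I(i,-)\times P$, use that the copower functor $-\times P$ preserves finite colimits (via the adjunction formula---strictly speaking it is not a genuine left adjoint since $\Hom_{\cat F}(P,-)$ need not land in $\Set_{\mathsf{fin}}$, but the formula suffices), and then read off the result from Lemma~\ref{lemma:represented_presheaves_are_Reedy}. The only cosmetic difference is in part~(2): the paper argues directly with the power functor $Q^{(-)}\colon\opp{\Set}_{\mathsf{fin}}\to\cat F$ to obtain $\mu_j(Y)\cong Q^{\lambda_j(I(i,-))}$, whereas you pass through $\opp{\cat F}$ and $\opp{I}$, but these are two packagings of the same duality.
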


\begin{proof}
In order to prove part (1), consider the following diagrams in $\Cat$:
\[
\begin{tikzcd}
\cat F \arrow[r, <-, "i^*"] \arrow[d, swap, "{\Hom_{\cat F}(P,-)}"] & \cat F^I \arrow[d, "{\Hom_{\cat F}(P,-)}"] &&
\cat F \arrow[r, "i_!"] \arrow[d, swap, <-, "-\times P"] & \cat F^I \arrow[d, <-, "-\times P"]
\\
\Set \arrow[r, <-, "i^*"] & \Set^I, &&
\Set_{\mathsf{fin}} \arrow[r, "i_!"] & \Set_{\mathsf{fin}}^I.
\end{tikzcd}
\]
Here, the rightmost arrows are obtained by componentwise applying the functors in the left arrow to $I$-shaped diagrams and, obviously, the left-hand side square commutes. Since the right-hand side square consists of left adjoint functors (defined only on $\Set_{\mathsf{fin}}$), it commutes up to a natural equivalence as well. As a consequence, if $[0]\in\Set$ stands for the singleton set, we have
\[ I(i,-)\times P = i_!([0])\times P\cong i_!([0]\times P) \cong i_!(P) = X \]
in $\cat F^I$. That is, $X$ is obtained up to natural equivalence as the composition
\[ I\xrightarrow{I(i,-)}\Set_{\mathsf{fin}}\xrightarrow{-\times P}\cat F. \]
Then, using that left adjoints preserve all existing colimits, we get that
\begin{align*}
L_j(X) &\cong L_j(I(i,-)\times P) \\
&= \colim_{\partial(I/i)}(I(i,\pi_i(-))\times P) \\
&\cong (\colim_{\partial(I/i)} I(i,\pi_i(-)))\times P \\
&= L_j(I(i,-))\times P.
\end{align*}
In fact, the argument even shows that $\lambda_j(X)\cong \lambda_j(I(i,-))\times P$. Then the conclusion immediately follows from Lemma~\ref{lemma:represented_presheaves_are_Reedy}.

The proof of part (2) is similar. We just notice that $\mu_j(X)\cong P^{\lambda_j(I(i,-))}$.
\end{proof}

Now we can prove the desired statement on (co)syzygies of stalk representations. Let $P\in\cat F$ and $i\in I$. 

\begin{proposition}\label{proposition:presentation_of_stalks}
Let $\cat F$ be an additive category, $I$ a finite direct category and $j\in I$.
\begin{enumerate}
\item Let $P\in\cat F$ and consider the compositions of functors
\[
\opp{I}\times I\xrightarrow{I(-,-)} \Set_{\mathsf{fin}} \xrightarrow{-\times P} \cat F.
\]
This gives us a functor $X\colon \opp{I}\to\cat F^I$ such that $X_i=I(i,-)\times P \;(\cong i_!(P))$ and there is a degreewise split short exact sequence in $\cat F^I$ of the form
\[ 0\longrightarrow L_j(X)\overset{\lambda_j(X)}\longrightarrow j_!(P) \longrightarrow \stalk{j}(P)\longrightarrow 0. \]

\item Let $Q\in\cat F$ and consider the compositions of functors
\[
I\times \opp{I}\xrightarrow{\opp{I(-,-)}} \opp{\Set}_{\mathsf{fin}} \xrightarrow{Q^{(-)}} \cat F.
\]
This gives us a functor $Y\colon \opp{I}\to\cat F^I$ such that $Y_i=Q^{I(-,i)} \;(\cong i_*(Q))$ and there is a degreewise split short exact sequence in $\cat F^I$ of the form
\[ 0\longrightarrow \stalk{j}(Q)\longrightarrow j_*(Q) \overset{\mu_j(Y)}\longrightarrow M_j(Y)\longrightarrow 0. \]
\end{enumerate}
\end{proposition}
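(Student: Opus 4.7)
The plan is to verify both sequences componentwise in $\cat F^I$ and identify the latching (resp.\ matching) objects by direct combinatorial colimit (resp.\ limit) computations in $\Set$, transferred to $\cat F$ via the left adjoint $-\times P \colon \Set_{\mathsf{fin}}\to\cat F$ (resp.\ via $Q^{(-)}\colon\opp{\Set}_{\mathsf{fin}}\to\cat F$, which takes set-colimits to limits).

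For part~(1), I first describe the candidate short exact sequence componentwise. Since $I$ is a finite direct category, $I(j,j)=\{1_j\}$, hence $(j_!(P))_j=P$; define $j_!(P)\to\stalk{j}(P)$ to be the identity at $k=j$ and zero otherwise. The componentwise kernel is then $I(j,k)\times P$ for $k\neq j$ and $0$ for $k=j$, and the sequence splits at each $k$ for trivial reasons. It remains to identify this kernel with $L_j(X)$, which I interpret as $\colim_{\partial(\opp{I}/j)} X|_{\partial(\opp{I}/j)}\in\cat F^I$ (the natural analogue of the latching object for an $\opp{I}$-indexed diagram). Using the isomorphism of categories $\partial(\opp{I}/j)\cong \opp{\partial(j/I)}$ and the pointwise computation of colimits in $\cat F^I$, the claim reduces to computing, for each $k\in I$, the set-theoretic colimit $\colim_{(i,f\colon j\to i)\in \opp{\partial(j/I)}}I(i,k)$ with cocone $\ell\mapsto\ell f\in I(j,k)$. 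For $k=j$, the direct category axiom forces $I(i,j)=\emptyset$ whenever $f\colon j\to i$ is non-identity (otherwise $d(j)<d(i)<d(j)$), so the colimit is empty. For $k\neq j$, the tautological factorization $(k, m\colon j\to k, 1_k)$ witnesses every $m\in I(j,k)$ in the image (using $k\neq j$ so that $m$ is non-identity), and any other factorization $(i,f,\ell)$ of $m$ is identified with $(k,m,1_k)$ in the colimit via the morphism $\ell\colon(k,m)\to(i,f)$ in $\opp{\partial(j/I)}$, whose action at the $k$-th component sends $1_k\mapsto\ell$. Thus the colimit is $I(j,k)$ and $\lambda_j(X)_k$ is identified with the identity on $I(j,k)\times P$, completing~(1).

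Part~(2) is strictly dual: $M_j(Y)\in\cat F^I$ is the pointwise limit, and contravariance of $Q^{(-)}$ gives $M_j(Y)_k = Q^{\colim_{(i,f\colon i\to j)\in\partial(I/j)}I(k,i)}$. The same combinatorial analysis---with sources and targets interchanged and using the tautological factorization $(k, m\colon k\to j, 1_k)$---shows that this inner colimit is $I(k,j)$ for $k\neq j$ and $\emptyset$ for $k=j$. Consequently $M_j(Y)_k=(j_*(Q))_k$ for $k\neq j$ and $0$ for $k=j$, and $\mu_j(Y)$ is identified with the obvious projection from $j_*(Q)$ onto this truncation, with kernel $\stalk{j}(Q)$, splitting componentwise.

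The main obstacle is the injectivity step in the set-theoretic colimit (dually, the surjectivity of the matching map): verifying that any two non-identity factorizations of the same morphism are connected in the slice $\opp{\partial(j/I)}$ requires exploiting its precise combinatorics. This is exactly where the direct category hypothesis is essential---it provides the acyclicity needed both for $I(j,j)=\{1_j\}$ and for the relevant Hom-sets to vanish in the $k=j$ case, and it is what guarantees that the naive cocone map is in fact a bijection in the $k\neq j$ case.
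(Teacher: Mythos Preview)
Your argument is correct and follows essentially the same strategy as the paper: reduce the latching (resp.\ matching) object to a finite set-theoretic colimit and transfer it to $\cat F$ via the copower $-\times P$ (resp.\ the power $Q^{(-)}$). The combinatorial step you carry out---showing that the canonical cocone map from the colimit to $I(j,k)$ is a bijection by identifying every element with $1_k$ at the tautological vertex $(k,m)$---is exactly the content of the paper's Lemma~\ref{lemma:represented_presheaves_are_Reedy}, applied to $\opp{I}$ in part~(1) and to $I$ in part~(2).

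The only real difference is organizational. The paper first isolates the computation for a single representable $i_!(P)\in\cat F^I$ (Lemmas~\ref{lemma:represented_presheaves_are_Reedy} and~\ref{lemma:represented_functors_are_Reedy}), and then argues that the latching morphism for the bifunctor $X$ is computed componentwise from these. You instead compute $L_j(X)\in\cat F^I$ directly as a pointwise colimit at each $k\in I$, which is arguably cleaner and sidesteps any ambiguity about which of the two indexing directions the latching is being taken in. Both routes rest on the same combinatorial insight and on the fact that $-\times P$ preserves finite colimits (resp.\ $Q^{(-)}$ converts them to limits), so the difference is packaging rather than substance.
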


\begin{proof}
Let us prove part (1), part (2) is similar.
The functor $X$ is just an adjoint form of the composition in (1), using the Cartesian closed monoidal structure on $\Cat$.
By the previous lemma, the latching morphism for $X_i\cong i_!(P)\in\cat{F}^I$ exists for each $i\in\opp{I}$. It follows that the latching morphism also exists for $X\in(\cat F^I)^{\opp{I}}$ and is computed component-wise, that is, $(\lambda_j(X))_i=\lambda_j(X_i)$. Finally, Lemma~\ref{lemma:represented_functors_are_Reedy} also tells us that $\lambda_j(X)$ is a degreewise split monomorphism and what the cokernel is.
\end{proof}

\subsection{Kan extensions for Frobenius exact categories}
\label{subsection:Kan-extensions}

Our next aim is to prove that given a weakly idempotent complete Frobenius exact category $\cat{F}$, then for any functor $u\colon I\to J$ between finite direct categories, there is a partially defined left Kan extension functor $u_!\colon \GProj(\cat{F}^I)\to\GProj(\cat{F}^J)$ and dually a partially defined right Kan extension functor $u_*\colon \GInj(\cat{F}^I)\to\GInj(\cat{F}^J)$.

We first prove that Gorenstein projectives and Gorenstein injectives restrict along certain functors $u\colon I\to J$. Here is the key observation.

\begin{lemma}\label{lemma:restriction-and-Ext}
Let $\cat{F}$ be an exact category, $\cat{W}\subseteq\cat{F}$ be a subcategory closed under finite direct sums, and $u\colon I\to J$ be a functor in $\kat{Cat}$.
\begin{enumerate}
\item If, for each $j\in J$, the slice $u/j$ is a disjoint union of finitely many categories with terminal objects, and if $Y\in\cat{F}^J$ is such that $\Ext^1_{\cat{F}^J}(\cat{W}^J,Y)=0$, then $\Ext^1_{\cat{F}^I}(\cat{W}^I,u^*Y)=0$.
\item If, for each $j\in J$, the slice $j/u$ is a disjoint union of finitely many categories with initial objects, and if $X\in\cat{F}^J$ is such that $\Ext^1_{\cat{F}^J}(X,\cat{W}^J)=0$, then $\Ext^1_{\cat{F}^I}(u^*X,\cat{W}^I)=0$.
\end{enumerate}
\end{lemma}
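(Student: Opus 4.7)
The plan is to exploit the adjoint functors $u_!$ (for part (1)) and $u_*$ (for part (2)) guaranteed by Corollary~\ref{corollary:Kan-extensions-existence}, together with the explicit formulas in Remark~\ref{remark:Kan-extensions-existence}, to reduce $\Ext^1$-vanishing along $u^*$ to $\Ext^1$-vanishing in $\cat F^J$. By duality it suffices to treat part (1), so I will concentrate on that.

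First I would record two structural facts about $u_!\colon \cat F^I\to\cat F^J$ under the slice hypothesis of (1). On the one hand, by Remark~\ref{remark:Kan-extensions-existence}, for each $j\in J$ the value $(u_!X)_j\cong\bigoplus_{t=1}^{n_j}X_{k_t}$ is a \emph{finite} direct sum of components of $X$; hence $u_!$ is computed componentwise by a finite coproduct functor and is therefore exact. On the other hand, since $\cat W$ is closed under finite direct sums, the same formula shows $u_!(\cat W^I)\subseteq \cat W^J$.

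Now take any $W\in\cat W^I$ and any conflation
\[
\xi\colon\; 0\longrightarrow u^*Y\stackrel{\alpha}\longrightarrow E\stackrel{\beta}\longrightarrow W\longrightarrow 0
\]
in $\cat F^I$; I must show $\xi$ splits. Apply the exact functor $u_!$ and push out the resulting conflation along the counit $\epsilon_Y\colon u_!u^*Y\to Y$ to obtain a conflation
\[
\eta\colon\; 0\longrightarrow Y\longrightarrow F\longrightarrow u_!W\longrightarrow 0
\]
in $\cat F^J$. Since $u_!W\in\cat W^J$, the hypothesis $\Ext^1_{\cat F^J}(\cat W^J,Y)=0$ gives a section $s\colon u_!W\to F$ of $\eta$. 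Applying $u^*$ and composing with the unit $\eta_W\colon W\to u^*u_!W$ produces a map $t=u^*s\circ\eta_W\colon W\to u^*F$ satisfying $u^*\beta'\circ t=\eta_W$, where $\beta'\colon F\to u_!W$ is the deflation in $\eta$. The canonical map $u^*p\circ\eta_E\colon E\to u^*F$ (with $p\colon u_!E\to F$ the pushout map) satisfies, by naturality of $\eta$, the identity $u^*\beta'\circ(u^*p\circ\eta_E)=\eta_W\circ\beta$. Hence the difference $u^*p\circ\eta_E-t\circ\beta$ kills $u^*\beta'$, so factors uniquely as $u^*\alpha'\circ\phi$ for some $\phi\colon E\to u^*Y$, where $\alpha'\colon Y\to F$ is the inflation in $\eta$. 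A short verification using the triangle identity $u^*\epsilon_Y\circ\eta_{u^*Y}=1_{u^*Y}$ shows $\phi\circ\alpha=1_{u^*Y}$, i.e.\ $\phi$ is a retraction of $\alpha$, and therefore $\xi$ splits.

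Part (2) is proved by the exact dual argument, applied to $u_*$: under the hypothesis of (2), $u_*$ is computed componentwise by finite products (hence is exact and preserves $\cat W$), and one replaces pushouts along counits by pullbacks along units. The only potential obstacle in writing this out cleanly is the verification that the above $\phi$ genuinely is a retraction of $\alpha$; this is where the triangle identity and the monomorphicity of the inflation $u^*\alpha'$ need to be combined carefully, but it is a routine diagram chase rather than a conceptual difficulty.
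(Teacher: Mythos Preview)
Your proof is correct and follows the same strategy as the paper: both use that $u_!$ exists, is exact, and satisfies $u_!(\cat W^I)\subseteq\cat W^J$ under the slice hypothesis. The only difference is that the paper invokes the standard natural isomorphism $\Ext^1_{\cat F^J}(u_!X,Y)\cong\Ext^1_{\cat F^I}(X,u^*Y)$ coming from an exact adjunction in one line, whereas you unroll this isomorphism by hand via the pushout-and-unit diagram chase; your argument is therefore longer but not genuinely different.
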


\begin{proof}
We will prove only the first statement; the other is dual. By Corollary~\ref{corollary:Kan-extensions-existence}, the exact restriction functor $u^*\colon \cat{F}^J\to\cat{F}^I$ has a left adjoint $u_!\colon \cat{F}^I\to\cat{F}^J$, which is also exact by Remark~\ref{remark:Kan-extensions-existence}. The exact adjunction induces a natural isomorphism $\Ext^1_{\cat{F}^J}(u_!X,Y)\cong\Ext^1_{\cat{F}^I}(X,u^*Y)$ for each $X\in\cat{F}^I$ and $Y\in\cat{F}^J$. The conclusion follows as Remark~\ref{remark:Kan-extensions-existence} also implies that $u_!(\cat{W}^I)\subseteq\cat{W}^J$.
\end{proof}

The main consequence of the previous lemma is as follows.

\begin{lemma}\label{lemma:latching-and-restriction}
Let $\cat F$ be a Frobenius exact category and $I$ be a finite direct set.
\begin{enumerate}
\item Let $X\in\GProj(\cat F^I)$. Then for any $i\in I$ and the corresponding projection functor $\pi_i\colon \partial(I/i)\to I$ from \S\ref{subsection:stalks}, we have $\pi_i^*(X)\in\GProj(\partial(I/i))$. Moreover, if $u\colon J\to I$ is a sieve (recall Proposition~\ref{proposition_(co)sieves}), then $u^*(X)\in\GProj(\cat F^I)$.
\item Let $Y\in\GInj(\cat F^I)$. Then for any $i\in I$ and the projection functor $\rho_i\colon \partial(i/I)\to I$ from \S\ref{subsection:stalks}, we have $\rho_i^*(Y)\in\GInj(\partial(i/I))$. Moreover, if $u\colon J\to I$ is a cosieve , then $u^*(Y)\in\GInj(\cat F^I)$.
\end{enumerate}
\end{lemma}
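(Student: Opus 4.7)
The plan is to reduce all four claims to applications of Lemma~\ref{lemma:restriction-and-Ext}, applied with $\cat W=\Proj(\cat F)=\Inj(\cat F)$ (the equality uses that $\cat F$ is Frobenius). The key preliminary is the identification $\WTriv(\cat F^I)=\Proj(\cat F)^I=\Inj(\cat F)^I$ from Lemma~\ref{lemma:finite_homological_dim_F^I}; combined with the hereditary cotorsion pairs of Proposition~\ref{proposition:Gore_cotorsion_pairs}, this yields
\[
X\in\GProj(\cat F^I) \iff \Ext^1_{\cat F^I}(X,\Proj(\cat F)^I)=0,
\]
and dually for $\GInj$. Since $\partial(I/i)$, $\partial(i/I)$, and any sieve or cosieve of $I$ inherit a degree function and are themselves finite direct categories, the same characterizations hold after restriction. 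Part (2) of the stated lemma reduces to part (1) by replacing $\cat F$ and $I$ with $\opp{\cat F}$ and $\opp I$ (which swaps $\GProj$ with $\GInj$, $\pi_i$ with $\rho_i$, and sieves with cosieves), so I will focus on (1) and apply Lemma~\ref{lemma:restriction-and-Ext}(2).

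For the sieve part of (1), given a sieve $u\colon J\hookrightarrow I$ and $i\in I$, the slice $i/u$ is either empty (when $i\notin u(J)$, by the defining property of a sieve) or, writing $i=u(j_0)$, isomorphic to $j_0/J$ via full faithfulness of $u$; the latter has initial object $(j_0,1_{j_0})$. Hence $i/u$ is a disjoint union of at most one category with an initial object. For the $\pi_i$ part, given $j\in I$, an object of $j/\pi_i$ is a triple $((k,f\colon k\to i),g\colon j\to k)$ with $f$ a non-identity. The assignment $((k,f),g)\mapsto fg$ is constant on connected components, giving a decomposition of $j/\pi_i$ indexed by $I(j,i)$. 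When $j\neq i$, each $\alpha\in I(j,i)$ admits the initial object $((j,\alpha),1_j)$ in its fibre (any other $((k,f),g)$ with $fg=\alpha$ receives the unique morphism from it given by $g\colon j\to k$), and finiteness of $I$ gives the required finite disjoint union. When $j=i$, directness forces $i/\pi_i$ to be empty: $fg=1_i$ with $f\colon k\to i$ non-identity would require $d(k)<d(i)$, while the existence of $g\colon i\to k$ forces $d(i)\le d(k)$ (equality only if $g$ is the identity, which would then make $f$ an endomorphism forced to be the identity, a contradiction); hence $d(k)<d(i)\le d(k)$, impossible.

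Combining these slice computations with Lemma~\ref{lemma:restriction-and-Ext}(2) proves (1); the dual argument proves (2) using Lemma~\ref{lemma:restriction-and-Ext}(1) together with the dual facts that $u/j$ for a cosieve $u$, and $\rho_i/j$, are disjoint unions of categories with \emph{terminal} objects. The only delicate point is the directness-based argument showing $i/\pi_i=\emptyset$: this is the unique spot where it is essential that $I$ be a direct category rather than a general finite category, and it echoes the role played by the strict inequalities $d(i_0)<\cdots<d(i_k)$ in the bar-type resolution used in the proof of Theorem~\ref{theorem:derivator_dg_finite}.
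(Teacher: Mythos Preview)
Your proof is correct and follows essentially the same approach as the paper: both reduce to Lemma~\ref{lemma:restriction-and-Ext}(2) (with $\cat W=\Proj(\cat F)$, using Lemma~\ref{lemma:finite_homological_dim_F^I}) after verifying that the slices $j/\pi_i$ and $j/u$ are finite disjoint unions of categories with initial objects, via the fibre decomposition over $I(j,i)$. Your treatment of the case $j=i$ is in fact slightly more explicit than the paper's, which handles it implicitly through the clause ``if non-empty'' for the fibre over $1_i$.
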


\begin{proof}
In part (1), the category $j/\pi_i$, where $j\in I$, is isomorphic to the full subcategory of the double slice category $j/I/i$, whose objects are triples $(k\in I, f\colon j\to k, g\colon k\to i)$ with $g\ne 1_i$. So $j/\pi_i$ is a disjoint union of categories $\cat C_h$, $h\in I(j,i)$, where $\cat C_h\subseteq j/I/i$ consists of all objects $(k, f\colon j\to k, g\colon k\to i)$ with $g\circ f=h$ and $g\ne 1_i$. Moreover, each $\cat C_h$, if non-empty, has the initial object $(j, 1_j\colon j\to j, h\colon j\to i)$. It follows from Lemmas~\ref{lemma:finite_homological_dim_F^I} and~\ref{lemma:restriction-and-Ext}(2) that, for any $X\in\GProj(\cat F^I)$, we have $\Ext^1_{\cat F^J}(\pi_i^*(X),\Proj(\cat F)^J)=0$, and so $\pi_i^*(X)\in\GProj(\cat F^J)$.

If $u\colon J\to I$ is a sieve and $j\in I$, then $j/u$, if non-empty, has the initial object $(u^{-1}(j), 1_j)$. So Lemma~\ref{lemma:restriction-and-Ext}(2) again applies to prove that $u^*(X)\in\GProj(\cat F^J)$.

The proof of part (2) is dual analogous.
\end{proof}

We also record the following easy characterization of Gorenstein projectives and injectives using stalk representations.

\begin{lemma} \label{lemma:Gorenstein-via-stalks}
Let $\cat F$ be a Frobenius exact category and $I$ a finite direct category.
\begin{enumerate}
\item A representation $X\in\cat F^I$ is Gorenstein projective if and only if $\Ext^1_{\cat F^I}(X,\stalk{j}(Q))=0$ for all $j\in I$ and $Q\in\Proj(\cat F)$.
\item A representation $Y\in\cat F^I$ is Gorenstein injective if and only if $\Ext^1_{\cat F^I}(\stalk{j}(P),Y)=0$ for all $j\in I$ and $P\in\Proj(\cat F)$.
\end{enumerate}
\end{lemma}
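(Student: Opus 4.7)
The plan is to reduce both statements to the cotorsion-pair characterization from Proposition~\ref{proposition:Gore_cotorsion_pairs}, applied to the Gorenstein exact category $\cat F^I$ (which is $n$-Gorenstein by Proposition~\ref{proposition:Gorenstein-diagrams}). The proof of Proposition~\ref{proposition:Gore_cotorsion_pairs} records that $\GProj(\cat F^I) = \{X : \Ext^1_{\cat F^I}(X,\WTriv(\cat F^I))=0\}$ and dually $\GInj(\cat F^I) = \{Y : \Ext^1_{\cat F^I}(\WTriv(\cat F^I),Y)=0\}$. Combined with Lemma~\ref{lemma:finite_homological_dim_F^I}, which identifies $\WTriv(\cat F^I)$ with the termwise projective (equivalently termwise injective) representations, the forward directions of (1) and (2) are immediate: a stalk representation $\stalk{j}(Q)$ with $Q\in\Proj(\cat F)$ is supported at a single point with a projective (hence injective, since $\cat F$ is Frobenius) value, so it lies in $\WTriv(\cat F^I)$.

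For the nontrivial implication in (1), I assume $\Ext^1_{\cat F^I}(X,\stalk{j}(Q))=0$ for all $j\in I$ and $Q\in\Proj(\cat F)$ and aim to show $\Ext^1_{\cat F^I}(X,W)=0$ for every termwise projective $W\in\cat F^I$. The tool is a \emph{degree filtration}. Fix a degree function $d\colon I\to\{0,\dots,n\}$ on the finite direct category $I$ and, for $W\in\cat F^I$ and $m\in\{0,\dots,n+1\}$, define a subfunctor $W^{\geq m}$ by $W^{\geq m}_k=W_k$ when $d(k)\geq m$ and $W^{\geq m}_k=0$ otherwise, with structure maps inherited from those of $W$. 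Because every non-identity morphism in $I$ strictly increases $d$, the $W^{\geq m}$ are honest subrepresentations and the inclusions $W^{\geq m+1}\hookrightarrow W^{\geq m}$ are termwise split inflations; moreover, since the objects of degree exactly $m$ admit no non-identity morphisms among themselves, the successive quotient $W^{\geq m}/W^{\geq m+1}$ identifies with $\bigoplus_{d(k)=m}\stalk{k}(W_k)$. When $W$ is termwise projective, each $W_k$ is a projective of $\cat F$, so each quotient is a finite direct sum of stalks of projectives.

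With this filtration in hand, descending induction on $m$ from $m=n+1$ (where $W^{\geq n+1}=0$) down to $m=0$ (where $W^{\geq 0}=W$) completes the argument: applying $\Hom_{\cat F^I}(X,-)$ to
\[
0\to W^{\geq m+1}\to W^{\geq m}\to \bigoplus_{d(k)=m}\stalk{k}(W_k)\to 0
\]
yields a long exact sequence in which the Ext-group on the right vanishes by hypothesis (each $\stalk{k}(W_k)$ is a stalk of a projective and the sum is finite) and the Ext-group on the left vanishes by the inductive step. The final case $m=0$ gives $\Ext^1_{\cat F^I}(X,W)=0$, so $X\in\GProj(\cat F^I)$ by the cotorsion-pair criterion. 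Statement (2) follows dually by passing to $(\opp{\cat F})^{\opp I}$, which is again covered by (1): $\opp{\cat F}$ is a weakly idempotent complete Frobenius exact category with $\Proj(\opp{\cat F})=\Inj(\cat F)$, the opposite $\opp I$ is again finite direct, and opposites of injective diagrams correspond to projective diagrams. The only nontrivial point, and hence the main obstacle, is verifying that the degree filtration is functorial with the stated stalk quotients; once this is set up, the rest is a formal application of the cotorsion pair and the long exact sequence of $\Ext$.
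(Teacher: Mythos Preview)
Your proposal is correct and follows essentially the same approach as the paper: the forward direction is immediate from Proposition~\ref{proposition:Gore_cotorsion_pairs} and Lemma~\ref{lemma:finite_homological_dim_F^I}, and for the converse the paper uses the very same degree filtration $W(m)$ (your $W^{\geq m}$) with successive quotients $\bigoplus_{d(i)=m}\stalk{i}(W_i)$, then concludes via the cotorsion-pair criterion. The only cosmetic difference is that you spell out the descending induction with the long exact sequence explicitly and handle part~(2) by passing to $(\opp{\cat F})^{\opp I}$, whereas the paper simply says ``analogous''.
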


\begin{proof}
The `only if' part in both (1) and (2) follows from Proposition~\ref{proposition:Gore_cotorsion_pairs} and Lemma~\ref{lemma:finite_homological_dim_F^I}.

For the `if' part, let $d\colon \operatorname{Ob}(I) \to \{0,\cdots n\}$ be the degree function as in Definition~\ref{definition:finite_directed} and consider $W\in\WTriv(\cat F^I)$. For each $0\leq m\leq n+1$, let $W(m)\subseteq W$ be a subrepresentation such that $W(m)_i=0$ for all $d(i)<m$ and $W(m)_i=W_i$ for $d(i)\ge m$. Then we have a degreewise split finite filtration
\[ 0=W(n+1)\subseteq W(n) \subseteq W(n-1) \subseteq\cdots\subseteq W(1)\subseteq W(0)=W \]
and $W(m)/W(m+1)\cong \bigoplus_{i\in I,\, d(i)=m}\stalk{i}(W_i)$ in $\cat F^I$. Hence, if $\Ext^1_{\cat F^I}(X,\stalk{j}(P))=0$ for all $j\in I$ and $P\in\Proj(\cat F)$, then $\Ext^1_{\cat F^I}(X,\WTriv(\cat F^I))=0$, so $X\in\GProj(\cat F)$ by Proposition~\ref{proposition:Gore_cotorsion_pairs} and Lemma~\ref{lemma:finite_homological_dim_F^I}. The argument for Gorenstein injectives is analogous.
\end{proof}

Now we give an inductive criterion which will in the end allow us to prove existence of colimits of Gorenstein projective diagrams. To this end, dually to the concept of minimal object of $I$ introduced in \S\ref{section:derivator_Frobenius}, we call an object $j \in I$ a \emph{maximal object} if $I(j,i)=\emptyset$ for any $i \ne j$. Any non-empty finite direct category clearly has a maximal object.

\begin{lemma}\label{lemma:colimit-existence-inductive}
Let $\cat F$ be a Frobenius exact category and let $I$ be a non-empty finite direct category.
Furthermore, let $j\in I$ be a maximal object, let $J = I\setminus \{j\}$, and denote by $u\colon J\to I$ the inclusion functor.
If $G\in\cat F^I$ is an $I$-shaped diagram in $\cat F$ such that
\begin{enumerate}
\item both the colimit of $u^*(G)\in\cat F^J$ and the latching object $L_j(G)$ exist in $\cat F$, and
\item the latching morphism $\lambda_j(G)\colon L_j(G)\to G_j$ is an inflation,
\end{enumerate}
then $\colim_IG$ exists in $\cat F$.
\end{lemma}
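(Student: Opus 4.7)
The plan is to realize $\colim_I G$ explicitly as a pushout. Since $j$ is a maximal object of $I$, any non-identity morphism $f\colon i\to j$ in $I$ forces $i\in J$, so the projection $\pi_j\colon \partial(I/j)\to I$ factors uniquely as $\pi_j=u\circ\bar\pi_j$ for some $\bar\pi_j\colon \partial(I/j)\to J$. Consequently $L_j(G)=\colim_{\partial(I/j)}\bar\pi_j^*(u^*(G))$, and composing $\bar\pi_j$ with the colimit cocone $(\iota_i\colon G_i\to C)_{i\in J}$ of $u^*(G)$, where $C:=\colim_Ju^*(G)$, yields a canonical morphism $\kappa\colon L_j(G)\to C$.

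Next I would form the pushout
\[
\begin{tikzcd}
L_j(G) \arrow[r, "\lambda_j(G)"] \arrow[d, swap, "\kappa"] & G_j \arrow[d, "\alpha"] \\
C \arrow[r, "\beta"] & P,
\end{tikzcd}
\]
which exists in $\cat F$ because $\lambda_j(G)$ is an inflation by hypothesis, and pushouts of inflations along arbitrary morphisms always exist in an exact category (see e.g.\ \cite[Proposition~2.12]{buehler}). The claim is that $P$, equipped with $\alpha\colon G_j\to P$ and with $\beta\circ\iota_i\colon G_i\to P$ for $i\in J$, is a colimit cocone for $G$.

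To verify the universal property, fix $A\in\cat F$ and a cocone $(\psi_i\colon G_i\to A)_{i\in I}$ on $G$. Its restriction to $J$ induces a unique $\bar\psi\colon C\to A$ with $\bar\psi\circ\iota_i=\psi_i$ for all $i\in J$. Maximality of $j$ ensures that the only compatibility conditions linking $\psi_j$ to the remaining $\psi_i$ are those indexed by objects of $\partial(I/j)$, and these collectively amount to the single equation $\psi_j\circ\lambda_j(G)=\bar\psi\circ\kappa$. The pushout property then supplies a unique morphism $P\to A$ through which the cocone factors, and any morphism $P\to A$ conversely yields such a cocone by composition, establishing the universal property.

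The main conceptual point is the factorization of $\pi_j$ through $J$ afforded by the maximality of $j$, which reduces the construction of the colimit to a single gluing step governed by the latching map. The inflation hypothesis enters only to guarantee existence of this pushout in $\cat F$, whose collection of colimits is otherwise very limited; once both ingredients are in place, verifying the universal property is a straightforward diagram chase.
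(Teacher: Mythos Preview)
Your proof is correct and takes essentially the same approach as the paper: both construct $\colim_I G$ as the pushout of the latching map $\lambda_j(G)$ against the comparison map $L_j(G)\to\colim_J u^*(G)$, using the factorization of $\pi_j$ through $J$ afforded by maximality of $j$ and the existence of pushouts along inflations in exact categories. The only difference is presentational---you verify the universal property directly via cocones, whereas the paper applies $\Hom_{\cat F}(-,Y)$ and compares two pullback squares of abelian groups---but the underlying argument is the same.
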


\begin{proof}
Let us denote $G_J:=\colim_Ju^*(G)\in\cat F$ for brevity and recall from the beginning of~\S\ref{subsection:stalks} that $L_j(G)=\colim_{\partial(I/j)}\pi_j^*(G)$.
Since $I$ is finite direct and $f\ne 1_i$ for all $(k,f\colon k\to j)\in\partial(I/j)$, the image of the projection $\pi_j\colon \partial(I/j)\to I$ is contained in $J$.
In particular, the diagram $\pi_j^*(G)$ is obtained as the composition of functors
$\partial(I/j) \overset{\pi_j}\longrightarrow J \overset{u}\longrightarrow I \overset{G}\longrightarrow \cat F$
and there is a canonical comparison map
\begin{equation}\label{equation:colimit-comparison-map}
r\colon L_j(G)=\colim_{\partial(I/j)}\pi_j^*u^*(G)\to\colim_Ju^*(G)=G_J.
\end{equation}
To construct $r$ explicitly, let $e$ be the terminal category, and let $\operatorname{pt}_J\colon J\to e$ and $\operatorname{pt}_{\partial(I/j)}\colon \partial(I/j)\to e$ be the unique functors to $e$, so that $\operatorname{pt}_J^*\colon\cat F\to\cat F^J$ and $\operatorname{pt}_{\partial(I/j)}^*\colon\cat F\to\cat F^{\partial(I/j)}$ are the constant diagram functors. Then the colimiting cocone of $G_J$, in the form of a morphism $u^*(G)\to\operatorname{pt}_J^*(G_J)$ in $\cat F^J$, restricts along $\pi_j\colon\partial(I/i)\to J$ to a cocone $\pi_j^*u^*(G)\to\operatorname{pt}_{\partial(I/j)}^*(G_J)$. Then $r$ is obtained from the latter cocone, using the universal property of the colimit defining the latching object.

Now we use the assumption that $\lambda_j(G)$ is an inflation, so that the following pushout square exists in~$\cat F$,
\begin{equation}\label{equation:colimit-GProj}
\begin{tikzcd}
L_j(G) \arrow[d, swap, "\lambda_j(G)"] \arrow[r, "r"] & \colim_J u^*(G) \arrow[d]  \\
G_j \arrow[r] & C.
\end{tikzcd}
\end{equation}
We claim that $C$ is a colimit object of the diagram $G\colon I\to\cat F$. To that end, let $Y\in\cat F$ be any object. If we apply $\Hom_{\cat F}(-,Y)$ to~\eqref{equation:colimit-GProj}, we obtain a pullback square of abelian groups,
\[
\begin{tikzcd}
\Hom_{\cat F}(L_j(G),Y) \arrow[d, <-, swap, "{\Hom_{\cat F}(\lambda_j(G),Y)}"] \arrow[r, <-, "{\Hom_{\cat F}(r,Y)}"] & \Hom_{\cat F}(\colim_J u^*(G), Y) \arrow[d, <-]
\\
\Hom_{\cat F}(G_j, Y) \arrow[r, <-] & \Hom_{\cat F}(C, Y).
\end{tikzcd}
\]
Using that homomorphism groups from a colimit classify cocones of the corresponding diagrams and writing the cocones in the form of morphisms to constant diagrams as above, we can replace some terms of the latter square by their adjoint forms,
\[
\begin{tikzcd}
\Hom_{\cat F^{\partial(I/j)}}(\pi_j^*u^*(G),\operatorname{pt}^*_{\partial(I/j)}(Y)) \arrow[d, <-] \arrow[r, <-, "\pi_j^*"] & \Hom_{\cat F^J}(u^*(G), \operatorname{pt}^*_J(Y)) \arrow[d, <-]
\\
\Hom_{\cat F}(G_j, Y) \arrow[r, <-] & \Hom_{\cat F}(C, Y).
\end{tikzcd}
\]
By the construction of the morphism $r$ in \eqref{equation:colimit-comparison-map}, the upper horizontal map amounts simply to applying the restriction functor $\pi_j^*\colon\cat F^J\to\cat F^{\partial(I/j)}$, as indicated. On the other hand, the left vertical map assigns to  a morphism $G_j\to Y$ in $\cat F$ the composition $\pi^*_j(G)\to\operatorname{pt}^*_{\partial(I/j)}(G_j)\to \operatorname{pt}^*_{\partial(I/j)}(Y)$, where the first map is the canonical cocone given by $G$, as explained at the beginning of~\S\ref{subsection:stalks}.

It is straightforward from the explicit computations that there is also another pullback square of abelian groups of the form
\[
\begin{tikzcd}
\Hom_{\cat F^{\partial(I/j)}}(\pi_j^*(G),\operatorname{pt}^*_{\partial(I/j)}(Y)) \arrow[d, <-] \arrow[r, <-, "\pi_j^*"] & \Hom_{\cat F^J}(u^*(G), \operatorname{pt}^*_J(Y)) \arrow[d, <-, "u^*"]
\\
\Hom_{\cat F}(G_j, Y) \arrow[r, <-, "j^*"] & \Hom_{\cat F^I}(G, \operatorname{pt}_I^*(Y)),
\end{tikzcd}
\]
where $\operatorname{pt_I}\colon I\to e$ is the unique functor to the terminal category, the vertical map on the right-hand side is induced by $u^*$ and the lower horizontal arrow is the restriction to the $j$-th component. Hence, from the uniqueness of pullbacks, we obtain a natural isomorphism
\[ \Hom_{\cat F}(C,-)\cong \Hom_{\cat F^I}(G,\operatorname{pt}_I^*(-)), \]
which shows that the colimit $\colim_I G$ exists in $\cat F$ and is isomorphic to $C$.
\end{proof}

In the setting of the previous lemma, we can also prove the exactness of $I$-shaped colimits, if assuming exactness of latching objects and exactness of colimits of restricted diagrams. We will again use this as an inductive criterion for exactness later in Theorem~\ref{theorem:Gorenstein_projectives}.

\begin{lemma}\label{lemma:colimit-exactness-inductive}
Let $\cat F$, $j\in I$ and $u\colon J\to I$ be as in Lemma~\ref{lemma:colimit-existence-inductive} and suppose that $\varepsilon\colon 0\to G\to H\to K\to 0$ is a conflation in $\cat F^I$ in the degreewise exact structure such that 
\begin{enumerate}
\item for each term $X$ of $\varepsilon$, the colimit of $u^*(X)\in\cat F^J$ and the latching object $L_j(X)$ exist in $\cat F$,
\item for each term $X$ of $\varepsilon$, the latching morphism $\lambda_j(X)\colon L_j(X)\to X_j$ is an inflation, and
\item the sequences $\colim_Ju^*(\varepsilon)$ and $L_j(\varepsilon)$ are conflations in $\cat F$.
\end{enumerate}
Then the colimit sequence
\[ \colim_I(\varepsilon)\colon \; 0 \to \colim_I G \to \colim_I H \to \colim_I K \to 0 \]
 (which exists in $\cat F$ by Lemma~\ref{lemma:colimit-existence-inductive}) is a conflation.
\end{lemma}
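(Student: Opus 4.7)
The strategy is to use the pushout description of $\colim_IX$ supplied by Lemma~\ref{lemma:colimit-existence-inductive} and reduce the claim to the $3\times 3$-lemma in $\cat F$. For each term $X\in\{G,H,K\}$ of $\varepsilon$, Lemma~\ref{lemma:colimit-existence-inductive} together with hypothesis~(2) exhibits $\colim_IX$ as the pushout of the inflation $\lambda_j(X)\colon L_j(X)\to X_j$ along $r_X\colon L_j(X)\to\colim_Ju^*(X)$. Because the pushout is taken along an inflation, the pushout square is bicartesian in the exact category $\cat F$, so it gives a conflation
\[ 0 \longrightarrow L_j(X) \longrightarrow X_j \oplus \colim_Ju^*(X) \longrightarrow \colim_IX \longrightarrow 0, \]
in which the first nontrivial map has components $\lambda_j(X)$ and $-r_X$, and the second recovers the canonical maps into the pushout.

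Next I would observe that these pushout squares are functorial in $X$: both $L_j(-)$ and $\colim_Ju^*(-)$ are functors on the subcategory of $\cat F^I$ of diagrams where the relevant (co)limits exist, the latching map $\lambda_j$ and the canonical comparison $r$ are natural transformations, and the universal property of pushouts then yields a functorial assignment $X\mapsto\colim_IX$ together with functorial conflations as above. Applying this naturality to $\varepsilon$ produces a commutative $3\times 3$ diagram in~$\cat F$:
\[
\begin{tikzcd}[column sep=small, row sep=small]
0 \arrow[r] & L_j(G) \arrow[d] \arrow[r] & L_j(H) \arrow[d] \arrow[r] & L_j(K) \arrow[d] \arrow[r] & 0 \\
0 \arrow[r] & G_j\oplus\colim_Ju^*(G) \arrow[d] \arrow[r] & H_j\oplus\colim_Ju^*(H) \arrow[d] \arrow[r] & K_j\oplus\colim_Ju^*(K) \arrow[d] \arrow[r] & 0 \\
0 \arrow[r] & \colim_IG \arrow[r] & \colim_IH \arrow[r] & \colim_IK \arrow[r] & 0.
\end{tikzcd}
\]
The three columns are conflations by the preceding display; the top row is $L_j(\varepsilon)$, a conflation by hypothesis~(3); and the middle row is the direct sum of the degreewise-$j$ component $\varepsilon_j$ (a conflation in $\cat F$ because $\varepsilon$ is a conflation in $\cat F^I$ with the degreewise exact structure) and $\colim_Ju^*(\varepsilon)$ (a conflation by hypothesis~(3) again), hence is itself a conflation.

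Finally, the $3\times 3$-lemma for exact categories (see \cite[\S 3]{buehler}), in the form where all three columns and the top two rows of a $3\times 3$-diagram are conflations and one concludes that the third row is a conflation, forces the bottom row of the diagram above to be a conflation in $\cat F$. That bottom row is precisely $\colim_I(\varepsilon)$, so the desired exactness follows. The only delicate point is to invoke exactly this variant of the $3\times 3$-lemma; verifying it is routine from the basic exact-category axioms, and there is no separate set-theoretic or size issue to address since all diagrams involved are finite and live in $\cat F$ itself.
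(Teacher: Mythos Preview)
Your proof is correct and essentially identical to the paper's: both convert the pushout of Lemma~\ref{lemma:colimit-existence-inductive} into the conflation $0\to L_j(X)\to X_j\oplus\colim_Ju^*(X)\to\colim_IX\to 0$ (via \cite[Proposition~2.12]{buehler}), assemble these for $X\in\{G,H,K\}$ into a $3\times 3$ diagram, and conclude with the $3\times 3$-lemma \cite[Corollary~3.6]{buehler}. The only cosmetic difference is that your diagram is the transpose of the paper's (your columns are the paper's rows and vice versa).
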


\begin{proof}
Using the pushout diagram~\eqref{equation:colimit-GProj} from the proof of Lemma~\ref{lemma:colimit-existence-inductive}, the fact that $C=\colim_I G$ there,
and \cite[Proposition~2.12]{buehler} which turns such pushouts to conflations, we obtain the following diagram in $\cat F$ with conflations in rows,
\[
\begin{tikzcd}
L_j(G) \arrow[r, >->] \arrow[d] & G_j\oplus \colim_J u^*(G) \arrow[r, ->>] \arrow[d] & \colim_I G \arrow[d] \\
L_j(H) \arrow[r, >->] \arrow[d] & H_j\oplus \colim_J u^*(H) \arrow[r, ->>] \arrow[d] & \colim_I H \ar[d] \\
L_j(K) \arrow[r, >->] & K_j\oplus \colim_J u^*(K) \arrow[r, ->>] & \colim_I K
\end{tikzcd}
\]
Now assumption (3) tells us that we also have conflations in the first two columns, so we can conclude by the $3\times3$-Lemma \cite[Corollary 3.6]{buehler}, which tells us that the last column is also a conflation.
\end{proof}

Now we are ready to give a description of Gorenstein projective objects in $\cat F^I$ for a weakly idempotent complete Frobenius exact category $\cat F$. A corresponding result for Gorenstein injectives follows as well by formal duality, as the category of Gorenstein injective objects of $\cat F^I$ is opposite to the category of Gorenstein projective objects of $(\opp{\cat F})^{\opp{I}}$.

\begin{theorem}\label{theorem:Gorenstein_projectives}
Let $\cat F$ be a weakly idempotent complete Frobenius exact category, let $I$ be a finite direct category, and let $G\in\cat F^I$. Then the following are equivalent:
\begin{enumerate}
\item $G$ is Gorenstein projective in $\cat F^I$.
\item The latching object $L_j(G)$ exists and the latching morphism
$\lambda_j(G)\colon L_j(G)\to G_j$
is an inflation for each $j\in I$.
\end{enumerate}
Moreover, for each $G\in\GProj(\cat F^I)$, the colimit $\colim_IG$ exists in $\cat F$, and the induced colimit functor
$\colim_I\colon \GProj(\cat F^I)\to\cat F$
is exact.
\end{theorem}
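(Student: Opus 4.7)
I will prove the theorem by induction on the pair $(n,m)$ ordered lexicographically, where $n$ is the maximum value attained by a chosen degree function on $I$ and $m$ is the number of objects of degree $n$. The base case $n = 0$ reduces to $I$ discrete, in which case $\cat F^I \cong \prod_{i\in I}\cat F$: every object is Gorenstein projective (as $\cat F$ is Frobenius), each latching morphism is $0\to G_i$ (trivially an inflation), and colimits are finite coproducts. For the inductive step, fix a maximal object $j\in I$ of degree $n$, let $J = I\setminus\{j\}$ and $u\colon J\hookrightarrow I$. Since $j$ is maximal, $u$ is a sieve and the projection $\pi_j\colon \partial(I/j)\to I$ factors through $J$; both $J$ and $\partial(I/j)$ have strictly smaller invariant, so the inductive hypothesis applies to them.

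For $(2)\Rightarrow(1)$, I apply Lemma~\ref{lemma:Gorenstein-via-stalks}(1) and show that every extension $\xi\colon 0\to\stalk{j}(Q)\to E\to G\to 0$ in $\cat F^I$ splits, where $Q\in\Proj(\cat F) = \Inj(\cat F)$. Such $\xi$ amounts to a conflation $0\to Q\to E_j\to G_j\to 0$ in $\cat F$ (with $E_k = G_k$ for $k\ne j$) together with a lift $\tilde\lambda\colon L_j(G)\to E_j$ of $\lambda_j(G)$. Since $Q$ is injective, I pick any splitting $\sigma_j^{(0)}\colon G_j\to E_j$; the obstruction $\sigma_j^{(0)}\lambda_j(G) - \tilde\lambda$ factors through $Q\hookrightarrow E_j$ as some $\bar\tau\colon L_j(G)\to Q$. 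As $\lambda_j(G)$ is an inflation by hypothesis (2) and $Q$ is injective, $\bar\tau$ extends along $\lambda_j(G)$ to $\bar\sigma\colon G_j\to Q$, and $\sigma_j^{(0)} - \bar\sigma$ then gives a natural splitting of $\xi$. This direction uses no induction.

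For $(1)\Rightarrow(2)$, assume $G\in\GProj(\cat F^I)$. For $i\in J$, Lemma~\ref{lemma:latching-and-restriction}(1) applied to the sieve $u$ gives $u^*G\in\GProj(\cat F^J)$; the inductive hypothesis for $J$ then ensures $\lambda_i(u^*G)$ exists and is an inflation, and since $\partial(I/i) = \partial(J/i)$ for $i\in J$ (by maximality of $j$) we have $\lambda_i(G) = \lambda_i(u^*G)$. For $i=j$, the same lemma applied to $\pi_j$ gives $\pi_j^*G\in\GProj(\cat F^{\partial(I/j)})$, and the inductive ``Moreover'' on $\partial(I/j)$ produces $L_j(G) = \colim_{\partial(I/j)}\pi_j^*G$ in $\cat F$. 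The key step is showing $\lambda_j(G)$ is an inflation: I plan to choose a deflation $P\twoheadrightarrow G$ in $\cat F^I$ with $P = \bigoplus_k k_!(P_k)$ projective, let $K$ be the kernel (which lies in $\GProj(\cat F^I)$ by the hereditary cotorsion pair of Proposition~\ref{proposition:Gore_cotorsion_pairs}), and apply the exact functor $\colim_{\partial(I/j)}\pi_j^*$ (exact on Gorenstein projectives by the inductive ``Moreover'') to obtain a conflation $0\to L_j(K)\to L_j(P)\to L_j(G)\to 0$ in $\cat F$. Lemma~\ref{lemma:represented_functors_are_Reedy}(1) forces $\lambda_j(P)$ to be a split inflation, so the composition $L_j(K)\hookrightarrow L_j(P)\xrightarrow{\lambda_j(P)} P_j$ is an inflation factoring through $K_j\hookrightarrow P_j$; weak idempotent completeness of $\cat F$ then forces $\lambda_j(K)\colon L_j(K)\to K_j$ to be an inflation. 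A diagram chase using the split structure of $\lambda_j(P)$ identifies $L_j(K)$ with the pullback $K_j\times_{P_j}L_j(P)$ inside $L_j(P)$, and concludes that $\lambda_j(G)\colon L_j(P)/L_j(K)\to P_j/K_j = G_j$ is an inflation.

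The main obstacle is this final diagram chase, specifically the identification $L_j(K) \cong K_j\times_{P_j}L_j(P)$, which relies delicately on the split structure of $\lambda_j(P)$ and on the exactness of $\colim_{\partial(I/j)}\pi_j^*$ on $\GProj(\cat F^{\partial(I/j)})$. Once $(1)\Leftrightarrow(2)$ is established, the ``Moreover'' statement follows immediately from Lemmas~\ref{lemma:colimit-existence-inductive} and~\ref{lemma:colimit-exactness-inductive}, using the inductively established exactness of $\colim_J$ on $\GProj(\cat F^J)$ to verify their hypotheses.
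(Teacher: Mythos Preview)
Your argument for $(2)\Rightarrow(1)$ is correct and pleasantly direct (just note that you must run it for \emph{every} $i\in I$, not only the maximal $j$, to apply Lemma~\ref{lemma:Gorenstein-via-stalks}(1); your splitting argument works verbatim for any $i$ since it only uses that $\lambda_i(G)$ is an inflation and $Q$ is injective). The existence of $L_j(G)$ in $(1)\Rightarrow(2)$ via the inductive ``Moreover'' on $\partial(I/j)$ is also fine, as is the deduction of the ``Moreover'' clause from Lemmas~\ref{lemma:colimit-existence-inductive} and~\ref{lemma:colimit-exactness-inductive}.

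The genuine gap is the final diagram chase showing that $\lambda_j(G)$ is an inflation. Your key claim, that $L_j(K)\cong K_j\times_{P_j}L_j(P)$, is essentially \emph{equivalent} to what you are trying to prove. Indeed, using the splitting $P_j=L_j(P)\oplus C$, the pullback $K_j\times_{P_j}L_j(P)$ identifies with $\ker\big(L_j(P)\to P_j\twoheadrightarrow G_j\big)=\ker\big(L_j(P)\twoheadrightarrow L_j(G)\xrightarrow{\lambda_j(G)} G_j\big)$, and this equals $L_j(K)=\ker\big(L_j(P)\twoheadrightarrow L_j(G)\big)$ precisely when $\lambda_j(G)$ is a monomorphism. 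You can see the circularity concretely already for $I=[1]$: with $j=1$ one has $L_1(K)=\ker(P_0\to G_0)$ while $K_1\times_{P_1}L_1(P)=\ker(P_0\to G_0\to G_1)$, and these agree if and only if $G_0\to G_1$ is monic. Attempts to rescue the step via the snake lemma or a $3\times 3$ argument all reduce to extending a map $L_j(G)\to Q$ along $\lambda_j(G)$, which is again the point at issue.

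The paper closes this gap by a different mechanism that does not pass through a projective resolution of $G$. Using the degreewise split conflation $0\to\stalk{j}(Q)\to j_*(Q)\to M_j(Y)\to 0$ from Proposition~\ref{proposition:presentation_of_stalks}(2), one computes
\[
\Hom_{\cat F^I}(G,j_*Q)\cong\Hom_{\cat F}(G_j,Q)
\quad\text{and}\quad
\Hom_{\cat F^I}(G,M_j(Y))\cong\Hom_{\cat F}(L_j(G),Q),
\]
so that $\Hom_{\cat F^I}(G,\mu_j(Y))$ identifies with $\Hom_{\cat F}(\lambda_j(G),Q)$ and hence
\[
\coker\Hom_{\cat F}(\lambda_j(G),Q)\cong\Ext^1_{\cat F^I}(G,\stalk{j}(Q)).
\]
This single isomorphism yields both implications at once (via Lemma~\ref{lemma:Gorenstein-via-stalks}(1)) and, by the Obscure Axiom in a weakly idempotent complete exact category, converts surjectivity of $\Hom_{\cat F}(\lambda_j(G),Q)$ for all injective $Q$ into $\lambda_j(G)$ being an inflation. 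If you want to keep your overall inductive architecture, the cleanest fix is to replace your diagram chase at the maximal vertex by this $\Hom$-identification; the rest of your outline then goes through.
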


\begin{proof}
Let $d\colon \operatorname{Ob}(I) \to \{0,\dots n\}$ be a degree function as in Definition~\ref{definition:finite_directed}. We will prove the statement by induction on $n\ge 0$ and, for finite direct categories with the same codomain of $d$, by the number of objects.

If $n=0$ or $I$ is empty, the category $I$ is a finite discrete category, so $\cat F^I$ is Frobenius again. The latching objects are trivial (they are computed as empty colimits) and $\colim_I$ is just the $I$-fold coproduct. Hence, the result follows trivially.

Let $n>0$ and $I$ be non-empty. We first prove the equivalence between (1) and (2).
To start with, note that if $G\in\GProj(\cat F^I)$ and $j\in J$, then $\pi_j^*(G)\in\GProj(\cat F^{\partial(I/j)})$ by Lemma~\ref{lemma:latching-and-restriction} and the finite direct category $\partial(I/j)$ is smaller in the sense of our induction. In particular, all the latching objects $L_j(X)$, $j\in J$, exist for Gorenstein projectives by the inductive hypothesis.

Suppose now that $G\in\cat F^I$ is such that all the latching objects $L_j(G)$ exist. Then we have
\begin{align*}
\Hom_{\cat F}(L_j(G),Q) &= \Hom_{\cat F}(\colim_{(i\to j)\in\partial(I/j)} G_i,Q) \\
&\cong \lim_{(i\to j)\in\partial(I/j)} \Hom_{\cat F}(G_i,Q) \\
&\cong \lim_{(i\to j)\in\partial(I/j)} \Hom_{\cat F^I}(G,i_*Q) \\
&\cong \Hom_{\cat F^I}(G,\lim_{(i\to j)\in\partial(I/i)} i_*Q)
\end{align*}
 for each $j\in J$ and $Q\in\Proj(\cat F)$. Indeed, the limit in $\cat F^I$ in the last row exists by Proposition~\ref{proposition:presentation_of_stalks}(2) and it coincides with the mathching object $M_j(Y)$ in the conflation in its statement.
 In fact, the argument shows that the homomorphism of abelian groups
 \[ \Hom_{\cat F}(\lambda_j(G),Q)\colon \Hom_{\cat F}(G_j, Q) \to \Hom_{\cat F}(L_j(G), Q) \]
 is isomorphic to
 \[ \Hom_{\cat F^I}(G,\mu_j(Y))\colon \Hom_{\cat F^I}(G,j_*Q) \to \Hom_{\cat F^I}(G,M_j(Y)), \]
where $Y\in (\cat F^I)^{\opp{I}}$ and $\mu_i(Y)\colon j_*Q\to M_j(Y)$ are again as in Proposition~\ref{proposition:presentation_of_stalks}(2). Since $j_*Q$ is injective with respect to the degreewise exact structure on $\cat F^I$ because $Q\in\Proj(\cat F)=\Inj(\cat F)$, an application of $\Hom_{\cat F^I}(G,-)$ to the short exact sequence from Proposition~\ref{proposition:presentation_of_stalks}(2) shows that
\[
\coker \Hom_{\cat F}(\lambda_j(G),Q) \cong \coker \Hom_{\cat F^I}(G,\mu_j(Y)) \cong \Ext^1_{\cat F^I}(G,\stalk{j}(Q)).
\]
Thus, by Lemma~\ref{lemma:Gorenstein-via-stalks}(1), $G$ is Gorenstein projective if and only if $\Hom_{\cat F}(\lambda_j(G),Q)$ is surjective for each $j\in I$ and $Q\in\Proj(\cat F)$.
Since we assume that $\cat F$ is weakly idempotent complete, this is further equivalent to $\lambda_j(G)$ being an inflation for all $j\in J$ by the so-called Obscure Axiom (for a dual version, see~\cite[Proposition~7.6 and Exercise~11.11]{buehler}). This completes the proof of $(1)\Leftrightarrow(2)$.

To prove the moreover part, let $j\in I$ be a maximal object (in the sense described just above Lemma~\ref{lemma:colimit-existence-inductive}). If we put $J=I\setminus\{j\}$ and let $u\colon J\to I$ be the inclusion functor, then $u$ is clearly a sieve and $J$ is a finite direct category which is smaller in the sense of our induction. So, given $G\in\GProj(\cat F^I)$, we have $u^*(G)\in\GProj(\cat F^J)$ by Lemma~\ref{lemma:latching-and-restriction} and $\colim_J u^*(G)$ exists in $\cat F$ by the inductive hypothesis. Then $\colim_I G$ also exists in $\cat F$ by Lemma~\ref{lemma:colimit-existence-inductive}.

It remains to show that $\colim_I\colon \GProj(\cat F^I)\to \cat F$ is exact.
If $\varepsilon\colon 0\to G\to H\to K\to 0$ is a conflation in $\GProj(\cat F^I)$, the restrictions $u^*(\varepsilon)$ and $\pi_j^*(\varepsilon)$ are conflations in $\GProj(\cat F^J)$ and $\GProj(\cat F^{\partial(I/j)})$, respectively, by Lemma~\ref{lemma:latching-and-restriction}, and $\colim_J u^*(\varepsilon)$ and $L_j(\varepsilon)=\colim_{\partial(I/j)}\pi_j^*(\varepsilon)$ are conflations in $\cat F$ by the inductive hypothesis. Finally, the conclusion that $\colim_I(\varepsilon)$ is a conflation in $\cat F$ follows from Lemma~\ref{lemma:colimit-exactness-inductive}.
\end{proof}

To deal with the derivator of $\cat F$, we must also treat Kan extensions. We again state the result only for Gorenstein projectives, since the Gorenstein injective version is formally dual.

\begin{corollary}\label{corollary:Gorenstein_left_Kan}
Let $\cat F$ be a weakly idempotent complete Frobenius exact category and let $u\colon I\to J$ be a functor between finite direct categories. Then there is an exact functor
\[ u_!\colon \GProj(\cat F^I) \to \GProj(\cat F^J) \]
and an isomorphism $\Hom_{\cat F^I}(G,u^*(Y))\cong\Hom_{\cat F^J}(u_!(G),Y)$ natural in both variables, where $G\in\GProj(\cat F^I)$ and $Y\in\cat F^J$ (one can say that $u_!$ is a partially defined left adjoint to $u^*\colon\cat F^J\to\cat F^I$).
\end{corollary}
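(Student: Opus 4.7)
The approach will follow the standard pointwise formula for left Kan extensions, adapted to the Gorenstein projective context. First I would verify that for each $j\in J$, the slice category $u/j$ is itself finite direct with degree function $(i,f\colon u(i)\to j)\mapsto d(i)$ inherited from $I$; the projection $p_j\colon u/j\to I$ sending $(i,f)\mapsto i$ acts as the identity on underlying morphisms, so it preserves the direct structure.

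The key technical step is to show that the restriction functor $p_j^*\colon \cat F^I\to\cat F^{u/j}$ sends Gorenstein projectives to Gorenstein projectives. I would apply Lemma~\ref{lemma:restriction-and-Ext}(2) to $p_j$: for each $i\in I$, the coslice $i/p_j$ decomposes as $\coprod_{\psi\in J(u(i),j)}\cat C_\psi$, where $\cat C_\psi$ consists of triples $((i',f'),\phi\colon i\to i')$ with $f'\circ u(\phi)=\psi$ and has the initial object $((i,\psi),1_i)$. Since $J(u(i),j)$ is finite, this is a finite disjoint union of categories with initial objects. Combining the lemma (with $\cat W=\Proj(\cat F)$) with the identification $\WTriv(\cat F^K)=\Proj(\cat F)^K$ for any finite direct $K$ from Lemma~\ref{lemma:finite_homological_dim_F^I} and the inclusion $\Proj(\cat F^{u/j})\subseteq\Proj(\cat F)^{u/j}$ from Proposition~\ref{proposition:structure_projectives_F^I}, one obtains $p_j^*(G)\in\GProj(\cat F^{u/j})$ for every $G\in\GProj(\cat F^I)$.

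With this in hand, Theorem~\ref{theorem:Gorenstein_projectives} applied to $u/j$ guarantees that $\colim_{u/j}p_j^*(G)$ exists in $\cat F$ and that the colimit functor is exact on $\GProj(\cat F^{u/j})$. I would then define $u_!(G)_j:=\colim_{u/j}p_j^*(G)$, with functoriality in $j$ coming from the obvious assignment $(h\colon j\to j')\mapsto(h_*\colon u/j\to u/j',\ (i,f)\mapsto(i,h\circ f))$. The Hom-isomorphism $\Hom_{\cat F^J}(u_!(G),Y)\cong\Hom_{\cat F^I}(G,u^*Y)$ is then a direct consequence of the universal property of colimits: cocones on $p_j^*G$ with apex $Y_j$, compatibly for all $j$, correspond precisely to morphisms $G\to u^*Y$. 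Instantiating at $Y=u_!(G)$ produces the adjunction unit $\eta_G\colon G\to u^*u_!(G)$, and exactness of $u_!$ on $\GProj(\cat F^I)$ follows levelwise from the exactness of $p_j^*$ and of $\colim_{u/j}$.

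The final step is to verify that $u_!(G)\in\GProj(\cat F^J)$. For this I would use the cotorsion pair description $\GProj(\cat F^J)=\{X:\Ext^1_{\cat F^J}(X,\WTriv(\cat F^J))=0\}$ from Proposition~\ref{proposition:Gore_cotorsion_pairs}. Given a conflation $0\to W\to E\to u_!(G)\to 0$ with $W\in\WTriv(\cat F^J)=\Proj(\cat F)^J$, applying the exact functor $u^*$ and pulling back along $\eta_G$ produces a conflation $0\to u^*W\to E'\to G\to 0$ in $\cat F^I$ with $u^*W\in\Proj(\cat F)^I=\WTriv(\cat F^I)$; this splits since $G\in\GProj(\cat F^I)$, and a diagram chase through the Hom-isomorphism transports the splitting back to the original conflation. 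I expect the main obstacle to lie in the slice-category analysis of step two: tracking the decomposition of $i/p_j$ into connected components indexed by morphisms $\psi\colon u(i)\to j$ is combinatorially the most delicate point, though ultimately straightforward once one observes that the quantity $f'\circ u(\phi)$ is preserved under morphisms in $i/p_j$.
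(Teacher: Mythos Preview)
Your proof is correct and follows essentially the same route as the paper: construct $u_!$ pointwise via colimits over the slices $u/j$, appeal to Theorem~\ref{theorem:Gorenstein_projectives} for existence and exactness, and then use the adjunction together with $u^*(\WTriv(\cat F^J))\subseteq\WTriv(\cat F^I)$ to conclude that $u_!(G)$ is Gorenstein projective. You are in fact more explicit than the paper about why the restricted diagrams $p_j^*(G)$ lie in $\GProj(\cat F^{u/j})$, carrying out the coslice analysis in the style of Lemma~\ref{lemma:latching-and-restriction}; the paper leaves this implicit. For the final step the paper argues slightly more efficiently: since $u_!$ is exact on $\GProj(\cat F^I)$ and sends projectives to projectives, it sends projective resolutions to projective resolutions, giving the isomorphism $\Ext^1_{\cat F^J}(u_!(G),Y)\cong\Ext^1_{\cat F^I}(G,u^*Y)$ directly and bypassing your pullback-and-splitting chase. (The reference to Proposition~\ref{proposition:structure_projectives_F^I} in your second paragraph is superfluous; only the identification $\WTriv(\cat F^K)=\Proj(\cat F)^K$ from Lemma~\ref{lemma:finite_homological_dim_F^I} is used.)
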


\begin{proof}
The functor $u_!$ can be constructed using \cite[Theorem~X.3.1]{maclane-categories} (the same argument as in Prosposition~\ref{proposition:Kan-extensions-existence}), since all the necessary colimits exist for Gorenstein projective $I$-shaped diagrams by Theorem~\ref{theorem:Gorenstein_projectives}. Moreover, $u_!$ is exact since all the colimits are exact by the theorem, so we have an exact functor $u_!\colon \GProj(\cat F^I) \to \cat F^J$ and the required natural isomorphism.

It remains to show that $u_!(G)\in\GProj(\cat F^J)$ for $G\in\GProj(\cat F^I)$.
Note that $u_!$ sends projectives to projectives (being a left adjoint to an exact functor, the fact that it only is partially defined does not matter), and so also projective resolutions to projective resolutions. Hence, we also have a natural isomorphism
\[ \Ext^1_{\cat F^I}(G,u^*(Y))\cong\Ext^1_{\cat F^J}(u_!(G),Y). \]
Since clearly $u^*$ sends weakly trivial objects to weakly trivial objects by Lemma~\ref{lemma:finite_homological_dim_F^I} (recall Definition~\ref{definition:Gorenstein_key_classes}), we have
$\Ext^1_{\cat F^J}(u_!(G),W)\cong\Ext^1_{\cat F^I}(G,u^*(W))=0$ for any $W\in\WTriv(\cat F^J)$. Hence $u_!(G)\in\GProj(\cat F^J)$ by Proposition~\ref{proposition:Gore_cotorsion_pairs}, as required.
\end{proof}

\subsection{The derivator of a Frobenius exact category revisited}
\label{subsection:derivator_Frobenius}

Now we are in a position to state the main result, which provides a more direct desription of the derivator of a weakly idempotent complete Frobenius exact category, without using complexes. We will essentially use the model structures from Example~\ref{example:Gorentstein}, and informed readers will recognize them as suitable Reedy model structures \cite[\S15.3]{hirschhorn-modelcategories} of the corresponding (trivial) model structures in $\cat F$ itself. The functors $u_!\colon \GProj(\cat F^I) \to \GProj(\cat F^J)$ and $u_*\colon \GInj(\cat F^I) \to \GInj(\cat F^J)$ are then none other than versions of (partially defined) left and right Quillen functors \cite[\S8.5]{hirschhorn-modelcategories}, respectively.

\begin{theorem}\label{theorem:derivator_Frobenius_direct}
Let $\cat F$ be a weakly idempotent complete Frobenius exact category and let $\derivator_{\cat F} \colon \opp{\kat{FinDir}} \to \kat{CAT}$ be the derivator given by the assignment~\eqref{equation:derivatordg_Frobenius} (which is a special case of Theorem~\ref{theorem:derivator_dg_finite}). Then $\derivator_{\cat F}$ is equivalent to one given by the assignment
\[
I \longmapsto \cat F^I[W_I^{-1}],
\]
where $W_I$ consists of the morphisms of diagrams $f\colon X\to Y$ in $\cat F^I$ such that $f_i\colon X_i\to Y_i$ is a stable isomorphism in $\cat F$ (i.e.\ the coset of $f_i$ is an isomorphism in $\underline{\cat F}$) for each $i\in I$.
Moreover, for any functor $u\colon I\to J$ between finite direct categories, the left and right adjoints to $u^*$ are given by the diagram
\begin{equation}\label{equation:Frobenius-Kan-extensions}
\begin{tikzcd}
\underline{\GProj(\cat F^I)} \ar[d, bend right, swap, pos=0.45, "u_!"] & \derivator_{\cat F}(I) \arrow[l, swap, <-, "\simeq"] \arrow[r, <-, "\simeq"] & \underline{\GInj(\cat F^I)}  \ar[d, bend left, pos=0.45, "u_*"] \\
\underline{\GProj(\cat F^J)} & \derivator_{\cat F}(J) \arrow[u, "u^*"] \arrow[l, swap, <-, "\simeq"] \arrow[r, <-, "\simeq"] & \underline{\GInj(\cat F^J)},
\end{tikzcd}
\end{equation}
where $u_!$ and $u_*$ are induced by Corollary~\ref{corollary:Gorenstein_left_Kan} and its dual.
\end{theorem}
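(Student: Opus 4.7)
The plan is to assemble the equivalences in three stages and then transfer the adjunctions. First, I would apply Corollary~\ref{corollary:GP_and_GI_are_Frobenius} to the $n$-Gorenstein exact category $\cat F^I$ (which is Gorenstein by Proposition~\ref{proposition:Gorenstein-diagrams}) in order to see that $\GProj(\cat F^I)$ and $\GInj(\cat F^I)$ are themselves Frobenius exact, with projective--injective objects given respectively by $\Proj(\cat F^I)$ and $\Inj(\cat F^I)$. Then the Keller--Vossieck equivalence~\eqref{equation:Frobenius-dg}, applied to these Frobenius categories, yields
\[
\underline{\GProj(\cat F^I)}\simeq \operatorname{K_{ac}}(\Proj(\cat F^I)),\qquad \underline{\GInj(\cat F^I)}\simeq \operatorname{K_{ac}}(\Inj(\cat F^I)),
\]
and combining these with Theorem~\ref{theorem:derivator-Frobenius} produces the two outer equivalences in the claimed diagram.

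Next, I would recover $\cat F^I[W_I^{-1}]$ as the middle term. Proposition~\ref{proposition:Gorenstein-diagrams} and the cotorsion pairs of Proposition~\ref{proposition:Gore_cotorsion_pairs} give both the Gorenstein projective and Gorenstein injective model structures on $\cat F^I$ (Example~\ref{example:Gorentstein}), sharing the class of weak equivalences $W$. By the argument at the end of Example~\ref{example:Gorentstein} (factoring a morphism through a deflation and using that $\cat F$ is Frobenius), $W$ is precisely $W_I$, i.e.\ the degreewise stable isomorphisms. Applying Proposition~\ref{proposition:exact-model}(b) to both model structures then delivers the required
\[
\underline{\GProj(\cat F^I)}\simeq \cat F^I[W_I^{-1}]\simeq \underline{\GInj(\cat F^I)}.
\]
Chaining everything together gives all the horizontal equivalences in~\eqref{equation:Frobenius-Kan-extensions}, and functoriality in $I$ follows because each of these equivalences is induced by a canonical (pre)derivator morphism out of $\cat F^{(-)}$ sending $W_I$ to isomorphisms.

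For the Kan extensions, I would invoke uniqueness of adjoints inside a derivator. The restriction $u^*\colon \cat F^J\to \cat F^I$ is exact and preserves $W$ componentwise, so it descends to $u^*\colon \cat F^J[W_J^{-1}]\to \cat F^I[W_I^{-1}]$, which must coincide (up to canonical natural isomorphism) with the derivator restriction $u^*\colon \derivator_{\cat F}(J)\to \derivator_{\cat F}(I)$, since both are obtained by precomposition and act compatibly on underlying objects. Now from Corollary~\ref{corollary:Gorenstein_left_Kan} the partially defined $u_!\colon \GProj(\cat F^I)\to\GProj(\cat F^J)$ is exact and preserves projectives (being left adjoint to the exact $u^*$); therefore it descends to a well-defined exact functor on the stable categories. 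For any $G\in\GProj(\cat F^I)$ and $Y\in\GProj(\cat F^J)$ the adjunction of Corollary~\ref{corollary:Gorenstein_left_Kan}, together with Remark~\ref{remark:Quillen-equivalence-and-more}(2) applied twice (on both sides), yields
\[
\Hom_{\cat F^J[W_J^{-1}]}(u_!G,Y)\cong \underline{\Hom}_{\cat F^J}(u_!G,Y)\cong \underline{\Hom}_{\cat F^I}(G,u^*Y)\cong \Hom_{\cat F^I[W_I^{-1}]}(G,u^*Y),
\]
so $u_!$ is genuinely left adjoint to $u^*$ on the localizations. A dual argument handles $u_*$ on $\GInj(\cat F^I)$. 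By uniqueness of adjoints, these coincide with the left and right homotopy Kan extensions provided by the derivator structure of Theorem~\ref{theorem:derivator_dg_finite}.

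The main obstacle I anticipate is the bookkeeping in the final step: ensuring that the various equivalences $\derivator_{\cat F}(I)\simeq \underline{\GProj(\cat F^I)}\simeq \cat F^I[W_I^{-1}]\simeq \underline{\GInj(\cat F^I)}$ are \emph{$2$-natural} with respect to $u\colon I\to J$, so that they transport the derivator restriction to the evident restriction on diagrams and the partial adjoints $u_!,u_*$ match the homotopy Kan extensions on the nose. This is handled essentially by tracking that each equivalence is induced by a morphism of functors out of $\cat F^{(-)}$ (or $\cat F^{(-)}[W_{(-)}^{-1}]$) which commutes strictly with $u^*$; once that is established, uniqueness of adjoints takes care of $u_!$ and $u_*$ automatically and no further calculation is required.
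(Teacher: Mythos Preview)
Your proposal is correct and uses essentially the same ingredients as the paper: Keller--Vossieck applied to the Frobenius categories $\GProj(\cat F^I)$ and $\GInj(\cat F^I)$ (via Corollary~\ref{corollary:GP_and_GI_are_Frobenius}), the model-categorical identification $\underline{\GProj(\cat F^I)}\simeq\cat F^I[W_I^{-1}]\simeq\underline{\GInj(\cat F^I)}$ from Example~\ref{example:Gorentstein}, and Theorem~\ref{theorem:derivator-Frobenius} to connect with $\derivator_{\cat F}(I)$; the Kan extensions then come from Corollary~\ref{corollary:Gorenstein_left_Kan} and uniqueness of adjoints.

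The one organizational difference worth noting is that the paper packages the equivalences into a single commutative square
\[
\begin{tikzcd}
\operatorname{K_{ac}}(\Proj(\cat F^I)) \arrow[r] \arrow[d, swap, "Z^0"] &
\operatorname{K_{ac}}(\Proj(\cat F)^I) / \operatorname{K_{tc}}(\Proj(\cat F)^I) \arrow[d, "Z^0"] \\
\underline{\GProj(\cat F^I)} \arrow[r] &\cat F^I[W_I^{-1}]
\end{tikzcd}
\]
(and its injective mirror), with the cocycle functor $Z^0$ as the vertical bridge. This makes the $2$-naturality in $I$ immediate, since every arrow is induced by an actual functor commuting strictly with restriction along $u$. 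Your linear chaining of equivalences is equivalent in content, but you correctly identify that the bookkeeping for naturality is the delicate point; the paper's diagram is precisely the device that resolves it cleanly. Your more explicit adjunction argument via Remark~\ref{remark:Quillen-equivalence-and-more}(2) is a fine elaboration of what the paper leaves implicit; note only that the middle isomorphism $\underline{\Hom}_{\cat F^J}(u_!G,Y)\cong\underline{\Hom}_{\cat F^I}(G,u^*Y)$ requires the observation that $u^*$ sends $\Proj(\cat F^J)$ into $\WTriv(\cat F^I)$ and that any map from a Gorenstein projective through a weakly trivial object factors through a projective (using the cotorsion pair of Proposition~\ref{proposition:Gore_cotorsion_pairs}).
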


\begin{proof}
We have an obvious commutative diagram of categories
\[
\begin{tikzcd}
\operatorname{C_{ac}}(\Proj(\cat F^I)) \arrow[r, "\subseteq"] \arrow[d, swap, "Z^0"] &
\operatorname{C_{ac}}(\Proj(\cat F)^I) \arrow[d, swap, "Z^0"] &
\operatorname{C_{ac}}(\Inj(\cat F^I)) \arrow[l, swap, "\supseteq"] \arrow[d, "Z^0"]
\\
\GProj(\cat F^I) \arrow[r, "\subseteq"] &\cat F^I &
\GInj(\cat F^J), \arrow[l, swap, "\supseteq"]
\end{tikzcd}
\]
which induces the following commutative diagram.
\begin{equation}\label{equation:Frobenius-derivator-comparison}
\begin{tikzcd}
\operatorname{K_{ac}}(\Proj(\cat F^I)) \arrow[r] \arrow[d, swap, "Z^0"] &
\operatorname{K_{ac}}(\Proj(\cat F)^I) / \operatorname{K_{tc}}(\Proj(\cat F)^I) \arrow[d, swap, "Z^0"] &
\operatorname{K_{ac}}(\Inj(\cat F^I)) \arrow[l] \arrow[d, "Z^0"]
\\
\underline{\GProj(\cat F^I)} \arrow[r] &\cat F^I[W_I^{-1}] &
\underline{\GInj(\cat F^J)}. \arrow[l]
\end{tikzcd}
\end{equation}
Here, the middle column is well defined by Lemma~\ref{lemma:finite_homological_dim_F^I}, the upper horizontal arrows are equivalences by the proof of Theorem~\ref{theorem:derivator-Frobenius}, and the lower horizontal arrows are equivalences by Example~\ref{example:Gorentstein}. Moreover, the outer vertical arrows are equivalences thanks to Corollary~\ref{corollary:GP_and_GI_are_Frobenius} and~\eqref{equation:Frobenius-dg}, so all the functors in~\eqref{equation:Frobenius-derivator-comparison} are equivalences. Finally, the diagram is natural in $I\in\opp{\kat{FinDir}}$ in the sense that for any functor $u\colon I\to J$ in $\opp{\kat{FinDir}}$, the restrictions along $u$ commute with the two instances of~\eqref{equation:Frobenius-derivator-comparison} for $I$ and $J$, forming a three-dimensional commutative diagram. 
In summary, together with Theorem~\ref{theorem:derivator-Frobenius}, we have the equivalences
\[ \derivator_{\cat F}(I) \simeq \operatorname{K_{ac}}(\Proj(\cat F)^I) / \operatorname{K_{tc}}(\Proj(\cat F)^I) \simeq \cat F^I[W_I^{-1}] \]
which are natural in $I$, so induce the required equivalence of (pre)derivators. The description of Kan extensions immediately follows from the above and Corollary~\ref{corollary:Gorenstein_left_Kan} and its dual.
\end{proof}

\begin{remark}
Suppose that $u\colon I\to J$ is a functor between finite direct categories. Since the partial Kan extensions $u_!$, $u_*$ given by Corollary~\ref{corollary:Gorenstein_left_Kan} are exact and preserve projective, resp.\ injective diagrams, they also induce functors $u_!\colon \operatorname{K_{ac}}(\Proj(\cat F^I))\to \operatorname{K_{ac}}(\Proj(\cat F^J))$ and $u_*\colon \operatorname{K_{ac}}(\Inj(\cat F^I))\to \operatorname{K_{ac}}(\Inj(\cat F^J))$. Hence, axiom (Der3) for the derivator of a weakly idempotent complete Frobenius exact category can be also expressed by the following commutative diagram similar to~\eqref{equation:Frobenius-Kan-extensions}, using the description of the derivator given in Theorem~\ref{theorem:derivator-Frobenius}: 
\[
\begin{tikzcd}
\operatorname{K_{ac}}(\Proj(\cat F^I)) \arrow[r, "\simeq"] \arrow[d, swap, bend right, "u_!"] &
\frac{\operatorname{K_{ac}}(\Proj(\cat F)^I)}{\operatorname{K_{tc}}(\Proj(\cat F)^I)} \arrow[d, swap, <-, "u^*"] &
\operatorname{K_{ac}}(\Inj(\cat F^I)) \arrow[l, "\simeq"] \arrow[d, bend left, "u_*"]
\\
\operatorname{K_{ac}}(\Proj(\cat F^J)) \arrow[r, "\simeq"] &
\frac{\operatorname{K_{ac}}(\Proj(\cat F)^J)}{\operatorname{K_{tc}}(\Proj(\cat F)^J)} &
\operatorname{K_{ac}}(\Inj(\cat F^J)) \arrow[l, "\simeq"]
\end{tikzcd}
\]
\end{remark}

\begin{remark}
For some functors $u\colon I\to J$ between finite direct categories, the situation is even easier. If for each $j\in J$, the slice $j/u$ is a disjoint union of finitely many categories with initial objects, then the restriction functor $u^*$ preserves (Gorenstein) projectives by the same argument as for Lemma~\ref{lemma:latching-and-restriction}. That is, we have honest adjunctions
\[
u_!\colon \underline{\GProj}(\cat F^I)\rightleftharpoons\underline{\GProj}(\cat F^J):\!u^*
\quad\text{and}\quad
u_!\colon \operatorname{K_{ac}}(\Proj(\cat F^I))\rightleftharpoons\operatorname{K_{ac}}(\Proj(\cat F^J)):\!u^*.
\]
A dual remark applies if $u/j$ is a disjoint union of finitely many categories with terminal objects.
\end{remark}

\bibliographystyle{plain}

\end{document}